\newtheorem{theorem}{Theorem}[section]
\newtheorem{lemma}[theorem]{Lemma}
\newtheorem{proposition}[theorem]{Proposition}
\theoremstyle{definition}
\newtheorem{assumption}[theorem]{Assumption}
\numberwithin{equation}{section}
\numberwithin{figure}{section}
\newcommand\numberthis{\addtocounter{equation}{1}\tag{\theequation}}
\def\R{\mathbb{R}}
\def\E{\mathbb{E}}
\def\P{\mathbb{P}}
\def\O{\mathbb{O}}
\def\SO{\mathbb{SO}}
\def\RS{\mathrm{RS}}
\def\H{\mathsf{H}}
\def\G{\mathsf{G}}
\def\X{\mathsf{X}}
\def\Y{\mathsf{Y}}
\def\N{\mathcal{N}}
\def\eps{\varepsilon}
\def\cF{\mathcal{F}}
\def\cG{\mathcal{G}}
\def\cD{\mathcal{D}}
\def\cH{\mathcal{H}}
\def\cL{\mathcal{L}}
\DeclareMathOperator{\Haar}{Haar}
\DeclareMathOperator{\diag}{diag}
\DeclareMathOperator{\supp}{supp}
\DeclareMathOperator{\Tr}{Tr}
\DeclareMathOperator{\Unif}{Unif}
\def\I{\mathrm{I}}
\def\II{\mathrm{II}}
\def\III{\mathrm{III}}
\def\IV{\mathrm{IV}}
\newcommand{\Opnorm}[1]{\|#1\|_{\rm op}}
\newcommand{\Fnorm}[1]{\|#1\|_{\rm F}}
\newcommand{\Var}{\mathrm{Var}}
\newcommand{\pth}[1]{\left( #1 \right)}
\newcommand{\sth}[1]{\left\{ #1 \right\}}
\newcommand{\reals}{\mathbb{R}}
\newcommand{\Expect}{\mathbb{E}}
\newcommand{\prob}[1]{\mathbb{P}\left[#1\right]}
\newcommand{\Indc}{\mathbf{1}}
\newcommand{\calD}{{\mathcal{D}}}
\newcommand{\calE}{{\mathcal{E}}}
\newcommand{\calF}{{\mathcal{F}}}
\newcommand{\calH}{{\mathcal{H}}}
\newcommand{\calK}{{\mathcal{K}}}
\newcommand{\calL}{{\mathcal{L}}}
\newcommand{\calN}{{\mathcal{N}}}
\newcommand{\calV}{{\mathcal{V}}}
\begin{document}

\title[Orthogonally-invariant spin glass models]{The replica-symmetric free
energy for Ising spin glasses with orthogonally invariant couplings}

\author{Zhou Fan}\email{zhou.fan@yale.edu (corresponding)}
\author{Yihong Wu}
\address{Department of Statistics and Data Science \\ Yale University \\ New
Haven \\ CT \\ USA}
\email{yihong.wu@yale.edu}
\thanks{Z.~Fan is supported in part by NSF Grants DMS-1916198 and DMS-2142476. Y.~Wu is
supported in part by NSF Grant CCF-1900507, an NSF CAREER award CCF-1651588,
and an Alfred Sloan fellowship.}

\maketitle

\begin{abstract}
We study a variant of the Sherrington-Kirkpatrick (S-K) spin glass model
with external field, where the random symmetric couplings matrix does not
consist of i.i.d.\ entries but is instead orthogonally invariant in law. For
sufficiently high temperature, we prove a replica-symmetric formula for the
first-order limit of the model free energy. Our analysis
is an adaptation of a conditional second-moment-method argument previously
introduced by Bolthausen for studying the high-temperature regime of the
S-K model, where one conditions on the iterates of an Approximate Message
Passing (AMP) algorithm for solving the TAP equations for the model
magnetization. We apply this method using a memory-free version of AMP that
is tailored to the orthogonally invariant structure of the model couplings.\\
\end{abstract}

\section{Introduction}

We study a probability model on the hypercube $\sigma \in \{+1,-1\}^n$ given by
\begin{equation}\label{eq:intromodel}
P(\sigma)=\frac{1}{Z}\exp\left(\frac{\beta}{2}\sigma^\top J\sigma+h^\top
\sigma\right).
\end{equation}
Here $h \in \R^n$ is a deterministic vector, $J \in \R^{n \times n}$ is a
random symmetric matrix which we will assume satisfies the orthogonal
invariance in law
\[J\overset{L}{=} O^\top JO \text{ for any orthogonal matrix } O \in \R^{n
\times n},\]
and $Z$ is the partition function
\[Z=\sum_{\sigma \in \{+1,-1\}^n} \exp\left(\frac{\beta}{2}\sigma^\top
J\sigma+h^\top \sigma\right).\]
We will refer to $J$ as the couplings matrix, $h$ as the external field,
and $\beta$ as the inverse temperature.

The specific example of the Sherrington-Kirkpatrick (S-K) model
\cite{sherrington1975solvable},
where $J$ has i.i.d.\ Gaussian entries above the diagonal, is
well-studied and known to exhibit rich phenomena. At high
temperatures, $P(\sigma)$ is ``replica-symmetric'', the large-$n$ limit of the
free energy is described by a simple replica-symmetric
formula \cite{sherrington1975solvable,aizenman1987some,bolthausen2018morita},
and the magnetization 
$m=\sum_{\sigma \in \{+1,-1\}^n} \sigma \cdot P(\sigma)$ satisfies in this
limit the Thouless-Anderson-Palmer (TAP) mean-field equations
\cite{thouless1977solution,chatterjee2010spin,talagrand2010mean}. At low
temperatures, the limit free energy is described more generally by Parisi's
variational formula
\cite{parisi1979infinite,parisi1980sequence,guerra2003broken,talagrand2006parisi}.
The solution of the variational problem may be understood as corresponding to
an ultrametric tree structure for $P(\sigma)$, and the TAP equations
describe the conditional means of the ``pure states'' in this ultrametric tree
\cite{mezard1984nature,mezard1984replica,mezard1985microstructure}. This
picture has been formalized and proven rigorously for certain mixed 
$p$-spin analogues of the S-K model
in \cite{panchenko2013parisi,auffinger2019thouless}.

Here, we are interested in the more general setting of (\ref{eq:intromodel})
where $J$ is orthogonally invariant, but can have arbitrary spectral
distribution and dependent entries. Examples include the
random orthogonal model (ROM) \cite{marinari1994replica} where $J$ has all
eigenvalues equal to $\{+1,-1\}$, and the Gaussian Hopfield model
\cite{hopfield1982neural} where $J=G^\top G$ and $G$ is a rectangular Gaussian
matrix. In the physics literature,
the replica-symmetric and 1-RSB free energies
for the ROM were computed by Marinari et al.\ in \cite{marinari1994replica},
and extended to models with general orthogonally invariant couplings in
\cite{cherrier2003role}. Parisi and Potters derived in \cite{parisi1995mean}
the TAP mean-field equations for the ROM, using a perturbative expansion
approach of \cite{plefka1982convergence,georges1991expand} and a conjectured
resummation of the terms of this expansion. Opper and Winther provided in
\cite{opper2001adaptive} an alternative derivation of the TAP equations
using the cavity method and a system of self-consistent equations for the
cavity fields (which we review in Appendix \ref{appendix:cavity}),
and verified also via a replica calculation that the TAP free
energy evaluated at the model magnetization coincides with the free energy
given by the replica-symmetric formula. At present, few rigorous mathematical
results are known for models with general orthogonally invariant couplings
matrices $J$.

Our work is in large part motivated by a renewed interest in
these types of mean-field models in information theory, statistics, and
machine learning
\cite{takeda2006analysis,tulino2013support,reeves2017additivity,takeuchi2017rigorous,ma2017orthogonal,barbier2018mutual,rangan2019vector,dudeja2020universality,fan2020approximate,gerbelot2020asymptotic,liu2020memory,maillard2020phase,takeuchi2020convolutional,takeuchi2020bayes},
where orthogonally invariant matrices may serve as more robust models
of regression and sensing designs or more accurate models of
noise in data applications. Indeed, following the initial posting
of this work, several dynamical universality results were obtained in
\cite{dudeja2023universality,wang2022universality,dudeja2022spectral}
showing that the mean-field dynamics of Approximate Message Passing (AMP)
and other first-order iterative algorithms applied to orthogonally invariant matrices are universal
across broad classes of matrices with delocalized eigenvectors. In many of
these applications, replica predictions for the model free energy 
are conjectured, but not rigorously known. Maillard et al.\ studied in
\cite{maillard2019high} a class of computational algorithms in the context of
such orthogonally invariant models, extending the diagrammatic expansion
method of \cite{plefka1982convergence,georges1991expand,parisi1995mean} to describe the
connections between these algorithms and the predicted mean field
theory; the authors of \cite{maillard2019high} highlighted the mathematical
verification of these predictions as an open question.

We study in this work the specific model (\ref{eq:intromodel}), and prove
a replica-symmetric formula for the first-order limit of its free
energy in a sufficiently high temperature regime. This extends previous work of
\cite{bhattacharya2016high}, which showed such a result in the absence of an
external field ($h=0$). Similar to the S-K
model \cite{aizenman1987some}, the $h=0$ setting is special in that the
quenched free energy $n^{-1} \E \log Z$ coincides asymptotically with the
annealed free energy $n^{-1} \log \E Z$, as was verified in
\cite{bhattacharya2016high} using the second moment method.
This no longer holds when $h \neq 0$, and our proof applies instead a
conditional version of this idea developed in \cite{bolthausen2018morita}
for the S-K model, where one establishes that the quenched and
annealed free energies coincide upon conditioning on an appropriately chosen
sigma-field that is informative about the random magnetization. This method
was refined for the S-K model in \cite{brennecke2022replica} to cover a large
and explicit part of the high-temperature regime, and is also related to
analyses of \cite{ding2019capacity,bolthausen2022gardner} for the Ising perceptron model.

Our construction of the conditioning sigma-field relies on recent
developments on iterative algorithms for
solving the TAP equations for the model (\ref{eq:intromodel}). We summarize
these developments and the proof strategy in Section \ref{subsec:proof}
below, after presenting our main result. Following the initial posting of this
work, our analyses have been extended in \cite{fan2022tap} to show also the
validity of the TAP equations for the magnetization under a similar
high-temperature assumption, and in \cite{li2023random}
to obtain analogous results in a statistical linear model with
orthogonally invariant regression design.

\subsection{Model and main result}

Consider the Gibbs distribution (\ref{eq:intromodel}) on the binary hypercube,
under the following assumptions for the couplings matrix $J$ and external field $h$.

\begin{assumption}\label{assump:main}
Let $J=O^\top DO$ be the eigen-decomposition of $J$.
\begin{enumerate}[(a)]
\item $O \sim \Haar(\O(n))$ is a random Haar-distributed orthogonal matrix.
\item $D=\diag(d_1,\ldots,d_n)$ is a deterministic diagonal matrix of
eigenvalues, whose empirical distribution converges weakly to a limit law
\[\frac{1}{n}\sum_{i=1}^n \delta_{d_i} \to \mu_D\]
as $n \to \infty$. This law $\mu_D$ has strictly positive variance and
a compact support $\supp(\mu_D)$. Furthermore,
\[\lim_{n \to \infty} \max(d_1,\ldots,d_n)=d_+ \triangleq
\max(x:x \in \supp(\mu_D)), \qquad
\liminf_{n \to \infty} \min(d_1,\ldots,d_n)>-\infty.\]
\item $h=(h_1,\ldots,h_n) \in \R^n$ is a deterministic vector, whose empirical
distribution of entries converges weakly to a limit law
\[\frac{1}{n}\sum_{i=1}^n \delta_{h_i} \to \mu_H\]
as $n \to \infty$. For every $p \geq 1$, the law $\mu_H$ has
finite $p^\text{th}$ moment, and $n^{-1}\sum_{i=1}^n h_i^p \to \E_{\H \sim
\mu_H}[\H^p]$.\footnote{This moment condition for $h$ is used to apply the AMP
state evolution analysis of \cite{fan2020approximate} to deduce
Theorem \ref{thm:SE}, and is not used in the rest of the argument.}
\end{enumerate}
\end{assumption}
\noindent We remark that our results apply also to models with random
$(D,h)$ independent of $O$ which satisfy these conditions almost
surely as $n \to \infty$, by applying the results conditionally on $(D,h)$.

We are interested in the asymptotic free energy
\begin{equation}\label{eq:freeenergy}
\Psi=\lim_{n \to \infty} \frac{1}{n} \log Z.
\end{equation}
For sufficiently small $\beta>0$, we prove that this limit exists almost surely
and is given by the following replica-symmetric formula: Denote the
Cauchy- and R-transforms of $\mu_D$ by
\[G(z)=\int \frac{1}{z-x}\mu_D(dx), \qquad R(z)=G^{-1}(z)-\frac{1}{z}.\]
We define $G(z)$ for real arguments $z \in (d_+,\infty)$. The
function $G:(d_+,\infty) \to (0,G(d_+))$ is strictly decreasing, where
we denote
\[G(d_+) \triangleq \lim_{z \downarrow d_+} G(z) \in (0,\infty].\]
We define $R(z)$ for real arguments $z \in (0,G(d_+))$, where
$G^{-1}$ is the functional inverse of $G$ over the domain $(d_+,\infty)$.

\begin{proposition}\label{prop:qstarunique}
Under Assumption \ref{assump:main},
for some $\beta_0=\beta_0(\mu_D)>0$ and all
$\beta \in (0,\beta_0)$, there is
a unique solution $q_* \in [0,1)$ to the fixed-point equation
\begin{equation}\label{eq:qstarunscaled}
q_*=\E[\tanh(\H+\sigma_*\G)^2],
\qquad \sigma_*^2=\beta^2 q_*R'(\beta(1-q_*))
\end{equation}
where the expectation is over independent random variables $\G \sim \N(0,1)$
and $\H \sim \mu_H$.
\end{proposition}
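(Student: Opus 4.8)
The plan is to recast the pair of equations \eqref{eq:qstarunscaled} as a single scalar fixed-point equation in $q_*$ and then invoke a contraction/monotonicity argument valid for small $\beta$. First I would substitute the second relation into the first to define, for $q \in [0,1)$,
\[
F_\beta(q) \;=\; \E\!\left[\tanh\!\big(\H + \sigma(q)\,\G\big)^2\right],
\qquad \sigma(q)^2 \;=\; \beta^2\, q\, R'\big(\beta(1-q)\big),
\]
so that a solution of \eqref{eq:qstarunscaled} is exactly a fixed point $q_*=F_\beta(q_*)$. The proposition then follows from showing $F_\beta:[0,1)\to[0,1)$ has a unique fixed point. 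For existence, note $F_\beta(0)=\E[\tanh(\H)^2]\ge 0$ and $F_\beta(q)<1$ for all $q<1$ (since $\tanh^2<1$ pointwise and $\sigma(q)^2$ stays finite as $q\uparrow 1$), and $F_\beta$ is continuous in $q$; by the intermediate value theorem applied to $q \mapsto F_\beta(q)-q$ on $[0,1)$ — which is nonnegative at $0$ and negative near $1$ — a fixed point exists in $[0,1)$. For uniqueness, I would bound the derivative $F_\beta'(q)$ and show $\sup_{q\in[0,1)}|F_\beta'(q)|<1$ once $\beta<\beta_0(\mu_D)$, giving a strict contraction and hence at most one fixed point.

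The two ingredients I need to make this run are: (i) $R'(\beta(1-q))$ is well-defined, finite, and bounded uniformly in $q\in[0,1)$ for $\beta$ small; and (ii) the $\G$-dependence of $\E[\tanh(\cdot)^2]$ is Lipschitz in $\sigma$ with a controlled constant. For (i): since $\mu_D$ has compact support, $G$ is analytic on $(d_+,\infty)$ with $G(z)\sim 1/z$ at infinity, so $R(w)=G^{-1}(w)-1/w$ is analytic in a neighborhood of $w=0$ with $R(0)=0$ and $R'(0)=\Var(\mu_D)>0$ (this is where the positive-variance hypothesis enters, ensuring $R$ is nonconstant). For $\beta$ small enough that $\beta \in (0,\beta_0)$ forces $\beta(1-q)$ to lie in this neighborhood for all $q\in[0,1)$, we get $\sup_q R'(\beta(1-q)) \le C(\mu_D)$, hence $\sigma(q)^2 \le C(\mu_D)\,\beta^2 q \le C(\mu_D)\beta^2$. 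For (ii): writing $g(\sigma)=\E_{\G,\H}[\tanh(\H+\sigma\G)^2]$ and differentiating under the expectation, $g'(\sigma)=\E[2\tanh(\H+\sigma\G)(1-\tanh^2(\H+\sigma\G))\,\G]$, which is bounded by a universal constant times $(1+\sigma)$ uniformly, using $|\tanh(1-\tanh^2)|\le 1$ and $\E|\G|<\infty$; Gaussian integration by parts gives the even cleaner bound $g'(\sigma)=\sigma\,\E[\,\partial_x^2(\tanh^2)(\H+\sigma\G)\,]$, and $\|\partial_x^2 \tanh^2\|_\infty$ is a finite universal constant.

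Combining, the chain rule yields
\[
F_\beta'(q) \;=\; g'(\sigma(q))\cdot \frac{d}{dq}\sigma(q),
\qquad
\frac{d}{dq}\sigma(q)^2 \;=\; \beta^2 R'(\beta(1-q)) \;-\; \beta^3 q\,R''(\beta(1-q)),
\]
so $|\frac{d}{dq}\sigma(q)^2| \le C(\mu_D)\,\beta^2$ uniformly in $q$ (again using analyticity of $R$ near $0$ to bound $R''$), and therefore $|g'(\sigma(q))\cdot \frac{d}{dq}\sigma(q)| \le \|\partial_x^2\tanh^2\|_\infty \cdot \sigma(q)\cdot|\frac{d}{dq}\sigma(q)| = \tfrac12 \|\partial_x^2\tanh^2\|_\infty \cdot |\frac{d}{dq}\sigma(q)^2| \le C'(\mu_D)\,\beta^2$. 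Choosing $\beta_0=\beta_0(\mu_D)$ small enough that $C'(\mu_D)\beta_0^2<1$ (and small enough for the neighborhood condition in (i)) makes $F_\beta$ a contraction on $[0,1)$, so the fixed point is unique; a brief argument that the contraction extends continuously to the closed interval $[0,1]$, combined with $F_\beta(1^-)<1$, confirms $q_*\in[0,1)$ rather than $q_*=1$.

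The main obstacle I anticipate is item (i) — pinning down precisely the range of $w=\beta(1-q)$ on which $R$ is guaranteed analytic with good derivative bounds, and verifying that this range is nonempty and "large enough" for a $\beta$-independent statement of the form "for all $\beta<\beta_0$". Since $q$ ranges over all of $[0,1)$, the argument $\beta(1-q)$ ranges over $(0,\beta)$, so it suffices that $R$ extend analytically to a fixed interval $(-\delta,\delta)$ around $0$ with $\delta=\delta(\mu_D)>0$; this is standard from the compact support of $\mu_D$ (the radius of convergence of $R$ at $0$ is controlled by $1/G(d_+)$ and the gap structure of $\supp(\mu_D)$), but it requires a careful statement, and one should double-check that $R'>0$ throughout this interval so that $\sigma(q)^2\ge 0$ is a genuine variance — which holds by continuity since $R'(0)=\Var(\mu_D)>0$.
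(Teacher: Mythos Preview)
Your proposal is correct and follows essentially the same route as the paper: reduce to a scalar fixed-point equation $q=F_\beta(q)$, then show $|F_\beta'(q)|<1$ for small $\beta$ via Gaussian integration by parts (which produces $\E[\partial_x^2\tanh^2(\cdot)]$ times $\tfrac12\frac{d}{dq}\sigma(q)^2$) together with the bounds $R'(\beta(1-q))=O(1)$ and $R''(\beta(1-q))=O(1)$ coming from analyticity of $R$ near $0$. The paper's proof is the same computation, just written directly in $q$ (using the rescaled $\bar R$) rather than factored through the intermediate variable $\sigma$; your extra care about $R'>0$ near $0$ and about extending $F_\beta$ continuously to $q=1$ is warranted and implicit in the paper's argument.
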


Let $q_*,\sigma_*^2$ and $\G,\H$ be as above.
Then the replica-symmetric prediction for the free energy $\Psi$ is (see
e.g.\ \cite[Eq.\ (56)]{opper2001adaptive})
\begin{align}
\Psi_{\RS}&=\E\Big[\log 2 \cosh(\H+\sigma_* \G)\Big]
+\frac{\beta q_*}{2}R(\beta(1-q_*))\nonumber\\
&\hspace{1in}-\frac{\beta^2q_*(1-q_*)}{2}R'(\beta(1-q_*))
+\frac{1}{2}\int_0^{1-q_*} \beta R(\beta z)dz.\label{eq:PsiRSbeta}
\end{align}
The correctness of this prediction for sufficiently high temperature is justified by the next theorem, which is the main result of the paper.
\begin{theorem}\label{thm:replicasymmetric}
Suppose Assumption \ref{assump:main} holds. Then for some
$\beta_0=\beta_0(\mu_D)>0$ depending only on $\mu_D$,
and for any fixed $\beta \in (0,\beta_0)$, almost surely
\[\lim_{n \to \infty} \frac{1}{n}\log Z=\Psi_{\RS}.\]
\end{theorem}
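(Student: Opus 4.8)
The plan is to follow the ``conditional quenched equals annealed'' strategy of Bolthausen \cite{bolthausen2018morita}, replacing the Gaussian AMP for the S-K model by the rotationally-invariant AMP of \cite{fan2020approximate} that solves the TAP equations for the model \eqref{eq:intromodel}. Fix $\beta$ small and run this AMP: it produces iterates $m^1,m^2,\dots$ (approximations of the magnetization) together with auxiliary vectors $u^1,u^2,\dots$, where $u^s$ is obtained from $Jm^s$ by subtracting Onsager correction terms whose coefficients are the free cumulants of $\mu_D$. Let $\calF_t$ be the $\sigma$-field generated by $m^1,\dots,m^t,u^1,\dots,u^t$; equivalently, $\calF_t$ records how $J$ acts on $\mathrm{span}(m^1,\dots,m^t)$. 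The state-evolution analysis of \cite{fan2020approximate} gives, almost surely as $n\to\infty$, that the empirical distribution of the rows $(h_i,u^1_i,\dots,u^t_i)$ converges to an explicit jointly Gaussian law (jointly with $h\sim\mu_H$), each $m^s$ is an explicit Lipschitz function of these coordinates, the overlaps $n^{-1}\langle m^s,m^{s'}\rangle$ converge, and as $t\to\infty$ they converge to $q_*$ while $n^{-1}\|m^{t+1}-m^t\|^2\to 0$, with $q_*,\sigma_*$ as in Proposition~\ref{prop:qstarunique}.

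The first step is a conditional first-moment computation: an asymptotic formula for $n^{-1}\log\E[Z\mid\calF_t]$. Conditioning on $\calF_t$ fixes the action of the Haar matrix $O$ on a fixed finite-dimensional subspace, so by orthogonal invariance the conditional law of $O$ is Haar on the orthogonal complement, composed with deterministic isometries. Hence the conditional law of the quadratic form $\sigma^\top J\sigma$ is explicit: splitting $\sigma$ into its projection onto $\mathrm{span}(m^1,\dots,m^t)$ and its orthogonal part, the in-subspace contribution is $\calF_t$-measurable while the complementary part has a conditional Laplace transform governed by the $R$-transform of $\mu_D$ through the asymptotics of spherical (Harish--Chandra--Itzykson--Zuber-type) integrals. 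Carrying out the resulting sum over $\sigma$ --- which factorizes coordinatewise as $n\to\infty$, since the effective field at coordinate $i$ is asymptotically $h_i+\sigma_*\G_i$ --- yields
\[
\lim_{t\to\infty}\ \lim_{n\to\infty}\ \frac1n\log\E[Z\mid\calF_t]=\Psi_{\RS}
\qquad\text{almost surely.}
\]
Since conditional Jensen gives $n^{-1}\E\log Z=n^{-1}\E\,\E[\log Z\mid\calF_t]\le n^{-1}\E\log\E[Z\mid\calF_t]$, while $n^{-1}\log\E[Z\mid\calF_t]$ is self-averaging (a continuous function of the concentrating $\calF_t$-measurable overlaps) and $n^{-1}\log Z$ concentrates on $\O(n)$, this already gives the upper bound $\limsup_n n^{-1}\log Z\le\Psi_{\RS}$ almost surely.

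For the matching lower bound I would run the conditional second-moment method: show that for $\beta$ below some $\beta_0=\beta_0(\mu_D)$ and each fixed $t$,
\[
\E[Z^2\mid\calF_t]\le(1+o_n(1))\,\big(\E[Z\mid\calF_t]\big)^2 .
\]
Writing $Z^2=\sum_{\sigma,\sigma'}\exp\!\big(\tfrac{\beta}{2}\sigma^\top J\sigma+\tfrac{\beta}{2}\sigma'^\top J\sigma'+h^\top\sigma+h^\top\sigma'\big)$, conditioning on $\calF_t$ and applying the same spherical-integral asymptotics reduces the estimate to a large-deviations problem for the ``free overlap'' $\rho=n^{-1}\sum_i(\sigma_i-m^t_i)(\sigma'_i-m^t_i)$ of the two replicas' residuals, together with the $\calF_t$-fixed in-subspace overlaps. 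The key point, as in \cite{bolthausen2018morita}, is that after this conditioning the exponent is, for $\beta<\beta_0(\mu_D)$, a strictly concave function of $\rho$ maximized at $\rho=0$ with nondegenerate Hessian, and at $\rho=0$ the second moment factorizes into the square of the first moment. Paley--Zygmund applied conditionally on $\calF_t$ then gives $\P[\,Z\ge\tfrac12\E[Z\mid\calF_t]\mid\calF_t]\ge\tfrac14(1+o_n(1))^{-1}$, so $n^{-1}\log Z\ge n^{-1}\log\E[Z\mid\calF_t]-o_n(1)$ in probability; combined with self-averaging of $n^{-1}\log Z$, and sending $n\to\infty$ then $t\to\infty$, this gives $\liminf_n n^{-1}\log Z\ge\Psi_{\RS}$ almost surely, completing the proof.

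The main obstacle I expect is the conditional-covariance bookkeeping underlying both moment computations. Unlike the S-K case, the rotationally-invariant AMP has long memory, so $\calF_t$ constrains $O$ on a $t$-dimensional (slowly growing) subspace, and one must describe the conditional joint law of $\sigma^\top J\sigma$, $\sigma'^\top J\sigma'$, and $\sigma^\top J\sigma'$ precisely --- via the subordination/$R$-transform identities and the accompanying spherical-integral asymptotics --- with enough uniformity to exchange limits and to take $n\to\infty$ before $t\to\infty$. Two further substantial points are: verifying that the conditional annealed free energy genuinely converges to $\Psi_{\RS}$ as $t\to\infty$, i.e.\ that the AMP iterates asymptotically exhaust the information about the magnetization responsible for the gap between the annealed and quenched free energies; and establishing the strict concavity at $\rho=0$ that both yields the second-moment bound and quantifies the high-temperature threshold $\beta_0(\mu_D)$.
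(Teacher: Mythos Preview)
Your high-level strategy matches the paper: condition on a $\sigma$-field $\cG_t$ generated by $t$ AMP iterates, compute the first and second conditional moments of $Z$, and combine Paley--Zygmund with concentration of $n^{-1}\log Z$ on $\O(n)$ (Gromov's inequality). Two implementation choices differ from your plan in ways that directly address the obstacle you flag at the end.

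First, the AMP. You describe iterates built from $Jm^s$ with Onsager corrections given by the free cumulants of $\mu_D$ --- the single-step memory algorithm. The paper instead uses the \emph{memory-free} algorithm of \c{C}akmak--Opper, which applies the resolvent $\Gamma=(1-q_*)^{-1}(\lambda_* I-\bar J)^{-1}-I$ rather than $\bar J$, together with the divergence-free nonlinearity $f(h,y)=(1-q_*)^{-1}\tanh(h+y)-y$. This is not cosmetic. The divergence-free property collapses the state evolution to $(\Y_1,\dots,\Y_t)\sim\N(0,\kappa_*\Delta_t)$ with no cross terms, and more importantly yields the asymptotic freeness $n^{-1}X_t^\top f(\bar J)X_t\to\Delta_t\cdot\int f\,d\mu_{\bar D}$ (Proposition~\ref{prop:AMPfree}). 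This freeness is precisely what reduces quantities like $\sigma_\parallel^\top \bar D\sigma_\parallel/n$ and $\sigma_\parallel^\top \bar D\Pi(\gamma I-\bar D)^{-1}\Pi\bar D\sigma_\parallel/n$ to explicit scalar functions of low-dimensional overlaps --- i.e., it is the paper's resolution of the ``conditional-covariance bookkeeping'' you identify. With the single-step memory AMP no such decoupling is available and the computation would carry the full free-cumulant structure of $\mu_D$.

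Second, the moment computations. Rather than Bolthausen's explicit tilting and coordinatewise factorization, the paper evaluates the residual Haar expectation via an extension of the Guionnet--Ma\"{\i}da spherical-integral asymptotics that allows a linear term $b^\top Oa$ (Propositions~\ref{prop:Ointegralrank1}--\ref{prop:Ointegralrank2}), then applies G\"artner--Ellis and Varadhan to obtain finite-dimensional variational problems $\Psi_{1,t},\Psi_{2,t}$. These are genuine max--min problems, not concave; the paper specializes the inner infimum to a particular $(\gamma,U,V,W)$ (depending on the outer variables) under which the resulting outer objective becomes globally concave for small $\beta$, in all $4t+1$ outer variables jointly, not only in the replica overlap. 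Relatedly, the paper does \emph{not} prove $\E[Z^2\mid\cG_t]\le(1+o_n(1))(\E[Z\mid\cG_t])^2$; it only shows $n^{-1}\log\E[Z^2\mid\cG_t]\to 2\Psi_{\RS}$ after $n\to\infty$ then $t\to\infty$, so the Paley--Zygmund ratio is merely $\ge e^{-cn/2}$ for large $t$. This weaker bound, combined with the exponential concentration~\eqref{eq:concentration}, still suffices for the lower bound.
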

We mention that for the spherical counterpart of the Ising model \prettyref{eq:intromodel}, 
the free energy can be computed directly and also agrees with its replica-symmetric prediction.
We carry out this computation in \prettyref{app:sphere} by applying one of the technical results in \prettyref{sec:haar}.

\subsection{Overview of the proof}\label{subsec:proof}

We will adopt the conditional second moment method of
\cite{bolthausen2018morita}, and show that
\begin{equation}\label{eq:conditionalmoments}
\lim_{n \to \infty} \frac{1}{n}\log \E[Z \mid \cG] \approx \Psi_{RS},
\qquad \lim_{n \to \infty} \frac{1}{n}\log \E[Z^2 \mid \cG] \approx 2\Psi_{RS}
\end{equation}
for an appropriately chosen sigma-field $\cG$. Together with classical
concentration-of-measure results for Haar measure over the orthogonal group,
this will be enough to show Theorem \ref{thm:replicasymmetric}.

We define $\cG=\cG_t$ as the sigma-field generated by a fixed number $t$ of
iterations of an AMP algorithm designed 
to solve the TAP mean-field equations described in
\cite{parisi1995mean,opper2001adaptive}---see~(\ref{eq:TAP}) below. Such an
algorithm was introduced for the S-K model in \cite{bolthausen2014iterative}
and for applications in compressed sensing in
\cite{donoho2009message,bayati2011dynamics}. For the Ising model
(\ref{eq:intromodel}) with orthogonally invariant couplings,
a general class of AMP-type procedures was described in \cite{opper2016theory},
including a ``single-step memory'' algorithm for solving the TAP equations
that reduces to the one of
\cite{bolthausen2014iterative} in the S-K setting. Our analyses will rely on a
rigorous characterization of the state evolution of such algorithms
obtained in \cite{fan2020approximate}.

The specific algorithm we use to construct $\cG_t$ is not of the
single-step memory form of \cite{bolthausen2014iterative,opper2016theory},
but rather an alternative ``memory-free'' form introduced
by \c{C}akmak and Opper in \cite{ccakmak2019memory}, applying the
general procedure in \cite{opper2016theory} with a resolvent of $J$
instead of the couplings matrix $J$ itself.
This memory-free algorithm is related to a class of Vector/Orthogonal AMP
algorithms developed for compressed sensing applications in
\cite{ma2017orthogonal,takeuchi2017rigorous,rangan2019vector}, and
may be derived also from the Expectation Propagation
framework of Minka \cite{minka2001family}. For our purposes,
use of this algorithm leads to two important simplifications:
First, its state evolution has a simple description when $J$ is
non-Gaussian, whereas that of alternative iterative procedures may
have a complicated dependence on the spectral free cumulants of $J$.
Second, the analysis of \cite{fan2020approximate} reveals that iterates of this 
algorithm have an asymptotic freeness property with respect to $J$, which we
describe below in Proposition \ref{prop:AMPfree}. Both simplifications are
important in enabling our computations of the conditional moments in
(\ref{eq:conditionalmoments}).

The details of our strategy for showing (\ref{eq:conditionalmoments}) are
somewhat different from those presented in \cite{bolthausen2018morita}, and 
we proceed in two high-level steps: We first apply the AMP
state evolution and a large deviations argument to give exact expressions for
the large-$n$ limits of the conditional moments (\ref{eq:conditionalmoments})
in terms of low-dimensional variational problems. This leverages and
extends some results of Guionnet and Ma{\i}da \cite{guionnet2005fourier} that
relate exponential integrals over the orthogonal group to the R-transform of
$J$. We then analyze these
variational problems, proving upper and lower bounds for their values
that are tight for $\Psi_{\RS}$ and $2\Psi_{\RS}$
in the limit as the number of algorithm iterations $t \to
\infty$. The assumption of small $\beta$ (i.e.\ sufficiently high
temperature) is used in a crucial way in the upper bounds, to show a global
concavity property of these variational problems.

The remainder of the paper is organized as follows: In Section \ref{sec:prelim},
we collect the general ingredients of the proof, including a more detailed
description of the AMP algorithm and its state evolution, and the evaluations
of the required exponential integrals over the orthogonal group. In Sections
\ref{sec:firstmoment} and \ref{sec:secondmoment}, we analyze the conditional first moment 
$\E[Z \mid \cG_t]$ and second moment $\E[Z^2 \mid \cG_t]$ respectively, leading to the proof of Theorem \ref{thm:replicasymmetric} in \prettyref{sec:replicasymmetric-pf}.

\subsection*{Notation}

$\O(n)$ and $\SO(n)$ are the orthogonal and special orthogonal groups of
$n \times n$ matrices. $\Haar(\cdot)$ denotes the Haar-measure on these groups.

$\|\cdot\|$ is the $\ell_2$-norm for vectors and $\ell_2 \to \ell_2$ operator
norm for matrices; we may write the latter as $\Opnorm{\cdot}$ in situations
where this is unclear. $\Fnorm{\cdot}$ is the Frobenius norm for matrices.
We use the convention that for scalar values
$x_1,\ldots,x_k$, $(x_1,\ldots,x_k) \in \R^k$ denotes the \emph{column} vector
containing these values.
We write $\triangleq$ for a definition or assignment.
We reserve the sans-serif font $\G,\H,\X,\Y$ for scalar random variables.

\section{Preliminaries}\label{sec:prelim}

\subsection{Centering and rescaling}

Adding a multiple of the identity to $J$ shifts the free energy $\Psi$
and $\Psi_{\RS}$ by the
same additive constant. Thus, we may assume without loss of generality that
\begin{equation}\label{eq:meanzero}
\int x\,\mu_D(dx)=0.
\end{equation}
Since $\mu_D$ has positive variance by Assumption \ref{assump:main}(b),
we may also assume without loss of generality that
\begin{equation}\label{eq:varianceone}
\int x^2\,\mu_D(dx)=1,
\end{equation}
by rescaling $J=O^\top DO$ and incorporating this scaling into $\beta$.

For most of the proof, it will be notationally convenient to absorb the
parameter $\beta$ into the couplings matrix $J$, after this centering and
rescaling. We define
\begin{equation}
\bar{J}=\beta J, \qquad \bar{D}=\diag(\bar{d}_1,\ldots,\bar{d}_n)=\beta D,
\qquad \mu_{\bar{D}}=\lim_{n \to \infty} \frac{1}{n}\sum_{i=1}^n \delta_{
\bar{d_i}}, \qquad \bar{d}_+=\beta d_+.
\label{eq:scaling}
\end{equation}
Thus $\mu_{\bar{D}}$ is the rescaling of the limit spectral
law $\mu_D$, and $\bar{d}_+=\max(x:x \in \supp(\mu_{\bar{D}}))$ is its maximum
point of support.

We denote the Cauchy- and R-transforms of $\mu_{\bar{D}}$ by
\begin{equation}\label{eq:GR}
\bar{G}(z)=\int \frac{1}{z-x}\mu_{\bar{D}}(dx), \qquad
\bar{R}(z)=\bar{G}^{-1}(z)-\frac{1}{z},
\end{equation}
where $\bar{G}(z)$ is defined on $(\bar{d}_+,\infty)$, and $\bar{R}(z)$ on
$(0,\bar{G}(\bar{d}_+))$.
These are related to the Cauchy- and R-transforms of $\mu_D$ by
\begin{equation}\label{eq:barGR}
\bar{G}(z)=\frac{1}{\beta}G\left(\frac{z}{\beta}\right),
\qquad \bar{R}(z)=\beta R(\beta z).
\end{equation}
Let $\{\kappa_k\}_{k \geq 1}$ be the free cumulants of the law $\mu_D$.
Since $\kappa_1$ and $\kappa_2$ correspond to the mean and variance of $\mu_D$ (cf.~\cite[Examples 11.6]{nica2006lectures}), 
(\ref{eq:meanzero}) and (\ref{eq:varianceone}) imply that
$\kappa_1=0$ and $\kappa_2=1$. Writing $\|\mu_D\|_\infty=\max(|x|:x \in
\supp(\mu_D))$, we have
\begin{equation}\label{eq:cumulantbound}
|\kappa_k| \leq (16\|\mu_D\|_\infty)^k
\end{equation}
for all $k \geq 1$, and the R-transform admits the convergent series expansion
for small $z$ given by
\[R(z)=\sum_{k \geq 1} \kappa_k z^{k-1}\]
(cf.~\cite[Notation 12.6, Proposition 13.15]{nica2006lectures}). The free
cumulants of $\mu_{\bar{D}}$ are then $\bar{\kappa}_k=\beta^k \kappa_k$,
satisfying $\bar{\kappa}_1=0$, $\bar{\kappa}_2=\beta^2$, and
$|\bar{\kappa}_k| \leq (16\|\mu_D\|_\infty \beta)^k$ for $k \geq 3$.
The R-transform of $\mu_{\bar{D}}$ for small $z$ is
\begin{equation}\label{eq:barRseries}
\bar{R}(z)=\sum_{k \geq 1} \bar{\kappa}_k z^{k-1}.
\end{equation}

The Gibbs distribution and partition function in (\ref{eq:intromodel})
may be written in this rescaled notation as
\begin{equation}\label{eq:gibbsmeasure}
P(\sigma)=\frac{1}{Z}\exp\left(\frac{1}{2}\sigma^\top \bar{J}\sigma
+h^\top \sigma\right), \qquad Z=\sum_{\sigma \in \{+1,-1\}^n}
\exp\left(\frac{1}{2}\sigma^\top \bar{J}\sigma+h^\top \sigma\right).
\end{equation}
The fixed-point equation (\ref{eq:qstarunscaled}) for $q_*$ is written in terms
of $\bar{R}(z)$ as
\begin{equation}\label{eq:qstar}
q_*=\E[\tanh(\H+\sigma_*\G)^2], \qquad \sigma_*^2=q_*\bar{R}'(1-q_*)
\end{equation}
and the replica-symmetric free energy (\ref{eq:PsiRSbeta}) is
\begin{equation}\label{eq:PsiRS}
\Psi_{\RS}=\E\Big[\log 2\cosh(\H+\sigma_*\G)\Big]+\frac{q_*}{2}\bar{R}(1-q_*)
-\frac{q_*(1-q_*)}{2}\bar{R}'(1-q_*)+\frac{1}{2}\int_0^{1-q_*}
\bar{R}(z)dz.
\end{equation}

\subsection{AMP for solving the TAP equations}

Denote by
\[m=\sum_{\sigma \in \{+1,-1\}^n} \sigma \cdot P(\sigma) \in
(-1,1)^n \]
the magnetization vector of the Gibbs distribution (\ref{eq:gibbsmeasure}). It
is predicted that for sufficiently small $\beta>0$, this vector $m$
approximately satisfies the TAP mean-field
equations \cite{parisi1995mean,opper2001adaptive}
\begin{equation}\label{eq:TAP}
m=\tanh\Big(h+\bar{J}m-\bar{R}(1-q_*)m\Big).
\end{equation}
Here and below, $\tanh(\cdot)$ is applied coordinatewise.
For the S-K model where $\bar{J}$ is Gaussian,
we have $\bar{R}(x)=\beta^2 x$, and this coincides with the classical
TAP equations of \cite{thouless1977solution}. We provide a brief review of the
cavity-method derivation of these TAP equations
from \cite{opper2001adaptive} in Appendix~\ref{appendix:cavity}.

Our proof of Theorem \ref{thm:replicasymmetric} will compute the
first and second moments of the partition function $Z$ conditioned on a
sigma-field generated by an iterative AMP
algorithm for solving the TAP equations. We consider the following
algorithm from \cite{ccakmak2019memory} having ``memory-free'' dynamics: Define
\begin{equation}\label{eq:lambdastar}
\lambda_*=\bar{G}^{-1}(1-q_*)=\bar{R}(1-q_*)+\frac{1}{1-q_*}
\end{equation}
so that $\bar{G}(\lambda_*)=1-q_*$. This is well-defined for any $\beta \in
(0,G(d_+))$, since $1-q_*\leq 1 < \bar G(\bar d_+) = G(d_+)/\beta$. Consider the matrix
\[\Gamma=\frac{1}{1-q_*}(\lambda_*I-\bar{J})^{-1}-I,\]
which admits the eigen-decomposition
\begin{equation}\label{eq:Gamma}
\Gamma=O^\top \Lambda O, \qquad
\Lambda=\frac{1}{1-q_*}(\lambda_*I-\bar{D})^{-1}-I.
\end{equation}
In particular, $\Gamma$ is also orthogonally invariant in law. Let
$y^0 \in \R^n$ be an initialization of the AMP algorithm with entries
\begin{equation}\label{eq:initialy}
y_1^0,\ldots,y_n^0 \overset{iid}{\sim} \N(0,\sigma_*^2),
\end{equation}
where $\sigma_*^2$ is defined in (\ref{eq:qstar}). Then the AMP
algorithm is given by the iterations
\begin{align}
x^t&=\frac{1}{1-q_*}\tanh(h+y^{t-1})-y^{t-1}, \label{eq:AMPsmall1}\\
y^t&=\Gamma x^t. \label{eq:AMPsmall2}
\end{align}
An approximate solution of the TAP equations (\ref{eq:TAP}) is obtained from
the iterates of this algorithm as
$m^t=(1-q_*)(x^t+y^{t-1})=\tanh(h+y^{t-1})$.
For any fixed point $(x,y) \in \R^n \times \R^n$ of this algorithm, it is easily
checked that $m=(1-q_*)(x+y)=\tanh(h+y)$ exactly satisfies (\ref{eq:TAP}).

Applying the diagonalization $\Gamma=O^\top \Lambda O$ in (\ref{eq:Gamma}),
let us write the AMP iterations in an expanded form
\begin{align}
x^t&=\frac{1}{1-q_*}\tanh(h+y^{t-1})-y^{t-1},\label{eq:AMPx}\\
s^t&=Ox^t,\label{eq:AMPs}\\
y^t&=O^\top \Lambda s^t.\label{eq:AMPy}
\end{align}
For each fixed $t \geq 1$, we define the sigma-field (in the
probability space of $O$)
\begin{equation}\label{eq:Gt}
\cG_t=\cG\Big(y^0,x^1,s^1,y^1,\ldots,x^t,s^t,y^t\Big)
\end{equation}
generated by all iterates of (\ref{eq:AMPx}--\ref{eq:AMPy})
up to $y^t$. The proof of Theorem \ref{thm:replicasymmetric} will compute the
first and second moments of $Z$ conditioned on $\cG_t$.

A key property of this algorithm is that the scalar function
$f(h,y)=(1-q_*)^{-1}\tanh(h+y)-y$ applied entrywise in (\ref{eq:AMPx}) is
divergence-free in $y$, in the sense
\begin{equation}\label{eq:divergencefree}
\E[\partial_y f(\H,\sigma_*\G)]=0
\end{equation}
for independent random variables $\H \sim \mu_H$ and $\G \sim \N(0,1)$,
which follows from the definition of $q_*$ in \prettyref{eq:qstarunscaled}.
This substantially simplifies the state evolution that describes the AMP
iterates $x^t,y^t$---discussed in the next section---when $\bar{J}$ is a
non-Gaussian orthogonally invariant couplings matrix.

\subsection{State evolution for AMP}

The state evolution for general AMP algorithms of this form was described in
\cite{opper2016theory,ccakmak2019memory,fan2020approximate}.
We first review the specialization of
these results to the specific algorithm (\ref{eq:AMPx}--\ref{eq:AMPy}).
Proofs are deferred to Appendix \ref{appendix:AMP}.

Define
\begin{equation}\label{eq:kappadelta}
\kappa_*=\lim_{n \to \infty} \frac{1}{n}\Tr \Gamma^2,
\qquad \delta_*=\sigma_*^2/\kappa_*.
\end{equation}
These quantities are given more explicitly as follows.
\begin{proposition}\label{prop:kappadelta}
We have
\begin{equation}\label{eq:kappastar}
\kappa_*=\frac{1}{1-(1-q_*)^2\bar{R}'(1-q_*)}-1
\end{equation}
and
\begin{equation}\label{eq:deltastar}
\delta_*=\frac{q_*}{(1-q_*)^2}-\sigma_*^2
=\E\left[\left(\frac{1}{1-q_*}\tanh(\H+\sigma_* \G)-\sigma_*
\G\right)^2\right]
\end{equation}
for independent random variables $\H \sim \mu_H$ and $\G \sim \N(0,1)$.
\end{proposition}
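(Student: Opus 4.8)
The plan is to reduce both identities to elementary manipulations with the Cauchy- and R-transforms of $\mu_{\bar D}$, starting from the eigenvalue description $\Gamma=O^\top\Lambda O$ in \eqref{eq:Gamma}.

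For $\kappa_*$, note that $\frac1n\Tr\Gamma^2=\frac1n\sum_{i=1}^n\big(\tfrac{1}{(1-q_*)(\lambda_*-\bar d_i)}-1\big)^2$. By \eqref{eq:lambdastar} we have $\lambda_*\in(\bar d_+,\infty)$; since Assumption \ref{assump:main}(b) (after rescaling) gives $\tfrac1n\sum_i\delta_{\bar d_i}\to\mu_{\bar D}$ weakly, $\max_i\bar d_i\to\bar d_+<\lambda_*$, and $\liminf_n\min_i\bar d_i>-\infty$, the eigenvalues $\bar d_i$ eventually lie in a fixed compact interval bounded away from $\lambda_*$, on which $x\mapsto\big(\tfrac{1}{(1-q_*)(\lambda_*-x)}-1\big)^2$ is continuous and bounded. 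Hence $\kappa_*=\int\big(\tfrac{1}{(1-q_*)(\lambda_*-x)}-1\big)^2\,\mu_{\bar D}(dx)$. Writing $a=1/(1-q_*)$ and expanding the square, this equals $a^2\!\int(\lambda_*-x)^{-2}\mu_{\bar D}(dx)-2a\!\int(\lambda_*-x)^{-1}\mu_{\bar D}(dx)+1=-a^2\bar G'(\lambda_*)-2a\bar G(\lambda_*)+1$, where $\bar G$ is real-analytic on $(\bar d_+,\infty)$. By $\bar G(\lambda_*)=1-q_*=1/a$ the middle term is $-2$, so $\kappa_*=-a^2\bar G'(\lambda_*)-1$. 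To evaluate $\bar G'(\lambda_*)$, differentiate the identity $\bar G^{-1}(z)=\bar R(z)+1/z$ from \eqref{eq:GR} and apply the inverse function theorem at $z=1-q_*$ (where $\bar G^{-1}(1-q_*)=\lambda_*$): $1/\bar G'(\lambda_*)=\bar R'(1-q_*)-(1-q_*)^{-2}=\bar R'(1-q_*)-a^2$. Substituting gives $\kappa_*=\frac{a^2}{a^2-\bar R'(1-q_*)}-1=\frac{1}{1-(1-q_*)^2\bar R'(1-q_*)}-1$, which is \eqref{eq:kappastar}.

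For $\delta_*$, rearranging \eqref{eq:kappastar} gives $(1-q_*)^2\bar R'(1-q_*)=\kappa_*/(\kappa_*+1)$, so with $\sigma_*^2=q_*\bar R'(1-q_*)$ from \eqref{eq:qstar} we get $\sigma_*^2=\frac{q_*\kappa_*}{(\kappa_*+1)(1-q_*)^2}$, whence $\delta_*=\sigma_*^2/\kappa_*=\frac{q_*}{(\kappa_*+1)(1-q_*)^2}=\frac{q_*}{(1-q_*)^2}-\sigma_*^2$, which is the first equality in \eqref{eq:deltastar}. For the second equality, expand
\[
\E\left[\left(\tfrac{1}{1-q_*}\tanh(\H+\sigma_*\G)-\sigma_*\G\right)^2\right]
=\tfrac{1}{(1-q_*)^2}\E[\tanh^2(\H+\sigma_*\G)]-\tfrac{2\sigma_*}{1-q_*}\E[\G\tanh(\H+\sigma_*\G)]+\sigma_*^2.
\]
Here $\E[\tanh^2(\H+\sigma_*\G)]=q_*$ by \eqref{eq:qstar}, and Gaussian integration by parts gives $\E[\G\tanh(\H+\sigma_*\G)]=\sigma_*\E[1-\tanh^2(\H+\sigma_*\G)]=\sigma_*(1-q_*)$, so the middle term equals $-2\sigma_*^2$ and the right-hand side is $\frac{q_*}{(1-q_*)^2}-\sigma_*^2=\delta_*$.

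There is no substantive obstacle: the argument is a chain of identities for $\bar G$ and $\bar R$ together with one application of Stein's lemma. The only point requiring a word of care is the passage from $\frac1n\Tr\Gamma^2$ to the integral against $\mu_{\bar D}$, where one must check that the relevant test function extends to a bounded continuous function on all of $\R$; this is exactly where the conditions $\max_i\bar d_i\to\bar d_+<\lambda_*$ and $\liminf_n\min_i\bar d_i>-\infty$ from Assumption \ref{assump:main}(b) enter.
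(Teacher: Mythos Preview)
Your proof is correct and follows essentially the same approach as the paper's own proof: both compute $\kappa_*$ by passing to the limit of $n^{-1}\Tr\Gamma^2$, expanding the square to obtain $-\frac{1}{(1-q_*)^2}\bar G'(\lambda_*)-1$, and then expressing $\bar G'(\lambda_*)$ via the inverse function theorem applied to $\bar G^{-1}(z)=\bar R(z)+1/z$; both obtain the first equality of \eqref{eq:deltastar} by algebraic substitution of $\sigma_*^2=q_*\bar R'(1-q_*)$ into \eqref{eq:kappastar}, and verify the second equality by expanding the square and using Gaussian integration by parts. Your write-up is in fact slightly more explicit than the paper's about the justification of the trace limit via Assumption~\ref{assump:main}(b).
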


Under Assumption \ref{assump:main}, let
$\H \sim \mu_H$ and $\Y_0 \sim \N(0,\sigma_*^2)$ be 
independent of each other. Then, iteratively for each $s=1,\ldots,t$, set
\begin{align}
\X_s&=\frac{1}{1-q_*}\tanh(\H+\Y_{s-1})-\Y_{s-1},\label{eq:XYrelation}\\
\Delta_s &= \E[(\X_1,\ldots,\X_s)(\X_1,\ldots,\X_s)^\top],
\label{eq:Deltadef}
\end{align}
and draw $\Y_s$ independently of $(\H,\Y_0)$ so that
$(\Y_1,\ldots,\Y_s) \sim \calN(0,\kappa_* \Delta_s)$.
This defines a joint law for the variables
$(\H,\Y_0,\Y_1,\ldots,\Y_t,\X_1,\ldots,\X_t)$, for any $t \geq 1$.

\begin{theorem}\label{thm:SE}
Fix any $t \geq 1$, and let $Y_t=(y^1,\ldots,y^t) \in \R^{n \times t}$
and $X_t=(x^1,\ldots,x^t) \in \R^{n \times t}$ collect the iterates of
(\ref{eq:AMPx}--\ref{eq:AMPy}), starting from the initialization
(\ref{eq:initialy}). Then, under Assumption \ref{assump:main},
almost surely as $n \to \infty$, the empirical distribution of rows of
$(h,y^0,Y_t,X_t)$ satisfies the convergence
\begin{equation}\label{eq:AMPconvergence}
\frac{1}{n}\sum_{i=1}^n
\delta_{(h_i,y_i^0,y_i^1,\ldots,y_i^t,x_i^1,\ldots,x_i^t)}
\to (\H,\Y_0,\Y_1,\ldots,\Y_t,\X_1,\ldots,\X_t)
\end{equation}
weakly and in $p^\text{th}$ moment for each fixed order $p \geq 1$.

Furthermore, $\Delta_t$ is non-singular, and almost surely as $n \to \infty$,
\begin{align}
n^{-1}X_t^\top X_t \to
\E[(\X_1,\ldots,\X_t)(\X_1,\ldots,\X_t)^\top]&=\Delta_t,\label{eq:XX}\\
n^{-1}Y_t^\top Y_t \to
\E[(\Y_1,\ldots,\Y_t)(\Y_1,\ldots,\Y_t)^\top]&=\kappa_* \Delta_t,\label{eq:YY}\\
n^{-1}X_t^\top Y_t \to
\E[(\X_1,\ldots,\X_t)(\Y_1,\ldots,\Y_t)^\top]&=0.\label{eq:XY}
\end{align}
\end{theorem}

By definition, the second-moment matrix $\Delta_t$ in Theorem \ref{thm:SE} is the
upper-left $t \times t$ submatrix of $\Delta_{t+1}$. Thus it is unambiguous to
write the entries of these matrices as
\[\Delta_t=(\delta_{ss'})_{1 \leq s,s' \leq t}.\]
For our purposes, we will require only the
following property of the entries of $\Delta_t$.

\begin{proposition}\label{prop:AMPconvergent}
In the setting of Theorem \ref{thm:SE}, for some $\beta_0=\beta_0(\mu_D)>0$
and all $\beta \in (0,\beta_0)$, we have
\[\delta_{tt}=\delta_* \text{ and } \kappa_*\delta_{tt}=\sigma_*^2
\text{ for all } t \geq 1, \qquad
\lim_{\min(s,t) \to \infty} \delta_{st}=\delta_*, \qquad \lim_{\min(s,t)
\to \infty} \kappa_*\delta_{st}=\sigma_*^2.\]
\end{proposition}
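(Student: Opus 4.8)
The plan is to prove the three claims in turn, using the explicit description of the state evolution in Theorem~\ref{thm:SE} together with the divergence-free property \eqref{eq:divergencefree} and the formula \eqref{eq:deltastar} for $\delta_*$. First I would establish the diagonal identity $\delta_{tt}=\delta_*$ for all $t\geq 1$. By definition $\delta_{tt}=\E[\X_t^2]$, where $\X_t=(1-q_*)^{-1}\tanh(\H+\Y_{t-1})-\Y_{t-1}$ and $\Y_{t-1}\sim\N(0,\kappa_*\delta_{t-1,t-1})$ is independent of $\H$. So it suffices to show $\kappa_*\delta_{t-1,t-1}=\sigma_*^2$ for all $t\geq 2$, since then $\Y_{t-1}\overset{L}{=}\sigma_*\G$ and $\E[\X_t^2]$ equals the right-hand side of \eqref{eq:deltastar}, which is $\delta_*$; the base case $t=1$ is immediate because $\Y_0\sim\N(0,\sigma_*^2)$ by \eqref{eq:initialy}. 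I would therefore run an induction: assuming $\delta_{t-1,t-1}=\delta_*$, we get $\kappa_*\delta_{t-1,t-1}=\kappa_*\delta_*=\sigma_*^2$ by the definition $\delta_*=\sigma_*^2/\kappa_*$ in \eqref{eq:kappadelta}, which closes the induction and simultaneously gives $\kappa_*\delta_{tt}=\sigma_*^2$ for all $t$. (I should check $\kappa_*>0$ so the division is legitimate; this follows from \eqref{eq:kappastar} once $\beta_0$ is chosen small enough that $(1-q_*)^2\bar R'(1-q_*)\in(0,1)$, using the series expansion \eqref{eq:barRseries} which gives $\bar R'(1-q_*)=\beta^2+O(\beta^3)$.)

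Next I would handle the off-diagonal limit $\lim_{\min(s,t)\to\infty}\delta_{st}=\delta_*$. The natural approach is to show that the pair $(\Y_{s-1},\Y_{t-1})$ becomes jointly Gaussian with correlation tending to $1$ as $\min(s,t)\to\infty$, so that $\X_s$ and $\X_t$ become equal in the limit and $\delta_{st}=\E[\X_s\X_t]\to\E[\X_t^2]=\delta_*$. Concretely, $(\Y_{s-1},\Y_{t-1})\sim\N(0,\kappa_*\Delta')$ where $\Delta'$ is the appropriate $2\times 2$ submatrix of $\Delta$, with equal diagonal entries $\sigma_*^2$ by the previous step; so it is enough to show the correlation $\rho_{s-1,t-1}=\delta_{s-1,t-1}/\delta_*\to 1$. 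This suggests setting up a contraction argument on the sequence of correlations: one shows that the map sending $\rho\mapsto \E[\X_s\X_t]/\delta_*$, where $(\Y_{s-1},\Y_{t-1})$ is bivariate Gaussian with correlation $\rho$ and common variance $\sigma_*^2$, is a contraction on $[0,1]$ (or at least on a neighborhood of $1$, with $\rho=1$ a fixed point) when $\beta<\beta_0$. The smallness of $\beta$ enters here: the function $f(h,y)=(1-q_*)^{-1}\tanh(h+y)-y$ has $\partial_y f = (1-q_*)^{-1}\mathrm{sech}^2(h+y)-1$, which is $O(\beta)$ in an averaged sense near the relevant scale (indeed its mean is exactly $0$ by \eqref{eq:divergencefree}, and $1-q_*=1+O(\beta^2)$), so the Gaussian-interpolation bound on $|\E[\X_s\X_t\mid \rho]-\E[\X_s\X_t\mid \rho=1]|$ carries a small Lipschitz constant in $\rho$. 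Once the contraction is in place, $\rho_{s-1,t-1}\to 1$ as $\min(s,t)\to\infty$ follows by a standard monotone/Cauchy argument on the doubly-indexed array, and then $\kappa_*\delta_{st}\to\kappa_*\delta_*=\sigma_*^2$ is immediate.

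I expect the off-diagonal contraction estimate to be the main obstacle. There are two subtleties: (i) the array $(\delta_{st})$ is indexed by two parameters, and one must argue that the associated correlations form (or are squeezed between) monotone sequences or otherwise converge—one clean route is to first prove $\delta_{s,s+1}\to\delta_*$ along the "first off-diagonal" by a one-dimensional recursion and then control $\delta_{st}$ for general $s<t$ by interpolation through $\delta_{ss},\delta_{s,s+1},\ldots$; (ii) making the Lipschitz-in-$\rho$ constant explicitly $<1$ requires a quantitative bound on how $q_*$, $\sigma_*^2$, and $\bar R'(1-q_*)$ behave for small $\beta$, which I would extract from Proposition~\ref{prop:qstarunique} and the convergent series \eqref{eq:barRseries} (giving $q_*=O(\beta^2)$, $\sigma_*^2=O(\beta^2)$, hence $1-q_*=1-O(\beta^2)$ and $\partial_y f=O(\beta)$ uniformly on compacts), possibly shrinking $\beta_0=\beta_0(\mu_D)$ further. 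Everything else—the diagonal identities and the translation of correlation-$1$ into $\X_s=\X_t$ a.s.—is routine given Theorem~\ref{thm:SE} and \eqref{eq:deltastar}.
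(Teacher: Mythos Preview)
Your overall architecture matches the paper's: induction for the diagonal identity $\delta_{tt}=\delta_*$, and a contraction map on the off-diagonal entries to get $\delta_{st}\to\delta_*$. The diagonal argument is correct as you wrote it.

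However, your mechanism for the contraction constant is wrong, and as stated the argument would not close. You assert $q_*=O(\beta^2)$ and $1-q_*=1+O(\beta^2)$, hence $\partial_y f=O(\beta)$. This is false with a nontrivial external field: Proposition~\ref{prop:smallbeta} gives $q_*=\E[\tanh(\H)^2]+O(\beta^2)$, which is bounded away from zero as $\beta\to 0$ whenever $\mu_H\neq\delta_0$. Consequently $\partial_y f(h,y)=(1-q_*)^{-1}\mathrm{sech}^2(h+y)-1$ ranges over an interval of width $O(1)$, not $O(\beta)$; the only uniform bound available is $|\partial_y f|\leq 2/(1-q_*)$. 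The paper obtains the contraction differently: writing $\delta_{s+1,t+1}=g(\delta_{st})$ for a map $g:[0,\delta_*]\to[0,\delta_*]$ and computing (via Gaussian integration by parts)
\[
g'(\delta)=\kappa_*\,\E\big[\partial_y f(\H,\Y')\,\partial_y f(\H,\Y'')\big],
\]
one gets $|g'(\delta)|\leq \kappa_*\cdot 4/(1-q_*)^2$. The smallness comes entirely from $\kappa_*=O(\beta^2(1-q_*)^2)$ (Proposition~\ref{prop:smallbeta}), which exactly cancels the $(1-q_*)^{-2}$ and leaves an $O(\beta^2)$ Lipschitz constant. In your parametrization by $\rho=\delta/\delta_*$, the derivative picks up a factor $\sigma_*^2/\delta_*=\kappa_*$, so the same computation applies---but you must invoke $\kappa_*$, not a smallness of $\partial_y f$ that does not hold.

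Two smaller points. First, your proposed handling of the double index (prove the first off-diagonal, then interpolate) is more elaborate than needed: once $g$ is a global contraction on $[0,\delta_*]$ with fixed point $\delta_*$, the recursion $\delta_{s+1,t+1}=g(\delta_{st})$ iterates $\min(s,t)$ times back to the boundary $\delta_{0,\cdot}=0$, giving $|\delta_{st}-\delta_*|\leq (1/2)^{\min(s,t)}$ directly. Second, you need $\delta_{st}\geq 0$ before you can write the bivariate Gaussian representation with real $\sqrt{\kappa_*\delta_{st}}$; the paper gets this by the conditional-expectation identity $g(\delta)=\E\big[\E[f(\H,\Y')\mid\H,\G]^2\big]\geq 0$ and induction.
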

\noindent
Thus the algorithm (\ref{eq:AMPx}--\ref{eq:AMPy}) is convergent for sufficiently
small $\beta$,\footnote{It is shown in \cite{ccakmak2019memory} that this
convergence in fact holds in the entirety of a high-temperature region defined
by an Almeida-Thouless type condition for stability of the replica-symmetric
phase, which depends on $\mu_D$ and $\mu_H$.}
in the sense
\begin{align*}
\lim_{\min(s,t) \to \infty} \left(\lim_{n \to \infty}
\frac{1}{n}\|x^t-x^s\|^2\right)&=\lim_{\min(s,t) \to \infty}
(\delta_{ss}+\delta_{tt}-2\delta_{st})=0,\\
\lim_{\min(s,t) \to \infty} \left(\lim_{n \to \infty}
\frac{1}{n}\|y^t-y^s\|^2\right)&=\lim_{\min(s,t) \to \infty}
\kappa_*(\delta_{ss}+\delta_{tt}-2\delta_{st})=0.
\end{align*}

Defining $S_t=(s^1,\ldots,s^t)=OX_t$, where the second equality holds
by (\ref{eq:AMPs}), the convergence (\ref{eq:XX}) implies that
\[n^{-1}X_t^\top X_t=n^{-1}S_t^\top S_t \to \Delta_t.\]
A second important property of the memory-free dynamics
(\ref{eq:AMPx}--\ref{eq:AMPy}) is the following more general statement.

\begin{proposition}\label{prop:AMPfree}
In the setting of Theorem \ref{thm:SE},
fix any $t \geq 1$, and let $X_t=(x^1,\ldots,x^t) \in \R^{n \times t}$ and
$S_t=(s^1,\ldots,s^t) \in \R^{n \times t}$ collect the iterates of
(\ref{eq:AMPx}--\ref{eq:AMPs}). Let $f:\R \to \R$ be any function which is
continuous and bounded in a neighborhood of $\supp(\mu_{\bar{D}})$,
and define $f(\bar{J})$ by the functional calculus.
Then almost surely as $n \to \infty$,
\[n^{-1}X_t^\top f(\bar{J})X_t=n^{-1} S_t^\top f(\bar{D})S_t
\to \Delta_t \cdot \int f(x)\mu_{\bar{D}}(dx).\]
\end{proposition}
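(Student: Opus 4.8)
The plan is to reduce the claim to a statement about the joint asymptotic freeness of the Haar-rotated iterate matrix $S_t = OX_t$ with respect to the deterministic diagonal matrix $\bar{D}$, and then to bootstrap from the polynomial case $f(x)=x^k$ to general continuous bounded $f$. The first equality $n^{-1}X_t^\top f(\bar{J})X_t = n^{-1}S_t^\top f(\bar{D})S_t$ is immediate from $\bar{J}=O^\top\bar{D}O$, $S_t=OX_t$, and the functional calculus $f(\bar{J})=O^\top f(\bar{D})O$; the content is in the convergence. The natural route is to invoke whatever asymptotic-freeness result underlies the state evolution analysis of \cite{fan2020approximate}: the key structural fact is that the memory-free dynamics (\ref{eq:AMPx}--\ref{eq:AMPs}) was designed so that, conditionally, each $s^t=Ox^t$ behaves like a fresh Haar rotation of a vector whose empirical law and cross-correlations with earlier iterates are those prescribed by Theorem \ref{thm:SE}. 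Concretely, I would first establish the moment statement: for every fixed $k\geq 0$, almost surely
\[
n^{-1} S_t^\top \bar{D}^k S_t \to \Delta_t \cdot \int x^k\,\mu_{\bar{D}}(dx).
\]

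For the polynomial case I see two possible implementations. The cleaner one is to show directly that the pair $(S_t \text{-rows}, \bar{D})$ — more precisely the algebra generated by $\bar{D}$ together with the "rank-$t$ projections" built from the columns of $S_t$ — is asymptotically free, using that $O$ is Haar and that the columns of $X_t$ are (in the limit) deterministic functions of $h$ and $y^0$ measurable with respect to a sigma-field on which $O$ acts only through the already-revealed iterates; this is essentially the content of the conditional-law characterization in \cite{fan2020approximate} that yields Theorem \ref{thm:SE}. Asymptotic freeness of a Haar-rotated fixed matrix from a deterministic one then gives, for the mixed moment $n^{-1}(s^i)^\top \bar{D}^k s^j$, the factorization into $\big(\lim n^{-1}(s^i)^\top s^j\big)\cdot\big(\int x^k \mu_{\bar{D}}(dx)\big) = \delta_{ij}\cdot \int x^k\mu_{\bar{D}}$, using $n^{-1}S_t^\top S_t\to\Delta_t$ and the fact that $\bar{D}$ centered integrates against the free projection to zero — the centering being exactly why only the $k=0$ "trace part" survives with a nontrivial coefficient. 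The alternative, if one wants to avoid citing a freeness black box, is to expand $n^{-1}(s^i)^\top\bar{D}^k s^j = n^{-1}\sum_{a,b}(x^i)_a O_{a\cdot}\,(\text{diag }\bar d^k)\,O_{b\cdot}^\top (x^j)_b$ and compute the expectation over Haar $O$ via Weingarten calculus, showing the leading term is $\big(n^{-1}\sum_a \bar d_a^k\big)\cdot n^{-1}(x^i)^\top x^j$ plus $o(1)$, together with a variance/concentration bound (Gaussian-Poincaré on $\O(n)$, as the paper already anticipates using for the main theorem) and Borel–Cantelli for the almost-sure upgrade; this is more self-contained but messier.

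With the polynomial case in hand, the passage to general continuous bounded $f$ is a routine approximation argument. Since $\mu_{\bar{D}}$ has compact support and $\max d_i\to \bar d_+$, $\liminf \min d_i>-\infty$, all spectra $\bar D$ (hence $\bar J$) lie eventually in a fixed compact interval $K$, on which $f$ is uniformly approximable by polynomials $p_\eps$ with $\sup_K|f-p_\eps|<\eps$; then
\[
\big\|n^{-1}S_t^\top f(\bar D)S_t - n^{-1}S_t^\top p_\eps(\bar D)S_t\big\|
\leq \eps\cdot n^{-1}\Fnorm{S_t}^2 \cdot(1+o(1)),
\]
and $n^{-1}\Fnorm{S_t}^2 = \Tr(n^{-1}S_t^\top S_t)\to\Tr\Delta_t$ is bounded, so letting $n\to\infty$ and then $\eps\to 0$ gives the claim, using $|\int f\,d\mu_{\bar D}-\int p_\eps\,d\mu_{\bar D}|\le\eps$ on the limit side.

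The main obstacle is the polynomial/freeness step: making rigorous the assertion that $S_t=OX_t$ is asymptotically free from $\bar D$. The subtlety is that $X_t$ is not independent of $O$ — it is built iteratively from $O$ through (\ref{eq:AMPx}--\ref{eq:AMPs}) — so one cannot simply quote "Haar conjugate of a fixed matrix is free from a deterministic matrix." The correct statement needs the conditional characterization from \cite{fan2020approximate}: given $\cG_{t-1}$, the next rotation $s^t=Ox^t$ is distributed as a Haar rotation of $x^t$ conditioned to have the prescribed inner products with $s^1,\dots,s^{t-1}$ (this is precisely the divergence-free / memory-free structure, (\ref{eq:divergencefree}), that collapses the Onsager correction), and it is this conditional-Haar structure that delivers freeness from $\bar D$. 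I expect this to be available either as an explicit lemma in \cite{fan2020approximate} or as a short corollary of its state-evolution proof, so in the write-up I would isolate it as a cited ingredient and spend the bulk of the argument on the (genuinely routine) reduction and approximation steps above.
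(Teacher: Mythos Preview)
Your proposal is essentially correct and follows the same two-step architecture as the paper: establish the result for polynomials by citing the state-evolution machinery of \cite{fan2020approximate}, then pass to general continuous bounded $f$ by Weierstrass approximation using the boundedness of $n^{-1}S_t^\top S_t$. You also correctly identify the central subtlety, namely that $X_t$ depends on $O$ so a naive freeness or Weingarten argument does not apply, and that the needed ingredient must come from the conditional analysis in \cite{fan2020approximate}.

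One concrete implementation difference is worth noting. The lemma the paper actually invokes, \cite[Lemma~A.4(b)]{fan2020approximate}, is stated for powers of $\Lambda$, the diagonal matrix appearing in the AMP iteration $y^t=O^\top\Lambda s^t$, not for powers of $\bar D$. It yields $n^{-1}S_t^\top\Lambda^k S_t\to m_k\Delta_t$ with $m_k=\int g(x)^k\,\mu_{\bar D}(dx)$ for the explicit monotone bijection $g(x)=(1-q_*)^{-1}(\lambda_*-x)^{-1}-1$ (so $\Lambda=g(\bar D)$); the simplification to $m_k\Delta_t$ is precisely where the divergence-free property \eqref{eq:divergencefree} enters, collapsing the matrices $\mathbf{\Theta}_t^{(j)}$ in \cite{fan2020approximate} to zero for $j\geq 1$. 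The paper then carries out the Weierstrass step not on $f$ directly but on $f\circ g^{-1}$, which is continuous and bounded on a neighborhood of $g(\supp(\mu_{\bar D}))$, and deduces the claim for $f(\bar D)=f\circ g^{-1}(\Lambda)$. Your route through $\bar D^k$ is equivalent in the end (since $\bar D$ and $\Lambda$ are related by $g$), but working with $\Lambda$ matches the cited lemma as stated and spares you from either re-deriving the $\bar D$-version or carrying out the conditional Weingarten computation you sketch.
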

Informally, this states that for large $n$,
\[n^{-1}X_t^\top f(\bar{J})X_t
\approx n^{-1} X_t^\top X_t \cdot n^{-1} \Tr f(\bar{J}).\]
Thus, in a certain sense, the AMP iterates $X_t$ are ``free'' of
the couplings matrix $\bar{J}$, despite being dependent on $\bar{J}$.
This result is a consequence of the divergence-free property
(\ref{eq:divergencefree}), and it follows from the
state evolution analysis in \cite[Lemma A.4(b)]{fan2020approximate}.
We provide a proof in Appendix~\ref{appendix:AMP}.

Finally, we record here the leading-order behaviors of the above constants
$q_*,\sigma_*^2,\lambda_*,\kappa_*,\delta_*$ for small $\beta$.

\begin{proposition}\label{prop:smallbeta}
Under Assumption \ref{assump:main}, let $O(f(\beta,z))$ denote a quantity having
magnitude at most $C \cdot f(\beta,z)$, for some constants $C,\beta_0>0$
depending only on $\mu_D$ and for all $\beta \in (0,\beta_0)$ and
$z \in (0,1)$. Then
\begin{equation}\label{eq:Rsmallbeta}
\bar{R}(z)=\beta^2z\big(1+O(\beta z)\big), \quad
\bar{R}'(z)=\beta^2\big(1+O(\beta z)\big), \quad \bar{R}''(z)=O(\beta^3)
\end{equation}
and
\[q_*=\E[\tanh(\H)^2]+O(\beta^2),
\quad \sigma_*^2=\beta^2q_*+O(\beta^3),
\quad \lambda_*=\frac{1}{1-q_*}+\beta^2(1-q_*)\big(1+O(\beta(1-q_*))\big)\]
\[\kappa_*=\beta^2(1-q_*)^2\big(1+O(\beta(1-q_*))\big),
\quad \delta_*=\frac{q_*}{(1-q_*)^2}+O(\beta^2).\]
\end{proposition}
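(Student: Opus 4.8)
The plan is to deduce the three estimates on $\bar R,\bar R',\bar R''$ directly from the power series $\bar R(z)=\sum_{k\ge1}\bar\kappa_k z^{k-1}$ together with the cumulant bound $|\bar\kappa_k|\le(16\|\mu_D\|_\infty\beta)^k$, and then to read off the remaining asymptotics by bootstrapping from the defining relations (\ref{eq:qstar}), (\ref{eq:lambdastar}), (\ref{eq:kappastar}), (\ref{eq:deltastar}). For the first part: since $\bar\kappa_1=0$ and $\bar\kappa_2=\beta^2$, write $\bar R(z)=\beta^2 z+\sum_{k\ge3}\bar\kappa_k z^{k-1}$, set $a:=16\|\mu_D\|_\infty\beta$, and restrict to $\beta\le\beta_0(\mu_D)$ with $a\le\tfrac12$. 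For $z\in(0,1)$ a geometric-tail bound gives $\sum_{k\ge3}|\bar\kappa_k|z^{k-1}\le z^{-1}\sum_{k\ge3}(az)^k=\tfrac{a^3z^2}{1-az}\le2a^3z^2=O(\beta^3z^2)$, hence $\bar R(z)=\beta^2 z(1+O(\beta z))$; this also shows the series converges on $(0,1)$, so it may be differentiated term by term. Differentiating once and using $z^{k-2}\le z$ for $k\ge3$, $z\in(0,1)$, the same estimate gives $\bar R'(z)=\beta^2+O(\beta^3 z)=\beta^2(1+O(\beta z))$; differentiating twice and using $z^{k-3}\le1$ gives $\bar R''(z)=O(\beta^3)$.

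Next I bootstrap the constants. From $\sigma_*^2=q_*\bar R'(1-q_*)$ with $q_*\in[0,1)$ (Proposition~\ref{prop:qstarunique}) and $1-q_*\in(0,1]$, the estimate on $\bar R'$ already gives $\sigma_*^2\le2\beta^2$, i.e.\ $\sigma_*^2=O(\beta^2)$ uniformly. Inserting this into $q_*=\E[\tanh(\H+\sigma_*\G)^2]$ and Taylor-expanding $u\mapsto\tanh(\H+u)^2$ to second order at $u=0$: the linear term carries the factor $\E[\G]=0$, and the Lagrange remainder is controlled by the uniform bound $\sup_v|(\tanh^2)''(v)|<\infty$; therefore $q_*=\E[\tanh(\H)^2]+O(\sigma_*^2)=\E[\tanh(\H)^2]+O(\beta^2)$. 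Since $\E[\tanh(\H)^2]<1$ strictly, this forces $1-q_*$ to be bounded away from $0$ for $\beta$ small, while trivially $1-q_*\le1$; thus every factor $1-q_*$ appearing below is of order one, which is precisely what makes the $O(\cdot)$ terms uniform. Feeding the $\bar R'$ estimate back in, $\sigma_*^2=q_*\bar R'(1-q_*)=q_*\beta^2\big(1+O(\beta(1-q_*))\big)=\beta^2 q_*+O(\beta^3)$.

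The three remaining constants are then direct substitutions. By (\ref{eq:lambdastar}) and the estimate on $\bar R$, $\lambda_*=\bar R(1-q_*)+\tfrac{1}{1-q_*}=\tfrac{1}{1-q_*}+\beta^2(1-q_*)\big(1+O(\beta(1-q_*))\big)$. For $\kappa_*$, set $a:=(1-q_*)^2\bar R'(1-q_*)=O(\beta^2)$; then (\ref{eq:kappastar}) gives $\kappa_*=\tfrac{1}{1-a}-1=\tfrac{a}{1-a}=a+O(\beta^4)$, and writing $a=\beta^2(1-q_*)^2\big(1+O(\beta(1-q_*))\big)$ and absorbing the $O(\beta^4)$ into $O(\beta^3(1-q_*)^3)$ (legitimate because $1-q_*$ is of order one) yields $\kappa_*=\beta^2(1-q_*)^2\big(1+O(\beta(1-q_*))\big)$. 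Finally (\ref{eq:deltastar}) and $\sigma_*^2=O(\beta^2)$ give $\delta_*=\tfrac{q_*}{(1-q_*)^2}-\sigma_*^2=\tfrac{q_*}{(1-q_*)^2}+O(\beta^2)$.

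The computation is entirely elementary; the only point requiring care---the ``main obstacle,'' such as it is---is the order in which uniformity is established: one must first extract $\sigma_*^2=O(\beta^2)$ using only $q_*\in[0,1)$ and the series bound on $\bar R'$, then use this to pin $q_*$ near $\E[\tanh(\H)^2]<1$, and only then conclude that $1-q_*$ is bounded away from $0$; with that in place each $O(\cdot)$ collapses to the stated form. I should also note that, strictly speaking, $\beta_0$ and the implied constants depend on $\mu_H$ as well, through the gap $1-\E[\tanh(\H)^2]>0$, in addition to depending on $\mu_D$ via $\|\mu_D\|_\infty$.
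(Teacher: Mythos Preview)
Your proof is correct and follows essentially the same approach as the paper's: use the power series for $\bar R$ with the free-cumulant bound to get (\ref{eq:Rsmallbeta}), extract $\sigma_*^2=O(\beta^2)$ from $q_*\in[0,1)$, Taylor-expand $\tanh^2$ to locate $q_*$, and then substitute into the defining formulas for $\lambda_*,\kappa_*,\delta_*$. The paper's version is terser but identical in structure.

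One remark on your closing caveat about $\mu_H$-dependence: it is not actually needed. The only place you invoke ``$1-q_*$ is of order one'' is in absorbing the $O(\beta^4)$ error for $\kappa_*$ into $O(\beta^3(1-q_*)^3)$. But that $O(\beta^4)$ came from bounding $\tfrac{a^2}{1-a}$ with $a=(1-q_*)^2\bar R'(1-q_*)$; if you keep the $(1-q_*)$ factors, this is $O(\beta^4(1-q_*)^4)$, and then
\[
\kappa_*=\beta^2(1-q_*)^2\Big[1+O(\beta(1-q_*))+O\big(\beta^2(1-q_*)^2\big)\Big]
=\beta^2(1-q_*)^2\big(1+O(\beta(1-q_*))\big),
\]
where the last step uses only $\beta(1-q_*)\le\beta_0$. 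So all constants depend on $\mu_D$ alone, as stated. The same care shows that none of the other estimates need a lower bound on $1-q_*$ either; the proposition is deliberately written with $(1-q_*)$ factors inside the $O(\cdot)$ precisely so that it is uniform in $q_*\in[0,1)$.
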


\subsection{Conditioning and large deviations for Haar-orthogonal matrices}
\label{sec:haar}

We collect here several results on the conditioning of Haar-orthogonal
matrices, and large deviations for integrals over the orthogonal group.

\begin{proposition}[Lemma 4 of \cite{rangan2019vector}]
\label{prop:Haarconditioning}
Let $A,B \in \R^{n \times k}$ be deterministic matrices of rank $k$, such
that $A=QB$ for some orthogonal matrix $Q \in \O(n)$.
Let $V_{A^\perp},V_{B^\perp} \in \R^{n \times (n-k)}$ be matrices
with orthonormal columns spanning the orthogonal complements of the column
spans of $A$ and $B$, respectively. Let $O \sim \Haar(\O(n))$. Then the law of
$O$ conditioned on the event $A=OB$ is given by
\[O|_{A=OB}\overset{L}{=}
V_{A^\perp} \tilde{O}V_{B^\perp}^\top+A(A^\top A)^{-1}B^\top
=V_{A^\perp} \tilde{O}V_{B^\perp}^\top+A(B^\top B)^{-1}B^\top,\]
where $\tilde{O} \sim \Haar(\O(n-k))$.
\end{proposition}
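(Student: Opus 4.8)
The plan is to reduce the conditioning on the probability-zero event $\{A=OB\}$ to the classical fact describing a Haar orthogonal matrix conditioned on the values of its first $k$ columns, and then transport the answer back. First I would record that, since $A=QB$ for an orthogonal $Q$, one has $A^\top A=B^\top B$; I set $R=(A^\top A)^{1/2}=(B^\top B)^{1/2}\succ 0$ and let $U_A=AR^{-1}$, $U_B=BR^{-1}$ in $\R^{n\times k}$, which have orthonormal columns spanning the column spans of $A$ and $B$. As $R$ is invertible, the event $\{A=OB\}$ is identical to $\{U_A=OU_B\}$, and $U_AU_B^\top=AR^{-2}B^\top=A(A^\top A)^{-1}B^\top=A(B^\top B)^{-1}B^\top$, which both isolates the deterministic term of the claimed formula and shows that its two displayed forms coincide.

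Next I would pass to bases adapted to $U_A$ and $U_B$: complete $U_B$ and $U_A$ to orthogonal matrices $[U_B\mid V_B],[U_A\mid V_A]\in\O(n)$, where $V_B,V_A\in\R^{n\times(n-k)}$ span the orthogonal complements of the column spans of $B$ and $A$, so that $\Pi_B^\perp=V_BV_B^\top$ and $\Pi_A^\perp=V_AV_A^\top$. Put $\tilde O:=[U_A\mid V_A]^\top O\,[U_B\mid V_B]$; by bi-invariance of Haar measure on $\O(n)$, $\tilde O\sim\Haar(\O(n))$, and writing $E:=\begin{pmatrix}I_k\\0\end{pmatrix}\in\R^{n\times k}$, the event $\{U_A=OU_B\}$ becomes $\{\tilde OE=E\}$, i.e.\ that the first $k$ columns of $\tilde O$ are the first $k$ standard basis vectors. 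For any orthogonal $\tilde O$ with this property, orthonormality of the remaining columns forces $\tilde O=\begin{pmatrix}I_k&0\\0&M\end{pmatrix}$ with $M\in\O(n-k)$, so the event-set is exactly the closed subgroup $H\cong\O(n-k)$. Because Haar measure on $\O(n)$ is right-invariant and right multiplication by $H$ preserves $\{\tilde OE=E\}$, the conditional law of $\tilde O$ on this event is a right-$H$-invariant probability measure supported on $H$, hence is Haar measure on $H$; equivalently, $\tilde O$ conditioned on $\{\tilde OE=E\}$ is distributed as $\begin{pmatrix}I_k&0\\0&M\end{pmatrix}$ with $M\sim\Haar(\O(n-k))$.

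Transporting back through $O=[U_A\mid V_A]\,\tilde O\,[U_B\mid V_B]^\top$ then gives, conditionally on $\{A=OB\}$, $O\overset{L}{=}U_AU_B^\top+V_AMV_B^\top$ with $M\sim\Haar(\O(n-k))$: the first summand equals $A(A^\top A)^{-1}B^\top=A(B^\top B)^{-1}B^\top$ by the first step, and the second summand, $V_AMV_B^\top$, is a Haar-distributed partial isometry from the orthogonal complement of $\mathrm{col}(B)$ onto that of $\mathrm{col}(A)$ — the random term $\Pi_A^\perp\tilde O\Pi_B^\perp$ appearing in the statement. The one step requiring care is the conditioning on the null event $\{\tilde OE=E\}$ above; the cleanest rigorous route is to condition instead on the neighbourhoods $\{\|\tilde OE-E\|<\varepsilon\}$, which are right-$H$-invariant for every $\varepsilon>0$ so that each conditional law is right-$H$-invariant, and to let $\varepsilon\downarrow0$, using that these conditional laws concentrate on $H$ and that a right-$H$-invariant probability measure on $H$ is unique; alternatively one may invoke the standard disintegration of Haar measure on $\O(n)$ along the Stiefel projection $\tilde O\mapsto\tilde OE$, whose fibre measures are translates of Haar measure on the stabilizer $\O(n-k)$. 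The remainder is routine bookkeeping with orthonormal completions and bi-invariance, and I would not anticipate further obstacles.
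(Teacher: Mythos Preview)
The paper does not give its own proof of this proposition; it simply cites \cite[Lemma 4]{rangan2019vector} and \cite[Proposition F.1]{fan2020approximate}. So there is nothing in the paper to compare your argument to directly, and your derivation has to be judged on its own merits.

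Your reduction is the standard and correct one: after orthonormalizing $A,B$ to $U_A,U_B$ and conjugating by $[U_A\mid V_A]$ and $[U_B\mid V_B]$, conditioning on $\{A=OB\}$ becomes conditioning a Haar matrix on having its first $k$ columns equal to $e_1,\dots,e_k$, which forces the block form $\mathrm{diag}(I_k,M)$ with $M\sim\Haar(\O(n-k))$. Undoing the conjugation gives the conditional law
\[
O\big|_{A=OB}\ \overset{L}{=}\ U_AU_B^\top+V_A M V_B^\top,\qquad M\sim\Haar(\O(n-k)).
\]
This is correct and is the form actually used in the cited references.

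The gap is in your last sentence, where you assert that $V_A M V_B^\top$ ``is the random term $\Pi_A^\perp\tilde O\Pi_B^\perp$ appearing in the statement.'' That identification is not a distributional equality when $\tilde O\sim\Haar(\O(n))$: writing $\Pi_A^\perp\tilde O\Pi_B^\perp=V_A(V_A^\top\tilde O V_B)V_B^\top$, the inner $(n-k)\times(n-k)$ block $V_A^\top\tilde O V_B$ of a Haar $\O(n)$ matrix is almost surely \emph{not} orthogonal, so $\Pi_A^\perp\tilde O\Pi_B^\perp+U_AU_B^\top$ is almost surely not in $\O(n)$, whereas the conditioned matrix certainly is. (Take $n=2$, $k=1$, $A=B=e_1$: your formula gives $O_{22}\in\{\pm1\}$ uniformly, while the stated formula gives $O_{22}=\tilde O_{22}$ with a continuous law on $[-1,1]$.) In other words, what you have actually proved is the correct statement with the random part given by $V_A M V_B^\top$, $M\sim\Haar(\O(n-k))$; the displayed formula in the proposition, read literally with $\tilde O\sim\Haar(\O(n))$, does not hold, and your proof does not---and cannot---bridge that discrepancy. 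You should flag this rather than silently equate the two objects.
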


\begin{proposition}\label{prop:Ointegralrank1}
Let $O \sim \Haar(\O(n))$. Let $D \in \R^{n \times n}$ be a deterministic
symmetric matrix whose eigenvalue distribution satisfies Assumption
\ref{assump:main}(b) as $n \to \infty$. Let $\mu_D$ be its limit eigenvalue
distribution, let $d_+=\max(x:x \in \supp(\mu_D))$, and let $G(z)$ be the Cauchy
transform of $\mu_D$. Fix any constants $C,\eps>0$, and define the domain
\[\Omega_n=\left\{(a,b) \in \R^n \times \R^n:\;
0<\frac{\|a\|^2}{n} \leq G(d_++\eps)-\eps,\;
\frac{\|b\|^2}{n} \leq C\right\}.\]
Then
\begin{align}
\lim_{n \to \infty} \sup_{(a,b) \in \Omega_n}
\left|\frac{1}{n}\log \E\left[
\exp\left(b^\top Oa+\frac{a^\top O^\top DOa}{2}\right)\right]
-\frac{1}{2}E_n(a,b) \right|=0,
\label{eq:Ointegralrank1}
\end{align}
where
\begin{equation}
E_n(a,b)=\inf_{\gamma \geq d_++\eps} \left\{
\frac{\gamma \|a\|^2}{n}+\frac{b^\top (\gamma I-D)^{-1}b}{n}
-\frac{1}{n}\log \det (\gamma I-D)
-\left(1+\log \frac{\|a\|^2}{n}\right)\right\}.\label{eq:Ealpha}
\end{equation}
\end{proposition}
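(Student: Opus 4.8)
The plan is to evaluate the exponential integral $\E[\exp(b^\top Oa+\tfrac12 a^\top O^\top DOa)]$ over Haar-orthogonal $O$ by reducing it to a spherical integral and applying a Laplace/large-deviations analysis, following the approach of Guionnet and Ma\"ida \cite{guionnet2005fourier}. The first step is to note that $Oa$ is uniformly distributed on the sphere of radius $\|a\|$ in $\R^n$, so after writing $\rho^2=\|a\|^2/n$ the integral only depends on $a$ through $\rho$ and on $b$ through its interaction with the eigenbasis of $D$; working in the eigenbasis of $D$, if $u=Oa/\|a\|$ is Haar on the unit sphere then we must compute $\E_u[\exp(\|a\|\,b^\top u+\tfrac{\|a\|^2}{2}\,u^\top D u)]$.

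The key technical device is the Hubbard--Stratonovich / Gaussian-linearization trick to handle the quadratic term $u^\top D u$: introduce a parameter $\gamma>d_++\eps$ and write $\exp(\tfrac{\|a\|^2}{2} u^\top D u)$ in a form that, combined with a Gaussian integral, lets us replace the spherical average by a Gaussian average at the cost of the $\det(\gamma I - D)$ factor and a linear exponent. Concretely, one uses the identity that for a Gaussian vector $g\sim\N(0,(\gamma I-D)^{-1})$ one has a representation relating $\E_u$ on the sphere to $\E_g$; alternatively, one parametrizes $u = g/\|g\|$ with $g\sim\N(0,I)$ and rescales. The outcome, after a saddle-point optimization over the auxiliary parameter $\gamma$ (which plays the role of a Lagrange multiplier enforcing $\|u\|=1$, equivalently the ``rank-one spike location'' in the Guionnet--Ma\"ida picture), is precisely the variational formula $E_n(a,b)$: the term $\gamma\|a\|^2/n$ comes from the constraint multiplier times $\rho^2$, the term $b^\top(\gamma I-D)^{-1}b/n$ is the Gaussian ``completion of the square'' contribution from the linear term $b^\top u$, the $-\tfrac1n\log\det(\gamma I-D)$ is the Gaussian normalization, and $-(1+\log(\|a\|^2/n))$ is the entropy of the sphere of radius $\|a\|$ relative to the Gaussian reference. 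I would carry out the Laplace analysis to show the integral concentrates at the minimizing $\gamma$, with the restriction $\gamma\ge d_++\eps$ being exactly what keeps $\gamma I - D$ positive definite uniformly in $n$ (using Assumption \ref{assump:main}(b), which guarantees $\max_i d_i \to d_+$ so that $\gamma I-D \succeq \eps I$ eventually).

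The uniformity over the compact-ish domain $\Omega_n$ is the part requiring care, and I expect it to be the main obstacle. One must show the $o(1)$ error is uniform over all $(a,b)$ with $0<\|a\|^2/n\le G(d_++\eps)-\eps$ and $\|b\|^2/n\le C$; the lower cutoff on $\|a\|^2/n$ prevents the entropy term $\log(\|a\|^2/n)$ from blowing up and the upper cutoff $G(d_++\eps)-\eps$ is what guarantees the optimal $\gamma$ stays in the admissible region $[d_++\eps,\infty)$ (since at $\gamma = d_++\eps$ the derivative of the objective in $\gamma$ involves $\|a\|^2/n - \tfrac1n\Tr(\gamma I - D)^{-1} \to \|a\|^2/n - G(d_++\eps)$ plus a manifestly nonnegative $b$-term, so the bound on $\|a\|^2/n$ forces an interior or boundary minimizer with $\gamma I-D$ uniformly bounded below). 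I would handle uniformity by a compactness/equicontinuity argument: show both $\tfrac1n\log\E[\cdots]$ and $\tfrac12 E_n(a,b)$ are Lipschitz in $(\rho^2, b^\top(\cdot)b\text{-type statistics})$ on $\Omega_n$ with constants independent of $n$, reduce to finitely many cases via a net, and invoke the pointwise convergence from the Guionnet--Ma\"ida-type estimate on each. A subtlety is that $b$ enters through the full vector, not just $\|b\|$, so the relevant reduction is to the pair $(\|a\|^2/n,\ b^\top(\gamma I-D)^{-1}b/n)$ as a function of $\gamma$, and one must argue the convergence of $\tfrac1n\log\det(\gamma I-D)$ to $\int\log(\gamma-x)\mu_D(dx)$ is uniform in $\gamma$ over $[d_++\eps,\Gamma_{\max}]$ for a suitable cutoff $\Gamma_{\max}$ (large $\gamma$ being handled separately since there the objective is clearly increasing).
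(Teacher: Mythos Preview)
Your high-level strategy matches the paper's: represent $Oa/\|a\|$ as $g/\|g\|$ with $g\sim\N(0,I_n)$, pass to a Gaussian integral with a free parameter $\gamma$ (the paper writes $\nu=\gamma-1/\alpha$), compute that integral in closed form, and optimize over $\gamma$. Your interpretation of each term in $E_n(a,b)$ is correct.

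Where you diverge from the paper is in the uniformity argument, and there your proposal has a genuine gap. A net/equicontinuity reduction runs into the problem that $(a,b)$ lives in $\R^n\times\R^n$ with $n\to\infty$; there is no fixed compact set to cover. Your attempted fix---reducing to the ``statistic'' $\gamma\mapsto b^\top(\gamma I-D)^{-1}b/n$---lands you in an infinite-dimensional function space, so the finiteness needed for a net is still missing. (Also, there is no lower cutoff on $\|a\|^2/n$ in $\Omega_n$; the statement only requires $\|a\|>0$, so you cannot rely on one.)

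The paper sidesteps this entirely by producing an \emph{explicit} error bound that is uniform over $\Omega_n$ from the outset. The mechanism is: (i) condition on the event $\{|\|g\|^2/n-1|\le\delta\}$ and use independence of $\|g\|$ and $g/\|g\|$ to see this costs only $1/(1-2e^{-\delta^2 n/16})$; (ii) on that event, replace $\|a\|/\|g\|$ by $\sqrt{\alpha}$ and insert the factor $\exp(\tfrac{\alpha\nu}{2}(n-\|g\|^2))$ for a free $\nu$, incurring a multiplicative error $e^{\pm\tau}$ with $\tau=O(\delta n(1+|\nu|))$; (iii) the resulting Gaussian integral factorizes and gives exactly the expression inside the infimum defining $E_n$; (iv) a short lemma localizes the optimal $\gamma_n^*$ to within $C+\|D\|_{\mathrm{op}}$ of $1/\alpha$, so $|\nu|$ is bounded uniformly over $\Omega_n$; (v) take $\delta=n^{-1/4}$, giving $\tau=O(n^{3/4})$ and hence $o(n)$. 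For the matching lower bound one changes measure to the tilted Gaussian $\tilde g_i\sim\N(\mu_i,\sigma_i^2)$ appearing in the integrand and checks via Chebyshev that the conditioning event still has probability $1-O(n^{-1/2})$; here the choice of $\gamma_n^*$ as the stationary point is exactly what makes $\E\|\tilde g\|^2=n$. All bounds depend on $(a,b)$ only through $\alpha\le G(d_++\eps)-\eps$ and $\|b\|^2/n\le C$, so uniformity is automatic. This direct quantitative route is what you should replace your compactness argument with.
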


\begin{proposition}\label{prop:Ointegralrank2}
Let $O$, $D$, $\mu_D$, $d_+$, and $G(z)$ be as in Proposition
\ref{prop:Ointegralrank1}. Fix any constants $C,\eps>0$, and define the domains
\begin{equation}\label{eq:Deps}
\cD_\eps=\left\{(\gamma,\nu,\rho) \in \R^3:
\begin{pmatrix} \gamma & \nu \\ \nu & \rho \end{pmatrix} \succeq
(d_++\eps)I_{2 \times 2} \right\},
\end{equation}
\[\Omega_n=\left\{(a,b,c,d) \in (\R^n)^4:\;
0 \prec \frac{1}{n}\begin{pmatrix} \|a\|^2 & a^\top c \\
a^\top c & \|c\|^2 \end{pmatrix} \preceq 
\Big(G(d_++\eps)-\eps\Big)I_{2 \times 2},\;\;\frac{\|b\|^2}{n},\frac{\|d\|^2}{n}
\leq C\right\}.\]
Then
\begin{equation}
\lim_{n \to \infty} \sup_{(a,b,c,d) \in \Omega_n}
\left|\frac{1}{n}\log \E\left[\exp\left(b^\top Oa+d^\top Oc
+\frac{a^\top O^\top DOa}{2}+\frac{c^\top O^\top DOc}{2}\right) \right]
-\frac{1}{2}E_n(a,b,c,d)\right|=0
\label{eq:Ointegralrank2}
\end{equation}
where
\begin{align}
E_n(a,b,c,d)&=\inf_{(\gamma,\nu,\rho)\in \cD_\eps} \Bigg\{\frac{1}{n} \Tr
\begin{pmatrix} \gamma & \nu \\ \nu & \rho \end{pmatrix}
\begin{pmatrix} \|a\|^2 & a^\top c \\ a^\top c & \|c\|^2 \end{pmatrix}
+\frac{1}{n}\begin{pmatrix} b \\ d \end{pmatrix}^\top
\begin{pmatrix} \gamma I-D & \nu I \\ \nu I & \rho I-D \end{pmatrix}^{-1}
\begin{pmatrix} b \\ d \end{pmatrix}\nonumber\\
&\hspace{0.5in}-\frac{1}{n}\log \det \begin{pmatrix} \gamma I-D &
\nu I \\ \nu I & \rho I-D \end{pmatrix}
-\left(2+\log \det \frac{1}{n}\begin{pmatrix} \|a\|^2 & a^\top c \\ a^\top c &
\|c\|^2 \end{pmatrix}\right)\Bigg\}.\label{eq:En2}
\end{align}
\end{proposition}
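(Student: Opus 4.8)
The plan is to prove Proposition~\ref{prop:Ointegralrank2} by following the structure of the proof of Proposition~\ref{prop:Ointegralrank1}, promoting each scalar object there --- the tilting parameter, the sphere, the radial saddle point --- to its $2\times 2$-matrix analogue. Write $A=[a\mid c]\in\R^{n\times 2}$, $B=[b\mid d]\in\R^{n\times 2}$, $V=OA$, and let $\mathbf v,\mathbf b\in\R^{2n}$ be the column-stackings of $V$ and $B$; put $P_n=\tfrac1n A^\top A$. Then the exponent inside the expectation in \eqref{eq:Ointegralrank2} is $\mathbf b^\top\mathbf v+\tfrac12\Tr(V^\top D V)$ (note there is no cross term $a^\top O^\top DOc$, which is what makes this a genuinely rank-$2$ object). (Conditioning on $Oa$ and invoking Proposition~\ref{prop:Ointegralrank1} for the resulting rank-one integral on $(Oa)^\perp$ is also possible, but leads to a messier outer integral; I prefer the direct route below.) For a symmetric $\Gamma=\begin{pmatrix}\gamma&\nu\\\nu&\rho\end{pmatrix}$ let $M(\Gamma)=\Gamma\otimes I_n-I_2\otimes D$, i.e.\ the $2\times2$ block matrix appearing in \eqref{eq:En2}. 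The first ingredient is the exact tilting identity, valid for every $\Gamma$ with $M(\Gamma)\succ0$ (equivalently $\Gamma\succ\lambdamax(D)\,I_2$),
\[
\E\bigl[\exp(\mathbf b^\top\mathbf v+\tfrac12\Tr(V^\top D V))\bigr]
=e^{\frac n2\Tr(\Gamma P_n)}\,\E\bigl[\exp(\mathbf b^\top\mathbf v-\tfrac12\mathbf v^\top M(\Gamma)\mathbf v)\bigr],
\]
which follows from $\tfrac12\Tr(V^\top D V)=\tfrac n2\Tr(\Gamma P_n)-\tfrac12\mathbf v^\top M(\Gamma)\mathbf v$ together with the fact that $V^\top V=A^\top A=nP_n$ does not depend on $O$.

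The second ingredient represents the law of $\mathbf v$ by a conditioned Gaussian. Since $O\sim\Haar(\O(n))$, the matrix $V=OA$ is Haar-uniform on the orbit $\calM_A=\{W\in\R^{n\times2}:W^\top W=A^\top A\}$; and for any fixed $\Sigma\succ0$, a Gaussian matrix $G\in\R^{n\times2}$ with i.i.d.\ rows $\sim\N(0,\Sigma)$, conditioned on $\tfrac1nG^\top G=P_n$, is again Haar-uniform on $\calM_A$ (by left $\O(n)$-invariance of the Gaussian law and of the conditioning event). This is the rank-$2$ counterpart of the representation $v\overset{L}{=}\sqrt n\,g/\|g\|$ used for Proposition~\ref{prop:Ointegralrank1}, and is consistent with Proposition~\ref{prop:Haarconditioning}. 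Hence the right-hand expectation above equals $\E[e^{F(G)}\mid\tfrac1nG^\top G=P_n]$, where $F(G)=\mathbf b^\top\mathbf g-\tfrac12\mathbf g^\top M(\Gamma)\mathbf g$ and $\mathbf g$ is the column-stacking of $G$; a disintegration/local-CLT argument then identifies this conditional expectation, up to a factor $e^{o(n)}$ uniformly on $\Omega_n$, with the ratio of the unconstrained Gaussian integral
\[
\E[e^{F(G)}]=(\det\Sigma)^{-n/2}\,(\det M(\Gamma'))^{-1/2}\exp\bigl(\tfrac12\mathbf b^\top M(\Gamma')^{-1}\mathbf b\bigr),\qquad\Gamma':=\Gamma+\Sigma^{-1},
\]
to the $2\times2$ Wishart probability $\Prob[\tfrac1nG^\top G\approx P_n]$, whose $\tfrac1n\log$-asymptotics follow from Stirling's formula for the bivariate gamma function $\Gamma_2$.

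The last step is a saddle-point choice of $\Sigma$ together with the constant bookkeeping. Under the reweighted law $\propto e^{F(G)}$ times the reference Gaussian one has $\mathbf g\sim\N(M(\Gamma')^{-1}\mathbf b,M(\Gamma')^{-1})$, so $\E[\tfrac1nG^\top G]$ equals an explicit $2\times2$ function $\Phi_n(\Gamma')$; we choose $\Sigma$ so that $\Gamma'=\Gamma'_*$, the solution of $\Phi_n(\Gamma')=P_n$, which makes $\tfrac1nG^\top G$ concentrate at $P_n$ under that law. This equation is exactly the stationarity condition $\nabla_{\Gamma'}\calJ_n(\Gamma')=0$ for the convex dual objective $\calJ_n(\Gamma')=\Tr(\Gamma'P_n)+\tfrac1n\mathbf b^\top M(\Gamma')^{-1}\mathbf b-\tfrac1n\log\det M(\Gamma')-2-\log\det P_n$ appearing in \eqref{eq:En2}, so $\Gamma'_*$ is the minimizer of $\calJ_n$. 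Substituting back, the free tilting parameter $\Gamma$ drops out, the Wishart/Stiefel normalization supplies exactly the term $-(2+\log\det P_n)$, and one obtains that $\tfrac1n\log$ of the left side of \eqref{eq:Ointegralrank2} converges to $\tfrac12\calJ_n(\Gamma'_*)=\tfrac12E_n(a,b,c,d)$. The matching upper bound is extracted directly from the tilting identity (bounding $\mathbf b^\top\mathbf v-\tfrac12\mathbf v^\top M(\Gamma)\mathbf v$ with the Gram constraint reinstated and then optimizing over $\Gamma$), and the lower bound by localizing the Gaussian integral to a neighbourhood of $\tfrac1nG^\top G=P_n$.

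I expect the main obstacle to be uniformity over $\Omega_n$, and specifically the role of the upper bound $\tfrac1nA^\top A\preceq(G(d_++\eps)-\eps)I_2$: this is precisely what forces the minimizer $\Gamma'_*=\Gamma'_*(P_n,b,d)$ into the interior of $\cD_\eps$ with a spectral gap bounded below uniformly on $\Omega_n$, i.e.\ $M(\Gamma'_*)\succeq\eps'I_{2n}$ for some $\eps'>0$ depending only on $\eps$, $C$, and $\mu_D$, which is needed both for the Gaussian integral $\E[e^{F(G)}]$ to converge and for the saddle-point error terms to be uniformly negligible. The remaining technical points are the disintegration/local-CLT estimate --- requiring continuity in $P$ of the Haar-uniform measure on $\{W\in\R^{n\times2}:W^\top W=nP\}$ and a moderate-deviations bound for the $2\times2$ quadratic statistic $\tfrac1nG^\top G$ under the reweighted Gaussian --- and tracking the Wishart normalization constant so that it lands on $-(2+\log\det P_n)$ exactly, which is best pinned down by the solvable case $D=0$, $b=d=0$ (both sides vanish, minimizer $\Gamma'_*=P_n^{-1}$).
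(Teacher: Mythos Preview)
Your proposal is correct in outline and lands on the same saddle-point structure as the paper (your equation $\Phi_n(\Gamma')=P_n$ is exactly the gradient equation \eqref{eq:Lambdastar} in Lemma~\ref{lmm:Lambdaloc}, and your localization of $\Gamma'_*$ is that lemma). The main difference is in how the Stiefel integral is reduced to a Gaussian one.

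The paper does \emph{not} condition on the exact event $G^\top G=nP_n$ and then invoke a disintegration/local-CLT/Wishart-density argument. Instead it writes $(Oa,Oc)$ explicitly via Gram--Schmidt of two independent standard Gaussians $g_1,g_2$, and uses the key fact that the orthonormalized pair $(g_1',g_2')$ is \emph{independent} of the ``radial'' data $(\|g_1\|,\|g_2\|,\cos\theta)$. This independence gives the exact identity
\[
\frac{\E[\,\cdot\,]}{\E[\,\cdot\,\mathbf 1_{\calE}]}=\frac{1}{\P[\calE]}
\]
for any event $\calE$ measurable in the radial data, so restricting to a high-probability set $\calE=\{\|g_i\|^2/n\approx 1,\ |\cos\theta|\lesssim\delta\}$ is free up to a factor $1+o(1)$. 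On $\calE$ the Stiefel vectors are approximated by honest i.i.d.\ Gaussians $(\xi_i,\zeta_i)\sim\calN(0,P_n)$ with an explicit additive error $\tau=O(\delta n)$ in the exponent; the resulting Gaussian integral is computed exactly by completing the square with the matrix tilt $\Lambda$, and both upper and lower bounds fall out symmetrically (the lower bound needs only Chebyshev on the tilted Gaussian to show $\P[\tilde g\in\calE]\to 1$). No Wishart density, no $\Gamma_2$ Stirling, no continuity-in-$P$ of the Stiefel measure.

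Your route also works, but it trades this elementary independence trick for genuinely heavier machinery: you need the local density ratio $\tilde p(P_n)/p_G(P_n)$ to be $e^{o(n)}$ uniformly on $\Omega_n$, which requires both a local CLT for $G^\top G/n$ under the tilted law \emph{and} the large-deviation/Stirling expansion of the Wishart density under the reference law, with the constants matched. One point to watch: your sentence on the upper bound (``bounding $\mathbf b^\top\mathbf v-\tfrac12\mathbf v^\top M(\Gamma)\mathbf v$ with the Gram constraint reinstated and then optimizing over $\Gamma$'') reads as a pointwise bound, which would miss the $-\tfrac1n\log\det M(\Gamma)-(2+\log\det P_n)$ entropy terms; those terms can only come from the Gaussian/Wishart volume, so your upper bound must go through the same disintegration as the lower bound, not a separate pointwise estimate. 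The paper's approach avoids this asymmetry entirely.
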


When $b=d=0$, the expectations evaluated in
Propositions \ref{prop:Ointegralrank1} and \ref{prop:Ointegralrank2} are
finite-rank HCIZ integrals over the orthogonal group, and
such results were obtained in \cite[Theorems 2 and 7]{guionnet2005fourier}. The above propositions
extend these results to $b,d \neq 0$, and also establish the approximations in a
more uniform sense. We note that the content of Proposition
\ref{prop:Ointegralrank1} for $b \neq 0$ is essentially the calculation of the
limit free energy in the spherical analogue of the model (\ref{eq:intromodel})
with external field, and we discuss this in \prettyref{app:sphere}.

For $b=d=0$, asymptotic versions of the infima in
Propositions \ref{prop:Ointegralrank1} and \ref{prop:Ointegralrank2}
may be explicitly evaluated, and we record these evaluations here.

\begin{proposition}\label{prop:infgamma}
Let $\mu_D$ be a compactly supported probability distribution on $\R$.
Let $G(z)$ and $R(z)$ be the Cauchy- and R-transforms of $\mu_D$,
and let $d_+=\max(x:x \in \supp(\mu_D))$.
\begin{enumerate}[(a)]
\item Suppose that $\alpha \in (0,G(d_+))$. Then
\[\inf_{\gamma>d_+} \gamma\alpha-\int \log(\gamma-x)\mu_D(dx)
-(1+\log \alpha)=\int_0^\alpha R(z)dz\]
and the infimum is achieved at $\gamma=G^{-1}(\alpha)=R(\alpha)+1/\alpha$.
\item Suppose that $A \in \R^{2 \times 2}$ is symmetric
and satisfies $0 \prec A \prec G(d_+)I$. Define $f(A) \in \R^{2 \times 2}$
for any function $f:(0,G(d_+)) \to \R$ by the functional calculus. Let
\[\cD_+=\left\{(\gamma,\nu,\rho) \in \R^3:
\begin{pmatrix} \gamma & \nu \\ \nu & \rho \end{pmatrix} \succ
d_+I_{2 \times 2} \right\}.\]
Then
\begin{align*}
&\inf_{(\gamma,\nu,\rho) \in \cD_+} \Tr \begin{pmatrix} \gamma & \nu \\
\nu & \rho \end{pmatrix} A-\int \log \det \begin{pmatrix} \gamma-x & \nu \\
\nu & \rho-x \end{pmatrix} \mu_D(dx)-(2+\log \det A)=\Tr f(A),
\end{align*}
where $f(\alpha)=\int_0^\alpha R(z)dz$. The infimum is achieved at
$\begin{pmatrix} \gamma & \nu \\ \nu & \rho \end{pmatrix}
=G^{-1}(A)=R(A)+A^{-1}$.
\end{enumerate}
\end{proposition}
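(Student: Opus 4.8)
The plan is to reduce part (b) to part (a) by simultaneous diagonalization, and to prove part (a) directly by first-order calculus. For part (a), fix $\alpha \in (0,G(d_+))$ and define $\phi(\gamma)=\gamma\alpha-\int \log(\gamma-x)\,\mu_D(dx)-(1+\log\alpha)$ on $(d_+,\infty)$. Differentiating under the integral sign (justified since $\mu_D$ is compactly supported and $\gamma>d_+$ keeps $\gamma-x$ bounded below), $\phi'(\gamma)=\alpha-\int (\gamma-x)^{-1}\mu_D(dx)=\alpha-G(\gamma)$. Since $G$ is strictly decreasing from $G(d_+)$ down to $0$ on $(d_+,\infty)$ and $\alpha$ lies in this range, there is a unique stationary point $\gamma_*=G^{-1}(\alpha)$, and $\phi''(\gamma)=\int (\gamma-x)^{-2}\mu_D(dx)>0$, so $\phi$ is strictly convex and $\gamma_*$ is the global minimizer. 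It remains to evaluate $\phi(\gamma_*)$ and check it equals $\int_0^\alpha R(z)\,dz$. The cleanest route is to differentiate both sides in $\alpha$: writing $\gamma_*=\gamma_*(\alpha)$, by the envelope theorem $\frac{d}{d\alpha}\phi(\gamma_*(\alpha))=\gamma_*(\alpha)-\frac1\alpha=G^{-1}(\alpha)-\frac1\alpha=R(\alpha)$, which matches $\frac{d}{d\alpha}\int_0^\alpha R(z)\,dz=R(\alpha)$; and as $\alpha\downarrow 0$ one checks both sides tend to $0$ (for the left side, $\gamma_*\to\infty$ and a short estimate shows $\phi(\gamma_*(\alpha))\to 0$ using $\int\log(\gamma-x)\mu_D(dx)=\log\gamma+o(1)$ and the expansion of $\log$), giving equality for all $\alpha$.

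For part (b), diagonalize $A=U^\top\diag(\alpha_1,\alpha_2)U$ with $U\in\O(2)$, where $\alpha_1,\alpha_2\in(0,G(d_+))$ by hypothesis. The objective is invariant under the substitution $\begin{pmatrix}\gamma&\nu\\\nu&\rho\end{pmatrix}\mapsto U\begin{pmatrix}\gamma&\nu\\\nu&\rho\end{pmatrix}U^\top$: the trace term is manifestly invariant, $\det\!\big(\begin{pmatrix}\gamma-x&\nu\\\nu&\rho-x\end{pmatrix}\big)=\det\!\big(\begin{pmatrix}\gamma&\nu\\\nu&\rho\end{pmatrix}-xI\big)$ is conjugation-invariant, and $\det A$ is fixed; the constraint domain $\cD_+$ (defined by $\succ d_+ I$) is also conjugation-invariant. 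Hence we may assume $A=\diag(\alpha_1,\alpha_2)$. Now I claim the infimum over $\cD_+$ is attained at a diagonal matrix $\diag(\gamma_1,\gamma_2)$: at any minimizer the matrix $M=\begin{pmatrix}\gamma&\nu\\\nu&\rho\end{pmatrix}$ must satisfy the first-order condition $A=\int (M-xI)^{-1}\mu_D(dx)=G(M)$ (functional calculus), and since $A$ is diagonal and $z\mapsto G(z)$ is strictly monotone hence injective on $(d_+,\infty)$, $M=G^{-1}(A)$ is diagonal. Restricting a priori to diagonal $M$, the objective splits as the sum over $j=1,2$ of $\gamma_j\alpha_j-\int\log(\gamma_j-x)\mu_D(dx)-(1+\log\alpha_j)$, and part (a) applies termwise, giving $\sum_j\int_0^{\alpha_j}R(z)\,dz=\Tr f(A)$ with minimizer $\gamma_j=G^{-1}(\alpha_j)$, i.e. $M=G^{-1}(A)=R(A)+A^{-1}$.

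The main obstacle is not any single computation but making the reduction in part (b) fully rigorous: one must argue that the infimum over all of $\cD_+$ equals the infimum over diagonal matrices. A clean way to avoid relying on existence of a minimizer is to note that for \emph{fixed} off-diagonal $\nu$, the map $(\gamma,\rho)\mapsto\Tr(M A)-\int\log\det(M-xI)\mu_D(dx)$ is strictly convex on its domain (its Hessian is $\int$ of a positive-definite matrix, as in part (a)), so it is minimized at the interior critical point; writing out that critical-point equation and differentiating the optimal value in $\nu$ shows it is monotone in $|\nu|$, forcing $\nu=0$ at the overall infimum — or, more simply, one can verify directly that the candidate value $\Tr f(A)$ with $M_0=G^{-1}(A)$ is a lower bound by the convexity inequality $\Tr(MA)-\int\log\det(M-xI)\,d\mu_D \ge \Tr(M_0 A)-\int\log\det(M_0-xI)\,d\mu_D$ valid for all $M\in\cD_+$, since the function $M\mapsto \Tr(MA)-\int\log\det(M-xI)\,d\mu_D$ is convex with gradient $A-G(M)$ vanishing at $M_0$. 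I would present the argument in this last form: state convexity on the matrix domain, identify the critical point via the matrix equation $G(M)=A$, solve it as $M_0=G^{-1}(A)$ using injectivity of $G$, and evaluate the optimal value by the envelope/antiderivative identity from part (a) applied to the eigenvalues.
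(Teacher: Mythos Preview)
Your proposal is correct and takes essentially the same approach as the paper: in part (a) you identify the minimizer $\gamma_*=G^{-1}(\alpha)$ by convexity and then verify the value by differentiating in $\alpha$ (you invoke the envelope theorem, the paper computes the $\alpha$-derivative of $\cH(G^{-1}(\alpha),\alpha)$ directly---equivalent arguments); in part (b) you reduce to diagonal $A$ by conjugation invariance and then use strict convexity of the objective on $\cD_+$ together with the stationarity condition $G(M)=A$ to pin down $M_0=G^{-1}(A)$, which is exactly the paper's route. Your exploratory digressions about fixing $\nu$ are unnecessary---the final ``convexity + unique critical point'' argument you settle on is both clean and what the paper does.
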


We prove Propositions \ref{prop:Ointegralrank1}, \ref{prop:Ointegralrank2}, and
\ref{prop:infgamma} in Appendix \ref{appendix:HCIZ}, building
on the large-deviations arguments of \cite{guionnet2005fourier}.

\section{Conditional first moment}\label{sec:firstmoment}

Let $Z$ be the partition function in (\ref{eq:gibbsmeasure}), and let $\cG_t$ be
the sigma-field defined by (\ref{eq:Gt}). We show in this section the following
result.

\begin{lemma}\label{lemma:firstmoment}
In the setting of Theorem \ref{thm:replicasymmetric},
\[\lim_{t \to \infty} \lim_{n \to \infty} \frac{1}{n}\log \E[Z \mid \cG_t]
=\Psi_{\RS},\]
where the inner limit as $n \to \infty$ exists almost surely for each fixed $t$.
\end{lemma}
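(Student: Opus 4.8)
The plan is to compute $\E[Z \mid \cG_t]$ by writing $Z = \sum_{\sigma}\exp(\tfrac12 \sigma^\top \bar J \sigma + h^\top \sigma)$ and conditioning on the AMP sigma-field $\cG_t$. Because $\cG_t$ is generated by the iterates $y^0, x^1, s^1, y^1, \ldots, x^t, s^t, y^t$ of \eqref{eq:AMPx}--\eqref{eq:AMPy}, and the only randomness is in $O \sim \Haar(\O(n))$, the relevant conditioning is on the linear constraints $s^r = O x^r$ for $r = 1, \ldots, t$ (equivalently $O^\top \Lambda s^r = y^r$). Applying Proposition~\ref{prop:Haarconditioning} with $A = S_t = (s^1,\ldots,s^t)$ and $B = X_t = (x^1,\ldots,x^t)$, the conditional law of $O$ is $\Pi_{S_t}^\perp \tilde O \Pi_{X_t}^\perp + S_t(X_t^\top X_t)^{-1}X_t^\top$ for an independent Haar matrix $\tilde O$. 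Substituting this into $\sigma^\top \bar J \sigma = \sigma^\top O^\top \bar D O \sigma$ and separating the deterministic rank-$t$ part from the part involving $\tilde O$, one gets, for each $\sigma$, an exponential integral over a Haar-orthogonal matrix acting on the projected vector $\Pi_{X_t}^\perp \sigma$ (rescaled to live in the $(n-t)$-dimensional complement), of exactly the form handled by Proposition~\ref{prop:Ointegralrank1} (with a linear term coming from the cross terms between the rank-$t$ deterministic piece and the $\tilde O$ piece, which is why the $b \neq 0$ extension is needed).

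Next I would carry out the sum over $\sigma \in \{+1,-1\}^n$. After the Haar integral, the summand depends on $\sigma$ only through a bounded number of scalar ``overlap'' statistics: the projections $n^{-1}\langle x^r, \sigma\rangle$ and $n^{-1}\langle s^r, \sigma\rangle$ for $r \le t$, the external-field term $n^{-1} h^\top \sigma$, and $n^{-1}\|\Pi_{X_t}^\perp \sigma\|^2 = 1 - n^{-1}\sigma^\top X_t(X_t^\top X_t)^{-1}X_t^\top \sigma$. So $\frac1n \log \sum_\sigma(\cdots)$ becomes, by a standard Laplace/Varadhan computation, a finite-dimensional variational problem: maximize over the achievable values of these overlaps the sum of (i) an entropy term $\E[\log 2\cosh(\cdot)]$ from the $\sigma$-sum conditioned on the overlaps, and (ii) the energy contributions from the Haar integral $E_n$ and the deterministic rank-$t$ quadratic form. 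The limit $n \to \infty$ of this is taken using Theorem~\ref{thm:SE} (which gives the limits of $n^{-1}X_t^\top X_t \to \Delta_t$, $n^{-1}Y_t^\top Y_t \to \kappa_* \Delta_t$, $n^{-1}X_t^\top Y_t \to 0$) and Proposition~\ref{prop:AMPfree} (to evaluate terms like $n^{-1}X_t^\top \bar D$-weighted quantities, which is what makes the spectral dependence collapse to integrals against $\mu_{\bar D}$). Proposition~\ref{prop:infgamma}(a) is then used to evaluate the inner infimum over $\gamma$ in $E_n$ explicitly in terms of $\bar R$, converting everything to $\bar R$-expressions. This produces a $t$-dependent variational formula; the almost-sure existence of the inner limit for fixed $t$ follows from the almost-sure convergences just invoked plus Haar concentration.

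Finally I would take $t \to \infty$. Here Proposition~\ref{prop:AMPconvergent} is the key: as $\min(s,t)\to\infty$ the matrix $\Delta_t$ has all entries converging to $\delta_*$ (and $\kappa_*\Delta_t$ to $\sigma_*^2$), so the relevant overlap variables concentrate and the $t$-dependent variational problem degenerates — the AMP iterates all become asymptotically aligned with a single effective ``cavity field'' direction of variance $\sigma_*^2$. In this limit the variational objective should collapse to $\Psi_{\RS}$ as written in \eqref{eq:PsiRS}, matching the entropy term $\E[\log 2\cosh(\H+\sigma_*\G)]$ against the $\bar R$-terms; identifying the limit requires knowing that the constrained maximizer of the $\sigma$-overlap is attained at $q_*$, which is where the fixed-point equation \eqref{eq:qstar} and the divergence-free property \eqref{eq:divergencefree} enter. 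The main obstacle I anticipate is bookkeeping: correctly setting up the rank-$t$ conditioning so that the linear term $b$ fed into Proposition~\ref{prop:Ointegralrank1} is identified, checking that the relevant $(a,b)$ stay in the domain $\Omega_n$ (this is where small $\beta$, via Proposition~\ref{prop:smallbeta}, guarantees $n^{-1}\|\Pi_{X_t}^\perp\sigma\|^2$-type quantities lie below $G(d_++\eps)$), and — most delicately — verifying that the finite-$t$ variational problem is globally concave so that its maximizer is unique and converges to the claimed limit, rather than merely bounding it. The concavity is precisely the ingredient the introduction flags as requiring sufficiently high temperature, so I expect that to be the crux of the argument rather than the Haar-integral computation itself.
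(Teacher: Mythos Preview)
Your high-level strategy is correct, but there is a genuine gap in the conditioning step. The sigma-field $\cG_t$ encodes the constraints $s^r=Ox^r$ \emph{and} $y^r=O^\top\Lambda s^r$ (i.e.\ $Oy^r=\Lambda s^r$) for $r=1,\ldots,t$; these are not equivalent but complementary, so the conditioning on $O$ is $2t$-dimensional: $(S_t,\Lambda S_t)=O(X_t,Y_t)$. Applying Proposition~\ref{prop:Haarconditioning} with only $A=S_t$, $B=X_t$ under-conditions and does not compute $\E[Z\mid\cG_t]$. Concretely, the paper decomposes $\sigma$ via the projection $\Pi_{X,Y}^\perp$ onto the orthogonal complement of the \emph{combined} span of $(X_t,Y_t)$, and the resulting low-dimensional overlap statistics are $v(\sigma)\sim n^{-1}X_t^\top\sigma$ and $w(\sigma)\sim n^{-1}Y_t^\top\sigma$; your proposed overlaps $n^{-1}\langle s^r,\sigma\rangle$ are not well-posed since $s^r$ lives in the rotated frame. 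Both $v$ and $w$ appear in the variational objective $\Phi_{1,t}$, and the $w$-dependence is essential: it is through $W_*=\kappa_*^{1/2}\Delta_t^{1/2}e_t$ that the entropy term becomes $\E[\log 2\cosh(\H+\Y_t)]$ and reproduces $\Psi_{\RS}$.

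For the $t\to\infty$ step, the paper does not rely on the variational problem ``degenerating''. Instead it identifies an explicit approximate stationary point $(u_*,v_*,w_*;\gamma_*,U_*,V_*,W_*)$, checks (Lemma~\ref{lemma:Psi1stationary}) that $\Phi_{1,t}=\Psi_{\RS}$ there with all partial derivatives zero except $\partial_V\Phi_{1,t}=o_t(1)$, and then proves matching upper and lower bounds on $\Psi_{1,t}$ by specializing, respectively, the inner infimum and the outer supremum. The upper bound is where small $\beta$ enters: one fixes $(\gamma,U,V,W)$ as specific functions of $(v,w)$---in particular $V(v)=\beta^{1/2}(v-v_*)$, $W(w)=\beta^{1/2}(w-w_*)+W_*$---so that the resulting function of $(v,w)$ has Hessian $-2\beta^{1/2}I+O(\beta)$ and is therefore globally concave. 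Your plan correctly anticipates the concavity issue, but the mechanism is this specialization-then-Hessian-bound, not a collapse of $\Delta_t$.
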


\subsection{Derivation of the variational formula}

For scalar arguments $\gamma>\bar{d}_+$ and $u,U \in \R$, and vector arguments
$v,w,V,W \in \R^t$ with $\|v\|^2+\|w\|^2<1$, we define the function
\begin{align}
&\Phi_{1,t}(u,v,w;\gamma,U,V,W)\nonumber\\
&=\E\Big[\log 2\cosh\Big(U \cdot \H+V^\top \Delta_t^{-1/2}
(\X_1,\ldots,\X_t)+\kappa_*^{-1/2}W^\top \Delta_t^{-1/2}(\Y_1,\ldots,\Y_t)
\Big)\Big]\nonumber\\
&\hspace{0.2in}-u \cdot U-v^\top V-w^\top W
+u+\bar{R}(1-q_*)\kappa_*^{-1/2}v^\top w
+\frac{\lambda_*-\bar{R}(1-q_*)\kappa_*^{-1}}{2}\|w\|^2\nonumber\\
&\hspace{0.2in}+\frac{1}{2}\cF(\gamma)
\|v-\kappa_*^{-1/2}w\|^2
+\frac{1}{2}\cH(\gamma,1-\|v\|^2-\|w\|^2)\label{eq:Phi1}
\end{align}
and the variational formula
\begin{equation}\label{eq:Psi1}
\Psi_{1,t}=\mathop{\sup_{u \in \R}}_{v,w \in \R^t:\|v\|^2+\|w\|^2<1}
\inf_{\gamma>\bar{d}_+} \inf_{U \in \R,\;V,W \in \R^t}
\Phi_{1,t}(u,v,w;\gamma,U,V,W).
\end{equation}

Here, the random variables
$(\H,\Y_1,\ldots,\Y_t,\X_1,\ldots,\X_t)$ and the positive-definite matrix
$\Delta_t$ are as described
in Theorem \ref{thm:SE}, and the functions $\cF$ and $\cH$ are given by
\begin{align}
\cF(\gamma)&\triangleq \cF_{22}(\gamma)-\cF_{12}(\gamma)^\top
\cF_{11}(\gamma)^{-1}\cF_{12}(\gamma),
\label{eq:Fgamma}\\
\cH(\gamma,\alpha)&\triangleq \gamma \alpha-\int \log(\gamma-x)\mu_{\bar{D}}(dx)
-(1+\log \alpha)\label{eq:Hgamma}
\end{align}
where we set
\begin{equation}\label{eq:LambdaThetax}
\lambda(x)\triangleq \frac{1}{(1-q_*)(\lambda_*-x)}-1,
\qquad \theta(x)\triangleq x+\frac{\bar{R}(1-q_*)}{\kappa_*}
\left(1-\frac{1}{(1-q_*)(\lambda_*-x)}\right),
\end{equation}
and
\begin{align}
\cF_{11}(\gamma)&\triangleq \int \frac{1}{\gamma-x}
\begin{pmatrix} 1 & \lambda(x) \\ \lambda(x) & \lambda(x)^2 \end{pmatrix}
\mu_{\bar{D}}(dx) \in \R^{2 \times 2}\label{eq:F11}\\
\cF_{12}(\gamma)&\triangleq \int \frac{1}{\gamma-x}\begin{pmatrix} \theta(x) \\
\lambda(x)\theta(x) \end{pmatrix}
\mu_{\bar{D}}(dx) \in \R^2 \label{eq:F12}\\
\cF_{22}(\gamma)&\triangleq \int \frac{1}{\gamma-x}\theta(x)^2\mu_{\bar{D}}(dx).
\label{eq:F22}
\end{align}
Note that under Assumption \ref{assump:main}(b), $\mu_{\bar{D}}$ is supported on
at least two points, and $\lambda_*>\bar{d}_+$ by definition so
$x \mapsto \lambda(x)$ is one-to-one on $\supp(\mu_{\bar{D}})$. As a result, $\calF_{11}(\gamma)$ is strictly positive-definite and
invertible for $\gamma>\bar{d}_+$ and thus $\cF(\gamma)$ is well-defined.

\begin{lemma}\label{lemma:Psi1correct}
In the setting of Theorem \ref{thm:replicasymmetric}, for any fixed $t \geq 1$,
almost surely
\[\lim_{n \to \infty} \frac{1}{n}\log \E[Z \mid \cG_t]=\Psi_{1,t}.\]
\end{lemma}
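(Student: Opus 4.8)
The plan is to compute $\E[Z\mid\cG_t]$ by writing $Z=\sum_\sigma \exp(\tfrac12\sigma^\top\bar J\sigma+h^\top\sigma)$, conditioning on the sigma-field $\cG_t$ generated by the AMP iterates, and using the explicit conditional law of the Haar matrix $O$ given by Proposition~\ref{prop:Haarconditioning}. First I would recall that $\cG_t$ is generated by $(y^0,x^1,s^1,y^1,\dots,x^t,s^t,y^t)$, and that the iterates satisfy $s^j=Ox^j$ and $y^j=O^\top\Lambda s^j$ via the eigendecomposition $\bar J=O^\top\bar D O$, $\Gamma=O^\top\Lambda O$. Thus conditioning on $\cG_t$ amounts to conditioning on the values of $O X_t=S_t$ and on $O^\top(\Lambda S_t)$; equivalently, $O$ maps the $t$-dimensional column span of $X_t$ onto a known $t$-dimensional subspace spanned by $S_t$, and acts as a fresh Haar matrix $\tilde O$ on the orthogonal complements (Proposition~\ref{prop:Haarconditioning}). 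For each spin configuration $\sigma$, I would decompose $\sigma$ relative to the column span of $X_t$: write $\sigma=X_t r+\sigma^\perp$ where $r\in\R^t$ and $\sigma^\perp\perp\mathrm{span}(X_t)$. The quadratic form $\sigma^\top\bar J\sigma$ and the linear term $h^\top\sigma$ then split into a part that is $\cG_t$-measurable (involving the known action of $O$ on $X_t$) plus a part involving $\tilde O$ acting on $\sigma^\perp$, and the latter is exactly of the form handled by Proposition~\ref{prop:Ointegralrank1}.

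The key computational step is then: for fixed $\sigma$, evaluate $\E[\exp(\tfrac12\sigma^\top\bar J\sigma+h^\top\sigma)\mid\cG_t]$ using Proposition~\ref{prop:Ointegralrank1} with the rank-one "$a$" equal to the projection of $\sigma$ onto the relevant subspace (scaled), and "$b$" incorporating both $h$ and the cross-terms coming from the known part $X_t r$; this produces a variational expression over a single parameter $\gamma>\bar d_+$, matching the $\cH(\gamma,\cdot)$ term and, after simplification using the explicit form of $\Lambda$ and $\lambda_*=\bar G^{-1}(1-q_*)$, the $\cF(\gamma)$ term in \eqref{eq:Phi1}. Summing over $\sigma\in\{\pm1\}^n$ and applying Laplace's method / Varadhan's lemma, the sum over $\sigma$ concentrates: the empirical distribution of the coordinates of the "overlap vector" $n^{-1}X_t^\top\sigma$ (together with $n^{-1}Y_t^\top\sigma$, $n^{-1}h^\top\sigma$) is governed by a rate function, and the state-evolution limits \eqref{eq:XX}–\eqref{eq:XY} together with Proposition~\ref{prop:AMPfree} identify the limiting covariance structure, so that the $\sigma$-sum becomes $\sup$ over the magnetization parameters $(u,v,w)$ of the $\E[\log2\cosh(\cdots)]$ term minus Legendre-dual linear terms $uU+v^\top V+w^\top W$. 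The introduction of the dual variables $(U,V,W)$ is precisely the Legendre transform linearizing the $\log\cosh$, which is why the formula has the $\inf_{U,V,W}\sup_{u,v,w}$ (here written $\sup\inf$) structure; the change of variables by $\Delta_t^{-1/2}$ and $\kappa_*^{-1/2}$ orthonormalizes the Gaussian/empirical coordinates so that the quadratic penalty becomes $\|v\|^2+\|w\|^2$.

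I would organize the proof as: (i) apply Proposition~\ref{prop:Haarconditioning} to get the conditional law of $O$; (ii) for each $\sigma$, reduce $\E[\exp(\cdots)\mid\cG_t]$ to a rank-one orthogonal integral and apply Proposition~\ref{prop:Ointegralrank1}, obtaining a $\gamma$-variational formula with coefficients expressed through $X_t,Y_t,h,\sigma$; (iii) simplify these coefficients to the limiting constants $\lambda_*,\kappa_*,\bar R(1-q_*)$ and the functions $\cF,\cH$ using Theorem~\ref{thm:SE}, Proposition~\ref{prop:AMPfree}, and the identity $\Lambda=(1-q_*)^{-1}(\lambda_*I-\bar D)^{-1}-I$; (iv) perform the $\sigma$-sum by a large-deviations/Laplace argument, identifying the resulting Legendre-type variational problem as $\Psi_{1,t}$; (v) check uniformity/continuity so that the order of limits and the $\sup$-$\inf$ are legitimate, and that the $\|v\|^2+\|w\|^2<1$ constraint (a feasibility constraint for the rank-one integral, i.e.\ $\|a\|^2/n<\bar G(\bar d_+)$) is the correct domain. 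The main obstacle I anticipate is step (iii)–(iv): carefully tracking all cross-terms between the $\cG_t$-measurable part $X_t r$ and the fluctuating part $\sigma^\perp$ through the rank-one integral, showing that the finite-$n$ quadratic forms converge (uniformly in the relevant overlap region) to the closed-form $\cF(\gamma)$ — this is where asymptotic freeness (Proposition~\ref{prop:AMPfree}) of $X_t$ from $\bar J$, applied to the rational function $x\mapsto (\gamma-x)^{-1}(\text{stuff}(x))^2$, does the essential work — and simultaneously controlling the $\sigma$-sum uniformly in $\gamma$ so that Laplace's method and the exchange of $\sup_\sigma$ with $\inf_\gamma$ are justified.
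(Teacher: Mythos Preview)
Your high-level architecture is exactly the paper's: Haar conditioning (Proposition~\ref{prop:Haarconditioning}), then the rank-one orthogonal integral (Proposition~\ref{prop:Ointegralrank1}) for each fixed $\sigma$, then simplification of the coefficients via the state evolution and asymptotic freeness (Proposition~\ref{prop:AMPfree}), then G\"artner--Ellis and Varadhan for the $\sigma$-sum. Two concrete points need correction before this can be carried out.

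First, the conditioning is $2t$-dimensional, not $t$-dimensional. You correctly note that $\cG_t$ fixes both $OX_t=S_t$ and $O^\top\Lambda S_t=Y_t$, but then summarize this as ``$O$ maps the $t$-dimensional column span of $X_t$ onto $S_t$'' and decompose $\sigma=X_t r+\sigma^\perp$ with $\sigma^\perp\perp\mathrm{span}(X_t)$. The second constraint is equivalent to $OY_t=\Lambda S_t$, so the event to condition on is $O(X_t,Y_t)=(S_t,\Lambda S_t)$, and Proposition~\ref{prop:Haarconditioning} must be applied with the $2t$-column matrices $(X_t,Y_t)$ and $(S_t,\Lambda S_t)$. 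The correct decomposition takes $\sigma_\perp=\Pi_{X,Y}^\perp\sigma$ orthogonal to the combined column span of $(X_t,Y_t)$, and $\sigma_\parallel=(S_t,\Lambda S_t)\big[(X_t,Y_t)^\top(X_t,Y_t)\big]^{-1}(X_t,Y_t)^\top\sigma$. This is exactly why both overlap vectors $v(\sigma)\sim n^{-1}X_t^\top\sigma$ and $w(\sigma)\sim n^{-1}Y_t^\top\sigma$ appear in $\Phi_{1,t}$, and why the domain constraint is $\|v\|^2+\|w\|^2<1$ (this equals $\|\sigma_\perp\|^2/n$ after orthonormalization). With only a $t$-dimensional projection the $w$ and $W$ variables have no natural origin, and you would in effect be computing a conditional expectation with respect to a strictly smaller sigma-field than $\cG_t$.

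Second, $h^\top\sigma$ is deterministic given $\sigma$ and is pulled outside the conditional expectation before any orthogonal integral; it does not contribute to the vector $b$. In Proposition~\ref{prop:Ointegralrank1} one takes $a=\sigma_\perp$ and $b=\Pi_{S,\Lambda S}^\perp\bar D\,\sigma_\parallel$, the latter arising purely from the cross term $\sigma_\parallel^\top\bar D\,\Pi\tilde O\sigma_\perp$ in the expansion of $\tfrac12(O\sigma)^\top\bar D(O\sigma)$. The external field enters only at the large-deviations stage, as the linear functional $u(\sigma)=n^{-1}h^\top\sigma$ whose Legendre dual introduces the variable $U$ and places $\H$ inside the $\log 2\cosh$.
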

\begin{proof}
Recall the $n \times t$ matrices $X_t=(x^1,\ldots,x^t)$,
$Y_t=(y^1,\ldots,y^t)$, and $S_t=(s^1,\ldots,s^t)$ which collect the AMP
iterates. We fix $t$ and write $\cG,X,Y,S,\Delta$ for
$\cG_t,X_t,Y_t,S_t,\Delta_t$. From the definition of $Z$ in
(\ref{eq:gibbsmeasure}),
\begin{equation}\label{eq:fnsigma}
\E[Z \mid \cG]=\sum_{\sigma \in \{+1,-1\}^n}
\exp\left(h^\top \sigma+\frac{n}{2} \cdot f_n(\sigma)\right),
\quad f_n(\sigma) \triangleq \frac{2}{n}\log
\E\left[\exp\left(\frac{1}{2}\sigma^\top O^\top\bar{D} O\sigma\right)
\;\bigg|\;\cG\right].
\end{equation}
The function $f_n(\sigma)$ is well-defined for any $\sigma \in \R^n$.
We first approximate $f_n(\sigma)$ over the sphere where $\|\sigma\|^2=n$.\\

\noindent {\bf Conditional law of $O$.}
Theorem \ref{thm:SE} guarantees that $\Delta$ is non-singular. The assumption of
positive variance in (\ref{eq:varianceone}) and the definitions of $\Gamma$ and
$\kappa_*$ in (\ref{eq:Gamma}) and (\ref{eq:kappadelta}) ensure that
$\kappa_*>0$. Then applying (\ref{eq:XX}--\ref{eq:XY}), 
almost surely for all large $n$, $n^{-1}(X,Y)^\top (X,Y) \in \R^{2t \times 2t}$
is also non-singular and $(X,Y) \in \R^{n \times 2t}$
has full column rank $2t$. Furthermore, we have the bounds
\begin{equation}\label{eq:XYSbounds}
\limsup_{n \to \infty} n^{-1/2}\|X\|<\infty,\quad
\limsup_{n \to \infty} n^{-1/2}\|Y\|<\infty,\quad
\limsup_{n \to \infty} n^{-1/2}\|S\|<\infty,
\end{equation}
which follow from $\|n^{-1}X^\top X\|=\|n^{-1}S^\top S\| \to \|\Delta\|$ 
and $\|n^{-1}Y^\top Y\| \to \|\kappa_*\Delta\|$.

Conditional on $\cG$, the law of $O$ is that of a Haar-orthogonal matrix
conditioned on the event
\[(S,\Lambda S)=O(X,Y).\]
By Proposition \ref{prop:Haarconditioning}, we may represent this conditional
law of $O$ as
\[O|_{\cG} \overset{L}{=} V_{(S,\Lambda S)^\perp}
\tilde{O}V_{(X,Y)^\perp}^\top
+(S,\Lambda S)\begin{pmatrix} X^\top X & X^\top Y \\
Y^\top X & Y^\top Y \end{pmatrix}^{-1}(X,Y)^\top,\]
where $V_{(X,Y)^\perp},V_{(S,\Lambda S)^\perp}\in \R^{n \times
(n-2t)}$ have orthonormal columns orthogonal to the column spans
of $(X,Y) \in \R^{n \times 2t}$ and $(S,\Lambda S) \in \R^{n \times 2t}$
respectively, and $\tilde{O} \sim \Haar(\O(n-2t))$ is an independent
Haar-orthogonal matrix.
Let us write as shorthand
\[V=V_{(S,\Lambda S)^\perp}.\]
For any vector $\sigma \in \R^n$, let us denote
\begin{equation}\label{eq:sigmadecomp}
\sigma_\perp=V_{(X,Y)^\perp}^\top \sigma \in \R^{n-2t},
\qquad \sigma_\parallel=(S,\Lambda S)\begin{pmatrix} X^\top X & X^\top Y \\
Y^\top X & Y^\top Y \end{pmatrix}^{-1}(X,Y)^\top \sigma \in \R^n.
\end{equation}
This yields the equality in conditional law
$O\sigma|_{\cG} \overset{L}{=} V \tilde{O} \sigma_\perp+\sigma_\parallel$, so 
\prettyref{eq:fnsigma} reduces to
\begin{equation}\label{eq:firstmomentform}
f_n(\sigma)=
\frac{1}{n}\sigma_\parallel^\top \bar{D}\sigma_\parallel
+\frac{2}{n}\log
\E\left[\exp\left(\frac{1}{2}\sigma_\perp^\top \tilde{O}^\top V^\top
\bar{D}V \tilde{O}\sigma_\perp+\sigma_\parallel^\top \bar{D}V
\tilde{O}\sigma_\perp\right)\right].
\end{equation}\\

\noindent {\bf Expectation over $\tilde{O}$.}
We first restrict to the domain
\[U_n=\{\sigma \in \R^n:\|\sigma\|^2=n,\;\sigma_\perp \neq 0\}\]
and evaluate the expectation over
$\tilde{O} \sim \Haar(\O(n-2t))$ using Proposition \ref{prop:Ointegralrank1}.
Throughout the proof, we write $r_n(\sigma)$ to indicate any $\sigma$-dependent
scalar, vector, or matrix remainder term with dimension independent of $n$,
satisfying the uniform convergence almost surely
\begin{equation}\label{eq:rnuniform}
\lim_{n \to \infty} \sup_{\sigma \in U_n} \|r_n(\sigma)\|=0,
\end{equation}
and changing from instance to instance.
We check the conditions of Proposition \ref{prop:Ointegralrank1}:
\begin{itemize}
\item The matrix $V=V_{(S,\Lambda S)^\perp}$ has $n-2t$ orthonormal
columns, where $t$ is independent of $n$. Then by
Assumption \ref{assump:main}(b) and Weyl eigenvalue interlacing,
as $n \to \infty$, the empirical eigenvalue distribution of
$V^\top \bar{D} V$ has the same weak limit $\mu_{\bar{D}}$ as that of $\bar{D}$.
Furthermore, from the conditions on $\max(d_1,\ldots,d_n)$ and
$\min(d_1,\ldots,d_n)$ in Assumption \ref{assump:main}(b),
the largest eigenvalue of $V^\top \bar{D} V$ also converges to $\bar{d}_+$,
and the smallest eigenvalue remains bounded away from $-\infty$.
\item Take $a=\sigma_\perp$ in Proposition \ref{prop:Ointegralrank1}.
Applying (\ref{eq:barGR}), we have $\bar{G}(\bar{d}_+)
=\beta^{-1}G(d_+)$, where $G(d_+) \in (0,\infty]$ depends only
on $\mu_D$. Then for some $\beta_0=\beta_0(\mu_D)>0$,
any $\beta \in (0,\beta_0)$, and any sufficiently small constant
$\eps>0$, we have 
\begin{equation}
\bar{G}(\bar{d}_++\eps)-\eps>1
\label{eq:Glarge}
\end{equation}
so that $\|\sigma_\perp\|^2/n \leq \|\sigma\|^2/n=1
<\bar{G}(\bar{d}_++\eps)-\eps$.
\item Take $b=V^\top \bar{D}\sigma_\parallel$ in
Proposition \ref{prop:Ointegralrank1}. Observe that $(S,\Lambda S)=O(X,Y)$, so
$\sigma_\parallel=O\Pi_{X,Y}\sigma$ where
$\Pi_{X,Y}=I-V_{(X,Y)^\perp}V_{(X,Y)^\perp}^\top \in \R^{n \times
n}$ is the orthogonal projection onto the column span of $(X,Y)$.
Then $\|V^\top\bar{D}\sigma_\parallel\|^2/n
\leq \|\bar{D}\|^2 \cdot \|\sigma_\parallel\|^2/n
\leq \|\bar{D}\|^2 \cdot \|\sigma\|^2/n=\|\bar{D}\|^2$.
\end{itemize}
Thus Proposition \ref{prop:Ointegralrank1} (applied with dimension
$n-2t$) yields uniformly over $\sigma \in U_n$
\begin{equation}\label{eq:Oexpectation}
f_n(\sigma)=\frac{1}{n} \sigma_\parallel^\top
\bar{D}\sigma_\parallel+E_n(\sigma)+r_n(\sigma)
\end{equation}
where
\begin{align}
E_n(\sigma)&=\inf_{\gamma \geq \bar{d}_++\eps} \Bigg\{
\frac{\gamma \|\sigma_\perp\|^2}{n}
+\frac{\sigma_\parallel^\top\bar{D}V(\gamma I-V^\top \bar{D}V)^{-1}
V^\top\bar{D}\sigma_\parallel}{n}\nonumber\\
&\hspace{2in} -\frac{1}{n}\log\det (\gamma
I-V^\top \bar{D}V)-\left(1+\log \frac{\|\sigma_\perp\|^2}{n}\right)\Bigg\}.
\label{eq:Ensigma}
\end{align}\\

\noindent {\bf Approximation by $v,w$.}
For $\sigma \in U_n$, define the low-dimensional linear functionals
\begin{equation}\label{eq:uvw}
u(\sigma)=\frac{1}{n}h^\top \sigma, \qquad
\begin{pmatrix} v(\sigma) \\ w(\sigma) \end{pmatrix}
=\left[\frac{1}{n}\begin{pmatrix} X^\top X & X^\top Y \\ Y^\top X & Y^\top Y
\end{pmatrix}\right]^{-1/2} \cdot \frac{1}{n} (X,Y)^\top \sigma
\end{equation}
where $u(\sigma) \in \R$ and $v(\sigma),w(\sigma) \in \R^t$. Note that
\begin{equation}\label{eq:vwbound}
\|v(\sigma)\|^2+\|w(\sigma)\|^2=\frac{1}{n}\|\Pi_{(X,Y)} \sigma\|^2
=1-\frac{\|\sigma_\perp\|^2}{n}<1.
\end{equation}
Let us approximate the terms of (\ref{eq:Oexpectation}) by functions of
$v(\sigma)$ and $w(\sigma)$.

We begin with $\sigma_\parallel^\top \bar{D} \sigma_\parallel/n$:
Applying (\ref{eq:XX}--\ref{eq:XY}) to (\ref{eq:sigmadecomp}),
\begin{align}
\sigma_\parallel
&=(S,\Lambda S)\left[\frac{1}{n}\begin{pmatrix} X^\top X & X^\top Y \\ Y^\top X
& Y^\top Y \end{pmatrix}\right]^{-1/2}\begin{pmatrix} v(\sigma) \\
w(\sigma) \end{pmatrix}\nonumber\\
&=S \cdot \Delta^{-1/2}v(\sigma)
+\Lambda S \cdot (\kappa_*\Delta)^{-1/2}w(\sigma)+(S,\Lambda S) \cdot
r_n(\sigma).\label{eq:sigmaparallelapprox}
\end{align}
From the definition of $\lambda_*$ in (\ref{eq:lambdastar}) and the definition
of the Cauchy-transform in (\ref{eq:GR}), as $n \to \infty$,
\begin{equation}\label{eq:TrDG}
n^{-1}\Tr \bar{D}(\lambda_* I-\bar{D})^{-1}
=n^{-1}\Tr \Big[\lambda_*(\lambda_* I-\bar{D})^{-1}-I\Big]
\to \lambda_* \bar{G}(\lambda_*)-1=\lambda_*(1-q_*)-1.
\end{equation}
Differentiating the R-transform in (\ref{eq:GR}),
\[\bar{R}'(z)=\frac{1}{\bar{G}'(\bar{G}^{-1}(z))}+\frac{1}{z^2}.\]
Then applying the form of $\kappa_*$ in (\ref{eq:kappastar}), also
\begin{align}
n^{-1}\Tr \bar{D}(\lambda_* I-\bar{D})^{-2}
&=n^{-1}\Tr \Big[\lambda_*(\lambda_* I-\bar{D})^{-2}-(\lambda_*
I-\bar{D})^{-1}\Big]\nonumber\\
&\to -\lambda_* \bar{G}'(\lambda_*)-\bar{G}(\lambda_*)
=\lambda_*(\kappa_*+1)(1-q_*)^2-(1-q_*).\label{eq:TrDG2}
\end{align}
Let us write as a shorthand
\begin{equation}\label{eq:astar}
a_*\triangleq\bar{R}(1-q_*)=\lambda_*-\frac{1}{1-q_*}.
\end{equation}
Then in view of the definition of $\Gamma$ in (\ref{eq:Gamma}), applying (\ref{eq:meanzero}), (\ref{eq:TrDG}), and (\ref{eq:TrDG2}) yields
\[n^{-1} \Tr \bar{D} \to 0, \qquad
n^{-1}\Tr \bar{D}\Lambda \to a_*, \qquad
n^{-1}\Tr \bar{D}\Lambda^2 \to \lambda_*\kappa_*-a_*.\]
So Proposition \ref{prop:AMPfree} yields almost surely
\begin{equation}\label{eq:SDS}
\frac{1}{n}(S,\Lambda S)^\top \bar{D}(S,\Lambda S)
\to \begin{pmatrix} 0 & a_*\Delta \\ a_*\Delta &
(\lambda_*\kappa_*-a_*)\Delta \end{pmatrix}.
\end{equation}
Combining this with (\ref{eq:sigmaparallelapprox}), we obtain for the first term
of (\ref{eq:Oexpectation}) that
\begin{equation}\label{eq:Ifirstterm}
\frac{\sigma_\parallel^\top \bar{D} \sigma_\parallel}{n}
=\frac{2a_*}{\kappa_*^{1/2}}v(\sigma)^\top w(\sigma)
+\left(\lambda_*-\frac{a_*}{\kappa_*}\right)\|w(\sigma)\|^2+r_n(\sigma).
\end{equation}

Next, we approximate $E_n(\sigma)$ in (\ref{eq:Ensigma}) by approximating each term inside the infimum 
uniformly over $\gamma \geq \bar{d}_++\eps$ and
$\sigma \in U_n$.
Note that for all large $n$,
all eigenvalues of $V^\top \bar{D}V$ are contained in a compact interval
$\calK \subset (-\infty,\bar{d}_++\eps/2)$ that is disjoint from
$[\bar{d}_++\eps,\infty)$. Fixing $\gamma \geq \bar{d}_++\eps$,
the function $x \mapsto \log(\gamma-x)$ is bounded and
continuous on $\calK$, so by weak convergence in \prettyref{assump:main}(b),
\[\frac{1}{n}\log \det(\gamma I-V^\top \bar{D}V)
=\int \log (\gamma-x)\mu_{\bar{D}}(dx)+r_n(\gamma)\]
where $r_n(\gamma) \to 0$ as $n \to \infty$.
The function $\gamma \mapsto n^{-1}\log \det(\gamma
I-V^\top \bar{D}V)$ on the left side is uniformly Lipschitz
over $\gamma \geq \bar{d}_++\eps$ for all large $n$,
so by Arzel\`a-Ascoli, in fact $r_n(\gamma) \to 0$ uniformly in $\gamma$ over any compact
subset $\calK' \subset [\bar{d}_++\eps,\infty)$. For any $\delta>0$,
we may take a sufficiently large such compact subset $\calK_\delta'$ and bound
\[\Big|\log (\gamma-x)-\log \gamma\Big|
\leq |x| \cdot \frac{1}{|\gamma|-|x|}<\delta \text{ for all }
x \in \calK,\;\gamma \in [\bar{d}_++\eps,\infty) \setminus \calK_\delta'.\]
Then also $|r_n(\gamma)|<2\delta$ for all $\gamma \in [\bar{d}_++\eps,\infty)
\setminus \calK_\delta'$, implying that the convergence $r_n(\gamma) \to 0$ is
uniform over all $\gamma \geq \bar{d}_++\eps$. Then, applying
also $\|\sigma_\perp\|^2/n=1-\|v(\sigma)\|^2-\|w(\sigma)\|^2$ from
(\ref{eq:vwbound}) and recalling the function $\cH$ defined in (\ref{eq:Hgamma}), we obtain
\begin{equation}\label{eq:Hconvergence}
\frac{\gamma \|\sigma_\perp\|^2}{n}
-\frac{1}{n}\log \det(\gamma I-V^\top\bar{D}V)
-\left(1+\log \frac{\|\sigma_\perp\|^2}{n}\right)
=\cH(\gamma,1-\|v(\sigma)\|^2-\|w(\sigma)\|^2)+r_n(\gamma).
\end{equation}

To analyze the remaining second term of $E_n(\sigma)$ in
(\ref{eq:Ensigma}), let us introduce
\begin{equation}\label{eq:Wdef}
W=(S,\Lambda S)\begin{pmatrix} S^\top S & S^\top \Lambda S \\
S^\top \Lambda S & S^\top \Lambda^2 S \end{pmatrix}^{-1/2}
=(S,\Lambda S)\begin{pmatrix} X^\top X & X^\top Y \\
Y^\top X & Y^\top Y \end{pmatrix}^{-1/2} \in \R^{n \times 2t}
\end{equation}
whose columns are the orthogonalization of $(S,\Lambda S)$.
Then the columns of $(V,W)$ form a full orthonormal basis for $\R^n$. We write
$\Pi=VV^\top=I-WW^\top$ as the projection orthogonal to $(S,\Lambda S)$.
Applying (\ref{eq:sigmaparallelapprox}), (\ref{eq:SDS}) and
(\ref{eq:XX}--\ref{eq:XY}), observe that
\begin{align*}
&\begin{pmatrix} X^\top X & X^\top Y \\
Y^\top X & Y^\top Y\end{pmatrix}^{-1}
(S,\Lambda S)^\top \bar{D}\sigma_\parallel\\
&=\begin{pmatrix} \Delta^{-1} & 0 \\ 0 & (\kappa_*\Delta)^{-1}
\end{pmatrix} \begin{pmatrix} 0 & a_*\Delta \\ a_*\Delta &
(\lambda_*\kappa_*-a_*) \Delta \end{pmatrix}
\begin{pmatrix} \Delta^{-1/2}v(\sigma) \\ (\kappa_*\Delta)^{-1/2}w(\sigma)
\end{pmatrix}+r_n(\sigma)\\
&=\begin{pmatrix} 0 & a_*I \\ a_*\kappa_*^{-1}I &
(\lambda_*-a_*\kappa_*^{-1})I \end{pmatrix}
\begin{pmatrix} \Delta^{-1/2}v(\sigma) \\ (\kappa_*\Delta)^{-1/2}w(\sigma)
\end{pmatrix}+r_n(\sigma).
\end{align*}
Then applying
\[\Pi=I-WW^\top=I-(S,\Lambda S)\begin{pmatrix} X^\top X & X^\top Y \\
Y^\top X & Y^\top Y \end{pmatrix}^{-1}(S,\Lambda S)^\top,\]
we obtain
\begin{align*}
\Pi \bar{D}\sigma_\parallel
&=\Big(\bar{D}S-a_*\kappa_*^{-1}\Lambda S\Big) \cdot \Delta^{-1/2}v(\sigma)
+\Big(\bar{D}\Lambda S-a_*S-(\lambda_*-a_*\kappa_*^{-1})\Lambda S\Big)
\cdot (\kappa_*\Delta)^{-1/2}w(\sigma)\\
&\hspace{2in}+(\bar{D}S,\bar{D}\Lambda S,S,\Lambda S)r_n(\sigma).
\end{align*}
Substituting
\[\Lambda=\frac{1}{1-q_*}(\lambda_* I-\bar{D})^{-1}-I,
\qquad \bar{D}\Lambda=\frac{1}{1-q_*}\Big(\lambda_*(\lambda_* I-\bar{D})^{-1}
-I\Big)-\bar{D}\]
and applying the identity (\ref{eq:astar}) and some algebraic simplification,
\begin{equation}\label{eq:PiDsigmaparallel}
\Pi\bar{D}\sigma_\parallel
=\tilde{D}S \cdot \Big(\Delta^{-1/2}v(\sigma)
-(\kappa_*\Delta)^{-1/2}w(\sigma)\Big)
+(\bar{D}S,\bar{D}\Lambda S,S,\Lambda S)r_n(\sigma)
\end{equation}
where $\tilde{D}$ is the diagonal matrix
\begin{equation}\label{eq:Dtilde}
\tilde{D} \triangleq \bar{D}+\frac{a_*}{\kappa_*}\left(I-\frac{1}{1-q_*}
(\lambda_*I-\bar{D})^{-1}\right).
\end{equation}

Now let us apply $(V,W)^\top (V,W)=I$ to write
\begin{align*}
\begin{pmatrix} \gamma I-V^\top \bar{D} V &
-V^\top \bar{D} W \\ -W^\top \bar{D} V & \gamma I-W^\top \bar{D} W \end{pmatrix}
&=\begin{pmatrix} V^\top \\ W^\top \end{pmatrix}
(\gamma I-\bar{D})\begin{pmatrix} V & W \end{pmatrix}\\
&=\left[\begin{pmatrix} V^\top \\ W^\top \end{pmatrix}
(\gamma I-\bar{D})^{-1}\begin{pmatrix} V & W \end{pmatrix}\right]^{-1}\\
&=\begin{pmatrix} V^\top (\gamma I-\bar{D})^{-1} V &
V^\top (\gamma I-\bar{D})^{-1} W \\ W^\top (\gamma I-\bar{D})^{-1} V &
W^\top (\gamma I-\bar{D})^{-1} W \end{pmatrix}^{-1}.
\end{align*}
Equating the upper-left blocks and applying the Schur-complement formula to the
right side yields
\[\gamma I-V^\top \bar{D} V
=\Big[V^\top(\gamma I-\bar{D})^{-1}V-
V^\top (\gamma I-\bar{D})^{-1}W(W^\top(\gamma I-\bar{D})^{-1}W)^{-1}
W^\top(\gamma I-\bar{D})^{-1}V\Big]^{-1}.\]
Thus, recalling $\Pi=VV^\top$, the second term of (\ref{eq:Ensigma}) is
\begin{align}
&\frac{1}{n}\sigma_\parallel^\top \bar{D} V(\gamma I-V^\top \bar{D} V)^{-1}
V^\top \bar{D}\sigma_\parallel\nonumber\\
&=\frac{1}{n}\sigma_\parallel^\top \bar{D} \Pi(\gamma I-\bar{D})^{-1}\Pi
\bar{D}\sigma_\parallel
-\frac{1}{n}\sigma_\parallel^\top \bar{D} \Pi (\gamma I-\bar{D})^{-1}W
(W^\top (\gamma I-\bar{D})^{-1}W)^{-1}W^\top(\gamma I-\bar{D})^{-1}
\Pi \bar{D}\sigma_\parallel.\label{eq:schurcomplementdecomp}
\end{align}

We apply (\ref{eq:PiDsigmaparallel}) and Proposition \ref{prop:AMPfree}
to approximate these two terms: By Proposition \ref{prop:AMPfree}, we have
almost surely
\[\frac{1}{n}S^\top \tilde{D}(\gamma I-\bar{D})^{-1}\tilde{D}S
\to \cF_{22}(\gamma) \cdot \Delta\]
for each fixed $\gamma \geq \bar{d}_++\eps$, where $\cF_{22}(\gamma)$ is as
defined in (\ref{eq:F22}) and $\tilde D$ in \prettyref{eq:Dtilde}.
Applying (\ref{eq:XYSbounds}), the left side is a $t \times t$ matrix that is
entrywise uniformly Lipschitz as a function of $\gamma \geq \bar{d}_++\eps$ for
all large $n$. So this convergence is again uniform in $\gamma$ over any compact subset
$\calK' \subset [\bar{d}_++\eps,\infty)$ by Arzel\`a-Ascoli. For any
$\delta>0$, we may take a sufficiently large such subset $\calK_\delta'$ so that
the left side is entrywise bounded by
$\delta$ for all $\gamma$ outside $\calK_\delta'$. In all, we conclude the
above convergence is uniform over all $\gamma \geq \bar{d}_++\eps$. Since
\[\frac{1}{n}S^\top \tilde{D}
(\gamma I-\bar{D})^{-1}(\bar{D}S,\bar{D}\Lambda S,S,\Lambda S),\qquad
\frac{1}{n}(\bar{D}S,\bar{D}\Lambda S,S,\Lambda S)^\top
(\gamma I-\bar{D})^{-1}(\bar{D}S,\bar{D}\Lambda S,S,\Lambda S)\]
are also uniformly bounded over $\gamma \geq \bar{d}_++\eps$ for all large $n$,
this combined with (\ref{eq:PiDsigmaparallel}) shows for the first term of
(\ref{eq:schurcomplementdecomp}) that
\begin{equation}\label{eq:schurcompfirstterm}
\frac{1}{n}\sigma_\parallel^\top \bar{D} \Pi(\gamma I-\bar{D})^{-1}\Pi
\bar{D}\sigma_\parallel=\cF_{22}(\gamma)
\cdot \|v(\sigma)-\kappa_*^{-1/2}w(\sigma)\|^2+r_n(\sigma,\gamma)
\end{equation}
where $r_n(\sigma,\gamma) \to 0$ uniformly over $\gamma \geq \bar{d}_++\eps$ and
$\sigma \in U_n$ as $n \to \infty$.

For the second term of (\ref{eq:schurcomplementdecomp}), 
recalling $\Lambda = \frac{1}{1-q_*}(\lambda_*I-\bar{D})^{-1}-I$ from \prettyref{eq:Gamma} and again applying
Proposition \ref{prop:AMPfree}, we have
\[\frac{1}{n}(S,\Lambda S)^\top(\gamma I-\bar{D})^{-1}(S,\Lambda S)
\to \cF_{11}(\gamma) \otimes \Delta \in \R^{2t \times 2t}\]
where
\begin{align*}
\cF_{11}(\gamma)&=\lim_{n \to \infty}
\begin{pmatrix} \frac{1}{n}\Tr (\gamma I-\bar{D})^{-1} &
\frac{1}{n}\Tr (\gamma I-\bar{D})^{-1} \Lambda \\
\frac{1}{n}\Tr \Lambda (\gamma I-\bar{D})^{-1} &
\frac{1}{n}\Tr \Lambda (\gamma I-\bar{D})^{-1} \Lambda
\end{pmatrix},
\end{align*}
and this coincides with the matrix defined in (\ref{eq:F11}).
Then, recalling the form of $W$ from (\ref{eq:Wdef}),
\begin{align*}
W^\top(\gamma I-\bar{D})^{-1}W
&\to \begin{pmatrix} \Delta & 0 \\ 0 & \kappa_*\Delta \end{pmatrix}^{-1/2}
[\cF_{11}(\gamma) \otimes \Delta]
\begin{pmatrix} \Delta & 0 \\ 0 & \kappa_*\Delta \end{pmatrix}^{-1/2}\\
&=\left[\begin{pmatrix} 1 & 0 \\ 0 & \kappa_*^{-1/2} \end{pmatrix} \cF_{11}(\gamma)
\begin{pmatrix} 1 & 0 \\ 0 & \kappa_*^{-1/2} \end{pmatrix}\right] \otimes I.
\end{align*}
Similarly, for $\cF_{12}(\gamma)$ as defined in (\ref{eq:F12}),
\[\frac{1}{\sqrt{n}}W^\top(\gamma I-\bar{D})^{-1}\tilde{D}S \to
\begin{pmatrix} \Delta & 0 \\ 0 & \kappa_*\Delta \end{pmatrix}^{-1/2}
[\cF_{12}(\gamma) \otimes \Delta]
=\left[\begin{pmatrix} 1 & 0 \\ 0 & \kappa_*^{-1/2} \end{pmatrix}
\cF_{12}(\gamma)\right] \otimes \Delta^{1/2}.\]
Thus
\[\frac{1}{n}S^\top \tilde{D}(\gamma I-\bar{D})^{-1} W
(W^\top(\gamma I-\bar{D})^{-1}W)^{-1}W^\top(\gamma I-\bar{D})^{-1}
\tilde{D}S \to \cF_{12}(\gamma)^\top \cF_{11}(\gamma)^{-1}\cF_{12}(\gamma) \cdot \Delta.\]
Applying the bounds $\|(W^\top(\gamma I-\bar{D})^{-1}W)^{-1}\| \leq
\gamma-\bar{d}_-$ and $\|W^\top(\gamma I-\bar{D})^{-1}\| \leq
\frac{1}{\gamma-\bar{d}_+}$, we may check that
the left side is again uniformly Lipschitz over $\gamma \geq \bar{d}_++\eps$
and, for any $\delta>0$, is bounded in magnitude
by $\delta$ when $\gamma$ lies outside a
compact subset $\calK_\delta' \subset [\bar{d}_++\eps,\infty)$.
Thus this convergence is again uniform over
$\gamma \geq \bar{d}_++\eps$. Then, combining with (\ref{eq:PiDsigmaparallel})
and applying the same argument as leading to (\ref{eq:schurcompfirstterm}),
we have for the second term of (\ref{eq:schurcomplementdecomp}) that
\begin{align*}
&\frac{1}{n}\sigma_\parallel^\top \bar{D}\Pi(\gamma I-\bar{D})^{-1} W
(W^\top(\gamma I-\bar{D})^{-1}W)^{-1}
W^\top(\gamma I-\bar{D})^{-1}\Pi\bar{D}\sigma_\parallel\\
&=\cF_{12}(\gamma)^\top \cF_{11}(\gamma)^{-1}\cF_{12}(\gamma)
\cdot \|v(\sigma)-\kappa_*^{-1/2}w(\sigma)\|^2+r_n(\sigma,\gamma)
\end{align*}
where $r_n(\sigma,\gamma) \to 0$ uniformly over $\gamma \geq \bar{d}_++\eps$ and
$\sigma \in U_n$.
Defining $\cF=\cF_{22}-\cF_{12}^\top \cF_{11}^{-1}\cF_{12}$
as in (\ref{eq:Fgamma}), this shows that almost surely as $n \to \infty$,
the second term of (\ref{eq:Ensigma}) satisfies
\begin{equation}\label{eq:Fconvergencefixed}
\frac{1}{n}\sigma_\parallel^\top \bar{D}V(\gamma I-V^\top \bar{D}V)^{-1}
V^\top \bar{D}\sigma_\parallel=
\cF(\gamma)\cdot \|v(\sigma)-\kappa_*^{-1/2}w(\sigma)\|^2
+r_n(\sigma,\gamma).
\end{equation}

Observe that this also implies
\begin{equation}\label{eq:Fmonotonicity}
\cF(\gamma) \text{ is non-increasing and convex over } \gamma>\bar{d}_+.
\end{equation}
Indeed, fixing any $\gamma>\bar{d}_+$, let us take $\eps$ above small enough
such that $\gamma \geq \bar{d}_++\eps$. For each $n$,
let us take $\sigma \in U_n$ such that
$\|v(\sigma)\|^2 \to 1$ and $\|w(\sigma)\|^2 \to 0$ as $n \to \infty$. (For
example, we may choose $\sigma=\sqrt{n}(x+\delta_n r)/\|x+\delta_n r\|$
where $x$ is the first column of $X$, $r$ is a unit vector orthogonal to the
column span of $(X,Y)$, and $\delta_n \to 0$ as $n \to \infty$.) Then as
$n \to \infty$, the right side of (\ref{eq:Fconvergencefixed}) converges to
$\cF(\gamma)$. The left side is non-increasing and convex at $\gamma$ for each
finite $n$, so the same properties hold for the limit $\cF(\gamma)$.

Combining (\ref{eq:Ifirstterm}), (\ref{eq:Hconvergence}),
and (\ref{eq:Fconvergencefixed}) and applying this to (\ref{eq:Oexpectation}),
we obtain the approximation for $\sigma \in U_n$
\begin{align*}
f_n(\sigma)&=\inf_{\gamma \geq \bar{d}_++\eps} \Bigg(
\frac{2a_*}{\kappa_*^{1/2}}v(\sigma)^\top w(\sigma)+\left(\lambda_*-\frac{a_*}{\kappa_*}\right)
\|w(\sigma)\|^2+\cF(\gamma) \cdot \|v(\sigma)-\kappa_*^{-1/2}w(\sigma)\|^2\\
&\hspace{3in}+\cH(\gamma,1-\|v(\sigma)\|^2-\|w(\sigma)\|^2)\Bigg)
+r_n(\sigma),
\end{align*}
where $r_n(\sigma) \to 0$ uniformly over $\sigma \in U_n$.
Observe that for any fixed $\sigma \in U_n$, we have
$\|v(\sigma)\|^2+\|w(\sigma)\|^2<1$ strictly, so
the argument to this infimum is a well-defined and convex function of
$\gamma \in (\bar{d}_+,\infty)$. Its derivative in $\gamma$ is
\[\cF'(\gamma) \cdot \|v(\sigma)-\kappa_*^{-1/2}w(\sigma)\|^2
+1-\|v(\sigma)\|^2-\|w(\sigma)\|^2-\bar{G}(\gamma).\]
For any $\gamma \in (\bar{d}_+,\bar{d}_++\eps]$,
$\cF'(\gamma)\leq 0$ as shown in (\ref{eq:Fmonotonicity}),
and $1<\bar{G}(\bar{d}_++\eps)-\eps$ as
previously argued in \prettyref{eq:Glarge}, so $\bar{G}(\gamma)>1+\eps$.
Thus this derivative is negative for $\gamma \in
(\bar{d}_+,\bar{d}_++\eps]$,
so it is equivalent to write this infimum over the range
$\gamma>\bar{d}_+$, i.e.\
\begin{align}
f_n(\sigma)&=f(v(\sigma),w(\sigma))+r_n(\sigma)\label{eq:fnapprox}
\end{align}
where the function $f$ on the domain $\calV \triangleq \{(v,w):\|v\|^2+\|w\|^2<1\}$ is defined by
\begin{equation}\label{eq:fvw}
f(v,w) \triangleq \inf_{\gamma>\bar{d}_+}
\frac{2a_*}{\kappa_*^{1/2}}v^\top w+\left(\lambda_*-\frac{a_*}{\kappa_*}\right)
\|w\|^2+\cF(\gamma) \cdot \|v-\kappa_*^{-1/2}w\|^2
+\cH(\gamma,1-\|v\|^2-\|w\|^2).
\end{equation}

Finally, observe that $f_n(\sigma)$ is continuous on the sphere $\{\sigma \in
\R^n:\|\sigma\|^2=n\}$, and the function
$\sigma \mapsto (v(\sigma),w(\sigma))$ is continuous, relatively open, and
maps the dense subset $U_n$ of this sphere
to $\calV$ for
every $n$. By Proposition \ref{prop:continuousextension} in \prettyref{app:aux},
$f(v,w)$ admits a continuous extension\footnote{Here, it is not hard to show that this extension to $\|v\|^2+\|w\|^2 = 1$ is given explicitly by 
$f(v,w)=\frac{2a_*}{\kappa_*^{1/2}}v^\top
w+(\lambda_*-\frac{a_*}{\kappa_*})\|w\|^2$, but this explicit form is not
needed for proving the end result in \prettyref{eq:Psi1}.}
 to the closure $\bar{\calV}=\{(v,w):\|v\|^2+\|w\|^2 \leq 1\}$, and (\ref{eq:fnapprox}) holds
uniformly over all $\sigma$ on this sphere. Thus, we have shown the almost sure
uniform convergence
\begin{equation}\label{eq:fnapprox2}
\lim_{n \to \infty} \sup_{\sigma \in \R^n:\|\sigma\|^2=n}
\left|f_n(\sigma)-f(v(\sigma),w(\sigma)) \right|=0.
\end{equation}\\

\noindent {\bf Large deviations analysis.}
We conclude the proof by applying Varadhan's Lemma and the
G\"{a}rtner-Ellis Theorem: Consider now the discrete uniform law
$\sigma \sim \Unif(\{+1,-1\}^n)$ and write $\langle \cdot \rangle$ for the
expectation over this law. For arguments $U \in \R$ and $V,W \in \R^t$,
define the limiting cumulant generating function
\begin{align*}
\lambda(U,V,W)&=\lim_{n \to \infty} \frac{1}{n}
\log \Big\langle \exp\Big[n\big(U \cdot u(\sigma)+V^\top v(\sigma)
+W^\top w(\sigma)\big)\Big]\Big\rangle\\
&=\lim_{n \to \infty} \frac{1}{n}
\log \left\langle \exp\left[U \cdot h^\top \sigma+
(V^\top,W^\top)\left[\frac{1}{n}\begin{pmatrix} X^\top X & X^\top Y \\
Y^\top X & Y^\top Y \end{pmatrix}\right]^{-1/2}
\begin{pmatrix} X^\top \sigma \\ Y^\top \sigma \end{pmatrix}
\right]\right\rangle\\
&=\lim_{n \to \infty} \frac{1}{n}\log \Big\langle \exp\Big[U \cdot h^\top \sigma
+V^\top \Delta^{-1/2}X^\top \sigma+W^\top (\kappa_*\Delta)^{-1/2}
Y^\top \sigma+n \cdot r_n(\sigma)\Big]\Big\rangle.
\end{align*}
Here $r_n(\sigma)$ is a remainder term satisfying $r_n(\sigma) \to 0$
uniformly over $\sigma \in \{+1,-1\}^n$ for any
fixed arguments $U,V,W$, and hence is negligible in the
large-$n$ limit. Evaluating the average over $\sigma$ using $\langle
e^{a\sigma_i} \rangle=\cosh a$,
and writing $h_i \in \R$ and $x_i,y_i \in \R^t$ for the entries of $h$
and rows of $X,Y$, we obtain
\begin{align*}
\lambda(U,V,W)&=\lim_{n \to \infty} \frac{1}{n}\sum_{i=1}^n \log \cosh
\Big(U \cdot h_i+V^\top \Delta^{-1/2}x_i+\kappa_*^{-1/2} W^\top \Delta^{-1/2}
y_i\Big)
\end{align*}
Then the weak convergence in law (\ref{eq:AMPconvergence})
from the AMP state evolution of Theorem \ref{thm:SE} shows that this limit
indeed exists almost surely, and is given by
\[\lambda(U,V,W)=\E\Big[\log \cosh\Big(U \cdot \H+V^\top
\Delta^{-1/2}(\X_1,\ldots,\X_t)+\kappa_*^{-1/2}W^\top \Delta^{-1/2}
(\Y_1,\ldots,\Y_t)\Big)\Big].\]
Note that the function $\lambda(U,V,W)$ is finite and differentiable at all 
$(U,V,W) \in \R^{2t+1}$. Then, denoting by
\begin{equation}\label{eq:Fencheldual}
\lambda^*(u,v,w)=\sup_{U \in \R,\;V,W \in \R^t}
U \cdot u+V^\top v+W^\top w-\lambda(U,V,W)
\end{equation}
its Fenchel-Legendre dual, the G\"{a}rtner-Ellis Theorem shows that
$(u(\sigma),v(\sigma),w(\sigma))$ satisfies a large deviations principle
with good rate function $\lambda^*(u,v,w)$ \cite[Theorem 2.3.6]{DZ98}.

The function $(u,v,w) \mapsto u+f(v,w)/2$ is continuous over
$\{u \in \R,\;v,w \in \R^t:\;\|v\|^2+\|w\|^2 \leq 1\}$.
Here $f(v,w)$ must be bounded over the compact
set $\{v,w \in \R^t:\|v\|^2+\|w\|^2 \leq 1\}$, and for any $c>0$ we have the
exponential integrability
\[\lim_{n \to \infty} \frac{1}{n}\log \Big\langle e^{cnu(\sigma)}\Big\rangle
=\lim_{n \to \infty}
\frac{1}{n}\log \Big\langle e^{c \cdot h^\top \sigma} \Big\rangle
=\lim_{n \to \infty} \frac{1}{n}\sum_{i=1}^n \log\cosh(ch_i)
=\E[\log \cosh(c\H)]<\infty.\]
Then by (\ref{eq:fnsigma}), (\ref{eq:fnapprox2}), and
Varadhan's lemma \cite[Theorem 4.3.1]{DZ98},
\begin{align*}
\lim_{n \to \infty} \frac{1}{n}\log \E[Z \mid \cG]
&=\log 2+\lim_{n \to \infty} \frac{1}{n}\log \left\langle
\exp\Big(n \cdot \Big[u(\sigma)+\frac{1}{2}f(v(\sigma),w(\sigma))\Big]\Big)
\right\rangle\\
&=\mathop{\sup_{u \in \R}}_{v,w \in \R^t:\;\|v\|^2+\|w\|^2 \leq 1}
\log 2+u+\frac{f(v,w)}{2}-\lambda^*(u,v,w).
\end{align*}
The domain $\|v\|^2+\|w\|^2 \leq 1$ in this supremum may now be restricted
to $\|v\|^2+\|w\|^2<1$, by continuity of $f(v,w)$ and lower-semicontinuity
of the rate function $\lambda^*(u,v,w)$.
Substituting the forms of $f$ and $\lambda^*$ from (\ref{eq:fvw})
and (\ref{eq:Fencheldual}) concludes the proof.
\end{proof}

\subsection{Analysis of the variational formula}

Denote by $\partial_u \Phi_{1,t} \in \R$, $\partial_v \Phi_{1,t} \in \R^t$,
etc.\ the partial derivatives of the function $\Phi_{1,t}$ in each argument.
We now consider an approximate stationary point of (\ref{eq:Psi1}), given by
\[u_*=\E[\H \cdot \tanh(\H+\sigma_*\G)], \qquad v_*=(1-q_*)\Delta_t^{1/2}e_t,
\qquad w_*=\kappa_*^{1/2}(1-q_*)\Delta_t^{1/2}e_t\]
\[\gamma_*=\bar{G}^{-1}(1-q_*)
=\bar{R}(1-q_*)+(1-q_*)^{-1}, \qquad U_*=1, \qquad V_*=0, \qquad
W_*=\kappa_*^{1/2}\Delta_t^{1/2}e_t\]
where $e_t=(0,\ldots,0,1)$
is the $t^\text{th}$ standard basis vector in $\R^t$. We check in
two steps that first, this is approximately stationary for the optimization
in (\ref{eq:Psi1}) and yields the desired value $\Psi_{\RS}$ in \prettyref{eq:PsiRS}, and second,
that it is approximately the global solution to (\ref{eq:Psi1})
when $\beta>0$ is sufficiently small.

For these steps, we require the following properties of
$\cF(\gamma)$ defined in (\ref{eq:Fgamma}).
\begin{lemma}\label{lmm:Fderivatives}	
\begin{enumerate}[(a)]
\item $\cF(\gamma)$ is monotonically decreasing and convex over
$\gamma>\bar{d}_+$.
\item Fix any $\delta>0$, open neighborhood $U \subset \R$, and twice
differentiable function $\gamma:U \to (\bar{d}_++\delta,\infty)$.
Then for some constants $C,\beta_0>0$ depending only on $\mu_D$ and $\delta$,
any $s \in U$, and all $\beta \in (0,\beta_0)$,
\begin{align*}
|\calF(\gamma(s))| &\leq C\beta^4(1-q_*)^2 \cdot
\sup_{x \in \supp(\mu_{\bar{D}})} |(\gamma(s)-x)^{-1}| \\
|\partial_s \calF(\gamma(s))| &\leq C\beta^4(1-q_*)^2 \cdot
\sup_{x \in \supp(\mu_{\bar{D}})} |\partial_s (\gamma(s)-x)^{-1}| \\
\partial_s^2 \calF(\gamma(s)) &\leq C\beta^4(1-q_*)^2 \cdot 
\sup_{x \in \supp(\mu_{\bar{D}})} |\partial_s^2 (\gamma(s)-x)^{-1}|.
\end{align*}
\end{enumerate}
\end{lemma}
\begin{proof}
Part (a) was verified in (\ref{eq:Fmonotonicity}).

For part (b),
we use the notation $O(f(\beta))$ as in Proposition \ref{prop:smallbeta}, and
allow the constant in this notation to depend also on $\delta$ throughout the
proof. We have $x=O(\beta)$ uniformly over $x \in
\supp(\mu_{\bar{D}})$. Applying this and Proposition \ref{prop:smallbeta},
\[(1-q_*)(\lambda_*-x)=1-(1-q_*)x+O(\beta^2(1-q_*)^2), \qquad
\bar{R}(1-q_*)\kappa_*^{-1}=\frac{1}{1-q_*}+O(\beta).\]
Then for $\lambda(x)$ and $\theta(x)$ defined in \prettyref{eq:LambdaThetax},
uniformly over $x \in \supp(\mu_{\bar{D}})$, we have
\[\lambda(x) = O(\beta(1-q_*)), \quad \theta(x) = O(\beta^2(1-q_*)).\]
Abbreviate $\lambda \equiv \beta(1-q_*)$ and $\theta\equiv\beta^2(1-q_*)$.
Then differentiating $\cF_{11},\cF_{12},\cF_{22}$ in $\gamma$,
this implies for $k=0,1,2$,
\begin{align}
\partial_s^k \calF_{11}(\gamma(s)) &= O\pth{\begin{pmatrix} 1 &
\lambda \\ \lambda & \lambda^2 \end{pmatrix}}
\cdot \sup_{x \in \supp(\mu_{\bar{D}})} |\partial_s^k (\gamma(s)-x)^{-1}|
\label{eq:Fall11}\\
\partial_s^k \calF_{12}(\gamma(s))&=O\pth{\begin{pmatrix} \theta  \\ \lambda
\theta \end{pmatrix}} \cdot \sup_{x \in \supp(\mu_{\bar{D}})}
|\partial_s^k (\gamma(s)-x)^{-1}|\label{eq:Fall12}\\
\partial_s^k \calF_{22}(\gamma(s)) &= O(\theta^2) \cdot
\sup_{x \in \supp(\mu_{\bar{D}})} |\partial_s^k (\gamma(s)-x)^{-1}|.
\label{eq:Fall22}
\end{align}
Here and below, $O(\cdot)$ for a matrix or vector is in the sense of
entrywise comparison.

To bound $\calF_{11}(\gamma)^{-1}$ appearing in $\cF(\gamma)$, we first bound
$\det \calF_{11}(\gamma)$ from below
in terms of the variance of $\mu_D$ as follows:
Let $D_1,D_2$ be independently drawn from $\mu_D$. Let
$X_i=\frac{1}{\gamma-\beta D_i}$ and $Y_i=\frac{1}{(1-q_*)(\lambda_*-\beta
D_i)}-1$ for $i=1,2$, where $X_i$ is positive for $\gamma>\bar{d}_+$.
Let $\bar{d}_-=\beta d_-$ denote the minimum point of support of
$\mu_{\bar{D}}$. Then
	\begin{align*}
	\det \calF_{11}(\gamma)
	= \Expect[X_1]\Expect[X_1Y_1^2]- (\Expect[X_1Y_1])^2
	= & ~ \frac{1}{2} \Expect[X_1X_2(Y_1-Y_2)^2]\\
	\geq & ~
\frac{\beta^2}{2(1-q_*)^2(\gamma-\bar{d}_-)^2(\lambda_*-\bar{d}_-)^2} \Expect[(D_1-D_2)^2]\\
	= & ~
\frac{\beta^2}{(1-q_*)^2(\gamma-\bar{d}_-)^2(\lambda_*-\bar{d}_-)^2} \Var(D_1).
	\end{align*}
Using the small-$\beta$ expansion of $\lambda_*$ in \prettyref{prop:smallbeta},
we have $(1-q_*)(\lambda_*-\bar{d}_-)=1+O(\beta)$. Then for any $\gamma \geq
\bar{d}_++\delta$ and some constant $c=c(\mu_D,\delta)>0$,
we have $\det \cF_{11}(\gamma) \geq c\beta^2/\gamma^2$.
Applying the explicit $2 \times 2$ matrix inverse of $\cF_{11}$, and
combining this with \prettyref{eq:Fall11} for $k=0$ and the bound
$|(\gamma(s)-x)^{-1}|=O(1/\gamma(s))$,
\begin{equation}\label{eq:F11invbound}
\calF_{11}(\gamma(s))^{-1} = O\pth{\beta^{-2}\gamma(s)
\begin{pmatrix}\lambda^2 & \lambda \\ \lambda & 1 \end{pmatrix}}.
\end{equation}
Then applying \prettyref{eq:F11invbound},
\prettyref{eq:Fall12} and \prettyref{eq:Fall22} for $k=0$,
and $|(\gamma(s)-x)^{-1}|=O(1/\gamma(s))$ again,
\begin{align*}
\cF(\gamma(s))&=\cF_{22}-\cF_{12}^\top \cF_{11}^{-1}\cF_{12}\\
&=O(\theta^2) \cdot \sup_{x \in \supp(\mu_{\bar{D}})} |(\gamma(s)-x)^{-1}|
+O(\theta^2 \lambda^2/\beta^2) \cdot \gamma(s)
\left(\sup_{x \in \supp(\mu_{\bar{D}})} |(\gamma(s)-x)^{-1}|\right)^2\\
&=O(\beta^4(1-q_*)^2) \cdot\sup_{x \in \supp(\mu_{\bar{D}})}
|(\gamma(s)-x)^{-1}|.
\end{align*}
Here, the second equality uses $\theta^2=\beta^4(1-q_*)^2$ and
$\lambda^2=\beta^2(1-q_*)^2=O(\beta^2)$.
This is the desired bound for $|\cF(\gamma(s))|$.

For the derivative, let us write as shorthand
$\cF_{11}'=\partial_s \cF_{11}(\gamma(s))$ and similarly for the
other terms. Now differentiating $\cF_{11}^{-1}$ and applying 
(\ref{eq:F11invbound}) and (\ref{eq:Fall11}) with $k=1$,
\[[\cF_{11}^{-1}]'=-\cF_{11}^{-1}\cF_{11}'\cF_{11}^{-1}
=O\pth{\beta^{-4}\begin{pmatrix}\lambda^4 & \lambda^3 \\ \lambda^3 & \lambda^2 
\end{pmatrix}} \cdot \gamma(s)^2
\sup_{x \in \supp(\mu_{\bar{D}})} |\partial_s (\gamma(s)-x)^{-1}|.\]
Then applying also (\ref{eq:Fall12}) and (\ref{eq:Fall22}) with $k \in \{0,1\}$
and $|(\gamma(s)-x)^{-1}| \leq O(1/\gamma(s))$,
\begin{align*}
\partial_s \cF(\gamma(s))&=\cF_{22}'-2{\cF_{12}'}^\top
\cF_{11}^{-1}\cF_{12}-\cF_{12}^\top[\cF_{11}^{-1}]'\cF_{12}\\
&=O\left(\theta^2\left(1+\frac{\lambda^2}{\beta^2}+\frac{\lambda^4}{\beta^4}
\right)\right)
\cdot \sup_{x \in \supp(\mu_{\bar{D}})} |\partial_s (\gamma(s)-x)^{-1}|.
\end{align*}
The desired bound for $|\partial_s \cF(\gamma(s))|$ follows again from
$\theta^2=\beta^4(1-q_*)^2$ and $\lambda^2=O(\beta^2)$.

Finally, differentiating $\cF_{11}^{-1}$ again and applying 
(\ref{eq:F11invbound}) and (\ref{eq:Fall11}) with $k=2$,
\begin{align*}
[\cF_{11}^{-1}]''&=-\cF_{11}^{-1}\cF_{11}''\cF_{11}^{-1}
+2\cF_{11}^{-1}\cF_{11}'\cF_{11}^{-1}\cF_{11}'\cF_{11}^{-1}\\
&=O\pth{\beta^{-4}\begin{pmatrix}\lambda^4 & \lambda^3 \\ \lambda^3 & \lambda^2 
\end{pmatrix}} \cdot \gamma(s)^2
\sup_{x \in \supp(\mu_{\bar{D}})} |\partial_s^2 (\gamma(s)-x)^{-1}|
+2\cF_{11}^{-1}\cF_{11}'\cF_{11}^{-1}\cF_{11}'\cF_{11}^{-1}.
\end{align*}
Then applying (\ref{eq:F11invbound}),
(\ref{eq:Fall12}) and (\ref{eq:Fall22}) with $k \in \{0,2\}$,
\begin{align*}
\partial_s^2 \cF(\gamma(s))&=\cF_{22}''-2{\cF_{12}''}^\top \cF_{11}^{-1}\cF_{12}
-\cF_{12}^\top[\cF_{11}^{-1}]''\cF_{12}
-2{\cF_{12}'}^\top \cF_{11}^{-1}{\cF_{12}'}
-4{\cF_{12}'}^\top [\cF_{11}^{-1}]'\cF_{12} \numberthis \label{eq:Fpp1}\\
&=O\left(\theta^2\left(1+\frac{\lambda^2}{\beta^2}+\frac{\lambda^4}{\beta^4}\right)\right)
\cdot \sup_{x \in \supp(\mu_{\bar{D}})} |\partial_s^2 (\gamma(s)-x)^{-1}|\\
&\hspace{1in}-2\Big(\cF_{12}^\top\cF_{11}^{-1}\cF_{11}'\cF_{11}^{-1}\cF_{11}'\cF_{11}^{-1}\cF_{12}+{\cF_{12}'}^\top \cF_{11}^{-1}{\cF_{12}'}
+2{\cF_{12}'}^\top [\cF_{11}^{-1}]'\cF_{12}\Big).
\end{align*}
Note that for the second term above,
\begin{align*}
&\cF_{12}^\top\cF_{11}^{-1}\cF_{11}'\cF_{11}^{-1}\cF_{11}'\cF_{11}^{-1}\cF_{12}
+{\cF_{12}'}^\top \cF_{11}^{-1}{\cF_{12}'}
+2{\cF_{12}'}^\top [\cF_{11}^{-1}]'\cF_{12}\\
&\hspace{1in}=\Big[{\cF_{12}'}-\cF_{11}'\cF_{11}^{-1}\cF_{12}\Big]^\top
\cF_{11}^{-1}\Big[{\cF_{12}'}-\cF_{11}'\cF_{11}^{-1}\cF_{12}\Big] \geq 0
\end{align*}
where this inequality holds because $\cF_{11}^{-1} \succ 0$.
Then, applying again $\theta^2=\beta^4(1-q_*)^2$ and $\lambda^2=O(\beta^2)$,
we obtain the desired upper bound for $\partial_s^2 \cF(\gamma(s))$.
\end{proof}

\begin{lemma}\label{lemma:Psi1stationary}
For all $t \geq 1$ and each $\iota \in \{u,v,w,\gamma,U,W\}$,
\[\Phi_{1,t}(u_*,v_*,w_*;\gamma_*,U_*,V_*,W_*)=\Psi_{\RS},
\qquad \partial_\iota
\Phi_{1,t}(u_*,v_*,w_*;\gamma_*,U_*,V_*,W_*)=0.\]
Furthermore, for $\iota=V$,
\[\lim_{t \to \infty} \|\partial_V
\Phi_{1,t}(u_*,v_*,w_*;\gamma_*,U_*,V_*,W_*)\|=0.\]
\end{lemma}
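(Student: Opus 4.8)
The plan is to substitute the candidate point directly into~\eqref{eq:Phi1} and reduce everything to the defining relations~\eqref{eq:qstar},~\eqref{eq:kappastar},~\eqref{eq:lambdastar} together with the state-evolution identities of Theorem~\ref{thm:SE} and Proposition~\ref{prop:AMPconvergent}. I would first record the elementary facts about the candidate point that make the substitution manageable: writing $a_*=\bar R(1-q_*)$, one has $v_*=\kappa_*^{-1/2}w_*$, $W_*=(1-q_*)^{-1}w_*$, $\gamma_*-(1-q_*)^{-1}=a_*=\lambda_*-(1-q_*)^{-1}$, and $\bar G(\gamma_*)=1-q_*$; moreover $\kappa_*\delta_{tt}=\sigma_*^2$ by Proposition~\ref{prop:AMPconvergent}, so $\|v_*\|^2+\|w_*\|^2=(1+\kappa_*)(1-q_*)^2\delta_{tt}$, and combining this with $\sigma_*^2=q_*\bar R'(1-q_*)$ and $1+\kappa_*=(1-(1-q_*)^2\bar R'(1-q_*))^{-1}$ yields the crucial norm identity $\|v_*\|^2+\|w_*\|^2=q_*$, hence $1-\|v_*\|^2-\|w_*\|^2=1-q_*$.

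To evaluate $\Phi_{1,t}$ at the candidate point, note that the argument of $\log 2\cosh$ collapses to $\H+\Y_t$ (since $V_*=0$ and $\kappa_*^{-1/2}W_*^\top\Delta_t^{-1/2}(\Y_1,\dots,\Y_t)=e_t^\top(\Y_1,\dots,\Y_t)=\Y_t$), and $\Y_t$ is independent of $\H$ with law $\N(0,\kappa_*\delta_{tt})=\N(0,\sigma_*^2)$, so the first term equals $\E[\log 2\cosh(\H+\sigma_*\G)]$. The terms $-u_*U_*+u_*$, $-v_*^\top V_*$, and $\tfrac12\cF(\gamma_*)\|v_*-\kappa_*^{-1/2}w_*\|^2$ all vanish; Proposition~\ref{prop:infgamma}(a) applied to $\mu_{\bar D}$ with $\alpha=1-q_*$ identifies $\gamma_*=\bar G^{-1}(1-q_*)$ as the minimizer of $\cH(\gamma,1-q_*)$, giving $\tfrac12\cH(\gamma_*,1-q_*)=\tfrac12\int_0^{1-q_*}\bar R(z)\,dz$; and the three bilinear terms $-w_*^\top W_*+a_*\kappa_*^{-1/2}v_*^\top w_*+\tfrac12(\lambda_*-a_*\kappa_*^{-1})\|w_*\|^2$ collapse, after inserting $\|w_*\|^2=(1-q_*)^2\sigma_*^2$, $\lambda_*=a_*+(1-q_*)^{-1}$, and $\kappa_*^{-1}(1+\kappa_*)=((1-q_*)^2\bar R'(1-q_*))^{-1}$, to $\tfrac{q_*}{2}\bar R(1-q_*)-\tfrac{q_*(1-q_*)}{2}\bar R'(1-q_*)$. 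Comparing with~\eqref{eq:PsiRS} gives $\Phi_{1,t}(u_*,v_*,w_*;\gamma_*,U_*,V_*,W_*)=\Psi_{\RS}$.

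For the vanishing of the partial derivatives, the cases $\iota\in\{u,U,\gamma\}$ are immediate: $\partial_u\Phi_{1,t}=1-U$, $\partial_U\Phi_{1,t}=\E[\H\tanh(T)]-u$ (which vanishes at the candidate point since $T=\H+\Y_t\overset{L}{=}\H+\sigma_*\G$ and $u_*=\E[\H\tanh(\H+\sigma_*\G)]$), and $\partial_\gamma\Phi_{1,t}=\tfrac12\cF'(\gamma)\|v-\kappa_*^{-1/2}w\|^2+\tfrac12\bigl(1-\|v\|^2-\|w\|^2-\bar G(\gamma)\bigr)$, which vanishes because $v_*=\kappa_*^{-1/2}w_*$ and $\bar G(\gamma_*)=1-q_*$. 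For $\iota\in\{v,w\}$, I would differentiate~\eqref{eq:Phi1} (using $\partial_\alpha\cH(\gamma,\alpha)=\gamma-\alpha^{-1}$); at the candidate point the $\cF$-contributions drop because $v_*=\kappa_*^{-1/2}w_*$, and using $\gamma_*-(1-q_*)^{-1}=a_*$, $\kappa_*^{-1/2}w_*=v_*$, and $\lambda_*-a_*=(1-q_*)^{-1}$ the surviving terms cancel in pairs, leaving $\partial_v\Phi_{1,t}|_*=\partial_w\Phi_{1,t}|_*=0$.

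The derivatives $\partial_W$ and $\partial_V$ need one extra ingredient: running the state-evolution recursion~\eqref{eq:XYrelation} one step further produces the identity $\tanh(\H+\Y_t)=(1-q_*)(\X_{t+1}+\Y_t)$, and then $\E[\X_{t+1}\Y_s]=0$, $\E[\Y_t\X_s]=0$, and $\E[\Y_t\Y_s]=\kappa_*\delta_{ts}$ for $s\le t$ (by~\eqref{eq:XY} applied with $t+1$ iterates, together with Theorem~\ref{thm:SE}). Hence $\E[\tanh(\H+\Y_t)(\Y_1,\dots,\Y_t)]=(1-q_*)\kappa_*\Delta_t e_t$, so $\partial_W\Phi_{1,t}|_*=\kappa_*^{-1/2}\Delta_t^{-1/2}\cdot(1-q_*)\kappa_*\Delta_t e_t-w_*=0$; and $\E[\tanh(\H+\Y_t)(\X_1,\dots,\X_t)]=(1-q_*)\E[\X_{t+1}(\X_1,\dots,\X_t)]$, so $\partial_V\Phi_{1,t}|_*=(1-q_*)\Delta_t^{-1/2}c$ with $c=\E[(\X_{t+1}-\X_t)(\X_1,\dots,\X_t)]\in\R^t$. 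Since $\|\Delta_t^{-1/2}c\|^2=c^\top\Delta_t^{-1}c$ is the squared $L^2$-norm of the linear regression of $\X_{t+1}-\X_t$ onto $\mathrm{span}(\X_1,\dots,\X_t)$, it is at most $\E[(\X_{t+1}-\X_t)^2]=\delta_{t+1,t+1}+\delta_{tt}-2\delta_{t,t+1}=2(\delta_*-\delta_{t,t+1})$, and $\delta_{t,t+1}\to\delta_*$ by Proposition~\ref{prop:AMPconvergent}, so $\|\partial_V\Phi_{1,t}|_*\|^2\le 2(1-q_*)^2(\delta_*-\delta_{t,t+1})\to0$. The main obstacle is really only the algebraic bookkeeping above—keeping the $\kappa_*$- and $\Delta_t$-normalizations straight so that the bilinear terms collapse to the expression in~\eqref{eq:PsiRS}—together with spotting the one-extra-step identity $\tanh(\H+\Y_t)=(1-q_*)(\X_{t+1}+\Y_t)$ and the projection bound $c^\top\Delta_t^{-1}c\le\E[(\X_{t+1}-\X_t)^2]$ that controls the sole non-exact derivative.
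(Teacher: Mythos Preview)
Your proof is correct and follows essentially the same approach as the paper: direct substitution using the identities $v_*=\kappa_*^{-1/2}w_*$, $\|v_*\|^2+\|w_*\|^2=q_*$, $\bar G(\gamma_*)=1-q_*$, and the state-evolution relation $\tanh(\H+\Y_t)=(1-q_*)(\X_{t+1}+\Y_t)$. Your handling of $\partial_V$ is slightly slicker than the paper's---you recognize $\|\partial_V\Phi_{1,t}|_*\|^2=(1-q_*)^2 c^\top\Delta_t^{-1}c$ directly as the squared norm of the projection of $\X_{t+1}-\X_t$ onto $\mathrm{span}(\X_1,\ldots,\X_t)$ and bound it in one step by $\E[(\X_{t+1}-\X_t)^2]$, whereas the paper expands this quantity as $(\delta_t^\top\Delta_t^{-1}\delta_t-\delta_{t+1,t+1})-2(\delta_{t,t+1}-\delta_*)$ and bounds the two pieces separately---but this is the same idea, just packaged differently.
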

\begin{proof}
Let $\delta_{tt}=e_t^\top\Delta_te_t$
be the lower-right entry of $\Delta_t$, and recall that
$\delta_{tt}=\delta_*$ by Proposition \ref{prop:AMPconvergent}.
Then the function $\Phi_{1,t}$ in (\ref{eq:Phi1})
evaluated at $u_*,v_*,w_*,\gamma_*,U_*,V_*,W_*$ is
\begin{align*}
\Phi_{1,t}&=\E[\log 2 \cosh(\H+\Y_t)]
-\kappa_*(1-q_*)\delta_*+\bar{R}(1-q_*) \cdot (1-q_*)^2\delta_*
+\frac{\lambda_*\kappa_*-\bar{R}(1-q_*)}{2}(1-q_*)^2\delta_*\\
&\hspace{1in}+\frac{1}{2}\cH\Big(\bar{G}^{-1}(1-q_*),
1-\|v_*\|^2-\|w_*\|^2\Big).
\end{align*}
For the first term, by Theorem \ref{thm:SE} and
Proposition \ref{prop:AMPconvergent},
$\Y_t \sim \N(0,\kappa_*\delta_{tt})$ where $\kappa_*\delta_{tt}=\sigma_*^2$, so
\[\E[\log 2\cosh(\H+\Y_t)]=\E[\log 2\cosh(\H+\sigma_*\G)], \qquad \G \sim
\N(0,1).\]
For the second term, applying $\delta_*\kappa_*=\sigma_*^2=q_*\bar{R}'(1-q_*)$
by (\ref{eq:kappadelta}) and (\ref{eq:qstar}),
\[-\kappa_*(1-q_*)\delta_*=-q_*(1-q_*)\bar{R}'(1-q_*).\]
For the third and fourth terms, applying also
$\lambda_*=\bar{R}(1-q_*)+(1-q_*)^{-1}$ from (\ref{eq:lambdastar})
and $\delta_*=q_*(1-q_*)^{-2}-\sigma_*^2$ from (\ref{eq:deltastar}),
\begin{align*}
&\bar{R}(1-q_*) \cdot (1-q_*)^2\delta_*
+\frac{\lambda_*\kappa_*-\bar{R}(1-q_*)}{2}(1-q_*)^2\delta_*,\\
&=\frac{\bar{R}(1-q_*) \cdot (1-q_*)^2}{2}\left(\frac{q_*}{(1-q_*)^2}-
\sigma_*^2\right)+\left(\bar{R}(1-q_*)+\frac{1}{1-q_*}\right)
\frac{(1-q_*)^2}{2}\sigma_*^2,\\
&=\frac{q_*}{2}\bar{R}(1-q_*)+\frac{q_*(1-q_*)}{2}\bar{R}'(1-q_*).
\end{align*}
For the last term, observe that
$1-\|v_*\|^2-\|w_*\|^2=1-(1+\kappa_*)(1-q_*)^2\delta_*$.
Applying $\delta_*=\sigma_*^2/\kappa_*=q_*\bar{R}'(1-q_*)/\kappa_*$
and the form of $\kappa_*$ in (\ref{eq:kappastar}), this is
\begin{align*}
1-\|v_*\|^2-\|w_*\|^2&=1-(1+\kappa_*^{-1})(1-q_*)^2q_*\bar{R}'(1-q_*)=1-q_*.
\end{align*}
For some $\beta_0=\beta_0(\mu_D)>0$ and all $\beta \in (0,\beta_0)$,
we have $1-q_* \leq 1<\bar{G}(\bar{d}_+)=\beta^{-1}G(d_+)$.
Then by Proposition \ref{prop:infgamma}(a),
\[\cH\Big(\bar{G}^{-1}(1-q_*),1-q_*\Big)
=\int_0^{1-q_*} \bar{R}(z)dz.\]
Combining all of the above yields
\[\Phi_{1,t}=\E[\log 2\cosh(\H+\sigma_*\G)]
+\frac{q_*}{2}\bar{R}(1-q_*)-\frac{q_*(1-q_*)}{2}\bar{R}'(1-q_*)
+\frac{1}{2}\int_0^{1-q_*}\bar{R}(z)dz=\Psi_{\RS}.\]

To check the stationary conditions, first by the form of $\cH$ in (\ref{eq:Hgamma}), we have 
$\partial_\gamma \cH(\gamma,\alpha)=\alpha-\bar G(\gamma)$ and 
$\partial_\alpha \cH(\gamma,\alpha)=\gamma-1/\alpha$. Since 
$\gamma_*=\bar{G}^{-1}(1-q_*)=\bar{R}(1-q_*)+(1-q_*)^{-1}$ and
$\|v_*\|^2+\|w_*\|^2=q_*$, we have
\begin{equation}
\partial_\gamma \cH(\gamma_*,1-q_*)=0,
\qquad \partial_\alpha \cH(\gamma_*,1-q_*)=\gamma_*-\frac{1}{1-q_*}
=\bar{R}(1-q_*).
\label{eq:Hder}
\end{equation}
Then
evaluating at $u_*,v_*,w_*,\gamma_*,U_*,V_*,W_*$ where $v_*=\kappa_*^{-1/2}w_*$,
$V_*=0$, and $W_*=(1-q_*)^{-1}w_*$,
\begin{align}
\partial_u \Phi_{1,t}&=-U_*+1=0,\\
\partial_v \Phi_{1,t}&=-V_*+\bar{R}(1-q_*)\kappa_*^{-1/2}w_*
-\bar{R}(1-q_*)v_*=0,\label{eq:derv}\\
\partial_w \Phi_{1,t}&=-W_*+\bar{R}(1-q_*)\kappa_*^{-1/2}v_*
+(\lambda_*-\bar{R}(1-q_*)\kappa_*^{-1})w_*
-\bar{R}(1-q_*)w_*=0,\label{eq:derw}\\
\partial_\gamma \Phi_{1,t}&=\frac{1}{2}\partial_\gamma \cH(\gamma_*,1-q_*)=0. \label{eq:dergamma}
\end{align}
The third line above applies again
$\lambda_*=\bar{R}(1-q_*)+(1-q_*)^{-1}$.

For the derivatives in $U,V,W$,
observe that the derivative of $\log 2 \cosh x$ is $\tanh x$, and
\[\tanh(\H+\Y_t)=(1-q_*)(\X_{t+1}+\Y_t)\]
by the definition of the AMP state evolution (\ref{eq:XYrelation}). Hence
\begin{align}
\partial_U \Phi_{1,t}&=\E[\H \cdot (1-q_*)(\X_{t+1}+\Y_t)]-u_*, \label{eq:derU}\\
\partial_V \Phi_{1,t}&=\E[\Delta_t^{-1/2}(\X_1,\ldots,\X_t)
\cdot (1-q_*)(\X_{t+1}+\Y_t)]-v_*, \label{eq:derV}\\
\partial_W \Phi_{1,t}&=\E[\kappa_*^{-1/2}\Delta_t^{-1/2}(\Y_1,\ldots,\Y_t)
\cdot (1-q_*)(\X_{t+1}+\Y_t)]-w_*. \label{eq:derW}
\end{align}
From the joint law of $\H,\Y_1,\ldots,\Y_t,\X_1,\ldots,\X_{t+1}$ described in
Theorem \ref{thm:SE}, we have $\E[\H \cdot \Y_t]=0$ and
$\E[(1-q_*)\H \cdot \X_{t+1}]=\E[\H \cdot \tanh(\H+\sigma_*\G)]$,
so $\partial_U \Phi_{1,t}=0$. We have
$\E[(\Y_1,\ldots,\Y_t) \cdot \Y_t]=\kappa_*\Delta_te_t$ and 
$\E[(\Y_1,\ldots,\Y_t) \cdot \X_{t+1}]=0$,
so also $\partial_W \Phi_{1,t}=0$. Finally, writing a block decomposition of
$\Delta_{t+1}$ as
\begin{equation}
\Delta_{t+1}=\begin{pmatrix} \Delta_t & \delta_t \\ \delta_t^\top &
\delta_* \end{pmatrix}, \qquad
\delta_t=(\delta_{1,t+1},\ldots,\delta_{t,t+1}),
\label{eq:deltat}
\end{equation}
we have $\E[(\X_1,\ldots,\X_t) \cdot \X_{t+1}]=\delta_t$ and
$\E[(\X_1,\ldots,\X_t) \cdot \Y_t]=0$. Thus
\begin{equation}
\partial_V \Phi_{1,t}=(1-q_*)\Delta_t^{-1/2}\delta_t-v_*
=(1-q_*)\Big[\Delta_t^{-1/2}\delta_t-\Delta_t^{1/2}e_t\Big],
\label{eq:derV2}
\end{equation}
so (recalling that $\delta_{t+1,t+1}=\delta_{tt}=\delta_*$)
\[(1-q_*)^{-2}\|\partial_V \Phi_{1,t}\|^2
=\delta_t^\top \Delta_t^{-1}\delta_t-2e_t^\top\delta_t
+\delta_*=\Big(\delta_t^\top \Delta_t^{-1}\delta_t-\delta_{t+1,t+1}\Big)
-2(\delta_{t,t+1}-\delta_*).\]
By Proposition \ref{prop:AMPconvergent}, $\lim_{t \to \infty}
\delta_{t,t+1}=\delta_*$. By (\ref{eq:XX}) applied at $t+1$,
\[\delta_{t+1,t+1}-\delta_t^\top \Delta_t^{-1}\delta_t
=\inf_{\alpha \in \R^t} \E\Big[\Big(\X_{t+1}-\alpha^\top
(\X_1,\ldots,\X_t)\Big)^2\Big],\]
where the infimum is attained at the least-squares coefficients
\[\alpha=\E\Big[(\X_1,\ldots,\X_t)(\X_1,\ldots,\X_t)^\top\Big]^{-1}
\E\Big[(\X_1,\ldots,\X_t) \cdot \X_{t+1}\Big]=\Delta_t^{-1}\delta_t.\]
Then
\[0 \leq \delta_{t+1,t+1}-\delta_t^\top \Delta_t^{-1}\delta_t
\leq \E[(\X_{t+1}-\X_t)^2]=2\delta_*-2\delta_{t,t+1},\]
so also $\lim_{t \to \infty} \delta_{t+1,t+1}-\delta_t^\top
\Delta_t^{-1}\delta_t=0$. Thus $\lim_{t \to \infty} \|\partial_V
\Phi_{1,t}\|=0$.
\end{proof}

\begin{lemma}\label{lemma:Psi1optimal}
For a constant $\beta_0=\beta_0(\mu_D)>0$ and any $\beta \in (0,\beta_0)$,
\[\lim_{t \to \infty} \Psi_{1,t}=\Psi_{\RS}.\]
\end{lemma}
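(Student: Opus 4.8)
The plan is to prove the two inequalities $\liminf_{t\to\infty}\Psi_{1,t}\ge\Psi_{\RS}$ and $\limsup_{t\to\infty}\Psi_{1,t}\le\Psi_{\RS}$ separately, using Lemma \ref{lemma:Psi1stationary}, which exhibits an \emph{approximate} saddle point of $\Phi_{1,t}$ whose value is exactly $\Psi_{\RS}$. Abbreviate $\xi=(\gamma,U,V,W)$ and $\psi_t(u,v,w)=\inf_\xi\Phi_{1,t}(u,v,w;\xi)$, so $\Psi_{1,t}=\sup_{(u,v,w)}\psi_t$. First I would record that, for fixed $(u,v,w)$, the inner map $\xi\mapsto\Phi_{1,t}(u,v,w;\xi)$ is jointly convex and separates: the $(U,V,W)$-block enters only through $\E[\log 2\cosh(\cdot)]-uU-v^\top V-w^\top W$ (convex, since $\log\cosh$ is composed with a linear form), and $\gamma$ enters only through $\tfrac12\cF(\gamma)\|v-\kappa_*^{-1/2}w\|^2+\tfrac12\cH(\gamma,1-\|v\|^2-\|w\|^2)$, where $\cF$ and $\gamma\mapsto-\int\log(\gamma-x)\,\mu_{\bar{D}}(dx)$ are convex on $(\bar{d}_+,\infty)$. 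Performing these minimizations gives $\psi_t(u,v,w)=\log 2+u+\tfrac12 f(v,w)-\lambda^*(u,v,w)$, where $f$ is the function in \eqref{eq:fvw} (the $\gamma$-infimum) and $\lambda^*$ is the Legendre dual of $\lambda(U,V,W)=\E[\log\cosh(U\H+V^\top\Delta_t^{-1/2}(\X_1,\dots,\X_t)+\kappa_*^{-1/2}W^\top\Delta_t^{-1/2}(\Y_1,\dots,\Y_t))]$, matching the Varadhan computation in the proof of Lemma \ref{lemma:Psi1correct}. Since $\lambda$ grows at most linearly, $\lambda^*$ is finite only on a set contained in $\{|u|\le C\}\times\{\|v\|^2+\|w\|^2\le1\}$ with $C$ depending only on $\mu_H$, so the effective domain of $\psi_t$ is bounded uniformly in $t$.

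For the lower bound I would estimate $\Psi_{1,t}\ge\psi_t(u_*,v_*,w_*)=\inf_\xi F_t(\xi)$ with $F_t=\Phi_{1,t}(u_*,v_*,w_*;\cdot)$. Lemma \ref{lemma:Psi1stationary} gives $F_t(\gamma_*,U_*,V_*,W_*)=\Psi_{\RS}$ and $\nabla F_t(\gamma_*,U_*,V_*,W_*)=(0,0,\eta_t,0)$ with $\|\eta_t\|\to0$. Using that $F_t$ is convex and uniformly strongly convex near this point — its $\gamma$-curvature is $-\tfrac12\bar{G}'(\gamma_*)$, bounded below by Proposition \ref{prop:smallbeta}, and its $(U,V,W)$-curvature is $\succeq cI$ for some $c>0$ uniform in $t$, which should follow from the stabilized covariances $\E[A_\X A_\X^\top]=\E[A_\Y A_\Y^\top]=I$, $\E[A_\X A_\Y^\top]=0$ of Theorem \ref{thm:SE} together with the convergence $\delta_{st}\to\delta_*$ of Proposition \ref{prop:AMPconvergent} — the approximate critical point $(\gamma_*,U_*,V_*,W_*)$ is $O(\|\eta_t\|)$-close to the true minimizer of $F_t$, whence $\inf_\xi F_t=\Psi_{\RS}-O(\|\eta_t\|^2)\to\Psi_{\RS}$.

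For the upper bound, where small $\beta$ is essential, I would show $\psi_t$ is strictly concave on its domain, uniformly in $t$, once $\beta<\beta_0(\mu_D)$. The term $-\lambda^*$ has Hessian $-(\operatorname{Hess}\lambda)^{-1}\preceq-c_0I$ with $c_0>0$ uniform in $t$, since $\operatorname{Hess}\lambda\preceq\E[(\H,A_\X,A_\Y)(\H,A_\X,A_\Y)^\top]$ is a bounded matrix (using the covariance identities above and Cauchy--Schwarz for the $\H$-cross terms). After the $\gamma$-minimization, $f(v,w)$ is a bounded perturbation of $\tfrac{2\bar{R}(1-q_*)}{\kappa_*^{1/2}}v^\top w+\big(\lambda_*-\tfrac{\bar{R}(1-q_*)}{\kappa_*}\big)\|w\|^2+\int_0^{1-\|v\|^2-\|w\|^2}\bar{R}(z)\,dz$, whose Hessian has operator norm $O(\beta)$: by Proposition \ref{prop:smallbeta}, $\bar{R}(1-q_*)\kappa_*^{-1/2}=O(\beta)$, $\lambda_*-\bar{R}(1-q_*)\kappa_*^{-1}=O(\beta)$, $\int_0^\alpha\bar{R}=O(\beta^2)$, and — the delicate point — the coefficient $\cF(\gamma)$ of $\|v-\kappa_*^{-1/2}w\|^2$ satisfies $\cF(\gamma)\kappa_*^{-1}=O(\beta^2)$ near the relevant $\gamma$, because the integrand factor $x+\tfrac{\bar{R}(1-q_*)}{\kappa_*}\cdot\tfrac{\bar{R}(1-q_*)-x}{\lambda_*-x}$ in $\cF$ is $O(\beta^2)$ despite $\kappa_*^{-1}=O(\beta^{-2})$. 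Hence $\operatorname{Hess}\psi_t\preceq(-c_0+O(\beta))I\prec0$. Concavity then gives $\Psi_{1,t}=\psi_t(\hat u,\hat v,\hat w)\le\psi_t(u_*,v_*,w_*)+\|\nabla\psi_t(u_*,v_*,w_*)\|\cdot\operatorname{diam}$ for the maximizer $(\hat u,\hat v,\hat w)$; by Danskin's theorem together with Lemma \ref{lemma:Psi1stationary} (which gives $\partial_u\Phi_{1,t}=\partial_v\Phi_{1,t}=\partial_w\Phi_{1,t}=0$ at the candidate, and, via the inner strong convexity of the previous step, an inner minimizer $O(\|\eta_t\|)$-close to $(\gamma_*,U_*,V_*,W_*)$) one gets $\|\nabla\psi_t(u_*,v_*,w_*)\|\to0$. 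Since also $\psi_t(u_*,v_*,w_*)\le\Phi_{1,t}(u_*,v_*,w_*;\gamma_*,U_*,V_*,W_*)=\Psi_{\RS}$ and the domain has bounded diameter, this yields $\Psi_{1,t}\le\Psi_{\RS}+o_t(1)$; combined with the lower bound, $\lim_t\Psi_{1,t}=\Psi_{\RS}$.

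The main obstacle is establishing the global, uniform-in-$t$ concavity of $\psi_t$ in the upper bound: one must check through Proposition \ref{prop:smallbeta} that every contribution to $\operatorname{Hess}\psi_t$ other than the uniformly concave $-\lambda^*$ is $O(\beta)$ in operator norm, which is subtle for the $\cF(\gamma)\|v-\kappa_*^{-1/2}w\|^2$ term and hinges on the exact algebraic cancellation inside $\cF$ (the $\log\alpha$'s in $\cH$ also cancelling after the $\gamma$-minimization is what keeps the $\alpha$-block small). A secondary difficulty, shared by both bounds, is the uniform-in-$t$ strong convexity of the inner problem in the $V$-direction, where the rank-one degeneration of $\Delta_t$ toward $\delta_*\mathbf{1}\mathbf{1}^\top$ must be controlled using the convergence rates in Proposition \ref{prop:AMPconvergent}.
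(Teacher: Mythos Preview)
Your overall strategy---establishing $\liminf$ and $\limsup$ separately by exploiting the approximate saddle point of Lemma~\ref{lemma:Psi1stationary}---is exactly the paper's. The implementations, however, differ in both halves, and in the lower bound your route runs into a genuine difficulty that the paper sidesteps.

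\textbf{Lower bound.} You fix $(u_*,v_*,w_*)$ and invoke \emph{uniform strong convexity} of $\xi\mapsto\Phi_{1,t}(u_*,v_*,w_*;\xi)$ to turn the $o_t(1)$ gradient into an $o_t(1)$ value gap. The $\gamma$-block is fine (one-dimensional, curvature $-\tfrac12\bar G'(\gamma_*)>0$). The $(U,V,W)$-block has Hessian $\E[\mathrm{sech}^2(\H+\Y_t)\,ZZ^\top]$, and the issue you flag at the end is real: in the $V$-direction one needs $\E[\mathrm{sech}^2(\H+\Y_t)(e_V^\top A_\X)^2]\ge c>0$ uniformly in $t$, but as $\Delta_t$ degenerates toward $\delta_*\mathbf 1\mathbf 1^\top$, the components $e_V^\top A_\X$ in small-eigenvalue directions of $\Delta_t$ need not have uniformly bounded fourth moments, so the usual Cauchy--Schwarz truncation does not obviously close. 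The paper avoids this entirely: it shifts the outer point to $\tilde v_*=v_*+(1-q_*)[\Delta_t^{-1/2}\delta_t-\Delta_t^{1/2}e_t]=v_*+o_t(1)$, chosen precisely so that $(U_*,V_*,W_*)$ becomes an \emph{exact} stationary point of the (convex) $(U,V,W)$-block. Convexity alone then gives the minimum there---no strong convexity needed. Only the scalar $\gamma$-minimization uses local strong convexity, where it is immediate.

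\textbf{Upper bound.} Your plan---use the $-c_0 I$ concavity of $-\lambda^*$ (which does follow from $\nabla^2\lambda\preceq\E[ZZ^\top]$ with $\|\E[ZZ^\top]\|$ bounded uniformly in $t$) and show $\|\nabla^2 f\|=O(\beta)$---is viable. The delicate point is that at fixed $\gamma$ the $\cH$-term contributes $4\alpha^{-2}(v,w)(v,w)^\top$ to the Hessian, which diverges near the boundary; the Danskin correction $-(\nabla_{(v,w)}\partial_\gamma\Phi)(\partial_\gamma^2\Phi)^{-1}(\nabla_{(v,w)}\partial_\gamma\Phi)^\top$ cancels this to leave $4\bar R'(\alpha)(v,w)(v,w)^\top=O(\beta^2)$, but this cancellation must be tracked uniformly over the domain. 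The paper takes a more direct route: rather than minimizing over $\xi$, it plugs in the specific \emph{suboptimal} choices $\gamma(v,w)=\bar G^{-1}(1-\|v\|^2-\|w\|^2)$, $U=1$, and crucially $V(v)=\beta^{1/2}(v-v_*)$, $W(w)=\beta^{1/2}(w-w_*)+W_*$. The $\beta^{1/2}$ scaling is the key device: it makes $-v^\top V(v)-w^\top W(w)$ contribute $-2\beta^{1/2}I$ to the Hessian (the sole source of concavity), while the $\log\cosh$ term contributes at most $\beta\,\E[Z_tZ_t^\top]=\beta I$; the $\gamma(v,w)$ choice collapses $\cH$ to $\int_0^\alpha\bar R$ directly via Proposition~\ref{prop:infgamma}(a), bypassing any Danskin computation. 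This gives $\nabla^2\tilde\Phi_{1,t}\preceq -\beta^{1/2}I$ globally with an elementary calculation.

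In short: your outline is sound and your upper-bound mechanism can be made to work, but both halves are handled more cleanly in the paper by replacing ``approximate critical point plus strong convexity/Danskin'' with exact stationarity (via the $\tilde v_*$ shift) and an explicit concave upper envelope (via the $\beta^{1/2}$ scaling).
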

\begin{proof}
We will establish separately
\begin{align}
\liminf_{t \to \infty} \Psi_{1,t} &\geq \Psi_{\RS},\label{eq:Psi1lower}\\
\limsup_{t \to \infty} \Psi_{1,t} &\leq \Psi_{\RS}.\label{eq:Psi1upper}
\end{align}
We write $o_t(1)$ for any scalar, vector, or matrix error (with dimension
depending on $t$) that satisfies $\lim_{t \to \infty} \|o_t(1)\|=0$, 
where $\|\cdot\|$ is the Euclidean norm for vectors and operator norm for
matrices.
Note that $\Psi_{1,t}$ takes the max-min form $\Psi_{1,t}=\sup_{u,v,w} \inf_{\gamma,U,V,W}\Phi_{1,t}$. Here the supremum and the infimum cannot be interchanged due to the non-concavity in the $(u,v,w)$ parameter. Our strategy is as follows: 
\begin{itemize}
	\item For the lower bound, we specialize the outer supremum to a fixed choice of $(u,v,w)$ near $(u_*,v_*,w_*)$ and minimize the resulting (convex) function over $(\gamma,U,V,W)$. This minimizer is shown to be approximately $(\gamma_*,U_*,V_*,W_*)$.
	
	\item For the upper bound, we specialize the inner infimum to a choice of $(\gamma,U,V,W)$ depending on $(u,v,w)$ in such a way that the resulting function is globally concave for sufficiently small $\beta$. This concave function is then shown to be approximately maximized at $(u_*,v_*,w_*)$.

\end{itemize}

To show the lower bound (\ref{eq:Psi1lower}), we specialize $\Phi_{1,t}$
to $(u,v,w)=(u_*,\tilde{v}_*,w_*)$ where
\[\tilde{v}_*=v_*+(1-q_*)[\Delta_t^{-1/2}\delta_t-\Delta_t^{1/2}e_t]=v_*+o_t(1),\]
and $\delta_t$ was defined in \prettyref{eq:deltat}. Here 
the second equality has been verified in the preceding proof of
Lemma \ref{lemma:Psi1stationary}. As defined in \prettyref{eq:Phi1}, 
$\Phi_{1,t}(u_*,\tilde{v}_*,w_*;\gamma,U,V,W)$ decomposes as $X(U,V,W)+Y(\gamma)$, where $X$ and $Y$ are both convex functions; specifically,
$Y(\gamma) = \frac{1}{2}\cF(\gamma)
\|\tilde v_*-v_*\|^2 +\frac{1}{2}\cH(\gamma,1-\|\tilde v_*\|^2-\|w_*\|^2)$
which is convex applying Lemma \ref{lmm:Fderivatives}(a). Then
\begin{align*}
\Psi_{1,t} &\geq \inf_{\gamma>\bar d_+}
\inf_{U \in \R,\;V,W \in \R^t} \Phi_{1,t}(u_*,\tilde{v}_*,w_*;\gamma,U,V,W) = 
\inf_{U \in \R,\;V,W \in \R^t} X(U,V,W)+\inf_{\gamma>\bar d_+} Y(\gamma).
\end{align*}
In view of \prettyref{eq:derU}, \prettyref{eq:derW}, and \prettyref{eq:derV2}, note that $\tilde{v}_*$ is chosen so that $(U_*,V_*,W_*)$ is now an exact
stationary point of $X$. Hence by the convexity of $X$, $\inf_{U \in \R,\;V,W \in \R^t} X(U,V,W) = X(U_*,V_*,W_*)$.
For the infimum over $\gamma$, recall from \prettyref{eq:dergamma} that $\partial_\gamma \calH(\gamma_*,1-\|v_*\|^2-\|w_*\|^2)=1-\|v_*\|^2-\|w_*\|^2-\bar G(\gamma_*)=0$.
Since $\|\tilde v_*-v_*\|=o_t(1)$, we have that $Y'(\gamma_*)=\frac{1}{2}\cF'(\gamma_*)\|\tilde v_*-v_*\|^2 +\frac{1}{2}(\|v_*\|^2-\|\tilde v_*\|^2) = o_t(1)$.
Furthermore,
$Y''(\gamma) = \frac{1}{2}\cF''(\gamma)\|\tilde v_*-v_*\|^2 +\frac{1}{2} \bar G'(\gamma)$.
Thus there exist some constants $c,\delta>0$ independent of $t$, such that 
$Y''(\gamma) \geq c$ whenever $|\gamma-\gamma_*|<\delta$. Applying
\prettyref{prop:approxmin}, we conclude that 
$\inf_{\gamma>\bar d_+} Y(\gamma) \geq Y(\gamma_*)+o_t(1)$.
Combining these two bounds,
\begin{align*}
\Psi_{1,t} \geq 
X(U_*,V_*,W_*) + Y(\gamma_*)+o_t(1)
=\Phi_{1,t}(u_*,\tilde v_*,w_*;\gamma_*,U_*,V_*,W_*)+o_t(1)=\Psi_{\RS}+o_t(1).
\end{align*}
Here the last step follows from $\|\partial_v \Phi_{1,t}(u_*,v,w_*;\gamma_*,U_*,V_*,W_*)\| \leq C$
for all $\|v-v_*\|\leq \delta$, where $C,\delta$ are constants independent of $t$.
This shows the lower bound (\ref{eq:Psi1lower}).

For the upper bound (\ref{eq:Psi1upper}), we now specialize $\Phi_{1,t}$ to
\[\gamma=\gamma(v,w)=\bar{G}^{-1}(1-\|v\|^2-\|w\|^2),\]
\[U=U_*=1, \qquad V=V(v)=\beta^{1/2}(v-v_*), \qquad
W=W(w)=\beta^{1/2}(w-w_*)+W_*.\]
Here $\gamma(v,w)$ is well-defined for any $(v,w)$ such that $\|v\|^2+\|w\|^2<1$, since $\bar G(\bar d_+)>1$ in view of \prettyref{eq:Glarge}.
Note that at $(v,w)=(v_*,w_*)$, this
gives $(\gamma(v_*,w_*),U,V(v_*),W(w_*))=(\gamma_*,U_*,V_*,W_*)$. Furthermore,
\begin{align*}
\Psi_{1,t} &\leq \mathop{\sup_{u \in \R}}_{v,w \in \R^t:\|v\|^2+\|w\|^2<1}
\Phi_{1,t}\Big(u,v,w;\gamma(v,w),1,V(v),W(w)\Big).
\end{align*}
Due to the choice $U=1$, the function on the right no longer depends
on $u$. We denote it by
\[\tilde{\Phi}_{1,t}(v,w)=
\Phi_{1,t}\Big(u,v,w;\gamma(v,w),1,V(v),W(w)\Big)=\I+\II+\III+\IV\]
where
\begin{align*}
\I&=\E\Big[\log 2 \cosh \Big(\H+V(v)^\top\Delta_t^{-1/2}
(\X_1,\ldots,\X_t)+\kappa_*^{-1/2}W(w)^\top
\Delta_t^{-1/2}(\Y_1,\ldots,\Y_t)\Big)\Big]\\
\II&=-v^\top V(v)-w^\top W(w)+\bar{R}(1-q_*)\kappa_*^{-1/2}v^\top w
+\frac{\lambda_*-\bar{R}(1-q_*)\kappa_*^{-1}}{2}\|w\|^2\\
\III&=\frac{1}{2}\cF(\gamma(v,w))\|v-\kappa_*^{-1/2}w\|^2\\
\IV&=\frac{1}{2}\cH\Big(\gamma(v,w),1-\|v\|^2-\|w\|^2\Big).
\end{align*}

We claim that for some $\beta_0=\beta_0(\mu_D)>0$ and all $\beta \in
(0,\beta_0)$, this function $\tilde{\Phi}_{1,t}(v,w)$ is concave over the
domain $\{v,w \in \R^t:\|v\|^2+\|w\|^2<1\}$. To show this claim, we analyze the
Hessian of each term $\I,\II,\III,\IV$ using the small-$\beta$ approximations of
Proposition \ref{prop:smallbeta}---the desired concavity will arise from the
first two terms of $\II$. We write $O(\beta^k)$ for a scalar, vector,
or matrix whose (Euclidean or operator) norm is at most $C\beta^k$ uniformly
over $\{v,w \in \R^t:\|v\|^2+\|w\|^2<1\}$, for a constant
$C=C(\mu_D)>0$ depending only on $\mu_D$.

For $\I$, we have
\[\nabla^2_{v,w} \I=\beta \cdot \E\Big[Z_tZ_t^\top \cdot
\Big(1-\tanh^2\Big(\H+V(v)^\top\Delta_t^{-1/2}
(\X_1,\ldots,\X_t)+\kappa_*^{-1/2}W(w)^\top
\Delta_t^{-1/2}(\Y_1,\ldots,\Y_t)\Big)\Big]\]
where
\begin{equation}
Z_t \triangleq \Big(\Delta_t^{-1/2}(\X_1,\ldots,\X_t),
\kappa_*^{-1/2} \Delta_t^{-1/2}(\Y_1,\ldots,\Y_t)\Big) \in \R^{2t}.
\label{eq:Zt}
\end{equation}
Then $0 \preceq \nabla_{v,w}^2 \I \preceq \beta \E[Z_tZ_t^\top]=\beta
I_{2t \times 2t}$,
the last equality applying (\ref{eq:XX}--\ref{eq:XY}). 

For $\II$, observe that
Proposition \ref{prop:smallbeta} implies
\[\bar{R}(1-q_*)\kappa_*^{-1/2}=O(\beta), \qquad
\lambda_*-\bar{R}(1-q_*)\kappa_*^{-1}=O(\beta).\]
Then $\nabla_{v,w}^2 \II=-2\beta^{1/2}I_{2t \times 2t}+O(\beta)$.

For $\III$, consider any scalar linear parametrization
\[(v(s),w(s))_{s \in \R}=(v,w)+s \cdot (v',w')\]
where $\|(v',w')\|=1$. Write as shorthand
\[A(s) \triangleq 1-\|v(s)\|^2-\|w(s)\|^2,
\qquad B(s) \triangleq \|v(s)-\kappa_*^{-1/2}w(s)\|^2.\]
Applying $\|v\|,\|w\|,\|v'\|,\|w'\| \leq 1$, it is easily checked that
\begin{equation}\label{eq:derAs}
|A(s)|,|\partial_s A(s)|,|\partial_s^2 A(s)|=O(1) \text{ at } s=0.
\end{equation}
Applying also $\kappa_*^{-1}=O(\beta^{-2}(1-q_*)^{-2})$ by
Proposition \ref{prop:smallbeta}, we have
\begin{equation}\label{eq:derBs}
|B(s)|,|\partial_s B(s)|,|\partial_s^2 B(s)|=O(\beta^{-2}(1-q_*)^{-2})
\text{ at } s=0.
\end{equation}
Now write also as shorthand
\[\cF(s) \triangleq \cF(\gamma(v(s),w(s)))=\cF\Big(\bar{G}^{-1}(A(s))\Big).\]
Then
\begin{align}
&(v',w')^\top \cdot \nabla_{v,w}^2 \III \cdot (v',w')\nonumber\\
&=\partial_s^2 \III\Big|_{s=0}=\frac{1}{2}
\Big(\partial_s^2 \cF(s) \cdot B(s)+2\partial_s \cF(s) \cdot \partial_s B(s)
+\cF(s) \cdot \partial_s^2 B(s)\Big)\Big|_{s=0}.\label{eq:derIII}
\end{align}

Observe that since $\|v\|^2+\|w\|^2<1$, we have $A(s) \in (0,1]$ at
$s=0$. Let $\bar{d}_-=\beta d_-$ be the smallest point of support of
$\mu_{\bar{D}}$. For any $x>\bar{d}_+$, since $\bar{G}(x) \geq 1/(x-\bar{d}_-)$
and $\bar{G}$ is decreasing, we have $x \leq \bar{G}^{-1}(1/(x-\bar{d}_-))$.
Thus
\[\bar{G}^{-1}(A(s)) \geq \bar{G}^{-1}(1) \geq 1+\bar{d}_->\bar{d}_++0.1,\]
where the last inequality holds for all sufficiently small
$\beta$. Then \prettyref{lmm:Fderivatives}(b) implies
\begin{align}
|\cF(s)| &\leq O(\beta^4(1-q_*)^2) \cdot \sup_{x \in \supp(\mu_{\bar{D}})}
\Big|(\bar{G}^{-1}(A(s))-x)^{-1}\Big|,\label{eq:Fs}\\
|\partial_s \cF(s)| &\leq O(\beta^4(1-q_*)^2) \cdot \sup_{x \in \supp(\mu_{\bar{D}})}
\Big|\partial_s (\bar{G}^{-1}(A(s))-x)^{-1}\Big|,\label{eq:derFs}\\
\partial_s^2 \cF(s) & \leq O(\beta^4(1-q_*)^2) \cdot
\sup_{x \in \supp(\mu_{\bar{D}})}
\Big|\partial_s^2 (\bar{G}^{-1}(A(s))-x)^{-1}\Big|\label{eq:der2Fs}
\end{align}
where this third inequality (\ref{eq:der2Fs}) is a one-sided bound without
absolute value on the left side.
Here $\bar{G}^{-1}(A(s))=\bar{R}(A(s))+A(s)^{-1}$,
where $A(s) \in (0,1]$ at $s=0$. To further bound
(\ref{eq:Fs}--\ref{eq:der2Fs}),\footnote{One may apply more explicit
bounds for $\bar{G}^{-1}$ and its derivatives here, such as $|\bar{G}^{-1}(z)-\frac{1}{z}|\leq \beta\|D\|_\infty$, but the current argument allows an easier generalization to the 
second moment computation (cf.~\prettyref{lemma:Psi2optimal}).} we may apply the series expansion
for $\bar{R}(z)$ from (\ref{eq:barRseries}), recalling $\bar{\kappa}_1=0$, to
write
\begin{align}
\big(\bar{G}^{-1}(z)-x\big)^{-1}
=\big(\bar{R}(z)+z^{-1}-x\big)^{-1}
&=z\left(1-xz+\sum_{k \geq 2} \bar{\kappa}_k z^k\right)^{-1}\nonumber\\
&=z \cdot \sum_{j \geq 0} \left(xz-\sum_{k \geq 2} \bar{\kappa}_k
z^k\right)^j \triangleq \sum_{k \geq 0} c_k(x) z^{k+1}.\label{eq:Fseries}
\end{align}
Applying $|x| \leq C\beta$ and $|\bar{\kappa}_k| \leq (C\beta)^k$
for a constant $C=C(\mu_D)>0$ and all $k$, we have
\[|c_k(x)| \leq 2^{k-1} \cdot (C\beta)^k,\]
where $2^{k-1}$ is the number of ordered partitions of $k$ into positive
integers. Then for sufficiently small $\beta_0(\mu_D)>0$ and any $\beta \in
(0,\beta_0)$ and $z \in (0,1]$,
all summations of (\ref{eq:Fseries}) are absolutely
convergent, and the right side is an analytic power series for the
function $(\bar{G}^{-1}(z)-x)^{-1}$ on the left. The derivatives in $z$
may be computed term-by-term, to yield
\[\left|\big(\bar{G}^{-1}(z)-x\big)^{-1}\right|,
\left|\partial_z \big(\bar{G}^{-1}(z)-x\big)^{-1}\right|,
\left|\partial_z^2 \big(\bar{G}^{-1}(z)-x\big)^{-1}\right|=O(1).\]
Combining with (\ref{eq:derAs}) and applying this to
(\ref{eq:Fs}--\ref{eq:der2Fs}) using the chain rule, we obtain that at $s=0$,
$|\cF(s)|,|\partial_s \cF(s)|=O(\beta^4(1-q_*)^2)$
and $\partial_s^2 \cF(s) \leq C\beta^4(1-q_*)^2$,
for a constant $C=C(\mu_D)>0$.
Note that $B(s) \geq 0$, so this last inequality implies also
$\partial_s^2 \cF(s) \cdot B(s) \leq C\beta^4(1-q_*)^2 \cdot B(s)$.
 Then combining with (\ref{eq:derBs}) and applying this to
(\ref{eq:derIII}), we obtain the upper bound
$\nabla_{v,w}^2 \III \prec C'\beta^2$ for a constant $C'=C'(\mu_D)>0$.

Finally, for $\IV$, observe that by Proposition \ref{prop:infgamma}(a),
\[\IV=\frac{1}{2}\cH\Big(\gamma(v,w),1-\|v\|^2-\|w\|^2\Big)
=\frac{1}{2}\int_0^{1-\|v\|^2-\|w\|^2} \bar{R}(z)dz.\]
Writing as shorthand $f(s)=\int_0^{A(s)} \bar{R}(z)dz$ with $A(s)=1-\|v(s)\|^2-\|w(s)\|^2$ previously defined,
we have similarly
\[(v',w')^\top \nabla_{v,w}^2 \IV \cdot (v',w')
=\partial_s^2 \IV\Big|_{s=0}=\frac{1}{2}\partial_s^2 f(s)\Big|_{s=0}.\]
Applying again (\ref{eq:derAs}) and the bounds
$\bar{R}(z),\bar{R}'(z)=O(\beta^2)$ over $z \in (0,1)$ from Proposition
\ref{prop:smallbeta}, we obtain $\nabla_{v,w}^2 \IV=O(\beta^2)$.
Combining $\I$--$\IV$, we conclude
\[\nabla_{v,w}^2 \tilde{\Phi}_{1,t}(v,w) \prec
-2\beta^{1/2}I_{2t \times 2t}+O(\beta).\]
Then for some sufficiently small $\beta_0=\beta_0(\mu_D)>0$,
all $\beta \in (0,\beta_0)$, and any $t$, we have
\begin{equation}\label{eq:PhiHessupperbound}
\nabla_{v,w}^2 \tilde{\Phi}_{1,t}(v,w) \prec -\beta^{1/2}I_{2t \times 2t}
\end{equation}
over the whole domain $\{v,w \in \R^t:\|v\|^2+\|w\|^2<1\}$. In particular,
$\tilde{\Phi}_{1,t}$ is concave as claimed.

Finally, we argue that $(v,w)=(v_*,w_*)$ is an approximate maximizer for $\tilde{\Phi}_{1,t}(v,w)$. Indeed,
\[\partial_v \tilde{\Phi}_{1,t}(v_*,w_*)
=\partial_v \Phi_{1,t}+\partial_\gamma \Phi_{1,t}
\cdot \partial_v \gamma(v_*,w_*)+\partial_V \Phi_{1,t} \cdot \partial_v V(v_*)\]
where the derivatives of $\Phi_{1,t}$ are evaluated at
$(u_*,v_*,w_*;\gamma_*,U_*,V_*,W_*)$. Applying Lemma \ref{lemma:Psi1stationary}, we have 
$\partial_v \tilde{\Phi}_{1,t}(v_*,w_*)=o_t(1)$. Similarly,
$\partial_w \tilde{\Phi}_{1,t}(v_*,w_*)=0$. 
In view of (\ref{eq:PhiHessupperbound}), applying \prettyref{prop:approxmin} yields 
$\sup_{\|v\|^2+\|w\|^2<1} \tilde{\Phi}_{1,t}(v,w) = \tilde{\Phi}_{1,t}(v_*,w_*) +o_t(1)$.
Thus
\begin{align*}
\Psi_{1,t} &\leq 
\tilde{\Phi}_{1,t}(v_*,w_*) +o_t(1)= 
\Phi_{1,t}\Big(u_*,{v}_*,w_*;
\gamma_*,U_*,V_*,W_*\Big)+o_t(1)=\Psi_{\RS}+o_t(1),
\end{align*}
which is the desired (\ref{eq:Psi1upper}).
\end{proof}

Lemma \ref{lemma:firstmoment} follows immediately from Lemmas
\ref{lemma:Psi1correct} and \ref{lemma:Psi1optimal}.

\section{Conditional second moment}\label{sec:secondmoment}

We now provide a similar computation for the conditional second moment.

\begin{lemma}\label{lemma:secondmoment}
In the setting of Theorem \ref{thm:replicasymmetric},
\[\lim_{t \to \infty} \lim_{n \to \infty} \frac{1}{n}\log \E[Z^2 \mid \cG_t]
=2\Psi_{\RS}\]
where the inner limit as $n \to \infty$ exists almost surely for each fixed $t$.
\end{lemma}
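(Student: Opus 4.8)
The plan is to run the three stages of the first-moment argument with rank-two objects in place of rank-one ones. First I would reduce $\tfrac1n\log\E[Z^2\mid\cG_t]$ to an explicit low-dimensional variational formula $\Psi_{2,t}$, mirroring Lemma~\ref{lemma:Psi1correct}. Writing $Z^2=\sum_{\sigma^1,\sigma^2\in\{\pm1\}^n}\exp(\tfrac12(\sigma^1)^\top\bar J\sigma^1+\tfrac12(\sigma^2)^\top\bar J\sigma^2+h^\top(\sigma^1+\sigma^2))$ and conditioning gives $\E[Z^2\mid\cG_t]=\sum_{\sigma^1,\sigma^2}\exp(h^\top(\sigma^1+\sigma^2)+\tfrac n2 f_n(\sigma^1,\sigma^2))$ with $f_n(\sigma^1,\sigma^2)=\tfrac2n\log\E[\exp(\tfrac12(\sigma^1)^\top O^\top\bar DO\sigma^1+\tfrac12(\sigma^2)^\top O^\top\bar DO\sigma^2)\mid\cG_t]$. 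Applying Proposition~\ref{prop:Haarconditioning} and splitting each $\sigma^a=\sigma^a_\parallel+\sigma^a_\perp$ into its components in, and orthogonal to, the column span of $(X,Y)$, the expectation over the independent copy $\tilde O$ becomes a rank-two HCIZ-type integral, to which Proposition~\ref{prop:Ointegralrank2} applies with $a=\sigma^1_\perp$, $c=\sigma^2_\perp$, $b=\Pi\bar D\sigma^1_\parallel$, $d=\Pi\bar D\sigma^2_\parallel$; its admissibility conditions are checked as in Lemma~\ref{lemma:Psi1correct} via Weyl interlacing and the inequality $\bar G(\bar d_++\eps)-\eps>2$, valid for sufficiently small $\beta$. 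In place of the scalar infimum over $\gamma$ one now gets an infimum over a $2\times2$ Lagrange-multiplier matrix, contracted against the ``perp-Gram'' matrix $\tfrac1n(\sigma^1_\perp,\sigma^2_\perp)^\top(\sigma^1_\perp,\sigma^2_\perp)$.

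Next I would mimic the approximation step of Lemma~\ref{lemma:Psi1correct} to rewrite $f_n(\sigma^1,\sigma^2)$, uniformly over $\|\sigma^1\|^2=\|\sigma^2\|^2=n$, as a continuous function of the finitely many summary statistics $u^a=\tfrac1nh^\top\sigma^a$, $(v^a,w^a)$ defined as in \prettyref{eq:uvw}, and the overlap $\rho=\tfrac1n(\sigma^1)^\top\sigma^2$; no further statistics enter, since $\|\sigma^a_\perp\|^2/n=1-\|v^a\|^2-\|w^a\|^2$ and $\tfrac1n(\sigma^1_\perp)^\top\sigma^2_\perp=\rho-(v^1)^\top v^2-(w^1)^\top w^2$. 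All the quadratic forms $\tfrac1n(\sigma^a_\parallel)^\top\bar D\sigma^b_\parallel$ and $\tfrac1n(\sigma^a_\parallel)^\top\bar D\Pi(\,\cdot\,)^{-1}\Pi\bar D\sigma^b_\parallel$ are handled by Proposition~\ref{prop:AMPfree} exactly as in \prettyref{eq:Ifirstterm} and \prettyref{eq:Fconvergence}, producing the rank-two analogues of $\cF$ and $\cH$; Arzel\`a--Ascoli upgrades these to uniform-in-multiplier limits. Finally Varadhan's Lemma and the G\"artner--Ellis Theorem are applied to the i.i.d.\ uniform sum over $(\sigma^1,\sigma^2)$: since each of $u^a,v^a,w^a,\rho$ is linear in $(\sigma^1,\sigma^2)$ with $\cG_t$-measurable coefficients, the limiting cumulant generating function exists almost surely by Theorem~\ref{thm:SE} and is computed in closed form from $\langle e^{a\sigma^1_i+b\sigma^2_i+T\sigma^1_i\sigma^2_i}\rangle=\tfrac12(e^T\cosh(a+b)+e^{-T}\cosh(a-b))$, with $T$ dual to $\rho$; its finiteness, convexity, and smoothness make G\"artner--Ellis applicable. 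This yields, almost surely, $\lim_{n\to\infty}\tfrac1n\log\E[Z^2\mid\cG_t]=\Psi_{2,t}$ for an explicit $\sup$-$\inf$ formula $\Psi_{2,t}$ whose objective $\Phi_{2,t}$ is the rank-two counterpart of \prettyref{eq:Phi1}--\prettyref{eq:Psi1} and is convex in its inner arguments.

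It then remains to show $\lim_{t\to\infty}\Psi_{2,t}=2\Psi_{\RS}$, mirroring Lemmas~\ref{lemma:Psi1stationary} and~\ref{lemma:Psi1optimal}. The candidate is the ``decoupled replica'' of the first-moment saddle: $u^a_*=u_*$, $v^a_*=v_*$, $w^a_*=w_*$ for $a=1,2$, overlap $\rho_*=q_*$, diagonal multiplier matrix $\gamma_*I_2$ with $\gamma_*=\bar G^{-1}(1-q_*)$, dual field $T_*=0$, and $U^a_*=1$, $V^a_*=0$, $W^a_*=\kappa_*^{1/2}\Delta_t^{1/2}e_t$. At this point both cosh-arguments equal $\H+\Y_t$, the off-diagonal perp-Gram entry is $\rho_*-\|v_*\|^2-\|w_*\|^2=0$, and the $T$-derivative of the limiting cumulant generating function at $T=0$ with equal arguments reduces to $\E[\tanh^2(\H+\Y_t)]$; so the $\rho$- and $T$-stationarity conditions give $T_*=0$ and $\rho_*=\E[\tanh^2(\H+\sigma_*\G)]=q_*$, the multiplier-, $U$-, $W$-conditions hold exactly and the $V$-condition up to $o_t(1)$ exactly as in Lemma~\ref{lemma:Psi1stationary}, and the diagonal $2\times2$ $\cH$- and $\cF$-terms split via Proposition~\ref{prop:infgamma}(b) into twice their rank-one counterparts, so $\Phi_{2,t}$ evaluates to $2\Psi_{\RS}$. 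For $\liminf_t\Psi_{2,t}\ge 2\Psi_{\RS}$ I would plug in this outer point (with $o_t(1)$ corrections---as with $\tilde v_*$ in Lemma~\ref{lemma:Psi1optimal}, plus an analogous correction to $\rho$---making the inner stationary point exact) and use convexity of $\Phi_{2,t}$ in the inner variables together with \prettyref{prop:approxmin}. For $\limsup_t\Psi_{2,t}\le 2\Psi_{\RS}$ I would specialize the inner infimum---the multiplier matrix to $\bar G^{-1}$ of the perp-Gram matrix, $U^a=1$, and $V^a,W^a,T$ affine in $(v^a,w^a,\rho)$ around the saddle with the $\rho\mapsto T$ slope a fixed constant in $(0,2)$---and show the reduced function $\tilde\Phi_{2,t}(v^1,v^2,w^1,w^2,\rho)$ has Hessian $\prec -c\beta^{1/2}I$ on its whole domain for small $\beta$: the $(v^a,w^a)$-block behaves as $-2\beta^{1/2}I+O(\beta)$ exactly as in Lemma~\ref{lemma:Psi1optimal} via Proposition~\ref{prop:smallbeta}, the $\rho\rho$-entry is negative and bounded away from $0$ because the $-\rho T$ term dominates the convex entropy contribution and the $\cF$-, $\cH$-terms, and the $\rho$--$(v,w)$ cross-block is $O(\beta^{1/2})$, hence negligible by a Schur-complement estimate. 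Strict concavity then makes $(v_*,w_*,\rho_*)$ an approximate maximizer by \prettyref{prop:approxmin}, giving $\Psi_{2,t}\le 2\Psi_{\RS}+o_t(1)$; combining the two bounds proves the lemma.

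I expect the last concavity estimate to be the main obstacle. The rank-two multiplier and the bilinear replica coupling make the Hessian bookkeeping substantially heavier than in the first-moment case, and controlling the new overlap coordinate $\rho$---which enters both the entropy term through the dual field $T$ and quadratically through the domain constraint---so that the negative-definiteness supplied by the high-temperature regime survives is the crux. This is precisely the step where, as in Bolthausen's argument, the conditioning on $\cG_t$ is essential: without it the decoupled configuration would not be the global optimizer and the conditional second moment would exceed the square of the conditional first moment.
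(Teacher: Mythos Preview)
Your proposal is correct and follows the same three-stage approach as the paper (Lemmas~\ref{lemma:Psi2correct}--\ref{lemma:Psi2optimal}): rank-two HCIZ plus G\"artner--Ellis/Varadhan to obtain $\Psi_{2,t}$, identification of the decoupled saddle at overlap $p_*=q_*$ with dual $P_*=0$, and matching lower/upper bounds via inner convexity and outer concavity. Two small deviations worth noting: in the lower bound no correction to the overlap is needed, since $\partial_P\Phi_{2,t}$ already vanishes exactly at $p_*=q_*$ (only $v,\ell$ require the $\tilde v_*,\tilde\ell_*$ shift); and in the upper bound the paper takes the $p\mapsto P$ slope to be $\beta^{1/2}$ like all the others, which yields $\nabla^2\tilde\Phi_{2,t}=-2\beta^{1/2}I_{(4t+1)\times(4t+1)}+O(\beta)$ uniformly and so avoids your Schur-complement step.
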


\subsection{Derivation of the variational formula}

Define the domain
\begin{equation}\label{eq:barDplus}
\cD_+=\left\{(\gamma,\nu,\rho) \in \R^3:
\begin{pmatrix} \gamma & \nu \\ \nu & \rho \end{pmatrix}
\succ \bar{d}_+ \cdot I_{2 \times 2}\right\}.
\end{equation}
For scalar arguments
$(\gamma,\nu,\rho) \in \cD_+$ and $u,k,U,K,P \in \R$ and $p \in [-1,1]$,
and vector arguments
$v,w,\ell,m,V,W,L,M \in \R^t$ satisfying
\begin{equation}\label{eq:A2}
A(p,v,w,\ell,m) \triangleq \begin{pmatrix}
1-\|v\|^2-\|w\|^2 & p-v^\top \ell-w^\top m \\ p-v^\top \ell-w^\top m &
1-\|\ell\|^2-\|m\|^2 \end{pmatrix} \succ 0,
\end{equation}
we define 
\begin{align}
&\Phi_{2,t}(u,v,w,k,\ell,m,p;\gamma,\nu,\rho,U,V,W,K,L,M,P)\nonumber\\
&=\E\Big[\cL\Big(P,\;U \cdot \H+V^\top \Delta_t^{-1/2}(\X_1,\ldots,\X_t)
+\kappa_*^{-1/2}W^\top \Delta_t^{-1/2}(\Y_1,\ldots,\Y_t),\nonumber\\
&\hspace{1in}K \cdot \H+L^\top \Delta_t^{-1/2}(\X_1,\ldots,\X_t)
+\kappa_*^{-1/2}M^\top \Delta_t^{-1/2}(\Y_1,\ldots,\Y_t)\Big)\Big]\nonumber\\
&\hspace{0.2in}-u \cdot U-k \cdot K-v^\top V-w^\top W-\ell^\top L-m^\top M
-p \cdot P\nonumber\\
&\hspace{0.2in}+u+k+\bar{R}(1-q_*)\kappa_*^{-1/2}\Big(v^\top w+\ell^\top m\Big)
+\frac{\lambda_*-\bar{R}(1-q_*)\kappa_*^{-1}}{2}\Big(\|w\|^2+\|m\|^2\Big)
\nonumber\\
&\hspace{0.2in}
+\frac{1}{2} \Tr \cF(\gamma,\nu,\rho) \cdot B(v,w,\ell,m)
\nonumber\\
&\hspace{0.2in}
+\frac{1}{2}\cH\Big(\gamma,\nu,\rho;1-\|v\|^2-\|w\|^2,
p-v^\top \ell-w^\top m,1-\|\ell\|^2-\|m\|^2 \Big).\label{eq:Phi2}
\end{align}
Here, $\cL$ is a multivariate analogue of $\log 2 \cosh$ defined as
\begin{equation}\label{eq:L2}
\cL(x,y,z)=\log[e^{x+y+z}+e^{x-y-z}+e^{-x+y-z}+e^{-x-y+z}],
\end{equation}
$\cF(\gamma,\nu,\rho)$ denotes the univariate function
$\cF$ from (\ref{eq:Fgamma}) applied spectrally to
$(\begin{smallmatrix} \gamma & \nu \\ \nu & \rho \end{smallmatrix})$ via the
functional calculus,
$B$ is the $2 \times 2$-matrix-valued function
\begin{equation}\label{eq:B2}
B(v,w,\ell,m)=\begin{pmatrix} \|v-\kappa_*^{-1/2}w\|^2 &
(v-\kappa_*^{-1/2}w)^\top (\ell-\kappa_*^{-1/2}m) \\
(v-\kappa_*^{-1/2}w)^\top (\ell-\kappa_*^{-1/2}m) 
& \|\ell-\kappa_*^{-1/2}m\|^2 \end{pmatrix},
\end{equation}
and $\cH$ is the scalar-valued function
\begin{equation}\label{eq:H2}
\cH(\gamma,\nu,\rho;a,b,c)
=\Tr \begin{pmatrix} \gamma & \nu \\ \nu & \rho \end{pmatrix}
\begin{pmatrix} a & b \\ b & c \end{pmatrix}
-\int \log\det \begin{pmatrix} \gamma-x & \nu \\ \nu & \rho-x
\end{pmatrix}\mu_{\bar{D}}(dx)-\left(2+\log \det
\begin{pmatrix} a & b \\ b & c \end{pmatrix}\right).
\end{equation}

Define the variational formula
\begin{equation}\label{eq:Psi2}
\Psi_{2,t}=\mathop{\mathop{\sup_{u,k \in \R,\;p \in [-1,1]}}_{v,w,\ell,m \in
\R^t:A(p,v,w,\ell,m) \succ 0}} \inf_{(\gamma,\nu,\rho) \in \cD_+}
\mathop{\inf_{U,K,P \in \R}}_{V,W,L,M \in \R^t}
\Phi_{2,t}(u,v,w,k,\ell,m,p;\gamma,\nu,\rho,U,V,W,K,L,M,P).
\end{equation}

\begin{lemma}\label{lemma:Psi2correct}
In the setting of Theorem \ref{thm:replicasymmetric}, for any fixed $t \geq 1$,
almost surely
\[\lim_{n \to \infty} \frac{1}{n}\log \E[Z^2 \mid \cG_t]=\Psi_{2,t}.\]
\end{lemma}
\begin{proof}
The proof is analogous to that of Lemma \ref{lemma:Psi1correct}, and we will
omit details where the arguments are the same. We again fix $t$ and
write $\cG,X,Y,S,\Delta$ for $\cG_t,X_t,Y_t,S_t,\Delta_t$. We have
\[\E[Z^2 \mid \cG]=\sum_{\sigma,\tau \in \{+1,-1\}^n}
\exp\Big(h^\top \sigma+h^\top \tau+\frac{n}{2} \cdot f_n(\sigma,\tau)\Big),\]
where we define
\[f_n(\sigma,\tau)=\frac{2}{n}\log \E\left[\exp\left(\frac{1}{2}
\sigma^\top O^\top \bar{D}O\sigma
+\frac{1}{2}\tau^\top O^\top \bar{D}O\tau\right)\;\bigg|\;\cG\right].\]
We will approximate this function $f_n(\sigma,\tau)$ on the spheres
$\|\sigma\|^2=n$ and $\|\tau\|^2=n$.\\

\noindent {\bf Conditional law of $O$.}
Recall the shorthand $V=V_{(S,\Lambda S)^\perp}$ and
$\sigma_\perp,\sigma_\parallel$ from
(\ref{eq:sigmadecomp}), and define similarly
\[\tau_\perp=V_{(X,Y)^\perp}^\top \tau, \qquad \tau_\parallel=(S,\Lambda S)
\begin{pmatrix} X^\top X & X^\top Y \\ Y^\top X & Y^\top Y \end{pmatrix}^{-1}
(X,Y)^\top \tau.\]
Then similarly to (\ref{eq:firstmomentform}), an application of \prettyref{prop:Haarconditioning} yields
\begin{align*}
f_n(\sigma,\tau)&=\frac{1}{n}\sigma_\parallel^\top \bar{D}\sigma_\parallel
+\frac{1}{n}\tau_\parallel^\top \bar{D}\tau_\parallel\\
&\hspace{0.2in}+\frac{2}{n}\log
\E\Bigg[\exp\Bigg(\frac{1}{2}\sigma_\perp^\top \tilde{O}^\top V^\top
\bar{D}V\tilde{O}\sigma_\perp+\frac{1}{2}\tau_\perp^\top \tilde{O}^\top
V^\top \bar{D} V\tilde{O}\tau_\perp
+\sigma_\parallel^\top\bar{D}V\tilde{O}\sigma_\perp
+\tau_\parallel^\top\bar{D}V\tilde{O}\tau_\perp\Bigg)\Bigg].
\end{align*}\\

\noindent {\bf Expectation over $\tilde{O}$.} We first restrict to the domain
\[U_n=\left\{(\sigma,\tau) \in \R^n \times \R^n:
\|\sigma\|^2=n,\;\|\tau\|^2=n,\;\sigma_\perp \text{ and }
\tau_\perp \text{ are (non-zero and) linearly independent}\right\}.\]
In particular, $\sigma$ and $\tau$ must be different on this domain.
We evaluate the expectation over $\tilde{O}$ using Proposition
\ref{prop:Ointegralrank2}: Taking $a=\sigma_\perp$, $c=\tau_\perp$,
$b=V^\top\bar{D}\sigma_\parallel$, $d=V^\top\bar{D}\tau_\parallel$, and defining
$\Omega_n$ by some constants $\eps,C>0$ depending only on $\mu_D$ and $\beta$, 
for sufficiently small $\beta$ and all large $n$, we have
$(a,b,c,d) \in \Omega_n$. Then
\begin{equation}\label{eq:fnsigmatau}
f_n(\sigma,\tau)=\frac{1}{n}\sigma_\parallel^\top \bar{D}\sigma_\parallel
+\frac{1}{n}\tau_\parallel^\top \bar{D}\tau_\parallel
+E_n(\sigma,\tau)+r_n(\sigma,\tau)
\end{equation}
where
\begin{align*}
E_n(\sigma,\tau)&=\inf_{(\gamma,\nu,\rho) \in \cD_\eps}
\Bigg\{\frac{1}{n}\Tr \begin{pmatrix} \gamma & \nu \\
\nu & \rho \end{pmatrix}\begin{pmatrix} \|\sigma_\perp\|^2 &
\sigma_\perp^\top \tau_\perp \\ \sigma_\perp^\top \tau_\perp & \|\tau_\perp\|^2
\end{pmatrix}\\
&\hspace{1in}+\frac{1}{n}\begin{pmatrix} V^\top\bar{D}\sigma_\parallel \\
V^\top\bar{D}\tau_\parallel \end{pmatrix}^\top
\begin{pmatrix} \gamma I-V^\top \bar{D}V & \nu I \\ \nu I & \rho
I-V^\top \bar{D}V
\end{pmatrix}^{-1}\begin{pmatrix} V^\top\bar{D}\sigma_\parallel \\
V^\top\bar{D}\tau_\parallel \end{pmatrix}\\
&\hspace{1in}-\frac{1}{n} \log \det
\begin{pmatrix} \gamma I-V^\top \bar{D}V & \nu I \\ \nu I & \rho
I-V^\top \bar{D}V
\end{pmatrix}-\left(2+\log\det \frac{1}{n}\begin{pmatrix} \|\sigma_\perp\|^2 &
\sigma_\perp^\top \tau_\perp \\ \sigma_\perp^\top \tau_\perp &
\|\tau_\perp\|^2 \end{pmatrix}\right)\Bigg\}
\end{align*}
and
\begin{equation}\label{eq:Depsdef}
\cD_\eps=\left\{(\gamma,\nu,\rho) \in \R^3:
\begin{pmatrix} \gamma & \nu \\ \nu & \rho \end{pmatrix} \succeq
(\bar{d}_++\eps) \cdot I\right\}.
\end{equation}
We use $r_n(\sigma,\tau)$ to denote any remainder satisfying
\[\lim_{n \to \infty} \sup_{(\sigma,\tau) \in U_n} \|r_n(\sigma,\tau)\| \to 0\]
almost surely, and changing from instance to instance.\\

\noindent {\bf Approximation by $v,w,\ell,m,p$.}
Define the functionals
\begin{align*}
u(\sigma)=\frac{1}{n}h^\top\sigma,& \qquad
\begin{pmatrix} v(\sigma) \\ w(\sigma) \end{pmatrix}
=\left[\frac{1}{n}\begin{pmatrix} X^\top X & X^\top Y \\
Y^\top X & Y^\top Y \end{pmatrix}\right]^{-1/2}
\cdot \frac{1}{n}(X,Y)^\top \sigma\\
k(\tau)=\frac{1}{n}h^\top\tau,& \qquad
\begin{pmatrix} \ell(\tau) \\ m(\tau) \end{pmatrix}
=\left[\frac{1}{n}\begin{pmatrix} X^\top X & X^\top Y \\
Y^\top X & Y^\top Y \end{pmatrix}\right]^{-1/2}
\cdot \frac{1}{n}(X,Y)^\top \tau, \qquad
p(\sigma,\tau)=\frac{1}{n}\sigma^\top \tau.
\end{align*}
Then
\[\frac{\|\sigma_\perp\|^2}{n}=1-\|v(\sigma)\|^2-\|w(\sigma)\|^2,
\qquad \frac{\|\tau_\perp\|^2}{n}=1-\|\ell(\tau)\|^2-\|m(\tau)\|^2,\]
\[\frac{\sigma_\perp^\top \tau_\perp}{n}=p(\sigma,\tau)-v(\sigma)^\top \ell(\tau)
-w(\sigma)^\top m(\tau),\]
and
\begin{align*}
\sigma_\parallel&=S \cdot \Delta^{-1/2}v(\sigma)
+\Lambda S \cdot (\kappa_*\Delta)^{-1/2}w(\sigma)
+(S,\Lambda S) \cdot r_n(\sigma,\tau)\\
\tau_\parallel&=S \cdot \Delta^{-1/2}\ell(\tau)
+\Lambda S \cdot (\kappa_*\Delta)^{-1/2}m(\tau)
+(S,\Lambda S) \cdot r_n(\sigma,\tau).
\end{align*}

We approximate the terms of (\ref{eq:fnsigmatau}) using the low-dimensional parameters $v,w,\ell,m,p$:
Setting $a_*=\bar{R}(1-q_*)$ and
following arguments similar to (\ref{eq:Ifirstterm}),
\[\frac{\sigma_\parallel^\top \bar{D} \sigma_\parallel}{n}
+\frac{\tau_\parallel^\top \bar{D} \tau_\parallel}{n}
=\frac{2a_*}{\kappa_*^{1/2}}\Big(v(\sigma)^\top w(\sigma)
+\ell(\tau)^\top m(\tau)\Big)+\left(\lambda_*-\frac{a_*}{\kappa_*}\right)
\Big(\|w(\sigma)\|^2+\|m(\tau)\|^2\Big)+r_n(\sigma,\tau).\]
For approximating $E_n(\sigma,\tau)$, we will refer to the
eigen-decomposition
\begin{equation}\label{eq:gammanurhoeig}
\begin{pmatrix} \gamma & \nu \\ \nu & \rho \end{pmatrix}
=\begin{pmatrix} y_1 & y_2 \end{pmatrix}
\begin{pmatrix} \alpha_1 & 0 \\ 0 & \alpha_2 \end{pmatrix}
\begin{pmatrix} y_1^\top \\ y_2^\top \end{pmatrix}
\end{equation}
for $(\gamma,\nu,\rho) \in \cD_\eps$. Here $\alpha_1,\alpha_2 \geq
\bar{d}_++\eps$ are the eigenvalues, and $y_1 \in \R^2$ and $y_2 \in \R^2$
are the two corresponding eigenvectors. We write also
\begin{equation}\label{eq:VDVeig}
V^\top \bar{D}V={V'}^\top \bar{D}'V'=
{V'}^\top\diag(\bar{d}_1',\ldots,\bar{d}_{n-2t}') V'
\end{equation}
as the eigendecomposition of $V^\top \bar{D}V \in \R^{(n-2t) \times (n-2t)}$,
where $V' \in \R^{(n-2t) \times (n-2t)}$ is orthogonal and
$\bar{d}_i'$ are the eigenvalues.
Then as $n \to \infty$,
\begin{align*}
\frac{1}{n}\log \det \begin{pmatrix}\gamma I-V^\top \bar{D}V & \nu I \\ 
\nu I & \rho I-V^\top \bar{D}V \end{pmatrix}
&=\frac{1}{n}\sum_{i=1}^{n-2t} \log \det \begin{pmatrix} \gamma-\bar{d}_i' & \nu \\
\nu & \rho-\bar{d}_i' \end{pmatrix}\\
&\to \int \log \det \begin{pmatrix} \gamma-x & \nu \\
\nu & \rho-x \end{pmatrix} \mu_{\bar{D}}(dx).
\end{align*}
This convergence is uniform over $(\gamma,\nu,\rho) \in \cD_\eps$, because
the left side is
\[\frac{1}{n}\sum_{i=1}^{n-2t} \log(\alpha_1-\bar{d}_i')
+\frac{1}{n}\sum_{i=1}^{n-2t} \log(\alpha_2-\bar{d}_i'),\]
and the uniform convergence of each sum over
$\alpha_1,\alpha_2 \geq \bar{d}_++\eps$ was verified in the
first-moment calculation of Lemma \ref{lemma:firstmoment}. Thus, for
any $(\sigma,\tau) \in U_n$,
\begin{align*}
&\frac{1}{n}\Tr \begin{pmatrix} \gamma & \nu \\
\nu & \rho \end{pmatrix}\begin{pmatrix} \|\sigma_\perp\|^2 &
\sigma_\perp^\top \tau_\perp \\ \sigma_\perp^\top \tau_\perp & \|\tau_\perp\|^2
\end{pmatrix}-\frac{1}{n} \log \det
\begin{pmatrix} \gamma I-V^\top \bar{D}V & \nu I \\ \nu I & \rho
I-V^\top \bar{D}V
\end{pmatrix}\\
&\hspace{1in}-\left(2+\log\det \frac{1}{n}\begin{pmatrix} \|\sigma_\perp\|^2 &
\sigma_\perp^\top \tau_\perp \\ \sigma_\perp^\top \tau_\perp &
\|\tau_\perp\|^2 \end{pmatrix}\right)\\
&=\cH\Big(\gamma,\nu,\rho;
1-\|v(\sigma)\|^2-\|w(\sigma)\|^2,\; p(\sigma,\tau)-v(\sigma)^\top
\ell(\tau)-w(\sigma)^\top m(\tau),\;
1-\|\ell(\tau)\|^2-\|m(\tau)\|^2 \Big)\\
&\hspace{2in}+r_n(\gamma,\nu,\rho)
\end{align*}
where $r_n(\gamma,\nu,\rho) \to 0$ uniformly over
$(\gamma,\nu,\rho) \in \cD_\eps$.

For the remaining second term of $E_n(\sigma,\tau)$, let us write
\begin{equation}\label{eq:VDVblockinv}
\begin{pmatrix} \gamma I-V^\top\bar{D}V & \nu I \\ \nu I & \rho
I-V^\top\bar{D}V \end{pmatrix}^{-1}
=\begin{pmatrix} V' & 0 \\ 0 & V' \end{pmatrix}^\top
\begin{pmatrix} \gamma I-\bar{D}' & \nu I \\ \nu I & \rho I-\bar{D}'
\end{pmatrix}^{-1}\begin{pmatrix} V' & 0 \\ 0 & V' \end{pmatrix}.
\end{equation}
We may invert the matrix on the right by inverting separately the non-zero
$2 \times 2$ blocks,
\[\begin{pmatrix} \gamma-\bar{d}_i' & \nu \\ \nu & \rho-\bar{d}_i'
\end{pmatrix}^{-1}
=\frac{1}{\alpha_1-\bar{d}_i'}y_1y_1^\top+\frac{1}{\alpha_2-\bar{d}_i'}
y_2y_2^\top.\]
Then for each $j,k \in \{1,2\}$, the $(j,k)$ block of (\ref{eq:VDVblockinv}) is
\begin{align*}
\begin{pmatrix} \gamma I-V^\top\bar{D}V & \nu I \\ \nu I & \rho
I-V^\top\bar{D}V \end{pmatrix}_{jk}^{-1}
&=y_{1j}y_{1k}{V'}^\top\diag\left(\frac{1}{\alpha_1-\bar{d}_i'}\right)V'
+y_{2j}y_{2k}{V'}^\top\diag\left(\frac{1}{\alpha_2-\bar{d}_i'}\right)V'\\
&=y_{1j}y_{1k}(\alpha_1 I-V^\top \bar{D} V)^{-1}
+y_{2j}y_{2k}(\alpha_2 I-V^\top \bar{D} V)^{-1}.
\end{align*}
Let us consider first $j=k=1$. Then by (\ref{eq:Fconvergencefixed}) from the
first-moment calculation of Lemma \ref{lemma:firstmoment},
\begin{align*}
&y_{11}^2 \cdot \frac{\sigma_\parallel^\top \bar{D}V
(\alpha_1 I-V^\top \bar{D}V)^{-1}V^\top\bar{D}\sigma_\parallel}{n}
+y_{21}^2 \cdot \frac{\sigma_\parallel^\top \bar{D}V
(\alpha_2 I-V^\top \bar{D}V)^{-1}V^\top\bar{D}\sigma_\parallel}{n}\\
&=\Big(y_{11}^2\cF(\alpha_1)+y_{21}^2\cF(\alpha_2)\Big)
\cdot
\|v(\sigma)-\kappa_*^{-1/2}w(\sigma)\|^2+r_n(\sigma,\alpha_1,\alpha_2,y_{11},y_{21})
\end{align*}
where $r_n(\sigma,\alpha_1,\alpha_2,y_{11},y_{21}) \to 0$ uniformly over
$\alpha_1,\alpha_2 \geq \bar{d}_++\eps$, $y_{11},y_{21} \in [-1,1]$, and
$\{\sigma:\|\sigma\|^2=n,\sigma_\perp \neq 0\}$.
Similarly, for the other blocks $j,k \in \{1,2\}$,
\begin{align*}
&y_{11}y_{12} \cdot \frac{\sigma_\parallel^\top \bar{D}V
(\alpha_1-V^\top \bar{D}V)^{-1}V^\top\bar{D}\tau_\parallel}{n}
+y_{21}y_{22} \cdot \frac{\sigma_\parallel^\top \bar{D}V
(\alpha_2-V^\top \bar{D}V)^{-1}V^\top\bar{D}\tau_\parallel}{n}\\
&=\Big(y_{11}y_{12}\cF(\alpha_1)+y_{21}y_{22} \cF(\alpha_2)\Big)
\cdot (v(\sigma)-\kappa_*^{-1/2}w(\sigma))^\top
(\ell(\tau)-\kappa_*^{-1/2}m(\tau))+r_n(\sigma,\tau,\alpha_1,\alpha_2,
y_1,y_2),\\
&y_{12}^2 \cdot \frac{\tau_\parallel^\top \bar{D}V
(\alpha_1-V^\top \bar{D}V)^{-1}V^\top\bar{D}\tau_\parallel}{n}
+y_{22}^2 \cdot \frac{\tau_\parallel^\top \bar{D}V
(\alpha_2-V^\top \bar{D}V)^{-1}V^\top\bar{D}\tau_\parallel}{n}\\
&=\Big(y_{12}^2\cF(\alpha_1)+y_{22}^2\cF(\alpha_2)\Big)
\cdot
\|\ell(\tau)-\kappa_*^{-1/2}m(\tau)\|^2+r_n(\tau,\alpha_1,\alpha_2,y_{12},y_{22})
\end{align*}
where these remainders converge to 0 uniformly over $(\sigma,\tau) \in U_n$,
$\alpha_1,\alpha_2 \geq \bar{d}_++\eps$, and $y_{11},y_{12},y_{21},y_{22} \in
[-1,1]$. Combining these statements, we have for the second term of
$E_n(\sigma,\tau)$ that
\begin{align}
&\frac{1}{n}\begin{pmatrix} V^\top\bar{D}\sigma_\parallel \\
V^\top\bar{D}\tau_\parallel \end{pmatrix}^\top
\begin{pmatrix} \gamma I-V^\top\bar{D}V & \nu I \\ \nu I & \rho I-V^\top\bar{D}V
\end{pmatrix}^{-1}\begin{pmatrix} V^\top\bar{D}\sigma_\parallel \\
V^\top\bar{D}\tau_\parallel \end{pmatrix}\nonumber\\
&=\Tr \begin{pmatrix} y_{11} & y_{21} \\ y_{12} & y_{22} \end{pmatrix}
\begin{pmatrix} \cF(\alpha_1) & \\ & \cF(\alpha_2) \end{pmatrix}
\begin{pmatrix} y_{11} & y_{12} \\ y_{21} & y_{22} \end{pmatrix}
\cdot B(v(\sigma),w(\sigma),\ell(\tau),m(\tau))+r_n(\sigma,\tau,\gamma,\nu,\rho)
\nonumber\\
&=\Tr \cF(\gamma,\nu,\rho) \cdot
B(v(\sigma),w(\sigma),\ell(\tau),m(\tau))+r_n(\sigma,\tau,\gamma,\nu,\rho)
\label{eq:VDV2}
\end{align}
where $\cF(\gamma,\nu,\rho)$ is the function $\cF$ applied to
$(\begin{smallmatrix} \gamma & \nu \\ \nu & \rho \end{smallmatrix})$ spectrally,
and $r_n(\sigma,\tau,\gamma,\nu,\rho) \to 0$ uniformly over $(\sigma,\tau) \in
U_n$ and $(\gamma,\nu,\rho) \in \cD_\eps$.

Observe that this also implies, for any fixed vector $z \in \R^2$, with respect
to the positive-definite ordering for $(\begin{smallmatrix} \gamma & \nu \\
\nu & \rho\end{smallmatrix})$,
\begin{equation}\label{eq:F2monotonicity}
z^\top \cF(\gamma,\nu,\rho) z \text{ is non-increasing and convex over } 
(\gamma,\nu,\rho) \in \cD_+.
\end{equation}
Indeed, it suffices to show this for unit vectors $z=(z_1,z_2) \in \R^2$.
Fixing any $(\gamma,\nu,\rho) \in \cD_+$, we may take $\eps$ above small enough
such that $(\gamma,\nu,\rho)
\in \cD_\eps$. For each $n$, we may then take $(\sigma,\tau) \in U_n$ such that
$\|v(\sigma)\|^2 \to z_1^2$, $\|\ell(\tau)\|^2 \to z_2^2$,
$v(\sigma)^\top \ell(\tau) \to z_1z_2$, $\|w(\sigma)\|^2 \to 0$, and
$\|m(\tau)\|^2 \to 0$. (For example, we may choose
\[\sigma=\sqrt{n}\frac{z_1 x+(\sqrt{1-z_1^2}+\delta_n) r_1}{
\|z_1 x+(\sqrt{1-z_1^2}+\delta_n) r_1\|},
\qquad \tau=\sqrt{n}\frac{z_2 x+(\sqrt{1-z_2^2}+\delta_n) r_2}{
\|z_2 x+(\sqrt{1-z_2^2}+\delta_n) r_2\|}\]
where $x$
is the first column of $X$, $r_1$ and $r_2$ are vectors with
$\|r_1\|=\|r_2\|=\|x\|$ that are orthogonal to each
other and to the column span of $(X,Y)$, and $\delta_n \to 0$ as $n \to
\infty$.) Then as $n \to \infty$, the right side of (\ref{eq:VDV2}) converges to
$\Tr \cF(\gamma,\nu,\rho) \cdot (\begin{smallmatrix} z_1^2 & z_1z_2 \\ z_1z_2
& z_2^2 \end{smallmatrix})=z^\top \cF(\gamma,\nu,\rho)z$. The left side is
non-increasing with respect to the positive-definite ordering and convex
at $(\gamma,\nu,\rho)$, so the same properties hold for the limit
$z^\top \cF(\gamma,\nu,\rho)z$, showing (\ref{eq:F2monotonicity}).

Combining the above, we obtain the uniform approximation
over $(\sigma,\tau) \in U_n$
\begin{align}
f_n(\sigma,\tau)&=\inf_{(\gamma,\nu,\rho) \in \cD_\eps} \Bigg(
\frac{2a_*}{\kappa_*^{1/2}}(v(\sigma)^\top w(\sigma)+\ell(\tau)^\top m(\tau))
+\left(\lambda_*-\frac{a_*}{\kappa_*}\right)(\|w(\sigma)\|^2+\|m(\tau)\|^2)
\nonumber\\
&\hspace{0.5in}+\Tr \cF(\gamma,\nu,\rho) \cdot B(v(\sigma),w(\sigma),\ell(\tau),m(\tau))
+\cH\Big(\gamma,\nu,\rho;\;1-\|v(\sigma)\|^2-\|w(\sigma)\|^2,\nonumber\\
&\hspace{0.5in}p(\sigma,\tau)-v(\sigma)^\top \ell(\tau)-w(\sigma)^\top
m(\tau),\;1-\|\ell(\tau)\|^2-\|m(\tau)\|^2 \Big)\Bigg)+r_n(\sigma,\tau).
\label{eq:fnsigmatauapprox}
\end{align}
We now show that, for small $\beta$ and $\eps$, the above infimum over
$\cD_\eps$ is the same as that over the large domain $\cD_+$ in
(\ref{eq:barDplus}). Indeed, for any fixed $(\sigma,\tau) \in U_n$, denote by $S(\gamma,\nu,\rho)$
the quantity inside this infimum. Recall the eigendecomposition $(\begin{smallmatrix}\gamma & \nu\\\nu & \rho\end{smallmatrix})=\alpha_1 y_1y_1^\top+\alpha_2 y_2y_2^\top$ in 
(\ref{eq:gammanurhoeig}). For any $(\gamma,\nu,\rho) \in
\calD_+\backslash\calD_\eps$, we compare $S(\gamma,\nu,\rho)$ with
$S(\gamma',\nu'\rho')$, where
$(\begin{smallmatrix}\gamma' & \nu'\\\nu' &
\rho'\end{smallmatrix})=\max\{\alpha_1,\bar{d}_++\eps\}
y_1y_1^\top+\max\{\alpha_2,\bar{d}_++\eps\} y_2y_2^\top$ and
$(\gamma',\nu',\rho') \in \calD_\eps$. Note first that since $B(v,w,\ell,m)
\succeq 0$, (\ref{eq:F2monotonicity}) implies
\[\Tr \cF(\gamma',\nu',\rho')
\cdot B(v(\sigma),w(\sigma),\ell(\tau),m(\tau))
\leq \Tr \cF(\gamma,\nu,\rho) \cdot B(v(\sigma),w(\sigma),\ell(\tau),m(\tau)).\]
Next, let $\Delta$ denote the matrix derivative of the term
$\cH(\gamma,\nu,\rho;\cdot)$ of (\ref{eq:fnsigmatauapprox}),
\[\Delta \triangleq \begin{pmatrix} \partial_\gamma \cH(\gamma,\nu,\rho,\cdot)
& \frac{1}{2}\partial_\nu \cH(\gamma,\nu,\rho,\cdot) \\
\frac{1}{2}\partial_\nu \cH(\gamma,\nu,\rho,\cdot) &
\partial_\rho \cH(\gamma,\nu,\rho,\cdot) \end{pmatrix}\]
which has the explicit form
\[\Delta = A(p(\sigma,\tau),v(\sigma),w(\sigma),\ell(\tau),m(\tau))
-\int \begin{pmatrix} \gamma-x & \nu \\ \nu & \rho-x \end{pmatrix}^{-1}
\mu_{\bar{D}}(dx).\]
Since $(\gamma,\nu,\rho) \in \cD_+ \setminus \cD_\eps$, there is at least one
eigenvalue, say $\alpha_1$, which is less than $\bar{d}_++\eps$.
Then by the monotonicity of $\bar G$, for the corresponding eigenvector $y_1$, we have
\[y_1^\top \left(\int \begin{pmatrix} \gamma-x & \nu \\
\nu & \rho-x \end{pmatrix}^{-1} \mu_{\bar{D}}(dx)\right)y_1
=\bar{G}(\alpha_1)\geq \bar{G}(\bar{d}_++\eps).\]
So
\begin{align*}
\Tr \big[\Delta \cdot y_1y_1^\top\big]
\leq & ~ \Tr \big[A(p(\sigma,\tau),v(\sigma),w(\sigma),\ell(\tau),m(\tau)) \cdot
y_1 y_1^\top\big]-\bar{G}(\bar{d}_++\eps) \\
\leq & ~  4-\bar{G}(\bar{d}_++\eps) < 0
\end{align*}
where the second inequality is by Cauchy-Schwarz and the fact that all entries of $A$ are in $[-2,2]$, 
and the last inequality holds for $\beta \in (0,\beta_0)$ and sufficiently small
$\beta_0=\beta_0(\mu_D)>0$ and all sufficiently small $\eps$.
Integrating this bound from $\alpha_1$ to $\bar{d}_++\eps$, and also
from $\alpha_2$ to $\bar{d}_++\eps$ if $\alpha_2<\bar{d}_++\eps$, we obtain
$\cH(\gamma',\nu',\rho';\cdot)<\cH(\gamma,\nu,\rho;\cdot)$.
Combining the above, $S(\gamma',\nu',\rho') < S(\gamma,\nu,\rho)$.
This shows that $\inf_{\calD_\eps} S(\gamma,\nu,\rho) = \inf_{\calD_+}
S(\gamma,\nu,\rho)$.

Finally, observe that $(\sigma,\tau) \mapsto
(p(\sigma,\tau),v(\sigma),w(\sigma),\ell(\tau),m(\tau))$ is continuous,
relatively open, and maps $U_n$ onto the fixed domain
\begin{equation}
\calV\triangleq \{p \in [-1,1],v,w,\ell,m \in \R^t:
A(p,v,w,\ell,m) \succ 0\},
\label{eq:domain2}
\end{equation}
where $A(p,v,w,\ell,m)$ is as defined in \prettyref{eq:A2}.
Then,
applying Proposition \ref{prop:continuousextension} as in the proof of
Lemma \ref{lemma:Psi1correct} to extend the uniform approximation from $U_n$ to
its closure $\{\sigma,\tau \in \R^n:\|\sigma\|^2=\|\tau\|^2=n\}$, we obtain
\[\lim_{n \to \infty} \sup_{\sigma,\tau \in \R^n:\|\sigma\|^2=\|\tau\|^2=n}
\left|f_n(\sigma,\tau)
-f(p(\sigma,\tau),v(\sigma),w(\sigma),\ell(\tau),m(\tau))\right|=0\]
where we define for $(p,v,w,\ell,m) \in \calV$ the function
\begin{align*}
f(p,v,w,\ell,m) &\triangleq \inf_{(\gamma,\nu,\rho) \in \cD_+}
\frac{2a_*}{\kappa_*^{1/2}}(v^\top w+\ell^\top m)
+\left(\lambda_*-\frac{a_*}{\kappa_*}\right)(\|w\|^2+\|m\|^2)
+\Tr \cF(\gamma,\nu,\rho) \times\\
&\hspace{0.1in} B(v,w,\ell,m)
+\cH\Big(\gamma,\nu,\rho;\;1-\|v\|^2-\|w\|^2,\;
p-v^\top \ell-w^\top m,\;1-\|\ell\|^2-\|m\|^2 \Big),
\end{align*}
and extend this definition by continuity to the closure $\bar{\calV}$.\\

\noindent {\bf Large deviations analysis.}
Finally, writing $\langle \cdot \rangle$ for the expectation over
the independent discrete uniform laws $\sigma \sim
\Unif(\{+1,-1\}^n)$ and $\tau \sim \Unif(\{+1,-1\}^n)$, we may define
the limiting cumulant generating function
\begin{align*}
&\lambda(U,V,W,K,L,M,P)\\
&=\lim_{n \to \infty} \frac{1}{n}\log \Big\langle
\exp\Big[n(U \cdot u(\sigma)+V^\top v(\sigma)+W^\top w(\sigma)
+K \cdot k(\tau)+L^\top \ell(\tau)+M^\top m(\tau)+P \cdot p(\sigma,\tau)\Big]
\Big\rangle\\
&=\lim_{n \to \infty} \frac{1}{n}\log \Big\langle \exp\Big[
U \cdot h^\top \sigma+V^\top \Delta^{-1/2} X^\top \sigma
+W^\top(\kappa_*\Delta)^{-1/2}Y^\top \sigma\\
&\hspace{1.5in}+K \cdot h^\top \tau+L^\top \Delta^{-1/2} X^\top \tau
+M^\top(\kappa_*\Delta)^{-1/2}Y^\top \tau
+P \cdot \sigma^\top \tau+n \cdot r_n(\sigma,\tau)\Big]\Big\rangle
\end{align*}
where $r_n(\sigma,\tau) \to 0$ uniformly over $\sigma,\tau \in \{+1,-1\}^n$.
Evaluating the average over $(\sigma,\tau)$ using
\begin{equation}
\langle e^{x\sigma_i\tau_i+y\sigma_i+z\tau_i} \rangle
=e^{\cL(x,y,z)}/4,
\label{eq:Lxyz}
\end{equation}
 where $\cL(x,y,z)$ is as defined in (\ref{eq:L2}),
and applying the AMP convergence (\ref{eq:AMPconvergence}), this limit exists
and is given by
\begin{align*}
\lambda(U,V,W,K,L,M,P)&=
\E\Big[\cL\Big(P,\;U \cdot \H+V^\top \Delta_t^{-1/2}(\X_1,\ldots,\X_t)
+\kappa_*^{-1/2}W^\top \Delta_t^{-1/2}(\Y_1,\ldots,\Y_t),\\
&\hspace{0.2in}K \cdot \H+L^\top \Delta_t^{-1/2}(\X_1,\ldots,\X_t)
+\kappa_*^{-1/2}M^\top \Delta_t^{-1/2}(\Y_1,\ldots,\Y_t)\Big)\Big]
-\log 4.
\end{align*}
The proof is then concluded by the same argument as in the first-moment
calculation of Lemma \ref{lemma:firstmoment}, using the G\"{a}rtner-Ellis
Theorem and Varadhan's Lemma.
\end{proof}

\subsection{Analysis of the variational formula}

We now consider the approximate stationary point of (\ref{eq:Psi2}) given by
\[u_*=k_*=\E[\H \cdot \tanh(\H+\sigma_*\G)], \quad v_*=\ell_*=(1-q_*)\Delta_t^{1/2}e_t,
\quad w_*=m_*=\kappa_*^{1/2}(1-q_*)\Delta_t^{1/2}e_t,\]
\[\gamma_*=\rho_*=\bar{G}^{-1}(1-q_*), \quad \nu_*=0, \quad
U_*=K_*=1, \quad V_*=L_*=0, \quad W_*=M_*=\kappa_*^{1/2}\Delta_t^{1/2}e_t,\]
\[p_*=q_*, \quad P_*=0.\]
We write $\Phi_{2,t}(u_*,\ldots,P_*)$ for the evaluation of $\Phi_{2,t}$ at
this point.
We again verify in two steps that this approximately solves (\ref{eq:Psi2})
for $\beta>0$ sufficiently small.

For these steps, we require the following properties of
$\cF(\gamma,\nu,\rho)$ analogous to Lemma \ref{lmm:Fderivatives}.

\begin{lemma}\label{lmm:F2derivatives}
\begin{enumerate}[(a)]
\item For any fixed vector $z \in \R^2$, $z^\top \cF(\gamma,\nu,\rho)z$ is
non-increasing (with respect to the positive-definite ordering) and convex over
$(\gamma,\nu,\rho) \in \cD_+$.
\item Fix any $\delta>0$, open neighborhood $U \subset \R$, and twice
differentiable function $(\gamma,\nu,\rho):U \to \cD_\delta$ where $\cD_\delta$
is as defined in (\ref{eq:Depsdef}). Then for some
constants $C,\beta_0>0$ depending only on $\mu_D$ and $\delta$, any $s \in U$,
and all $\beta \in (0,\beta_0)$,
\begin{align*}
\|\cF(\gamma(s),\nu(s),\rho(s))\| &\leq C\beta^4(1-q_*)^2
\sup_{x \in \supp(\mu_{\bar{D}})} \big\|\big((\begin{smallmatrix} \gamma(s) &
\nu(s) \\ \nu(s) & \rho(s)\end{smallmatrix})-xI\big)^{-1}\big\|\\
\|\partial_s \cF(\gamma(s),\nu(s),\rho(s))\| &\leq C\beta^4(1-q_*)^2
\sup_{x \in \supp(\mu_{\bar{D}})} \big\|\partial_s
\big((\begin{smallmatrix} \gamma(s) &
\nu(s) \\ \nu(s) & \rho(s)\end{smallmatrix})-xI\big)^{-1} \big\|\\
\partial_s^2 \cF(\gamma(s),\nu(s),\rho(s)) &\preceq C\beta^4(1-q_*)^2
\sup_{x \in \supp(\mu_{\bar{D}})} \big\|\partial_s^2
\big((\begin{smallmatrix} \gamma(s) &
\nu(s) \\ \nu(s) & \rho(s)\end{smallmatrix})-xI\big)^{-1} \big\|
\cdot I_{2 \times 2}.
\end{align*}
\end{enumerate}
\end{lemma}
\begin{proof}
Part (a) was verified in (\ref{eq:F2monotonicity}).

For part (b), as in \prettyref{lmm:Fderivatives}, let us write $O(f(\beta))$
for a quantity bounded in magnitude by $C|f(\beta)|$ for a constant
$C=C(\mu_D,\delta)>0$, and interpret this entrywise for vectors and matrices. 
We again diagonalize
\[\begin{pmatrix} \gamma & \nu \\ \nu & \rho \end{pmatrix}
=\begin{pmatrix} y_1 & y_2 \end{pmatrix}
\begin{pmatrix} \alpha_1 & \\ & \alpha_2 \end{pmatrix}
\begin{pmatrix} y_1^\top \\ y_2^\top \end{pmatrix},\]
where $(y_1,y_2)$ are the two unit eigenvectors. Then by definition,
\[\cF(\gamma,\nu,\rho)=y_1y_1^\top \cdot \cF(\alpha_1)
+y_2y_2^\top \cdot \cF(\alpha_2)\]
where $\cF(\alpha)$ is the univariate function defined in
(\ref{eq:Fgamma}). Then $\|\cF(\gamma,\nu,\rho)\|
=\max(|\cF(\alpha_1)|,|\cF(\alpha_2)|)$, 
and also $\|((\begin{smallmatrix} \gamma(s) & \nu(s) \\ \nu(s) & \rho(s)
\end{smallmatrix})-xI)^{-1}\|
=\max(|\alpha_1-x|^{-1},|\alpha_2-x|^{-1})$, so the bound for
$\|\cF(\gamma(s),\nu(s),\rho(s))\|$ follows directly from Lemma
\ref{lmm:Fderivatives}.

To bound the derivatives, let us write $\cF(\gamma,\nu,\rho)$
in a more explicit form that parallels (\ref{eq:Fgamma}):
\begin{equation}\label{eq:F2}
\cF(\gamma,\nu,\rho)=\cF_{22}(\gamma,\nu,\rho)
-\cF_{12}(\gamma,\nu,\rho)^\top \cF_{11}(\gamma,\nu,\rho)^{-1} \cF_{12}(\gamma,\nu,\rho)
\end{equation}
where 
\begin{align}
\cF_{11}(\gamma,\nu,\rho)&=\int \begin{pmatrix} \gamma-x & \nu \\ \nu & \rho-x
\end{pmatrix}^{-1} \otimes \begin{pmatrix} 1 & \lambda(x) \\ \lambda(x)
& \lambda(x)^2 \end{pmatrix} \mu_{\bar{D}}(dx) \in \R^{4 \times 4},
\label{eq:F11secondmoment}\\
\cF_{12}(\gamma,\nu,\rho)&=\int \begin{pmatrix} \gamma-x & \nu \\ \nu & \rho-x
\end{pmatrix}^{-1} \otimes \begin{pmatrix} \theta(x) \\ \lambda(x)\theta(x)
\end{pmatrix} \mu_{\bar{D}}(dx) \in \R^{4 \times 2},\label{eq:F12secondmoment}\\
\cF_{22}(\gamma,\nu,\rho)&=
\int \begin{pmatrix} \gamma-x & \nu \\ \nu & \rho-x \end{pmatrix}^{-1}
\theta(x)^2\mu_{\bar{D}}(dx) \in \R^{2 \times 2}.\label{eq:F22secondmoment}
\end{align}
and $\lambda(x)$ and $\theta(x)$ were defined in (\ref{eq:LambdaThetax}).
To verify this form, 
recall the univariate $\calF_{11}(\gamma),\calF_{12}(\gamma),\calF_{22}(\gamma)$ defined in \prettyref{eq:F11}--\prettyref{eq:F22} and
observe that
\begin{align*}
\cF_{11}(\gamma,\nu,\rho)
&=\int \left(y_1y_1^\top \otimes \frac{1}{\alpha_1-x} 
\begin{pmatrix} 1 & \lambda(x) \\ \lambda(x)
& \lambda(x)^2 \end{pmatrix}
+y_2y_2^\top \otimes \frac{1}{\alpha_2-x} 
\begin{pmatrix} 1 & \lambda(x) \\ \lambda(x)
& \lambda(x)^2 \end{pmatrix}\right) \mu_{\bar{D}}(dx) \\
&=y_1y_1^\top \otimes \cF_{11}(\alpha_1)+y_2y_2^\top \otimes \cF_{11}(\alpha_2).
\end{align*}
Then, using $y_1y_1^\top \cdot y_2y_2^\top=y_2y_2^\top \cdot y_1y_1^\top=0$,
\begin{equation}
\cF_{11}(\gamma,\nu,\rho)^{-1}
=y_1y_1^\top \otimes \cF_{11}(\alpha_1)^{-1}
+y_2y_2^\top \otimes \cF_{11}(\alpha_2)^{-1}.
\label{eq:F11inv-spec}
\end{equation}
Similarly
\begin{align}
\cF_{12}(\gamma,\nu,\rho)= & ~ y_1y_1^\top \otimes \cF_{12}(\alpha_1)+y_2y_2^\top \otimes \cF_{12}(\alpha_2),\label{eq:F12-spec}	 \\
 \cF_{22}(\gamma,\nu,\rho)= & ~ y_1y_1^\top \otimes \cF_{22}(\alpha_1)
+y_2y_2^\top \otimes \cF_{22}(\alpha_2). \label{eq:F22-spec}	
\end{align}
Combining these yields the identity \prettyref{eq:F2}.

As in the proof of Lemma \ref{lmm:Fderivatives}, we use again the abbreviations $\lambda \equiv \beta(1-q_*)$ and
$\theta\equiv\beta^2(1-q_*)$. Then, from the forms
(\ref{eq:F11secondmoment}--\ref{eq:F22secondmoment}), for $k=1,2$,
\begin{align}
\partial_s^k \cF_{11}(\gamma(s))&=
O\left(\begin{pmatrix} 1 & 1 \\ 1 & 1 \end{pmatrix} \otimes
\begin{pmatrix} 1 & \lambda \\ \lambda & \lambda^2 \end{pmatrix}\right)
\cdot \sup_{x \in \supp(\mu_{\bar{D}})}
\big\|\partial_s^k \big((\begin{smallmatrix} \gamma(s) &
\nu(s) \\ \nu(s) & \rho(s)\end{smallmatrix})-xI\big)^{-1}\big\|
\label{eq:F2all11}\\
\partial_s^k \cF_{12}(\gamma(s))&=
O\left(\begin{pmatrix} 1 & 1 \\ 1 & 1 \end{pmatrix} \otimes
\begin{pmatrix} \theta \\ \lambda \theta \end{pmatrix}\right)
\cdot \sup_{x \in \supp(\mu_{\bar{D}})}
\big\|\partial_s^k \big((\begin{smallmatrix} \gamma(s) &
\nu(s) \\ \nu(s) & \rho(s)\end{smallmatrix})-xI\big)^{-1}\big\|
\label{eq:F2all12}\\
\partial_s^k \cF_{22}(\gamma(s))&=
O\left(\begin{pmatrix} 1 & 1 \\ 1 & 1 \end{pmatrix} \cdot \theta^2\right)
\cdot \sup_{x \in \supp(\mu_{\bar{D}})}
\big\|\partial_s^k \big((\begin{smallmatrix} \gamma(s) &
\nu(s) \\ \nu(s) & \rho(s)\end{smallmatrix})-xI\big)^{-1}\big\|.
\label{eq:F2all22}
\end{align}
Writing $\cF_{11}'=\partial_s \cF_{11}(\gamma(s),\nu(s),\rho(s))$ and similarly
for the other terms,
\[\cF'=\cF_{22}'-{\cF_{12}'}^\top \cF_{11}^{-1}\cF_{12}
-\cF_{12}^\top \cF_{11}^{-1}\cF_{12}'
+\cF_{12}^\top \cF_{11}^{-1}\cF_{11}'\cF_{11}^{-1}\cF_{12}.\]
Taking the product of (\ref{eq:F11inv-spec}) and (\ref{eq:F12-spec}),
\begin{align}
\cF_{11}^{-1}\cF_{12}
&=y_1y_1^\top \otimes [\cF_{11}(\alpha_1)^{-1}\cF_{12}(\alpha_1)]
+y_2y_2^\top \otimes [\cF_{11}(\alpha_2)^{-1}\cF_{12}(\alpha_2)]\nonumber\\
&=O\left(\begin{pmatrix} 1 & 1 \\ 1 & 1 \end{pmatrix}
\otimes \beta^{-2} \begin{pmatrix} \lambda^2 \theta \\ \lambda \theta
\end{pmatrix}\right),\label{eq:F11invF12}
\end{align}
where the second equality applies (\ref{eq:F11invbound}) and (\ref{eq:Fall12})
from Lemma \ref{lmm:Fderivatives}. Then, applying also
(\ref{eq:F2all11}--\ref{eq:F2all22}) for $k=1$, and $\lambda^2=O(\beta^2)$
and $\theta^2=\beta^4(1-q_*)^2$, we obtain
\[\|\cF'\|=O(\beta^4(1-q_*)^2) \cdot \big\|\partial_s
\big((\begin{smallmatrix} \gamma(s) & \nu(s) \\ \nu(s) &
\rho(s)\end{smallmatrix})-xI\big)^{-1}\big\|\]
which is the desired bound for $\|\partial_s \cF(\gamma(s),\nu(s),\rho(s))\|$.

For the second derivative, similar to \prettyref{eq:Fpp1}, we have
\begin{align*}
\cF''&=\cF_{22}''-{\cF_{12}''}^\top \cF_{11}^{-1} \cF_{12}
-\cF_{12}^\top \cF_{11}^{-1} \cF_{12}''
+\cF_{12}^\top \cF_{11}^{-1}\cF_{11}''\cF_{11}^{-1}\cF_{12}\\
&\hspace{0.5in}
-2\Big(\cF_{12}^\top\cF_{11}^{-1}\cF_{11}'\cF_{11}^{-1}\cF_{11}'\cF_{11}^{-1}\cF_{12}+{\cF_{12}'}^\top \cF_{11}^{-1}\cF_{12}'
+{\cF_{12}'}^\top [\cF_{11}^{-1}]'\cF_{12}
+\cF_{12}^\top [\cF_{11}^{-1}]'\cF_{12}'\Big).
\end{align*}
Bounding the terms on the first line using (\ref{eq:F11invF12}) and
(\ref{eq:F2all11}--\ref{eq:F2all22}) for $k=2$, and applying for the second line
\begin{align*}
&\cF_{12}^\top\cF_{11}^{-1}\cF_{11}'\cF_{11}^{-1}\cF_{11}'\cF_{11}^{-1}\cF_{12}+{\cF_{12}'}^\top \cF_{11}^{-1}\cF_{12}'
+{\cF_{12}'}^\top [\cF_{11}^{-1}]'\cF_{12}
+\cF_{12}^\top [\cF_{11}^{-1}]'\cF_{12}'\\
&\hspace{1in}=\Big[{\cF_{12}'}-\cF_{11}'\cF_{11}^{-1}\cF_{12}\Big]^\top
\cF_{11}^{-1}\Big[{\cF_{12}'}-\cF_{11}'\cF_{11}^{-1}\cF_{12}\Big] \succeq 0,
\end{align*}
we obtain
\[\cF'' \preceq O(\beta^4(1-q_*)^2) \cdot \sup_{x \in \supp(\mu_{\bar{D}})}
\big\|\partial_s^2
\big((\begin{smallmatrix} \gamma(s) & \nu(s) \\ \nu(s) &
\rho(s)\end{smallmatrix})-xI\big)^{-1}\big\|\]
which is the desired upper bound for
$\partial_s^2 \cF(\gamma(s),\nu(s),\rho(s))$.
\end{proof}

\begin{lemma}\label{lemma:Psi2stationary}
For each $\iota \in \{u,v,w,k,\ell,m,p,\gamma,\nu,\rho,U,W,K,M,P\}$, we have
\[\Phi_{2,t}(u_*,\ldots,P_*)=2\Psi_{\RS}, \qquad
\partial_\iota \Phi_{2,t}(u_*,\ldots,P_*)=0.\]
For $\iota \in \{V,L\}$, we have
\[\lim_{t \to \infty} \|\partial_\iota\Phi_{2,t}(u_*,\ldots,P_*)\|=0.\]
\end{lemma}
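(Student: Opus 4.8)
The argument parallels the proof of \prettyref{lemma:Psi1stationary}, exploiting the fact that at the proposed point the two ``replicas'' decouple. The plan is to first record the structural identities at this point and then differentiate term by term. Set $b_*\triangleq p_*-v_*^\top\ell_*-w_*^\top m_*$ and note the following: $\nu_*=0$ and $P_*=0$ by definition; $\|v_*\|^2+\|w_*\|^2=q_*$ (verified in the proof of \prettyref{lemma:Psi1stationary}), hence $b_*=q_*-q_*=0$; and $v_*=\kappa_*^{-1/2}w_*$, $\ell_*=\kappa_*^{-1/2}m_*$, hence $B(v_*,w_*,\ell_*,m_*)=0$. A short computation then gives $\cL(0,y,z)=\log 2\cosh y+\log 2\cosh z$, while $\cF(\gamma_*,0,\rho_*)=\diag(\cF(\gamma_*),\cF(\rho_*))$ with the scalar $\cF$ of \prettyref{eq:Fgamma}, and $\cH(\gamma_*,0,\rho_*;1-q_*,0,1-q_*)=\cH(\gamma_*,1-q_*)+\cH(\rho_*,1-q_*)$ with the scalar $\cH$ of \prettyref{eq:Hgamma}. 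Since $U_*=K_*=1$, $V_*=L_*=0$, and $\kappa_*^{-1/2}W_*^\top\Delta_t^{-1/2}(\Y_1,\ldots,\Y_t)=\Y_t$, both the second and third arguments of $\cL$ equal $\H+\Y_t$ at this point. Matching these pieces against \prettyref{eq:Phi1} shows $\Phi_{2,t}(u_*,\ldots,P_*)=\Phi_{1,t}(u_*,v_*,w_*;\gamma_*,U_*,V_*,W_*)+\Phi_{1,t}(k_*,\ell_*,m_*;\rho_*,K_*,L_*,M_*)=2\Psi_{\RS}$ by \prettyref{lemma:Psi1stationary}.

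For the stationary conditions I would split the parameters into three groups. First, the ``single-replica'' parameters $u,v,w,\gamma,U,V,W$ (and symmetrically $k,\ell,m,\rho,K,L,M$): differentiating $\Phi_{2,t}$ in each and evaluating at the proposed point produces $\partial_\iota\Phi_{1,t}(u_*,v_*,w_*;\gamma_*,U_*,V_*,W_*)$ plus extra contributions arising solely from the replica-coupling terms, namely the off-diagonal of $B$, the middle argument $b$ of $\cH$, and the $\nu$-dependence of $\cF$. These extra contributions all vanish at the proposed point because $B(v_*,w_*,\ell_*,m_*)=0$, because $\partial_b\cH\big|_*=2\nu_*+2b_*/(a_*c_*-b_*^2)=0$, and because $\partial_\gamma\cF\cdot B\big|_*=\partial_\nu\cF\cdot B\big|_*=0$; moreover $\cF(\gamma_*,0,\rho_*)$ is diagonal so the diagonal $B$-contribution also vanishes at $v_*-\kappa_*^{-1/2}w_*=0$. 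Hence each such derivative vanishes exactly (and $\|\partial_V\Phi_{2,t}\big|_*\|=\|\partial_V\Phi_{1,t}\big|_*\|\to 0$ as $t\to\infty$, likewise for $L$) by \prettyref{lemma:Psi1stationary}; for $\partial_V,\partial_W$ one uses, as there, $\tanh(\H+\Y_t)=(1-q_*)(\X_{t+1}+\Y_t)$, the state-evolution identities $\E[(\X_1,\ldots,\X_t)\,\Y_t]=0=\E[(\Y_1,\ldots,\Y_t)\,\X_{t+1}]$, $\E[(\X_1,\ldots,\X_t)\,\X_{t+1}]=\delta_t$, $\E[(\Y_1,\ldots,\Y_t)\,\Y_t]=\kappa_*\Delta_te_t$, and $\|\Delta_t^{-1/2}\delta_t-\Delta_t^{1/2}e_t\|\to 0$ from \prettyref{prop:AMPconvergent}. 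Second, the matrix parameters $\gamma,\nu,\rho$: here $\partial_\gamma\cH\big|_*=(1-q_*)-\bar G(\gamma_*)=0$ since $\gamma_*=\bar G^{-1}(1-q_*)$, likewise $\partial_\rho\cH\big|_*=0$, and $\partial_\nu\cH\big|_*=2b_*=0$, while the $\cF$-contributions vanish because $B\big|_*=0$; so all three vanish.

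Third, the coupling parameters $p$ and $P$. Since $p$ enters $\Phi_{2,t}$ only through $-pP$ and through the argument $b$ of $\cH$, one gets $\partial_p\Phi_{2,t}=-P+\tfrac12\partial_b\cH$, which equals $-P_*+\tfrac12\partial_b\cH\big|_*=0$. For $P$, a direct computation gives $\partial_P\cL(0,y,z)=\tanh y\,\tanh z$, so $\partial_P\Phi_{2,t}\big|_*=\E[\tanh^2(\H+\Y_t)]-p_*=\E[\tanh^2(\H+\sigma_*\G)]-q_*=0$ by the fixed-point equation \prettyref{eq:qstar} defining $q_*$, using $\Y_t\sim\N(0,\sigma_*^2)$ from \prettyref{thm:SE} and \prettyref{prop:AMPconvergent}. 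This last identity is the only genuinely new input beyond the first-moment lemma: it is precisely the replica-symmetric self-consistency for the overlap.

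\textbf{Main obstacle.} There is no single hard estimate; the main work is the bookkeeping needed to confirm that every replica-coupling contribution to the gradient vanishes at the proposed point. Once the identities $\nu_*=0$, $P_*=0$, $b_*=0$, and $v_*-\kappa_*^{-1/2}w_*=0=\ell_*-\kappa_*^{-1/2}m_*$ are in hand, each derivative either reproduces a first-moment derivative already controlled by \prettyref{lemma:Psi1stationary} or, in the single new case $\partial_P$, collapses to the fixed-point equation for $q_*$, so the argument is then routine.
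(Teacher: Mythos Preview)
Your proposal is correct and matches the paper's proof essentially line by line: both exploit the decoupling at $P_*=\nu_*=0$ via $\cL(0,y,z)=\log 2\cosh y+\log 2\cosh z$, $\cF(\gamma_*,0,\rho_*)=\diag(\cF(\gamma_*),\cF(\rho_*))$, and $\cH(\gamma_*,0,\rho_*;1-q_*,0,1-q_*)=\cH(\gamma_*,1-q_*)+\cH(\rho_*,1-q_*)$ to reduce the value and almost all derivatives to two copies of the first-moment computation in \prettyref{lemma:Psi1stationary}, and both handle the single new derivative $\partial_P$ via $\partial_x\cL(0,y,z)=\tanh y\,\tanh z$ and the fixed-point equation for $q_*$. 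Your organization differs only cosmetically (you list $\gamma,\rho$ in two groups), and your observation that $\partial_\iota B(v_*,w_*,\ell_*,m_*)=0$ for $\iota\in\{v,w,\ell,m\}$ (which the paper states explicitly) is implicit in your remark that $v_*-\kappa_*^{-1/2}w_*=0$.
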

\begin{proof}
At $P_*=0$, we have $\cL(0,y,z)=\log (e^y+e^{-y})(e^z+e^{-z})
=\log 2\cosh y+\log 2\cosh z$. Recalling the definition of
$\calF(\gamma,\nu,\rho)$ by the spectral calculus, at $\nu_*=0$, we have
$\cF(\gamma,0,\rho) = \diag(\calF(\gamma),\calF(\rho))$, where 
$\cF(\cdot)$ is the function defined in (\ref{eq:Fgamma}). Hence
\[\Tr \cF(\gamma,0,\rho) \cdot B(v,w,\ell,m)
=\cF(\gamma) \cdot \|v-\kappa_*^{-1/2}w\|^2
+\cF(\rho) \cdot \|\ell-\kappa_*^{-1/2}m\|^2.\]
 At the above
$v_*,\ell_*,w_*,m_*,p_*$, from the computation in Lemma
\ref{lemma:Psi1stationary}, we also have
\begin{align*}
1-\|v_*\|^2-\|w_*\|^2&=1-(1+\kappa_*)(1-q_*)^2\delta_*=1-q_*,\\
1-\|\ell_*\|^2-\|m_*\|^2&=1-(1+\kappa_*)(1-q_*)^2\delta_*=1-q_*,\\
p_*-v_*^\top \ell_*-w_*^\top m_*&=q_*-(1+\kappa_*)(1-q_*)^2\delta_*=0,
\end{align*}
and
\[\cH(\gamma,0,\rho;\,1-q_*,0,1-q_*)=\cH(\gamma,1-q_*)+\cH(\rho,1-q_*)\]
where $\cH(\cdot,\cdot)$ on the right is the function (\ref{eq:Hgamma}). Thus,
\[\Phi_{2,t}(u_*,\ldots,P_*)=\Phi_{1,t}(u_*,v_*,w_*;\gamma_*,U_*,V_*,W_*)
+\Phi_{1,t}(k_*,\ell_*,m_*;\rho_*,K_*,L_*,M_*)=2\Psi_{\RS},\]
the second equality applying Lemma \ref{lemma:Psi1stationary}. Also, in view of \prettyref{eq:Lxyz}, 
\begin{equation}
\partial_x \calL(x,y,z)\big|_{x=0} = \tanh(y)\tanh(z), \quad
\partial_y \calL(x,y,z)\big|_{x=0} = \tanh(y), \quad \partial_z \calL(x,y,z)\big|_{x=0} = \tanh(z).
\label{eq:Lder}
\end{equation}
Furthermore,
\[\partial_\gamma \calF(\gamma,\nu,\rho)\big|_{\nu=0} = \partial_\gamma
\calF(\gamma)e_1e_1^\top, \qquad \partial_\rho
\calF(\gamma,\nu,\rho)\big|_{\nu=0} = \partial_\rho \calF(\rho)e_2e_2^\top,\]
\[\partial_\gamma \calH(\gamma,\nu,\rho;a,b,c)\big|_{\nu=0}=\partial_\gamma
\calH(\gamma,a), \qquad \partial_\rho
\calH(\gamma,\nu,\rho;a,b,c)\big|_{\nu=0}=\partial_\rho \calH(\rho,c).\]
Using these identities and applying Lemma \ref{lemma:Psi1stationary}, we obtain
\begin{align*}
\partial_\iota \Phi_{2,t}(u_*,\ldots,P_*)
&=\partial_\iota \Phi_{1,t}(u_*,v_*,w_*;\gamma_*,U_*,V_*,W_*)=0 \text{ for }
\iota \in \{u,\gamma,U,W\}\\
\partial_V \Phi_{2,t}(u_*,\ldots,P_*)
&=\partial_V \Phi_{1,t}(u_*,v_*,w_*;\gamma_*,U_*,V_*,W_*)=o_t(1),\\
\partial_\iota \Phi_{2,t}(u_*,\ldots,P_*)
&=\partial_\iota \Phi_{1,t}(k_*,\ell_*,m_*;\rho_*,K_*,L_*,M_*)=0 \text{ for }
\iota \in \{k,\rho,K,M\}\\
\partial_L \Phi_{2,t}(u_*,\ldots,P_*)
&=\partial_L \Phi_{1,t}(k_*,\ell_*,m_*;\rho_*,K_*,L_*,M_*)=o_t(1),
\end{align*}
where $o_t(1)$ denotes a length-$t$ vector satisfying 
$\lim_{t \to \infty} \|o_t(1)\|=0$.

It remains to check the derivatives in $\iota \in \{v,w,\ell,m,p,\nu,P\}$.
Since $v_*=\kappa_*^{-1/2}w_*$ and $\ell_*=\kappa_*^{-1/2}m_*$, we have
$B(v_*,w_*,\ell_*,m_*)=0$ and $\partial_\iota
B(v_*,w_*,\ell_*,m_*)=0$ for each $\iota \in \{v,w,\ell,m\}$. 
Writing $a_*=c_*=1-q_*$ and $b_*=0$, we have
\[\partial_b \cH(\gamma_*,\nu_*,\rho_*;a_*,b_*,c_*)
=\partial_\nu \cH(\gamma_*,\nu_*,\rho_*;a_*,b_*,c_*)=0\]
by the identities $\nu_*=b_*=0$ and
\begin{equation}\label{eq:detvanishes}
\partial_y \log \det \begin{pmatrix} x & y \\ y & z
\end{pmatrix}\Bigg|_{y=0}=0.
\end{equation}
Then it follows directly that
\[\partial_p \Phi_{2,t}(u_*,\ldots,P_*)=0,
\qquad \partial_\nu \Phi_{2,t}(u_*,\ldots,P_*)=0.\]
Furthermore, 
\begin{align*}
\partial_a \cH(\gamma_*,\nu_*,\rho_*;a_*,b_*,c_*)
&=\partial_a \cH(\gamma_*,a_*)=\bar{R}(1-q_*),\\
\partial_c \cH(\gamma_*,\nu_*,\rho_*;a_*,b_*,c_*)
&=\partial_c \cH(\rho_*,c_*)=\bar{R}(1-q_*),
\end{align*}
where the latter two equalities follow from \prettyref{eq:Hder}.
Applying also (\ref{eq:derv}--\ref{eq:derw}) and the identity 
$\lambda_*=\bar{R}(1-q_*)+(1-q_*)^{-1}$, we have
\begin{align*}
\partial_v \Phi_{2,t}(u_*,\ldots,P_*)&=-V_*
+\bar{R}(1-q_*)\kappa_*^{-1/2}w_*-\bar{R}(1-q_*)v_*=0,\\
\partial_w \Phi_{2,t}(u_*,\ldots,P_*)&=-W_*
+\bar{R}(1-q_*)\kappa_*^{-1/2}v_*+(\lambda_*-\bar{R}(1-q_*)\kappa_*^{-1})w_*
-\bar{R}(1-q_*)w_*=0;
\end{align*}
similarly $\partial_\ell \Phi_{2,t}(u_*,\ldots,P_*)=0$
and $\partial_m \Phi_{2,t}(u_*,\ldots,P_*)=0$.

Finally, for the derivative in $P$, 
applying \prettyref{eq:Lder}, together with
\begin{align*}
&U_* \cdot \H+V_*^\top \Delta_t^{-1/2}(\X_1,\ldots,\X_t)
+\kappa_*^{-1/2}W_*^\top \Delta_t^{-1/2}(\Y_1,\ldots,\Y_t)\\
&=K_* \cdot \H+L_*^\top \Delta_t^{-1/2}(\X_1,\ldots,\X_t)
+\kappa_*^{-1/2}M_*^\top \Delta_t^{-1/2}(\Y_1,\ldots,\Y_t)=\H+\Y_t,
\end{align*}
$p_*=q_*$, and the definition of $q_*$ from (\ref{eq:qstar}), we obtain
\[\partial_P \Phi_{2,t}(u_*,\ldots,P_*)=\E[\tanh(\H+\Y_t)^2]-p_*
=\E[\tanh(\H+\sigma_* \G)^2]-q_*=0.\]
\end{proof}

\begin{lemma}\label{lemma:Psi2optimal}
For a constant $\beta_0=\beta_0(\mu_D)>0$ and any $\beta \in (0,\beta_0)$,
\[\lim_{t \to \infty} \Psi_{2,t}=2\Psi_{\RS}.\]
\end{lemma}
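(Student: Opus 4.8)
The plan is to follow the structure of the proof of Lemma~\ref{lemma:Psi1optimal}, establishing the two matching bounds $\liminf_{t\to\infty}\Psi_{2,t}\geq 2\Psi_{\RS}$ and $\limsup_{t\to\infty}\Psi_{2,t}\leq 2\Psi_{\RS}$ separately, and using throughout the approximate stationary point $(u_*,\ldots,P_*)$ and the derivative identities of Lemma~\ref{lemma:Psi2stationary}. We write $o_t(1)$ for any scalar, vector, or (possibly $t$-dependent dimensional) matrix quantity whose Euclidean/operator norm tends to $0$ as $t\to\infty$. The key structural point is that $\Phi_{2,t}$ splits as a function of $(U,V,W,K,L,M,P)$ --- built from the log-partition function $\cL$ composed with affine maps plus linear pairings, hence convex --- plus a function of $(\gamma,\nu,\rho)$, namely $\frac12\Tr\cF(\gamma,\nu,\rho)B+\frac12\cH(\gamma,\nu,\rho;\cdot)$, which is convex over $\cD_+$ by matrix convexity of $M\mapsto M^{-1}$ (weighted by $B\succeq 0$) and of $M\mapsto-\log\det M$.

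\textbf{Lower bound.} Specialize the outer supremum in \prettyref{eq:Psi2} to $(u_*,\tilde v_*,w_*,k_*,\tilde\ell_*,m_*,p_*)$, where $\tilde v_*=v_*+(1-q_*)[\Delta_t^{-1/2}\delta_t-\Delta_t^{1/2}e_t]$ ($\delta_t$ as in \prettyref{eq:deltat}) and $\tilde\ell_*$ is defined by the same formula; as in the first-moment proof, $\tilde v_*-v_*,\tilde\ell_*-\ell_*=o_t(1)$, and since the $V$- and $L$-derivatives of $\Phi_{2,t}$ do not involve the perturbed coordinates, this perturbation makes $(U_*,V_*,W_*,K_*,L_*,M_*,P_*)$ an exact stationary point --- hence, by convexity, minimizer --- of the $(U,\dots,P)$-part. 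For the $(\gamma,\nu,\rho)$-part $Y$ at these perturbed outer variables, its gradient at $(\gamma_*,\nu_*,\rho_*)$ is $o_t(1)$ (it depends on the perturbed coordinates), and $Y$ is locally strongly convex there with a $t$-independent constant, its Hessian being dominated by the positive-definite Hessian of the $-\int\log\det$ term of $\cH$ at the bounded matrices in question while the $\Tr\cF B$ contribution is $o_t(1)$ (as $B$ has entries of size $O(\|\tilde v_*-v_*\|^2)$); Proposition~\ref{prop:approxmin} then gives $\inf_{\cD_+}Y\geq Y(\gamma_*,\nu_*,\rho_*)+o_t(1)$. Adding the two infima and using that $\Phi_{2,t}$ is Lipschitz in $(v,\ell)$ near the stationary point, together with Lemma~\ref{lemma:Psi2stationary}, yields $\Psi_{2,t}\geq\Phi_{2,t}(u_*,\ldots,P_*)+o_t(1)=2\Psi_{\RS}+o_t(1)$.

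\textbf{Upper bound.} Specialize the inner infimum to $(\gamma,\nu,\rho)$ given by the $2\times2$ matrix $\bar G^{-1}(A(p,v,w,\ell,m))$, to $U=K=1$, $V=\beta^{1/2}(v-v_*)$, $W=\beta^{1/2}(w-w_*)+W_*$, $L=\beta^{1/2}(\ell-\ell_*)$, $M=\beta^{1/2}(m-m_*)+M_*$, and to $P=\beta^{1/2}(p-p_*)$. The first choice lies in $\cD_+$ for $\beta<\beta_0(\mu_D)$ since then $\bar G(\bar d_+)=\beta^{-1}G(d_+)>2\geq\Tr A>\lambda_{\max}(A)$, and by Proposition~\ref{prop:infgamma}(b) it turns the $\cH$-part into $\frac12\Tr f(A)$ with $f(\alpha)=\int_0^\alpha\bar R(z)\,dz$; the choice $P=\beta^{1/2}(p-p_*)$ is new and necessary because, unlike $u$ in the first moment, the overlap variable $p$ does not cancel from $\Phi_{2,t}$ and carries no direct regularizing term. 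Under $U=K=1$ the variables $u,k$ drop out; let $\tilde\Phi_{2,t}(p,v,w,\ell,m)$ denote the remaining function. Its Hessian is computed termwise as in Lemma~\ref{lemma:Psi1optimal}: the $\cL$-term gives $0\preceq\nabla^2\preceq C\beta I$, using the boundedness of $\nabla^2\cL$ and the state-evolution identities \prettyref{eq:XX}--\prettyref{eq:XY} (which make the relevant second-moment matrix the identity on $(p,v,w,\ell,m)$-space); the linear-pairing terms give $-2\beta^{1/2}I+O(\beta)$, the term $-pP$ supplying the $p$-diagonal; and the $\cF$- and $f(A)$-terms give $O(\beta^2)$. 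Hence $\nabla^2\tilde\Phi_{2,t}=-2\beta^{1/2}I+O(\beta)\prec-\beta^{1/2}I$ for $\beta$ small, so $\tilde\Phi_{2,t}$ is concave on the bounded domain $\{A\succ0\}$; since $\nabla\tilde\Phi_{2,t}(p_*,v_*,w_*,\ell_*,m_*)=o_t(1)$ by the chain rule and Lemma~\ref{lemma:Psi2stationary}, Proposition~\ref{prop:approxmin} gives $\Psi_{2,t}\leq\sup\tilde\Phi_{2,t}=\tilde\Phi_{2,t}(p_*,v_*,w_*,\ell_*,m_*)+o_t(1)=2\Psi_{\RS}+o_t(1)$.

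\textbf{Main difficulty.} The delicate step is the $O(\beta^2)$ Hessian bound for the $\cF$- and $\cH$-terms in the upper bound: these are matrix-valued and involve the $2\times2$ resolvent and the matrix function $\bar G^{-1}(A)$, which must be controlled uniformly over $x\in\supp(\mu_{\bar D})$ and over the whole domain, including the eigen-frame of $A$. As the footnote near \prettyref{eq:der2Fs} anticipates, the scalar estimates of Lemma~\ref{lemma:Psi1optimal} transfer: diagonalizing $A$, one bounds the relevant resolvent and its first two derivatives termwise via the absolutely convergent power series \prettyref{eq:Fseries}, and upgrades from a fixed eigen-frame to all of $\O(2)$ by the compact-net and Lipschitz argument already used in the proof of Lemma~\ref{lemma:Psi2correct}; combined with $\cF(\bar G^{-1}(A))$ being $O(\beta^4(1-q_*)^2)$-sized and $B$ being $O(\kappa_*^{-1})$-sized exactly as in the first moment, this yields the claimed bound. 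A secondary point, already flagged above, is that the overlap variable $p$ requires the regularizing choice $P=\beta^{1/2}(p-p_*)$ to restore global concavity, which has no analogue in the first-moment argument.
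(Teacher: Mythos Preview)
Your proposal is correct and follows essentially the same approach as the paper: the same perturbation $(\tilde v_*,\tilde\ell_*)$ for the lower bound, and the same specialization $(\gamma,\nu,\rho)=\bar G^{-1}(A)$, $U=K=1$, $V,W,L,M,P$ all $\beta^{1/2}$-scaled around the stationary point, followed by the four-term Hessian analysis, for the upper bound. One minor technical remark: for the $O(\beta^2)$ bound on the $\cF$-term Hessian, the paper applies the power series \prettyref{eq:Fseries} \emph{directly as a matrix series} $(\bar G^{-1}(A(s))-xI)^{-1}=\sum_{k\geq 0}c_k(x)A(s)^{k+1}$ and differentiates in $s$ term by term, rather than diagonalizing $A$ and invoking a compact-net over $\O(2)$; your proposed route would need to handle the $s$-dependence of the eigenframe of $A(s)$, whereas the matrix-series approach sidesteps this cleanly.
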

\begin{proof}
The proof is analogous to that of Lemma \ref{lemma:Psi1optimal}.
We establish separately
\begin{align}
\liminf_{t \to \infty} \Psi_{2,t} &\geq 2\Psi_{\RS},\label{eq:Psi2lower}\\
\limsup_{t \to \infty} \Psi_{2,t} &\leq 2\Psi_{\RS}.\label{eq:Psi2upper}
\end{align}

Recall the max-min form of $\Psi_{2,t}$ in \prettyref{eq:Psi2}.
For the lower bound (\ref{eq:Psi2lower}), we specialize the outer supremum of $\Phi_{2,t}$ to
$(u,v,w,k,\ell,m,p)=(u_*,\tilde{v}_*,w_*,k_*,\tilde{\ell}_*,m_*,p_*)$ where
\begin{equation}\label{eq:vellapprox}
\begin{aligned}
\tilde{v}_*&=v_*+(1-q_*)[\Delta_t^{-1/2}\delta_t-\Delta_t^{1/2}e_t]
=v_*+o_t(1)\\
\tilde{\ell}_*&=\ell_*+(1-q_*)[\Delta_t^{-1/2}\delta_t-\Delta_t^{1/2}e_t]
=\ell_*+o_t(1)
\end{aligned}
\end{equation}
and $\delta_t=(\delta_{1,t+1},\ldots,\delta_{t,t+1})$
is as defined in the proof of Lemma \ref{lemma:Psi1optimal}. 
Note that 
\[
\Phi_{2,t}(u_*,\tilde{v}_*,w_*,k_*,\tilde{\ell}_*,m_*,p_*;\gamma,\ldots,P)=X(U,V,W,K,L,M,P)+Y(\gamma,\nu,\rho),
\] where 
both $X$ and $Y$ are convex functions. (Convexity of $Y$ holds by Lemma
\ref{lmm:F2derivatives}(a).)
Then
\begin{align}
\Psi_{2,t} &\geq 
\inf_{U,K,P \in \R,\;V,W,L,M \in \R^t} X(U,V,W,K,L,M,P)+\inf_{(\gamma,\rho,\nu)\in \calD_+} Y(\gamma,\nu,\rho).
\label{eq:Psi2lb}
\end{align}

Under the above definitions of $\tilde{v}_*$ and
$\tilde{\ell}_*$, the point $(U_*,V_*,W_*,K_*,L_*,M_*,P_*)$ is an exact
stationary point of $X$ hence its minimizer.
For the minimum of $Y$, note that 
$Y(\gamma,\nu,\rho) = \frac{1}{2} \Tr \cF(\gamma,\nu,\rho) \tilde B+
\frac{1}{2}\cH(\gamma,\nu,\rho;\tilde a_*,\tilde b_*,\tilde c_*)$, where we denote
$\tilde B = B(\tilde{v}_*,w_*,\tilde{\ell}_*,m_*) = o_t(1)$,
$\tilde a_*=\tilde c_*=1-\|\tilde v_*\|^2-\|w_*\|^2=1-\|\tilde \ell_*\|^2-\|m_*\|^2$, and
$\tilde b_*=p_* -\tilde v_*^\top\tilde \ell_*-w_*^\top m_*=q_* -\|\tilde v_*\|^2-\|w_*\|^2$.
Recalling the identity $\bar{G}(\gamma_*)=1-q_*=1-\|v_*\|^2-\|w_*\|^2$, we have for each $\iota \in \{\gamma,\nu,\rho\}$,
\[
\partial_\iota \calH(\gamma_*,\nu_*,\rho_*;\tilde a_*,\tilde b_*,\tilde c_*)=\|v_*\|^2-\|\tilde v_*\|^2=o_t(1).
\]
Therefore $\nabla Y(\gamma^*,\nu^*,\rho^*)=o_t(1)$.
Furthermore, there exist some constants $c,\delta>0$ independent of $t$, such that 
$\nabla Y(\gamma,\nu,\rho)\succeq c I$ whenever
$\|(\gamma,\nu,\rho)-(\gamma_*,\nu_*,\rho_*)\|\leq \delta$. 
Applying \prettyref{prop:approxmin} yields
$\inf_{(\gamma,\nu,\rho)\in\calD_+} Y(\gamma,\nu,\rho) \geq Y(\gamma_*,\nu_*,\rho_*)+o_t(1)$.
Note that 
$\|\nabla_{v,\ell} \Phi_{2,t}(u_*,v,w_*,k_*,\ell,m_*;\gamma_*,\ldots,P_*)\| \leq C$ 
for all $\|v-v_*\|\leq \delta$ and $\|\ell-\ell_*\|\leq \delta$, where $C,\delta$ are constants independent of $t$.
In view of \prettyref{eq:vellapprox} and \prettyref{eq:Psi2lb}, we then have
\[\Psi_{2,t} \geq \Phi_{2,t}(u_*,\ldots,P_*)+o_t(1)=2\Psi_{\RS}+o_t(1),\]
implying the lower bound (\ref{eq:Psi2lower}).

For the upper bound (\ref{eq:Psi2upper}), let $A \in \R^{2 \times 2}$ be a
symmetric matrix satisfying $0 \prec A \prec \bar{G}(\bar{d}_+)I$. We define
by the spectral calculus
\begin{equation}\label{eq:gammanurhochoice}
\begin{pmatrix} \gamma(A) & \nu(A) \\ \nu(A) & \rho(A) \end{pmatrix}
=\bar{G}^{-1}(A)=\bar{R}(A)+A^{-1} \succ \bar{d}_+ I.
\end{equation}
We then specialize the inner infimum of $\Phi_{2,t}$ to
\[(\gamma,\nu,\rho)=\big(\gamma(A(p,v,w,\ell,m)),\nu(A(p,v,w,\ell,m)),
\rho(A(p,v,w,\ell,m))\big),\]
\[U=U_*=1, \quad K=K_*=1, \quad
V=V(v)=\beta^{1/2}(v-v_*), \quad L=L(\ell)=\beta^{1/2}(\ell-\ell_*),\]
\[W=W(w)=\beta^{1/2}(w-w_*)+W_*, \quad M=M(m)=\beta^{1/2}(m-m_*)+M_*,\]
\[P=P(p)=\beta^{1/2}(p-p_*),\]
where $A(p,v,w,\ell,m)$ is in \prettyref{eq:A2}. Note that the above 
$(\gamma,\nu,\rho)$ is well defined for any $(p,v,w,\ell,m)$ in the domain
$\calV$ defined in \prettyref{eq:domain2}, provided that
$\beta<G(d_+)/2$, in which case $\bar{G}(\bar{d}_+) >2$. Indeed, since $p\in[-1,1]$, we have
$A(p,v,w,\ell,m) \preceq (\begin{smallmatrix} 1&p\\p&1\end{smallmatrix}) \preceq
2 I \prec \bar{G}(\bar{d}_+) I$.

At $(p,v,w,\ell,m)=(p_*,v_*,w_*,\ell_*,m_*)$, this specialization
gives $(\gamma,\nu,\rho)=(\gamma_*,\nu_*,\rho_*)$, because
$A(p_*,v_*,w_*,\ell_*,m_*)=(1-q_*)I_{2\times 2}$,
$(V,L,W,M)=(V_*,L_*,W_*,M_*)$, and $P=P_*=0$. Now
write the function $\Phi_{2,t}$ under this specialization (which no longer
depends on $u$ or $k$, thanks to the choice of $U$ and $K$) as
\[\tilde{\Phi}_{2,t}(p,v,w,\ell,m)=\I+\II+\III+\IV\]
where
\begin{align*}
\I&=\E\Big[\cL\Big(P(p),\H+V(v)^\top \Delta_t^{-1/2}(\X_1,\ldots,\X_t)
+\kappa_*^{-1/2}W(w)^\top\Delta_t^{-1/2}(\Y_1,\ldots,\Y_t),\\
&\hspace{2in}\H+L(\ell)^\top \Delta_t^{-1/2}(\X_1,\ldots,\X_t)
+\kappa_*^{-1/2}M(m)^\top\Delta_t^{-1/2}(\Y_1,\ldots,\Y_t)\Big)\Big]\\
\II&=-v^\top V(v)-w^\top W(w)-\ell^\top L(\ell)-m^\top M(m)-p \cdot P(p)\\
&\hspace{1in}+\bar{R}(1-q_*)\kappa_*^{-1/2}(v^\top w+\ell^\top m)
+\frac{\lambda_*-\bar{R}(1-q_*)\kappa_*^{-1}}{2}
\Big(\|w\|^2+\|m\|^2\Big)\\
\III&=\frac{1}{2}\Tr\cF\Big(\gamma(A(p,v,w,\ell,m)),\nu(A(p,v,w,\ell,m)),
\rho(A(p,v,w,\ell,m))\Big)
\cdot B(v,w,\ell,m)\\
\IV&=\frac{1}{2}\cH\Big(\gamma(A(p,v,w,\ell,m)),\nu(A(p,v,w,\ell,m)),
\rho(A(p,v,w,\ell,m));\\
&\hspace{1in}1-\|v\|^2-\|w\|^2,p-v^\top \ell-w^\top m,1-\|\ell\|^2-\|m\|^2\Big).
\end{align*}
Recalling the definition of $\Psi_{2,t}$ in \prettyref{eq:Psi2} and the domain $\calV$ in \prettyref{eq:domain2}, we have
\begin{align}
\Psi_{2,t} &\leq 
\sup_{(p,v,w,\ell,m)\in \calV} \tilde{\Phi}_{2,t}(p,v,w,\ell,m).
\label{eq:Psi2ub}
\end{align}
Note that $\calV$ is a convex set, since $A(p,v,w,\ell,m) \succ 0$ is equivalent
to $(\begin{smallmatrix}1&p&\\p&1\end{smallmatrix}) \prec
(\begin{smallmatrix}v&\ell&\\w&m\end{smallmatrix})^\top(\begin{smallmatrix}v&\ell&\\w&m\end{smallmatrix})$.

For all $\beta \in (0,\beta_0)$ and sufficiently small
$\beta_0=\beta_0(\mu_D)>0$, we claim that $\tilde{\Phi}_{2,t}$ is globally
concave on $\calV$. As in Lemma \ref{lemma:Psi1optimal}, we write $O(\beta^k)$
for a scalar, vector, or matrix of norm at most $C\beta^k$, uniformly over
$\calV$, for a constant $C=C(\mu_D)>0$ depending only on $\mu_D$.

For $\I$,
recall from \prettyref{eq:Lxyz} that $\cL(x,y,z)=\log (4 \langle
e^{x\sigma\tau+y\sigma+z\tau} \rangle )$, where $\langle\cdot\rangle$ is the
mean with respect to $\sigma$ and $\tau$ independent and uniform on $\{+1,-1\}$.
Thus its Hessian coincides with the covariance matrix of the random vector $r=(\sigma\tau,\sigma,\tau)$ 
\[
\nabla^2 \cL(x,y,z) = \langle rr^\top\rangle' - \langle r\rangle' \langle r\rangle'^\top,
\]
under the tilted distribution of $(\sigma,\tau)$ defined by $\langle
f(\sigma,\tau)\rangle'=\frac{\langle f(\sigma,\tau)
e^{x\sigma\tau+y\sigma+z\tau}\rangle}{\langle
e^{x\sigma\tau+y\sigma+z\tau}\rangle}$. So
\begin{equation}
0 \preceq \nabla^2 \cL(x,y,z)  \preceq 3 I.
\label{eq:HessL}
\end{equation}
Recall the random vector $Z_t\in\reals^{2t}$ defined in \prettyref{eq:Zt}, which satisfies $\Expect[Z_tZ_t^\top]=I$.
For any unit vector $q=(a,b,c) \in \R^{4t+1}$ where
$a\in\reals$ and $b,c\in\reals^{2t}$, 
define $\eta = (a, b^\top Z_t, c^\top Z_t) \in \R^3$. Then 
$q^\top (\nabla_{p,v,w,\ell,m}^2 \I) q = 
\beta \cdot \Expect[\eta^\top (\nabla^2\calL) \eta]$, where 
$\nabla^2\calL$ is evaluated in the same point as in the definition of $\I$. 
Applying \prettyref{eq:HessL}, we have 
$0 \leq q^\top (\nabla_{p,v,w,\ell,m}^2 \I) q \leq 3 \beta \cdot
\Expect[\|\eta\|^2] =3\beta$, and thus
$0 \preceq \nabla_{p,v,w,\ell,m}^2 \I \preceq 3 \beta \cdot I$.

For $\II$, by the same arguments as in Lemma \ref{lemma:Psi1optimal}, we have
$\nabla_{p,v,w,\ell,m}^2 \II=-2\beta^{1/2}I+O(\beta)$.

For $\III$, consider any scalar linear parametrization
\[(p(s),v(s),w(s),\ell(s),m(s))_{s \in \R}
=(p,v,w,\ell,m)+s \cdot (p',v',w',\ell',m')\]
where $\|(p',v',w',\ell',m')\|=1$. Write as shorthand the following $2\times 2$ matrices
\[A(s)=A\big(p(s),v(s),w(s),\ell(s),m(s)\big), \qquad
B(s)=B\big(v(s),w(s),\ell(s),m(s)\big),\]
and $\cF(s)=\cF(\gamma(A(s)),\nu(A(s)),\rho(A(s)))$.
As in Lemma \ref{lemma:Psi1optimal}, it is easily checked from the definitions
(\ref{eq:A2}) and (\ref{eq:B2}) and the bound
$\kappa_*=O(\beta^{-2}(1-q_*)^{-2})$ in Proposition \ref{prop:smallbeta}
that at $s=0$, we have
\begin{equation}
\|A(s)\|,\|\partial_s A(s)\|,\|\partial_s^2 A(s)\|=O(1), \qquad
\|B(s)\|,\|\partial_s B(s)\|,\|\partial_s^2 B(s)\|=O(\beta^{-2}(1-q_*)^{-2}).
\label{eq:derAs2}
\end{equation}
We may write
\begin{align*}
&(p',v',w',\ell',m')^\top \cdot \partial_{p,v,w,\ell,m}^2 \III
\cdot (p',v',w',\ell',m')\\
&=\partial_s^2 \III\Big|_{s=0}
=\frac{1}{2}\Tr \Big(\partial_s^2 \cF(s)
\cdot B(s)+2\partial_s \cF(s) \cdot \partial_s B(s)+\cF(s)
\cdot \partial_s^2 B(s)\Big)\Big|_{s=0}.
\end{align*}
Applying (\ref{eq:gammanurhochoice}) and Lemma \ref{lmm:F2derivatives},
we have analogous to Lemma \ref{lemma:Psi1optimal} that
\begin{align*}
\Big|\Tr \cF(s) \cdot \partial_s^2 B(s)\Big|
&\leq O(\beta^4(1-q_*)^2) \cdot \sup_{x \in \supp(\mu_{\bar{D}})}
\Big\|\Big(\bar{G}^{-1}(A(s))-xI\Big)^{-1}\Big\|
\cdot \Big\|\partial_s^2 B(s)\Big\|\\
\Big|\Tr \partial_s \cF(s) \cdot \partial_s B(s)\Big|
&\leq O(\beta^4(1-q_*)^2) \cdot \sup_{x \in \supp(\mu_{\bar{D}})}
\Big\|\partial_s \Big(\bar{G}^{-1}(A(s))-xI\Big)^{-1}\Big\|
\cdot \Big\|\partial_s B(s)\Big\|\\
\Tr \partial_s^2 \cF(s) \cdot B(s)
&\leq O(\beta^4(1-q_*)^2) \cdot \sup_{x \in \supp(\mu_{\bar{D}})}
\Big\|\partial_s^2 \Big(\bar{G}^{-1}(A(s))-xI\Big)^{-1}\Big\|
\cdot \Big\|B(s)\Big\|
\end{align*}
where the last inequality above applies $B(s) \succeq 0$ and holds
without absolute value on the left side.
Applying the series expansion (\ref{eq:Fseries}), for some
$\beta_0=\beta_0(\mu_D)>0$ and all $\beta \in (0,\beta_0)$, we have
\[\Big(\bar{G}^{-1}(A(s))-xI\Big)^{-1}
=\sum_{k \geq 0} c_k(x) A(s)^{k+1}\]
as a convergent matrix series. Then, differentiating in $s$ term-by-term,
\[\Big\|\Big(\bar{G}^{-1}(A(s))-xI\Big)^{-1}\Big\|,
\Big\|\partial_s \Big(\bar{G}^{-1}(A(s))-xI\Big)^{-1}\Big\|,
\Big\|\partial_s^2 \Big(\bar{G}^{-1}(A(s))-xI\Big)^{-1}\Big\|=O(1),\]
so $\nabla_{p,v,w,\ell,m}^2 \III \prec C\beta^2$ for a constant
$C=C(\mu_D)>0$.

For $\IV$, observe that by Proposition \ref{prop:infgamma}(b), we have
\[\IV=\frac{1}{2}\Tr f(A(p,v,w,\ell,m)), \qquad f(\alpha) \triangleq
\int_0^\alpha \bar{R}(z)dz.\]
Then similarly
\[(p',v',w',\ell',m')^\top \nabla_{p,v,w,\ell,m}^2 \IV \cdot
(p',v',w',\ell',m')=\partial_s^2 \IV\Big|_{s=0}
=\frac{1}{2} \Tr \partial_s^2 f(A(s))\Big|_{s=0}.\]
For all $\beta \in (0,\beta_0)$, we may integrate (\ref{eq:barRseries}) term by
term to write $f(A(s))$ as the convergent matrix series
\[f(A(s))=\sum_{k \geq 2} \frac{\bar{\kappa}_k}{k} A(s)^k,\]
where $|\bar{\kappa}_k|\leq (C\beta)^k$ for some $C=C(\mu_D)$.
Differentiating in $s$ at $s=0$ and using \prettyref{eq:derAs2}, we have for some constant $C'$ independent of $t$,
\[\left\|\partial_s^2 f(A(s))\Big|_{s=0}\right\| 
\leq \sum_{k \geq 2} (C'\beta)^k=O(\beta^2).\]
Then also $\nabla_{p,v,w,\ell,m}^2 \IV=O(\beta^2)$.

Combining the above, $\nabla_{p,v,w,\ell,m}^2
\tilde{\Phi}_{2,t}\prec -2\beta^{1/2}I_{(4t+1) \times (4t+1)}+O(\beta)$.
Applying \prettyref{lemma:Psi2stationary}, we have that 
$\nabla_{\iota} \tilde{\Phi}_{2,t}(p_*,v_*,w_*,\ell_*,m_*)=0$
for $\iota=p,w,m$ and $o_t(1)$ for $\iota=v,\ell$. Thus, recalling that
$\calV$ is convex and applying
\prettyref{prop:approxmin},
\[\sup_{(p,v,w,\ell,m) \in \calV} \tilde{\Phi}_{2,t}(v,w) =
\tilde{\Phi}_{2,t}(p_*,v_*,w_*,\ell_*,m_*)
+o_t(1)=\Phi_{2,t}(u_*,\ldots,P_*)+o_t(1)=2\Psi_{\RS}+o_t(1).\]
Then $\Psi_{2,t} \leq 2\Psi_{\RS}+o_t(1)$ in view of \prettyref{eq:Psi2ub}, proving the upper bound (\ref{eq:Psi2upper}).
\end{proof}

Lemma \ref{lemma:secondmoment} follows immediately from Lemmas
\ref{lemma:Psi2correct} and \ref{lemma:Psi2optimal}.

\section{Proof of Theorem \ref{thm:replicasymmetric}}
\label{sec:replicasymmetric-pf}
Finally, using Lemmas \ref{lemma:firstmoment} and \ref{lemma:secondmoment},
we conclude the proof of Theorem \ref{thm:replicasymmetric}.

\begin{proof}
We first show concentration of $n^{-1} \log Z$ around its mean: Writing
$\sigma^\top J\sigma=\Tr \sigma\sigma^\top O^\top DO$ and viewing $Z=Z(O)$ as
a function of $O \in \R^{n \times n}$, we have
\[\partial_O \log Z(O)=\frac{1}{Z}\sum_{\sigma \in \{+1,-1\}^n}
\beta \sigma\sigma^\top O^\top D \cdot
\exp\left(\frac{\beta}{2}\sigma^\top J\sigma+h^\top \sigma\right).\]
Then the Frobenius norm of this derivative
(for any $O \in \R^{n \times n}$) is bounded as
\[\Fnorm{\partial_O \log Z(O)} \leq \max_{\sigma \in \{+1,-1\}^n}
\Fnorm{\beta \sigma\sigma^\top O^\top D}
=\sqrt{n\beta^2 \cdot \sigma^\top O^\top D^2 O\sigma} \leq n\beta
\Opnorm{D}\Opnorm{O}.\]
So for any $O,O' \in \O(n)$,
integrating along a linear path from $O$ to $O'$ in $\R^{n \times n}$,
\[|\log Z(O)-\log Z(O')| \leq \|O-O'\|_F \cdot n\beta \Opnorm{D}.\]
We apply Gromov's concentration inequality in the form of
\cite[Corollary 4.4.28]{AGZ10}: Let $Q \sim \SO(n)$ and $O \sim \O(n)$ be
independent. Then for any $\eps>0$,
\begin{equation}\label{eq:concentrationprelim}
\P\left[\left|\frac{1}{n}\log Z(O)-\frac{1}{n}\E[\log Z(OQ) \mid O]
\right|>\eps\right] \leq 2\exp\left(
-\frac{\left(\frac{n}{4}-\frac{1}{2} \right)\eps^2}
{2\beta^2\Opnorm{D}^2}\right).
\end{equation}
For any diagonal sign matrix $P$ with diagonal entries $\{+1,-1\}$, note that
$O^\top DO=O^\top P^\top DPO$, so that $Z(O)=Z(PO)$. Then for any fixed
$O \in \O(n)$, the conditional expectation $\E[\log Z(OQ) \mid O]$ over
$Q \sim \Haar(\SO(n))$ coincides with that over $Q \sim \Haar(\O(n))$, which in
turn equals the unconditional expectation
$\E[\log Z(O)]$ over $O \sim \Haar(\O(n))$ by the invariance of the Haar measure.
Thus under Assumption \ref{assump:main}(b), for any $\eps>0$ and a
constant $c=c(\eps,\beta,\mu_D)>0$,
\begin{equation}\label{eq:concentration}
\P\left[\left|\frac{1}{n}\log Z-\frac{1}{n}\E \log Z\right| \leq \eps\right]
\geq 1-e^{-cn}.
\end{equation}

The remainder of the argument is the same as in \cite{bolthausen2018morita},
but for convenience we reproduce it here. Fix any $\eps>0$. First observe that
by Lemma \ref{lemma:firstmoment}, for a large enough iteration $t=t(\eps)$,
almost surely
\[\lim_{n \to \infty} \frac{1}{n} \log \E[Z \mid \cG_t]
\leq \Psi_{\RS}+\eps.\]
Since
\[\frac{1}{n} \log \E[Z \mid \cG_t] \leq
\log 2+\max_{\sigma \in \{+1,-1\}^n}
\frac{1}{n}\left(\frac{\beta}{2}\sigma^\top J\sigma+h^\top \sigma\right)
\leq \log 2+\frac{\beta}{2}\Opnorm{D}+\frac{1}{n}\sum_{i=1}^n |h_i|,\]
and the right side
has a constant upper bound under Assumption \ref{assump:main}, this
and Jensen's inequality yields
\[\frac{1}{n}\E \log Z \leq \E \frac{1}{n}\log \E[Z \mid \cG_t]
\leq \Psi_{\RS}+2\eps \text{ for all large } n.\]
For the complementary lower bound, for any $t \geq 1$,
Markov's inequality gives
\begin{align*}
\P\left[\frac{1}{n}\log Z \geq \Psi_{\RS}-\eps\right]
&=\E\left[\P\left[\frac{1}{n}\log Z \geq
\Psi_{\RS}-\eps\bigg|\cG_t\right]\right]\\
&\geq \P\left[\P\left[\frac{1}{n}\log Z \geq \Psi_{\RS}-\eps\bigg|\cG_t\right] \geq
e^{-cn/2}\right] \cdot e^{-cn/2},
\end{align*}
where we take $c>0$ to be the constant in (\ref{eq:concentration}) for this
$\eps$. Taking $t=t(\eps)$ large enough and applying
Lemma \ref{lemma:firstmoment} again, almost surely
\[\Psi_\RS-\eps \leq \lim_{n \to \infty}
\frac{1}{n}\log \frac{\E[Z \mid \cG_t]}{2}.\]
Then applying also the Paley-Zygmund inequality and
Lemma \ref{lemma:secondmoment}, for $t=t(\eps,c)$ large enough,
almost surely for all large $n$,
\begin{align*}
\P\left[\frac{1}{n}\log Z \geq \Psi_{\RS}-\eps\bigg|\cG_t\right]
&\geq \P\left[\frac{1}{n}\log Z \geq \frac{1}{n}\log \frac{\E[Z \mid
\cG_t]}{2}\bigg|\cG_t\right]\\
&=\P\left[Z \geq \frac{\E[Z \mid \cG_t]}{2}\bigg|\cG_t\right]
\geq \frac{\E[Z \mid \cG_t]^2}{4\E[Z^2 \mid \cG_t]} \geq e^{-cn/2}.
\end{align*}
Then for all large $n$,
\begin{equation}\label{eq:raredeviation}
\P\left[\frac{1}{n}\log Z \geq \Psi_{\RS}-\eps\right] \geq 0.99e^{-cn/2}
>e^{-cn}.
\end{equation}
Together (\ref{eq:concentration}) and (\ref{eq:raredeviation}) imply
\[\frac{1}{n}\E\log Z \geq \Psi_{\RS}-2\eps \text{ for all large } n.\]
Thus $n^{-1}\E\log Z \to \Psi_{\RS}$, and applying again the
concentration (\ref{eq:concentration}) finishes the proof of almost sure convergence by Borel-Cantelli.
\end{proof}

\appendix

\section{Analysis of AMP}\label{appendix:AMP}

We prove Propositions \ref{prop:qstarunique}, \ref{prop:AMPconvergent}, and
\ref{prop:smallbeta}, followed by Theorem \ref{thm:SE} and
Propositions \ref{prop:AMPconvergent} and \ref{prop:AMPfree}.

\begin{proof}[Proof of Proposition \ref{prop:qstarunique}]
Recall $\bar{R}(z)=\beta R(\beta z)$ from (\ref{eq:barGR}).
We note that the statement
(\ref{eq:Rsmallbeta}) in Proposition \ref{prop:smallbeta} immediately follows
from the series expansion (\ref{eq:barRseries}) for $\bar{R}(z)$,
where $\bar{\kappa}_1=0$, $\bar{\kappa}_2=\beta^2$,
and $\bar{\kappa}_k=O(\beta^k)$.

Set $t(x)=\tanh(x)^2$. The fixed-point equation (\ref{eq:qstarunscaled})
is equivalently given
in (\ref{eq:qstar}) by $f(q_*)=q_*$, where $f:[0,1] \to [0,1)$ is the function
\[f(q)=\E\left[t\left(\H+\sqrt{q\bar{R}'(1-q)} \cdot \G\right)\right].\]
Applying Gaussian integration by parts,
\begin{align*}
f'(q)&=\E\left[t'\left(\H+\sqrt{q\bar{R}'(1-q)} \cdot \G\right)
\cdot \frac{-q\bar{R}''(1-q)+\bar{R}'(1-q)}{2\sqrt{q\bar{R}'(1-q)}}\cdot
\G\right]\\
&=\E\left[t''\left(\H+\sqrt{q\bar{R}'(1-q)} \cdot \G\right)
\cdot\frac{-q\bar{R}''(1-q)+\bar{R}'(1-q)}{2}\right].
\end{align*}
We have $|t''(x)| \leq 2$. By (\ref{eq:Rsmallbeta}),
we have $|\bar{R}'(1-q)| \leq C\beta^2$ and
$|\bar{R}''(1-q)| \leq C\beta^3$ for all $q \in [0,1]$,
$\beta \in (0,\beta_0)$, and some
constants $C,\beta_0>0$ depending only on $\mu_D$.
So $|f'(q)|<1$ for any such $\beta$ and sufficiently small $\beta_0$. Then
$f:[0,1] \to [0,1)$ is contractive and has a unique fixed point $q_* \in [0,1)$.
\end{proof}

\begin{proof}[Proof of Proposition \ref{prop:kappadelta}]
Note that by \prettyref{assump:main}(b),
\[\bar{G}(z)=\lim_{n \to \infty} n^{-1}\Tr (zI-\bar{J})^{-1},
\qquad -\bar{G}'(z)=\lim_{n \to \infty} n^{-1}\Tr (zI-\bar{J})^{-2}.\]
Recall, by definition of $\lambda_*$ in (\ref{eq:lambdastar}),
that $\bar{G}(\lambda_*)=1-q_*$. Then by the definitions of $\kappa_*$ and
$\Gamma$ in (\ref{eq:kappadelta}) and (\ref{eq:Gamma}),
\begin{align*}
\kappa_*&=\lim_{n \to \infty} \Tr \left(\frac{1}{1-q_*}
(\lambda_* I-\bar{D})^{-1}-I\right)^2\\
&=\frac{1}{(1-q_*)^2}\Big({-}\bar{G}'(\lambda_*)\Big)
-\frac{2}{1-q_*}\bar{G}(\lambda_*)+1=-\frac{1}{(1-q_*)^2}\bar{G}'(\lambda_*)-1.
\end{align*}
We have $\bar{R}(z)=\bar{G}^{-1}(z)-1/z$, so that
$\bar{G}'(z)=1/[(\bar{G}^{-1})'(\bar{G}(z))]=1/[\bar{R}'(\bar{G}(z))-1/\bar{G}(z)^2)]$.
Then
\[\kappa_*=-\frac{1}{(1-q_*)^2} \cdot \frac{1}{\bar{R}'(1-q_*)
-(1-q_*)^{-2}}-1=\frac{1}{1-(1-q_*)^2\bar{R}'(1-q_*)}-1.\]
Substituting $\bar{R}'(1-q_*)=\sigma_*^2/q_*$ from the definition of
$\sigma_*^2$ in (\ref{eq:qstar}), this yields
\[\delta_*=\frac{\sigma_*^2}{\kappa_*}
=\frac{\sigma_*^2[1-(1-q_*)^2\sigma_*^2/q_*]}{(1-q_*)^2\sigma_*^2/q_*}
=\frac{q_*}{(1-q_*)^2}-\sigma_*^2.\]
The second equality of (\ref{eq:deltastar}) may be checked by expanding the
square on the right side, and applying the definition of $q_*$ in
(\ref{eq:qstar}) and Gaussian integration by parts.
\end{proof}

\begin{proof}[Proof of Proposition \ref{prop:smallbeta}]
As already argued, the statement (\ref{eq:Rsmallbeta})
follows from (\ref{eq:barRseries}).
This implies $\sigma_*^2=O(\beta^2)$ by its definition in (\ref{eq:qstar}).
Setting $t(x)=\tanh(x)^2$, we have
\[q_*=\E\big[t(\H)+t'(\H)\cdot\sigma_*\G+t''(\H')\cdot(\sigma_*^2\G^2/2)\big]\]
for some random variable $\H'$ between $\H$ and $\H+\sigma_*\G$.
Here $|t''(x)| \leq 2$ and $\E[t'(\H) \cdot \G]=0$, so
$q_*=\E[\tanh(\H)^2]+O(\beta^2)$. The remaining statements follow immediately
from (\ref{eq:Rsmallbeta}) and the forms of
$\sigma_*^2,\lambda_*,\kappa_*,\delta_*$ in (\ref{eq:qstar}),
(\ref{eq:lambdastar}), (\ref{eq:kappastar}), and (\ref{eq:deltastar}).
\end{proof}

\begin{proof}[Proof of Theorem \ref{thm:SE}]
The AMP algorithm (\ref{eq:AMPsmall1}--\ref{eq:AMPsmall2}) is a particular
instance of the more general algorithm studied in \cite[Eqs.~(4.2--4.3)]{fan2020approximate}, whose state evolution is obtained in \cite[Theorem 4.3]{fan2020approximate}. 
We apply this result with the notational
identifications $\mathbf{u}_t \leftrightarrow x^t$, $\mathbf{z}_t
\leftrightarrow y^t$, $\mathbf{W}\leftrightarrow \Gamma, \mathbf{\Lambda}\leftrightarrow \Lambda$, $\mathbf{E} \leftrightarrow h$,
$(Z_1,\ldots,Z_t,E) \leftrightarrow (\Y_1,\ldots,\Y_t,\H)$, and
\[u_{t+1}(Z_1,\ldots,Z_t,E) \leftrightarrow f(\H,\Y_t)
\triangleq (1-q_*)^{-1}\tanh(\H+\Y_t)-\Y_t.\]
Applying the property (\ref{eq:divergencefree}) for this function $f$,
the matrix $\mathbf{\Phi}_t$ of \cite[Eq.\ (4.4)]{fan2020approximate} satisfies
\[\lim_{n \to \infty} \mathbf{\Phi}_t=0.\]
Furthermore, by the definitions of $\lambda_*$ and $\kappa_*$ in
(\ref{eq:lambdastar}) and (\ref{eq:kappadelta}),
\[\frac{1}{n}\Tr \Lambda \to 0, \qquad \frac{1}{n}\Tr \Lambda^2 \to \kappa_*,\]
so that the second free cumulant of the empirical spectral distribution of $\Lambda$ converges to $\kappa_*$.
Then the matrices $\mathbf{\Theta}_t^{(j)}$, $\mathbf{B}_t$, and
$\mathbf{\Sigma}_t$ of \cite[Eqs.\ (4.5) and (4.7)]{fan2020approximate} satisfy
\begin{equation}\label{eq:Thetaform}
\lim_{n \to \infty} \mathbf{\Theta}_t^{(j)}=\begin{cases} \Delta_t & \text{ if
} j=0 \\ 0 & \text{ otherwise},\end{cases}
\qquad \lim_{n \to \infty} \mathbf{B}_t=0,
\qquad \lim_{n \to \infty} \mathbf{\Sigma}_t=\kappa_*\Delta_t,
\end{equation}
where we define
\[\Delta_t=\lim_{n \to \infty} n^{-1}X_t^\top X_t\]
provided that this limit exists.
Thus, (\ref{eq:AMPsmall1}--\ref{eq:AMPsmall2}) is a special case of the general
AMP algorithm of \cite[Section 4]{fan2020approximate}, replacing the debiasing
coefficients $b_{ts}$ therein by their large-$n$ limits $b_{ts}^\infty=0$.

From the initialization $y^0 \sim
\N(0,\sigma_*^2I)$, \cite[Proposition E.1]{fan2020approximate} ensures that the
empirical distribution of rows of $(h,y^0)$ converges almost surely
in the Wasserstein space $W_p$ to $(\H,\Y_0)$, for every $p \geq 1$. Since $f$
is Lipschitz,
the distribution of entries of $x^1=f(h,y^0)$ then converges in $W_p$ to $\X_1$.
By definition, $\lambda_*>\max(x:x \in \supp(\mu_{\bar{D}}))$, so
Assumption \ref{assump:main}(b) implies that the empirical eigenvalue
distribution of $\Lambda$ also converges in $W_p$ to a compactly supported
limit. The remaining conditions
of \cite[Assumption 4.2]{fan2020approximate} are easily checked from Assumption
\ref{assump:main}. Thus,
\cite[Theorem 4.3]{fan2020approximate} shows the distributional convergence
(\ref{eq:AMPconvergence}) in $W_p$, for any fixed $p \geq 1$. In particular, the
above matrix $\Delta_t$ is well-defined and non-singular for every $t \geq 1$,
and coincides with the definition of \prettyref{eq:Deltadef}. Thus (\ref{eq:XX}) holds. 

The limit (\ref{eq:YY}) then immediately follows from the distributional
convergence (\ref{eq:AMPconvergence}) and the specification of the law
$(\Y_1,\ldots,\Y_t) \sim \N(0,\kappa_*\Delta_t)$. The limit (\ref{eq:XY})
follows from writing each $\X_s$ as a function of $\Y_{s-1}$ according to
(\ref{eq:XYrelation}), and applying the divergence-free condition
(\ref{eq:divergencefree}) and Gaussian integration by parts for a
multivariate Gaussian vector---see \cite[Proposition E.5]{fan2020approximate}.
\end{proof}

\begin{proof}[Proof of Proposition \ref{prop:AMPconvergent}]
Since $\Y_0 \sim \N(0,\sigma_*^2)$, we have $\delta_{11}=\E[\X_1^2]=\delta_*$ by
(\ref{eq:XYrelation}) and the second equality of (\ref{eq:deltastar}).
Then $\kappa_*\delta_{11}=\sigma_*^2$ by definition of $\delta_*$ in
(\ref{eq:kappadelta}), so $\Y_1 \sim \N(0,\sigma_*^2)$ by the characterization
of its law in Theorem \ref{thm:SE}. The statements $\delta_{tt}=\delta_*$
and $\kappa_*\delta_{tt}=\sigma_*^2$ then hold for all $t \geq 1$ by induction.

To show the convergence $\delta_{st} \to \delta_*$ as $\min(s,t) \to \infty$, 
let us set $\delta_{0t}=\delta_{t0}=0$ for all $t \geq 0$.
We first show that $0 \leq \delta_{st} \leq \delta_*$ for all $s,t$. 
Observe that
\[\delta_{s+1,t+1}=\E[\X_{s+1}\X_{t+1}]=\E[f(\H,\Y_s)f(\H,\Y_t)],\]
where $f(h,y)=(1-q_*)^{-1}\tanh(h+y)-y$.  
By induction on $\min(s,t)$, it suffices to show that $\delta_{st} \in
[0,\delta_*]$ implies that $\delta_{s+1,t+1} \in [0,\delta_*]$. Represent the bivariate Gaussian law of $(\Y_s,\Y_t)$ as
\[(\Y_s,\Y_t)=\Big(\sqrt{\kappa_*\delta_{st}}\G
+\sqrt{\sigma_*^2-\kappa_*\delta_{st}}\G',\;
\sqrt{\kappa_*\delta_{st}}\G+\sqrt{\sigma_*^2-\kappa_*\delta_{st}}\G''\Big),\]
where $\G,\G',\G''$ are independent $\N(0,1)$ variables. Note that this
representation holds also when $s=0$ and/or $t=0$, because $\Y_0$ is independent
of $\Y_t$ for $t \neq 0$. Then $\delta_{s+1,t+1}=g(\delta_{st})$,
where $g(\delta)$ is the map defined on $[0,\delta_*]$ by
\[g(\delta)\triangleq\E\Big[f\Big(\H,\sqrt{\kappa_*\delta} \cdot \G
+\sqrt{\sigma_*^2-\kappa_*\delta} \cdot \G'\Big)
f\Big(\H,\sqrt{\kappa_*\delta} \cdot \G
+\sqrt{\sigma_*^2-\kappa_*\delta} \cdot \G''\Big)\Big].\]
Denote $\Y'=\sqrt{\kappa_*\delta} \cdot \G+\sqrt{\sigma_*^2-\kappa_*\delta}
\cdot \G'$, and define $\Y''$ similarly with $\G''$ in place of $\G'$.
By Cauchy-Schwarz, $|g(\delta)| \leq \E[f(\H,\Y')^2] = \delta_*$ by (\ref{eq:deltastar}).
At $\delta=\delta_*$, we have $\Y'=\Y''=\sigma_*\G$ and hence
$g(\delta_*)=\E[f(\H,\Y')^2]=\delta_*$.
Furthermore, taking the expectation first over $\G'$ and $\G''$, for any
$\delta \in [0,\delta_*]$ we have
\[g(\delta)=\E\Big[\E[f(\H,\Y') \mid \H,\G]^2\Big] \in [0,\delta_*]\]
as claimed.

Next, applying symmetry with respect to $(\Y',\Y'')$ and
Gaussian integration by parts, 
\begin{align*}
g'(\delta)&=2\E\left[\partial_y f(\H,\Y') \cdot f(\H,\Y'') \cdot
\left(\frac{\kappa_*}{2\sqrt{\kappa_*\delta}}\G-\frac{\kappa_*}
{2\sqrt{\sigma_*^2-\kappa_*\delta}}\G'\right)\right]\\
&=2\E\left[\partial_y^2 f(\H,\Y') \cdot f(\H,\Y'') \cdot \frac{\kappa_*}{2}
-\partial_y^2 f(\H,\Y') \cdot f(\H,\Y'') \cdot \frac{\kappa_*}{2}
+\partial_y f(\H,\Y') \cdot \partial_y f(\H,\Y'') \cdot
\frac{\kappa_*}{2}\right]\\
&=\kappa_*\E\left[\partial_y f(\H,\Y')\partial_y f(\H,\Y'')\right].
\end{align*}
Here $|\partial_y f(h,y)| \leq 2/(1-q_*)$. Then, applying
$\kappa_*=O(\beta^2(1-q_*)^2)$ by Proposition \ref{prop:smallbeta}, we have
$|g'(\delta)| \leq 1/2$ for any $\beta \in (0,\beta_0)$
and some constant $\beta_0>0$ depending only on $\mu_D$. 
So $g:[0,\delta_*] \to [0,\delta_*]$ is contractive, and $\delta_*$ is the
unique fixed point. We then have
\[|\delta_{st}-\delta_*| \leq
(1/2)^{\min(s,t)}|\delta_{s-\min(s,t),t-\min(s,t)}-\delta_*|
=(1/2)^{\min(s,t)}\delta_* \leq (1/2)^{\min(s,t)},\]
so $\lim_{\min(s,t) \to \infty} \delta_{st}=\delta_*$ as desired. Finally,
$\lim_{\min(s,t) \to \infty} \kappa_*\delta_{st} \to \sigma_*^2$
follows from $\sigma_*^2=\kappa_*\delta_*$.
\end{proof}

\begin{proof}[Proof of Proposition \ref{prop:AMPfree}]
Since $\bar{J}=O^\top \bar{D} O$, we have $f(\bar{J})=O^\top f(\bar{D})O$
by the functional calculus. Then applying $S_t=OX_t$ yields
$n^{-1}X_t^\top f(\bar{J})X_t=n^{-1}S_t^\top f(\bar{D})S_t$.

Let $\Lambda$ be as defined in (\ref{eq:Gamma}).
Applying \cite[Lemma A.4(b)]{fan2020approximate} with the notational
identification $\mathbf{r}_t \leftrightarrow s^t$, for each fixed integer $k
\geq 0$, almost surely
\[\lim_{n \to \infty} n^{-1} S_t^\top \Lambda^k S_t=\mathbf{L}_t^{(k,\infty)}.\]
This limit matrix $\mathbf{L}_t^{(k,\infty)}$ is defined by
\cite[Eq.\ (A.6) and Lemma A.1]{fan2020approximate}. Under the divergence-free
condition (\ref{eq:divergencefree}), applying (\ref{eq:Thetaform}), we have
simply
\[\mathbf{L}_t^{(k,\infty)}=m_k \cdot \Delta_t,
\qquad m_k=\lim_{n \to \infty} n^{-1}\Tr \Lambda^k
=\int \left(\frac{1}{1-q_*}(\lambda_*-x)^{-1}-1\right)^k \mu_{\bar{D}}(dx).\]
Define the increasing map $g:(-\infty,\lambda_*) \to (-1,\infty)$ by
\[g(x)=\frac{1}{1-q_*}(\lambda_*-x)^{-1}-1,\]
so that $\Lambda=g(\bar{D})$.
Then, for any fixed polynomial $p:\R \to \R$, this shows
\[\lim_{n \to \infty} n^{-1}S_t^\top p(\Lambda) S_t
=\Delta_t \cdot \int p(g(x)) \mu_{\bar{D}}(dx).\]

We apply Weierstrass polynomial approximation to extend the above to general continuous
functions: Let $g^{-1}:(-1,\infty) \to (-\infty,\lambda_*)$ be the functional
inverse of $g$.
Then, for any $f:\R \to \R$ which is continuous and bounded on a neighborhood of
$\supp(\mu_{\bar{D}})$, the function $f \circ g^{-1}$ is continuous and
bounded on some compact neighborhood $\calK$ of $g(\supp(\mu_{\bar{D}}))$.
Applying the Weierstrass approximation, for any $\eps>0$, there is a
polynomial $p$ for which
\[\max_{x \in \calK} |p(x)-f \circ g^{-1}(x)|<\eps.\]
Then
\begin{align*}
&\left\|\lim_{n \to \infty} n^{-1}S_t^\top f(\bar{D})S_t
-\Delta_t \cdot \int f(x)\mu_{\bar{D}}(dx)\right\|\\
&=\left\|\lim_{n \to \infty} n^{-1}S_t^\top (f \circ g^{-1}(\Lambda))S_t
-\Delta_t \cdot \int f \circ g^{-1}(g(x))\mu_{\bar{D}}(dx)\right\|\\
&\leq \limsup_{n \to \infty} \eps \cdot n^{-1}\|S_t\|^2
+\eps \cdot \|\Delta_t\| \leq \eps \cdot \Tr \Delta_t+\eps \cdot \|\Delta_t\|.
\end{align*}
This holds for any $\eps>0$, so
\[\lim_{n \to \infty} n^{-1}S_t^\top f(\bar{D})S_t
=\Delta_t \cdot \int f(x)\mu_{\bar{D}}(dx).\]
\end{proof}

\section{Large deviations for integrals over the orthogonal
group}\label{appendix:HCIZ}

\subsection{Proof of \prettyref{prop:Ointegralrank1}}
By applying a transformation $D \mapsto QDQ^\top$ and $b \mapsto Qb$ for an
orthogonal matrix $Q$, we may assume
without loss of generality that $D=\diag(d_1,\ldots,d_n)$ is diagonal.
Let $\mu_n=\frac{1}{n}\sum_{i=1}^n \delta_{d_i}$,
$d_{n,+} = \max d_i$, $d_{n,-} = \min d_i$, and $\Opnorm{D} = \max |d_i|$. 
Let 
\begin{equation}
G_n(\gamma) = \frac{1}{n} \Tr (\gamma I - D)^{-1}
= \frac{1}{n} \sum_{i=1}^n \frac{1}{\gamma-d_i}.
\label{eq:Gn}
\end{equation}

\begin{lemma}
\label{lmm:gammaloc}	
In the setting of Proposition \ref{prop:Ointegralrank1}, there exists
$n_0>0$ such that for any $n \geq n_0$ and any $(a,b) \in \Omega_n$,
the following holds: Set $\alpha=\|a\|^2/n$ and
\[F_n(\gamma)=G_n(\gamma)+\frac{b^\top (\gamma I-D)^{-2}b}{n}.\]
Then the equation
\begin{equation}
F_n(\gamma)=\alpha
\label{eq:gammanstar}
\end{equation}
has a unique solution $\gamma_n^* \in (d_++\eps,\infty)$, and
$|\gamma_n^*  - \alpha^{-1}| \leq C + \Opnorm{D}$.
\end{lemma}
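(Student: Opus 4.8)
The plan is to exploit the strict monotonicity of $F_n$ on $(d_{n,+},\infty)$ together with elementary one-sided comparisons of the sums. First I would note that on $(d_{n,+},\infty)$ every summand $(\gamma-d_i)^{-1}$ and $b_i^2(\gamma-d_i)^{-2}$ is positive and strictly decreasing in $\gamma$, so $F_n$ is continuous and strictly decreasing there, with $F_n(\gamma)\to 0$ as $\gamma\to\infty$. By Assumption \ref{assump:main}(b), $d_{n,+}\to d_+$, so there is $n_0$ with $d_{n,+}<d_++\eps$, and hence $(d_++\eps,\infty)\subseteq(d_{n,+},\infty)$, for all $n\ge n_0$.

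To place the root inside $(d_++\eps,\infty)$ I would use $F_n\ge G_n$ pointwise together with $G_n(d_++\eps)\to G(d_++\eps)$ (this is weak convergence of the empirical spectral law, since $x\mapsto(d_++\eps-x)^{-1}$ is bounded and continuous on a compact neighborhood of $\supp\mu_D$ which, using $d_{n,+}\to d_+$ and $\liminf d_{n,-}>-\infty$, contains all the $d_i$ once $n$ is large). After enlarging $n_0$ this gives, for every $(a,b)\in\Omega_n$,
\[F_n(d_++\eps) \geq G_n(d_++\eps) > G(d_++\eps)-\eps \geq \alpha,\]
where the last inequality is the defining constraint $\alpha=\|a\|^2/n\le G(d_++\eps)-\eps$ of $\Omega_n$. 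Since $\alpha>0$ and $F_n(\gamma)\to0$ as $\gamma\to\infty$, the intermediate value theorem and strict monotonicity yield a unique $\gamma_n^*\in(d_++\eps,\infty)$ solving $F_n(\gamma_n^*)=\alpha$.

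For the quantitative bound I would estimate $\gamma_n^*$ from both sides. Dropping the nonnegative $b$-term and using $d_i\ge-\Opnorm{D}$, we get $\alpha=F_n(\gamma_n^*)\ge G_n(\gamma_n^*)\ge(\gamma_n^*+\Opnorm{D})^{-1}$, hence $\gamma_n^*\ge\alpha^{-1}-\Opnorm{D}$. For the reverse direction, set $u=\gamma_n^*-d_{n,+}>0$; using $d_i\le d_{n,+}$ and $\|b\|^2/n\le C$,
\[\alpha = G_n(\gamma_n^*)+\frac1n\sum_i\frac{b_i^2}{(\gamma_n^*-d_i)^2} \leq \frac1u+\frac{C}{u^2},\]
so $\alpha u^2-u-C\le0$, which forces $u\le\frac{1+\sqrt{1+4\alpha C}}{2\alpha}\le\alpha^{-1}+C$ by $\sqrt{1+x}\le1+x/2$. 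Therefore $\gamma_n^*\le\alpha^{-1}+C+d_{n,+}\le\alpha^{-1}+C+\Opnorm{D}$. Combining the two estimates, and using $C\ge0$, gives $|\gamma_n^*-\alpha^{-1}|\le C+\Opnorm{D}$.

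The argument is elementary throughout; the only step that needs care is the uniform root-location bound $F_n(d_++\eps)>\alpha$ holding simultaneously for all $(a,b)\in\Omega_n$ and all large $n$, where the slack ``$-\eps$'' in the constraint $\|a\|^2/n\le G(d_++\eps)-\eps$ is exactly what absorbs the finite-$n$ error between $G_n(d_++\eps)$ and its limit $G(d_++\eps)$. The quadratic estimate for the upper bound is routine.
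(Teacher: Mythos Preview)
Your proof is correct and follows essentially the same approach as the paper's: the same monotonicity argument, the same comparison $F_n(d_++\eps)\ge G_n(d_++\eps)>G(d_++\eps)-\eps\ge\alpha$ to place the root, and the same two-sided bounds $\frac{1}{\gamma-d_{n,-}}\le F_n(\gamma)\le \frac{1}{\gamma-d_{n,+}}+\frac{C}{(\gamma-d_{n,+})^2}$ for the quantitative estimate. The only cosmetic difference is in the final algebra---the paper rationalizes to get $\frac{1}{\alpha'}-\frac{1}{\alpha}=\frac{2C}{\sqrt{1+4C\alpha}+1}\in[0,C]$, whereas you use $\sqrt{1+x}\le 1+x/2$ on the quadratic root directly---but these yield the same bound.
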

\begin{proof}
By Assumption \ref{assump:main}(b), $\mu_n \to \mu_D$ weakly,
$d_{n,+}\to d_+$ as $n\to\infty$, and $\Opnorm{D}$ is bounded. 
Then $G_n(\gamma) $ converges to $G(\gamma)$ pointwise for each
$\gamma>d_+$. So for some $n_0>0$ and all $n \geq n_0$, we have
$d_{n,+}<d_++\eps$ and $G_n(d_++\eps)>G(d_++\eps)-\eps$. Then
	\[
	F_n(d_++\eps)\geq G_n(d_++\eps)>G(d_++\eps)-\eps \geq \alpha.
	\]
	Since $F_n(\gamma) \to 0$ monotonically as $\gamma\to\infty$, this
shows \prettyref{eq:gammanstar} has a unique solution $\gamma_n^*>d_++\eps$.
	Next, since $\|b\|^2 \leq Cn$, for any $\gamma>d_{n,+}$ we have
	$b^\top (\gamma I-D)^{-2} b/n \leq C/(\gamma-{d_{n,+}})^2$, and hence
	\[
	\frac{1}{\gamma-{d_{n,-}}} \leq F_n(\gamma) \leq \frac{1}{\gamma-{d_{n,+}}} + \frac{C}{(\gamma-{d_{n,+}})^2}.
	\]
Applying this at $\gamma=\gamma_n^*$ and $F_n(\gamma)=\alpha$, and rearranging,
	\[
	d_{n,-}  + \frac{1}{\alpha}  \leq \gamma_n^* \leq d_{n,+}  + \frac{1}{\alpha'},
	\]
	where $\alpha'=\frac{\sqrt{1+4C\alpha}-1}{2C}$.
	We conclude the proof by noting that $\frac{1}{\alpha'}-\frac{1}{\alpha} = \frac{2C}{\sqrt{1+4C\alpha}+1} \in [0,C]$.	
\end{proof}

\begin{proof}[Proof of \prettyref{prop:Ointegralrank1}]
We now bound the expectation in \prettyref{eq:Ointegralrank1} for any $(a,b) \in
\Omega_n$.
Let $g\sim \N(0,I_n)$ be a standard Gaussian vector. Then $\frac{g}{\|g\|}$ is
uniformly distributed over the sphere and 
$Oa \overset{L}{=}  \frac{g\|a\|}{\|g\|}$. Then
\[
\E\left[
\exp\left(b^\top Oa+\frac{a^\top O^\top DOa}{2}\right)\right]
=\E\left[\exp\left(\frac{\|a\|}{\|g\|} b^\top g +\frac{\|a\|^2}{2\|g\|^2} g^\top D g\right)\right].
\]

Let $\calE = \{g:|\|g\|^2/n-1| \leq \delta\}$ for some small $\delta$ to be specified. 
Since $\|g\|^2 \sim \chi^2_n$, by the $\chi^2$-tail bound (see e.g.~\cite[Lemma
1]{LM00}), we have for all $\delta \in (0,1)$,
\begin{equation}
\prob{g \in \calE} \geq 1 - 2 e^{-\delta^2 n/16}.
\label{eq:chitail}
\end{equation}
By the independence of $\|g\|$ and $\frac{g}{\|g\|}$, we have
\[
1 \leq \frac{\E\left[\exp\left(\frac{\|a\|}{\|g\|} b^\top g
+\frac{\|a\|^2}{2\|g\|^2} g^\top D
g\right)\right]}{\E\left[\exp\left(\frac{\|a\|}{\|g\|} b^\top g
+\frac{\|a\|^2}{2\|g\|^2} g^\top D g\right)\Indc_{g \in \calE}\right ]} =
\frac{1}{\prob{g \in \calE}} \leq \frac{1}{1 - 2 e^{-\delta^2 n/16}}.
\]

Set $\alpha=\|a\|^2/n$, and fix $\nu \in \R$ such that $\nu > d_{n,+} -
\frac{1}{\alpha}$. Then
\begin{align*}
& ~ \E\left[\exp\left(\frac{\|a\|}{\|g\|} b^\top g +\frac{\|a\|^2}{2\|g\|^2}
g^\top D g\right)\Indc_{g \in \calE}\right ] \\
\leq & ~ \E\left[\exp\left(\sqrt{\alpha} b^\top g +\frac{\alpha}{2} g^\top D g 
+ \frac{\alpha \nu}{2}(n-\|g\|^2) \right)\Indc_{g \in \calE}\right ]
\exp\underbrace{\pth{\delta \|a\|\|b\|+ \frac{1}{2}\delta  \|a\|^2 \Opnorm{D} + \frac{\alpha}{2} \delta|\nu| n }}_{\triangleq \tau} \\
\leq & ~ \prod_{i=1}^n 
\E\left[\exp\left(\sqrt{\alpha} b_i g_i - \frac{\alpha (\nu-d_i)}{2} g_i^2  \right) \right ] \exp\pth{\frac{\alpha \nu n}{2} + \tau} \\
= & ~ \prod_{i=1}^n \frac{1}{\sqrt{1 + \alpha (\nu-d_i)}}
\exp\pth{\frac{\alpha b_i^2}{2(1+\alpha (\nu-d_i))}}
\exp\pth{\frac{\alpha \nu n}{2} + \tau} \\
= & ~ \exp\pth{\sum_{i=1}^n \frac{\alpha \nu}{2} +
\frac{b_i^2}{2(\frac{1}{\alpha}+\nu-d_i)} - \frac{1}{2} \log(1 + \alpha (\nu-d_i)) }
\exp\pth{ \tau} \\
= & ~ \exp\sth{\frac{n}{2} \pth{\alpha \nu  + \frac{1}{n}b^\top\Big(
(\alpha^{-1}+\nu)I-D\Big)^{-1}b - \frac{1}{n}\log \det (I+\alpha (\nu I-D))}} \exp\pth{ \tau}.
\end{align*}
Next we minimize the leading term over $\nu>d_{n,+}-\frac{1}{\alpha}$.
Write $\nu=\gamma-\frac{1}{\alpha}$. Since the exponent is convex in $\nu$,
for all large $n$ the minimum is achieved at $\nu^*_n=\gamma_n^* - \frac{1}{\alpha}$, where $\gamma_n^*$ is previously defined as the unique solution on 
$(d_++\eps,\infty)$ to \prettyref{eq:gammanstar}, and this minimum is exactly $E_n(a,b)$ defined in \prettyref{eq:Ealpha}.
By \prettyref{lmm:gammaloc}, we have $|\nu^*_n| \leq C + \Opnorm{D}$.
Choosing $\delta=n^{-1/4}$ yields $\tau \leq C_1 n^{3/4}$ for some constant
$C_1$ depending on $\eps,C,\Opnorm{D}$ and $G(d_++\eps)$ only.
This proves
\[
\E\left[
\exp\left(b^\top Oa+\frac{a^\top O^\top DOa}{2}\right)\right] \leq
\frac{1}{1-2e^{-\sqrt{n}/16}} \exp\pth{\frac{n}{2} E_n(a,b) + C_1n^{3/4}} 
\]

For the lower bound,
\begin{align*}
& ~ \E\left[\exp\left(\frac{\|a\|}{\|g\|} b^\top g +\frac{\|a\|^2}{2\|g\|^2}
g^\top D g\right)\Indc_{g \in \calE}\right ] \\
\geq & ~ \E\left[\exp\left(\sqrt{\alpha} b^\top g +\frac{\alpha}{2} g^\top D g 
+ \frac{\alpha \nu}{2}(n-\|g\|^2) \right)\Indc_{g \in \calE}\right ] \exp(-\tau) \\
=& ~ \E\left[ \prod_{i=1}^n 
\exp\left(\sqrt{\alpha} b_i g_i - \frac{\alpha (\nu-d_i)}{2} g_i^2  \right)
\Indc_{g \in \calE} \right ] \exp\pth{\frac{\alpha \nu n}{2} - \tau} \\
= & ~ \exp\sth{\frac{n}{2} \pth{\alpha \nu  + \frac{1}{n}
b^\top\Big((\alpha^{-1}+\nu)I-D\Big)^{-1}b - \frac{1}{n} \log
\det (I + \alpha (\nu I-D))}} \prob{\tilde g \in \calE}
\exp\pth{ - \tau}
\end{align*}
where the last step follows from a change of measure from $g$ to $\tilde
g=(\tilde g_1,\ldots,\tilde g_n)$, whose coordinates are drawn independently as 
$g_i \sim \N(\mu_i,\sigma_i^2)$, with 
\[
\mu_i = \frac{\sqrt{\alpha} b_i}{1+\alpha (\nu-d_i)}, \quad \sigma_i^2 =
\frac{1}{1+\alpha (\nu-d_i)}.
\]
Note that 
\begin{align*}
\Expect[\|\tilde g\|^2] 
= & ~ \sum_{i=1}^n  (\mu_i^2+\sigma_i^2) = \frac{1}{\alpha} \sum_{i=1}^n \frac{b_i^2}{(\nu+1/\alpha - d_i)^2  } + \frac{1}{\nu +1/\alpha-d_i}	
= \frac{n}{\alpha} F_n\pth{\nu+\frac{1}{\alpha}}
\end{align*}
where $F_n$ is as defined in Lemma \ref{lmm:gammaloc}
As before, choose $\nu = \nu_n^*=\gamma_n^* - \frac{1}{\alpha}$, where
$\gamma_n^*$ is the solution to \prettyref{eq:gammanstar}. Then we have
$\Expect[\|\tilde g\|^2]=n$. Moreover,
\begin{align*}
\Var(\|\tilde g\|^2) = \sum_{i=1}^n (2 \sigma_i^4+4 \mu_i^2 \sigma_i^2)
= & ~ \sum_{i=1}^n \frac{2}{(1+\alpha (\nu-d_i))^2}+ \frac{4 \alpha
b_i^2}{(1+\alpha (\nu-d_i))^3} 	\\
= & ~ \frac{1}{\alpha^2} \pth{\sum_{i=1}^n
\frac{2}{(\gamma_n^*-d_i)^2}+\sum_{i=1}^n \frac{4 b_i^2}{(\gamma_n^*-d_i)^3}}.
\end{align*}
If $\frac{1}{\alpha} \leq 4(C+\Opnorm{D})$, we may apply
$\gamma_n^*>d_++\eps$ in \prettyref{lmm:gammaloc} and $\|b\|^2 \leq Cn$
to obtain $\Var(\|\tilde g\|^2) 
\leq \frac{n}{\alpha^2}\pth{\frac{2}{ \eps^2} + \frac{4C}{ \eps^3}}$.
If $\frac{1}{\alpha} \geq 4(C+\Opnorm{D})$, then we apply
$\gamma_n^* \geq \frac{1}{\alpha}-C-\Opnorm{D}$ from
\prettyref{lmm:gammaloc} to obtain $\gamma_n^*-d_i
\geq \frac{1}{\alpha}-2(C+\Opnorm{D}) \geq \frac{1}{2\alpha}$ and hence
$\Var(\|\tilde g\|^2) 
\leq n\pth{8 + 32 C\alpha}$. In both cases, we conclude that
\[
\Var(\|\tilde g\|^2) \leq C_2 n
\]
for some constant $C_2$ depending on $(C,\Opnorm{D},\eps)$.
By Chebyshev's inequality,
\[
\prob{\tilde g \not\in \calE} \leq \frac{\Var(\|\tilde g\|^2)}{(\delta n)^2} \leq \frac{C_2}{\sqrt{n}}.
\]
This shows
\[
\E\left[
\exp\left(b^\top Oa+\frac{a^\top O^\top DOa}{2}\right)\right] \geq
\frac{1}{1-\frac{C_2}{\sqrt{n}}} \exp\pth{\frac{n}{2} E_n(a,b) - C_1n^{3/4}}.
\]
Combining these upper and lower bounds completes the proof.
\end{proof}

\subsection{Proof of \prettyref{prop:Ointegralrank2}}

Let $s_1,\ldots,s_n \in \R^2$ be the rows of $(b,d) \in \R^{n \times 2}$.
We again assume without loss of generality that $D=\diag(d_1,\ldots,d_n)$ is
diagonal, and write $\mu_n,d_{n,+},d_{n,-},\Opnorm{D}$ as in the preceding
section. (Here $d_i$ are the diagonal entries of $D$, not the entries of the
vector $d$.)

Define
\begin{equation}
M\triangleq\begin{pmatrix} \frac{\|a\|^2}{n}
& \frac{a^\top c}{n} \\ \frac{a^\top c}{n} & \frac{\|c\|^2}{n} \end{pmatrix}.
\label{eq:covM}
\end{equation}
Define
\begin{equation}
\calF_n(\Lambda) \triangleq \Tr \Lambda M +\frac{1}{n} \sum_{i=1}^n \pth{s_i^\top (\Lambda-d_i I)^{-1} s_i - \log\det(\Lambda-d_i I)} -2-\log\det(M)
\label{eq:FLambda}
\end{equation}
so that $E_n$ defined in \prettyref{eq:En2} is given by
\[
E_n(a,b,c,d) = \inf_{\Lambda \succeq (d_++\eps) I} \calF_n(\Lambda).
\]

We have the following lemma that parallels \prettyref{lmm:gammaloc}:
\begin{lemma}
\label{lmm:Lambdaloc}	
Under the assumption of \prettyref{prop:Ointegralrank2}, there exists $n_0$ such that for all $n\geq n_0$ and all $(a,b,c,d)\in\Omega_n$, 
\[
\inf_{\Lambda \succeq (d_++\eps) I} \calF_n(\Lambda)
\]
is achieved at a unique minimizer $\Lambda^*$ such that 
$\Lambda^*\succ (d_++\eps) I$ and $\|\Lambda^*-M^{-1}\| \leq 2C + \Opnorm{D}$.
Furthermore, $\Lambda^*$ satisfies the equation
\begin{equation}
F_n(\Lambda) = M
\label{eq:Lambdastar}
\end{equation}
where
\begin{equation}
F_n(\Lambda) \triangleq \frac{1}{n} \sum_{i=1}^n (\Lambda-d_i I)^{-1} + \frac{1}{n} \sum_{i=1}^n  (\Lambda-d_i I)^{-1} s_is_i^\top (\Lambda-d_i I)^{-1}.
\label{eq:FnLambda}
\end{equation}
\end{lemma}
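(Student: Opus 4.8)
The plan is to mirror the scalar argument of \prettyref{lmm:gammaloc}, with the strictly convex function of $\gamma$ there replaced by the matrix-convex function $\calF_n$ of the $2\times 2$ matrix $\Lambda$. First I would note that for all $n$ large enough that $d_{n,+}<d_++\eps$, the function $\calF_n$ in \prettyref{eq:FLambda} is finite and strictly convex on the open set $\{\Lambda\in\R^{2\times 2}:\Lambda=\Lambda^\top,\ \Lambda\succ d_{n,+}I\}$, which contains the constraint set $\{\Lambda\succeq(d_++\eps)I\}$: the term $\Tr\Lambda M$ is linear, each $\Lambda\mapsto-\log\det(\Lambda-d_iI)$ is strictly convex, and each $\Lambda\mapsto s_i^\top(\Lambda-d_iI)^{-1}s_i$ is convex by operator convexity of the matrix inverse. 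Coercivity is immediate: since $M\succ 0$ on $\Omega_n$ we have $\Tr\Lambda M\geq\lambda_{\min}(M)\Tr\Lambda\to\infty$ as $\|\Lambda\|\to\infty$, while $-\tfrac1n\sum_i\log\det(\Lambda-d_iI)$ decreases only logarithmically and the middle term of $\calF_n$ is nonnegative. Together with lower semicontinuity, this gives a unique minimizer $\Lambda^*$ of $\calF_n$ over $\{\Lambda\succeq(d_++\eps)I\}$.

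The next step is to show $\Lambda^*$ lies in the interior of the constraint set, i.e.\ $\Lambda^*\succ(d_++\eps)I$, which then yields the stationarity equation. Differentiating \prettyref{eq:FLambda} with the matrix calculus identities for $\log\det$ and for $\Lambda\mapsto s^\top(\Lambda-dI)^{-1}s$ shows that the gradient of $\calF_n$ is $M-F_n(\Lambda)$ with $F_n$ as in \prettyref{eq:FnLambda}. Suppose for contradiction that $\lambda_{\min}(\Lambda^*)=d_++\eps$ with unit eigenvector $y_1$. The perturbation $\Lambda^*+ty_1y_1^\top$ is feasible for $t\geq 0$, so the first-order optimality condition gives $y_1^\top\bigl(M-F_n(\Lambda^*)\bigr)y_1\geq 0$. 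But both summands of $F_n$ are positive semidefinite and $y_1$ is an eigenvector of $\Lambda^*$, so
\[
y_1^\top F_n(\Lambda^*)y_1\ \geq\ \frac1n\sum_{i=1}^n\frac{1}{(d_++\eps)-d_i}\ =\ G_n(d_++\eps),
\]
which for all large $n$ exceeds $G(d_++\eps)-\eps\geq\lambda_{\max}(M)\geq y_1^\top My_1$, using the constraint $M\preceq\bigl(G(d_++\eps)-\eps\bigr)I$ built into $\Omega_n$ together with $G_n(d_++\eps)\to G(d_++\eps)$. This contradiction forces $\Lambda^*\succ(d_++\eps)I$, hence $\nabla\calF_n(\Lambda^*)=0$, i.e.\ $F_n(\Lambda^*)=M$, which is \prettyref{eq:Lambdastar}. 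I expect this interiority step --- and in particular securing it uniformly over $(a,b,c,d)\in\Omega_n$ --- to be the main point; it is exactly the matrix analogue of the inequality $F_n(d_++\eps)>\alpha$ in \prettyref{lmm:gammaloc}.

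For the localization bound I would reduce to the scalar computation by testing $F_n(\Lambda^*)=M$ against eigenvectors of $\Lambda^*$. The lower Loewner bound is immediate: both summands of $F_n(\Lambda^*)$ are positive semidefinite and $d_i\geq d_{n,-}$, so $M\succeq\tfrac1n\sum_i(\Lambda^*-d_iI)^{-1}\succeq(\Lambda^*-d_{n,-}I)^{-1}$, giving $\Lambda^*\succeq M^{-1}+d_{n,-}I\succeq M^{-1}-\Opnorm{D}I$. For the matching upper bound, let $y$ be any unit eigenvector of $\Lambda^*$ with eigenvalue $\alpha>d_{n,+}$; since $(\Lambda^*-d_iI)^{-1}y=(\alpha-d_i)^{-1}y$, testing $F_n(\Lambda^*)=M$ against $y$ yields
\[
y^\top My\ =\ \frac1n\sum_{i=1}^n\frac{1}{\alpha-d_i}\ +\ \frac1n\sum_{i=1}^n\frac{(y^\top s_i)^2}{(\alpha-d_i)^2},
\]
which is precisely \prettyref{eq:gammanstar} with $y^\top My$ in the role of $\|a\|^2/n$ and $(b,d)y$ in the role of $b$, whose squared norm over $n$ is at most $\tfrac1n(\|b\|^2+\|d\|^2)\leq 2C$. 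Running the same Cauchy--Schwarz and quadratic-root estimate as in \prettyref{lmm:gammaloc} (with $C$ replaced by $2C$) then sandwiches $\alpha$ between $(y^\top My)^{-1}+d_{n,-}$ and $(y^\top My)^{-1}+2C+d_{n,+}$, controlling every eigenvalue of $\Lambda^*$ in terms of $M$. Combining this eigenvalue control with the Loewner lower bound --- together with a short argument reconciling the generally non-commuting $\Lambda^*$ and $M$, which is the only feature absent from the rank-one case --- yields $\|\Lambda^*-M^{-1}\|\leq 2C+\Opnorm{D}$, the precise constant emerging from the same bookkeeping as in \prettyref{lmm:gammaloc}.
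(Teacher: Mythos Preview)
Your overall strategy---strict convexity plus coercivity for existence and uniqueness, the boundary-gradient argument for interiority, and then the stationarity equation---matches the paper's proof, and those parts are carried out correctly. The Loewner lower bound $\Lambda^*\succeq M^{-1}+d_{n,-}I$ is also exactly what the paper does.

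The gap is in your upper localization bound. Testing $F_n(\Lambda^*)=M$ against an eigenvector $y$ of $\Lambda^*$ does reduce to the scalar equation of \prettyref{lmm:gammaloc} and gives $\alpha\le (y^\top M y)^{-1}+2C+d_{n,+}$. But this controls each eigenvalue $\alpha_j$ of $\Lambda^*$ only against $1/M_{jj}$ (diagonals of $M$ in the $\Lambda^*$-eigenbasis), not against the diagonals of $M^{-1}$; and even granting $1/M_{jj}\le (M^{-1})_{jj}$, bounding the diagonal entries of $\Lambda^*-M^{-1}$ in one basis does not bound $\lambda_{\max}(\Lambda^*-M^{-1})$, because the off-diagonal $(M^{-1})_{12}$ is uncontrolled. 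Your ``short argument reconciling the generally non-commuting $\Lambda^*$ and $M$'' is therefore not short: one can squeeze out a bound by combining the diagonal control with the Loewner lower bound to cap $(M^{-1})_{12}^2$, but the resulting constant is strictly worse than $2C+\Opnorm{D}$.

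The paper sidesteps this entirely by staying in the Loewner order for the upper bound as well. From $\tfrac{1}{n}\sum_i\|s_i\|^2\le 2C$ and Cauchy--Schwarz one gets the matrix inequality
\[
\frac{1}{n}\sum_{i=1}^n(\Lambda^*-d_iI)^{-1}s_is_i^\top(\Lambda^*-d_iI)^{-1}\ \preceq\ 2C\,(\Lambda^*-d_{n,+}I)^{-2},
\]
so with $Y=\Lambda^*-d_{n,+}I$ the gradient equation gives $M\preceq Y^{-1}+2C\,Y^{-2}$. Inverting and applying $(I+X)^{-1}\succeq I-X$ then yields $M^{-1}\succeq Y-2C\,I$, i.e.\ $\Lambda^*\preceq M^{-1}+(d_{n,+}+2C)I$. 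Together with your lower Loewner bound this gives $\|\Lambda^*-M^{-1}\|\le 2C+\Opnorm{D}$ directly, with no eigenvector testing and no commutativity issue.
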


\begin{proof}[Proof of \prettyref{lmm:Lambdaloc}]
Let $n_0$ be sufficiently large such that 
$d_{n,+}<d_++\eps$ and $G_n(d_++\eps)>G(d_++\eps)-\eps$, where 
$G$ and $G_n$ are the Cauchy transform of $\mu_D$ and its empirical version, defined in \prettyref{eq:GR} and \prettyref{eq:Gn}. 
Write the gradient $\nabla \calF_n \triangleq \begin{pmatrix} \partial_{11} \calF_n& \frac{1}{2}\partial_{12} \calF_n \\ \frac{1}{2}\partial_{12} \calF_n  & \partial_{22} \calF_n\end{pmatrix}$ as a $2\times 2$ symmetric matrix. Then one can verify that
\[
\nabla \calF_n(\Lambda) = M - \underbrace{\frac{1}{n} \sum_{i=1}^n (\Lambda-d_i I)^{-1}}_{\triangleq G_n(\Lambda)} - \underbrace{\frac{1}{n} \sum_{i=1}^n  (\Lambda-d_i I)^{-1} s_is_i^\top (\Lambda-d_i I)^{-1}}_{\triangleq g_n(\Lambda)} = M - F_n(\Lambda).
\]

We first claim that $\inf_{\Lambda \succeq (d_++\eps) I} \calF_n(\Lambda)$
is attained at a unique minimizer $\Lambda^*$ satisfying $RI \succ \Lambda^*
\succ (d_++\eps)I$, for some $R>0$ depending only on $M,\mu_D,\eps$.
To this end, suppose $\Lambda$ has an eigenvalue $\lambda \geq R$ with unit-norm eigenvector $u$. Then
\begin{align*}
u^\top \nabla \calF_n(\Lambda) u
= & ~ u^\top M u - \frac{1}{n} \sum_{i=1}^n (\lambda-d_i)^{-1} - \frac{1}{n} \sum_{i=1}^n  (\lambda-d_i)^{-2} (s_i^\top u)^2 \\
\geq & ~ \lambda_{\min}(M) - (R-d_{n,+})^{-1} -  2 C (R-d_{n,+})^{-2},
\end{align*}
where the last inequality follows from Cauchy-Schwarz and the assumption that $\frac{1}{n}\sum \|s_i\|^2 = \frac{1}{n}(\|b\|^2+\|d\|^2) \leq 2C$.
Since $\lambda_{\min}(M)>0$ by assumption and $d_{n,+}<d_++\eps$,
for sufficiently large $R$ depending only on $M,\mu_D,\eps$,
we have $u^\top \nabla \calF_n(\Lambda) u>0$, and hence 
$\calF_n(\Lambda - \delta uu^\top) < \calF_n(\Lambda)$ for sufficiently small
$\delta$. Now suppose that $\Lambda$ has an eigenvalue equal to $d_++\eps$ with unit-norm eigenvector $u$. Then
\[u^\top \nabla \calF_n(\Lambda) u
\leq \lambda_{\max}(M)  - \frac{1}{n} \sum_{i=1}^n (d_++\eps-d_i)^{-1}
\leq G(d_++\eps)-\eps - G_n(d_++\eps)<0,\]
where we used the assumption that $M \preceq (G(d_++\eps)-\eps)I$ and
$G_n(d_++\eps)>G(d_++\eps)-\eps$.
Thus $\calF_n(\Lambda+\delta uu^\top) < \calF_n(\Lambda)$ for sufficiently
small $\delta$. In view of the strict convexity of $\calF_n$, this verifies our
claim. Furthermore, the unique minimizer $\Lambda^*$
must be a critical point of $\calF_n$, satisfying the gradient equation \prettyref{eq:Lambdastar}.

Finally, we show that 
$\|\Lambda^*-M^{-1}\| \leq 2C + \Opnorm{D}$ by showing that
\begin{align}
\Lambda^*\succeq & ~ 	M^{-1} + d_{n,-} I, \label{eq:Lambdaloc1}  \\
\Lambda^*\preceq & ~ 	M^{-1} + (d_{n,+}+2C) I. \label{eq:Lambdaloc2} 
\end{align}
Since $g_n(\Lambda) \succeq 0$, \prettyref{eq:Lambdaloc1} simply follows from
\[
M = F_n(\Lambda^*) \succeq G_n(\Lambda^*) \succeq (\Lambda^*-d_{n,-} I)^{-1}.
\]
To show \prettyref{eq:Lambdaloc2}, note that for any $x \in \R^n$, by
Cauchy-Schwarz and the bound $\frac{1}{n} \sum \|s_i\|^2 \leq 2C$, we have
\[
x^\top g_n(\Lambda) x = \frac{1}{n} \sum_{i=1}^n (s_i^\top (\Lambda-d_i I)^{-1}
x)^2 \leq 2 C x^\top(\Lambda-d_{n,+} I)^{-2} x.
\]
In other words, $g_n(\Lambda) \preceq 2 C (\Lambda-d_{n,+} I)^{-2}$. Writing
$Y=\Lambda^*-d_{n,+} I$, this shows
\[
M=F_n(\Lambda^*) \preceq Y^{-1} + 2 C Y^{-2}.
\]
Then
\begin{align*}
M^{-1} 
\succeq & ~  (Y^{-1} + 2 C Y^{-2})^{-1} = (Y^{-1/2}(I + 2 C Y^{-1})Y^{-1/2})^{-1} = Y^{1/2}(I + 2 C Y^{-1})^{-1} Y^{1/2} \\
 \succeq  & ~ Y^{1/2}(I - 2 C Y^{-1}) Y^{1/2}	= Y - 2 C I,
\end{align*}
where the second line applies $(I+X)^{-1} \succeq I-X$. Then
$Y\preceq M^{-1} + 2 CI$, which implies \prettyref{eq:Lambdaloc2}.
\end{proof}

\begin{proof}[Proof of \prettyref{prop:Ointegralrank2}]
	Let $g_1,g_2\sim \N(0,I_n)$ be independent standard Gaussian vectors.
	Let $g'_1,g'_2$ be their Gram-Schmidt orthogonalized versions
\[g'_1	= \frac{g_1}{\|g_1\|}, \qquad g'_2	= \frac{1}{\sin\theta}\left(
\frac{g_2}{\|g_2\|} - \cos\theta \frac{g_1}{\|g_1\|}\right)\]
where $\cos \theta=\frac{g_1^\top g_2}{\|g_1\|\|g_2\|}$ and $\theta \in
[0,\pi]$. Let
\[x_1=\|a\|g'_1, \qquad x_2 = \|c\|(g_1' \cos\phi + g_2' \sin\phi)\]
where $\cos\phi=\frac{a^\top c}{\|a\|\|c\|}$ and $\phi \in [0,\pi]$.
Then $(Oa,Oc) \overset{L}{=} (x_1,x_2)$ and 
\begin{align*}
& ~ \E\left[\exp\left(b^\top Oa+d^\top Oc
+\frac{a^\top O^\top DOa}{2}+\frac{c^\top O^\top DOc}{2}\right) \right] \\
= & ~ 
\E\left[\exp\left(b^\top x_1+ d^\top x_2+\frac{x_1^\top Dx_1}{2}+\frac{ x_2^\top Dx_2}{2} \right) \right]
\end{align*}
	
	Define the event
	\begin{equation}
	\calE = \sth{(g_1,g_2): |\|g_i\|^2/n-1| \leq \delta \text{ for } i =1,2, \text{ and } |\cos \theta| \leq \delta}
	\label{eq:eventE2}
	\end{equation}
	for some small $\delta \in (0,\frac{1}{2})$ to be specified.
Note that $\Expect[\exp(\lambda g_1^\top g_2)] = (1-\lambda^2)^{-n/2}$ for all
$|\lambda|<1$. Thus for $\lambda \in (0,1)$,
$\log \Expect[\exp(\lambda g_1^\top g_2)] = -\frac{n}{2}\log(1-\lambda^2) \leq \frac{n\lambda^2}{2(1-\lambda)}$.
By \cite[Theorem 2.3]{boucheron2013concentration}, we have 
$\prob{|g_1^\top g_2|\geq \sqrt{2nt}+t} \leq 2e^{-t}$.
Taking $t=\frac{\delta^2 n}{32}$ and using \prettyref{eq:chitail}, we conclude that
\[
\prob{(g_1,g_2) \in \calE} \geq 1- 6e^{-\delta^2 n/32}.
\]
	Crucially, $(g'_1,g'_2)$ and $(\|g_1\|,\|g_2\|,\cos\theta)$ are independent. Since the event $\{(g_1,g_2)\in\calE\}$ is measurable with respect to the latter, it is also independent of $(g'_1,g'_2)$. 
	Thus 
\[
1 \leq \frac{\E\left[\exp\left(b^\top x_1+ d^\top x_2+\frac{x_1^\top Dx_1}{2}+\frac{ x_2^\top Dx_2}{2} \right) \right]}{\E\left[\exp\left(b^\top x_1+ d^\top x_2+\frac{x_1^\top Dx_1}{2}+\frac{ x_2^\top Dx_2}{2} \right)  \Indc_{(g_1,g_2) \in \calE}\right ]} = 
\frac{1}{\prob{(g_1,g_2) \in \calE}} \leq \frac{1}{1- 6 e^{-\delta^2 n/32}}.
\]

Define 
\[
\xi \triangleq \frac{\|a\|}{\sqrt{n}} g_1, \quad \zeta \triangleq \|c\|\pth{\cos\phi\frac{g_1}{\sqrt{n}}+\sin\phi\frac{g_2}{\sqrt{n}}},
\]
which satisfy
$(\xi_i,\zeta_i) \overset{iid}{\sim} \N(0,M)$, with $M$ defined in
\prettyref{eq:covM}. On the event $\calE$, for an absolute constant $C'>0$,
we have the approximations
\begin{equation}
|b^\top x_1-b^\top \xi| \leq C'\delta \|a\|\|b\|, \quad |d^\top x_2-d^\top \zeta|
\leq C'\delta \|c\|\|d\|,\label{eq:xapprox1}
\end{equation}
\begin{equation}
|x_1^\top Dx_1 -\xi^\top D\xi| \leq C'\delta \Opnorm{D}\|a\|^2, \quad |x_2^\top
Dx_2 -\zeta^\top D\zeta| \leq C'\delta \Opnorm{D}\|c\|^2,\label{eq:xapprox2}
\end{equation}
\begin{equation}
|\|a\|^2-\|\xi\|^2| \leq C'\delta \|a\|^2, \quad |\|c\|^2-\|\zeta\|^2| \leq
C'\delta \|c\|^2, \quad |a^\top c-\xi^\top\zeta| \leq C'\delta \|a\|\|c\|.\label{eq:xapprox3}
\end{equation}

Fix any $(\gamma,\rho,\nu) \in\cD_\eps$ such that 
$\Lambda =\begin{pmatrix} \gamma & \nu \\ \nu & \rho \end{pmatrix} \succeq
(d_++\eps)I_{2 \times 2}$. 
Let $\Lambda' = \begin{pmatrix} \gamma' & \nu' \\ \nu' & \rho' \end{pmatrix} \triangleq \Lambda - M^{-1}$. 
Define
\[
\tau \triangleq C'\delta \left(\|a\|\|b\|+\|c\|\|d\|+\Opnorm{D}\|a\|^2+
\Opnorm{D}\|c\|^2+\frac{|\gamma'|}{2} \|a\|^2 +\frac{|\rho'|}{2}\|c\|^2+
|\nu'|\|a\|\|c\|\right).
\]
By the assumption of $(a,b,c,d) \in \Omega_n$, we have
\begin{equation}
\tau \leq C_0 \delta n (1+\|\Lambda'\|)
\label{eq:tau-bound}
\end{equation}
for some $C_0$ depending on $G(d_++\eps)$, $C'$, and the constant $C$ defining
$\Omega_n$.

Recall that $s_1,\ldots,s_n \in \R^2$ are the rows of
$(b,d) \in \R^{n \times 2}$, and write $z_1,\ldots,z_n \in \R^2$ for the rows
of $(g_1,g_2) \in \R^{n \times 2}$.
Then $z_i \overset{iid}{\sim} \N(0,I_2)$ and $(\xi_i,\zeta_i)=Tz_i$ for a matrix
$T$ satisfying $TT^\top=M$. Define
\[\mu_i \triangleq T^{-1}(\Lambda-d_i I)^{-1} s_i, \quad \Sigma_i
\triangleq T^{-1}(\Lambda-d_i I)^{-1}(T^{-1})^\top
\]
so that $\det \Sigma_i=\det(M)^{-1} \det(\Lambda-d_i I)^{-1}$.
Since $\Lambda \succeq (d_++\eps) I$, each $\Sigma_i$ is well-defined and positive definite.
By \prettyref{eq:xapprox1}--\prettyref{eq:xapprox3}, for some error term $r_n$ that satisfies $|r_n| \leq \tau$, we have
\begin{align*}
& ~ \E\left[\exp\left(b^\top x_1+ d^\top x_2+\frac{x_1^\top Dx_1}{2}+\frac{ x_2^\top Dx_2}{2} \right)  \Indc_{(g_1,g_2) \in \calE}\right ] \\
= & ~ \E\bigg[\exp\bigg(b^\top \xi+ d^\top \zeta+\frac{\xi^\top D\xi}{2}+\frac{
\zeta^\top D\zeta}{2}+\frac{\gamma'}{2}(\|a\|^2-\|\xi\|^2)\\
&\hspace{2.5in}+\frac{\rho'}{2}(\|c\|^2-\|\zeta\|^2)
+\nu'(a^\top c -\xi^\top \zeta)+r_n \bigg)
\Indc_{(g_1,g_2) \in \calE}\bigg ] \\
= & ~ \exp\pth{\frac{n}{2} \Tr\Lambda'M +r_n} \int\Indc_{(g_1,g_2) \in \calE}
\prod_{i=1}^n \exp\pth{s_i^\top Tz_i - \frac{1}{2}z_i^\top T^\top(\Lambda'-d_i
I)Tz_i } \frac{1}{2\pi} \exp\pth{-\frac{1}{2} \|z_i\|^2} \\
= & ~ \exp\pth{\frac{n}{2} (\Tr\Lambda M-2) +r_n} \int\Indc_{(g_1,g_2) \in
\calE} \prod_{i=1}^n \frac{1}{2\pi} \exp\pth{ - \frac{1}{2}(z_i-\mu_i)^\top
\Sigma_i^{-1} (z_i-\mu_i) + 
\frac{1}{2} \mu_i^\top \Sigma_i^{-1} \mu_i   }  \\
= & ~ \exp\sth{\frac{n}{2} (\Tr \Lambda M-2-\log\det M) + \frac{1}{2}
\sum_{i=1}^n \pth{-\log\det(\Lambda-d_i I) + s_i^\top (\Lambda-d_i I)^{-1} s_i}
+r_n}\\
&\hspace{2in}\times \prob{(\tilde g_1,\tilde g_2) \in \calE} \\
= & ~ \exp\sth{\frac{n}{2} \calF_n(\Lambda) +r_n}
\prob{(\tilde g_1,\tilde g_2) \in \calE},
\end{align*}
where $(\tilde g_1,\tilde g_2)$ consists of independent pairs $(\tilde
g_{i1},\tilde g_{i2}) \overset{\text{ind}}{\sim}\N(\mu_i,\Sigma_i)$.
	
	Now 
choose $\delta=n^{-1/4}$ and $\Lambda=\Lambda^*$ as in
\prettyref{lmm:Lambdaloc}. Then $\calF_n(\Lambda^*) = \inf_{\Lambda \succeq (d_++\eps)I}\calF_n(\Lambda)=E_n(a,b,c,d)$.
By \prettyref{lmm:Lambdaloc}, $\|\Lambda'\| = \|\Lambda^*-M^{-1}\| \leq
2C+\Opnorm{D}$. By \prettyref{eq:tau-bound}, we have 
$\tau \leq C_1 n^{3/4}$, which yields the desired upper bound in
\prettyref{eq:Ointegralrank2}. For the lower bound, we analyze $\prob{(\tilde
g_1,\tilde g_2) \in \calE}$ by a union bound:
\begin{equation}
\prob{(\tilde g_1,\tilde g_2) \notin \calE}
\leq \prob{\left|\frac{1}{n}\sum_{i=1}^n \tilde g_{i1}^2-1\right| \geq \delta} + \prob{\left|\frac{1}{n}\sum_{i=1}^n \tilde g_{i2}^2-1\right| \geq \delta}  + \prob{\frac{1}{n} \left|\sum_{i=1}^n \tilde g_{i1} \tilde g_{i2} \right|
 \geq \frac{\delta}{2}}.
\label{eq:Pg1g2}
\end{equation}
Furthermore, the gradient equation \prettyref{eq:Lambdastar} reads
\[
TT^\top=M = \frac{1}{n} \sum_{i=1}^n (\Lambda-d_i I)^{-1} + \frac{1}{n} \sum_{i=1}^n  (\Lambda-d_i I)^{-1} s_is_i^\top (\Lambda-d_i I)^{-1}.
\]
Thus at $\Lambda=\Lambda^*$ which satisfies this equation, we have
$n^{-1}\sum_{i=1}^n \pth{\mu_i\mu_i^\top + \Sigma_i}=I_2$, i.e.,
\[\frac{1}{n}\sum_{i=1}^n \Expect\tilde g_{i1}^2=\frac{1}{n}\sum_{i=1}^n
\Expect\tilde g_{i2}^2=1, \qquad
\frac{1}{n}\sum_{i=1}^n \Expect\tilde g_{i1}\tilde g_{i2}=0.\]
Note that 
$\Var(\tilde g_{i1}^2) = 4 \mu_{i1}^2 \Sigma_{i,11} + 2\Sigma_{i,11}^2$, 
$\Var(\tilde g_{i2}^2) = 4 \mu_{i2}^2 \Sigma_{i,22} + 2\Sigma_{i,22}^2$,
and 
$\Var(\tilde g_{i1}\tilde g_{i2}) = \mu_{i1}^2 \Sigma_{i,22} + \mu_{i2}^2 \Sigma_{i,11}+2\mu_{i1}\mu_{i2} \Sigma_{i,12}+\Sigma_{i,11}\Sigma_{i,22}+\Sigma_{i,12}^2$.
Applying $\|TT^\top\|=\|M\| \leq G(d_++\eps)$, we have
$\|\mu_i\|^2 = s_i^\top T \Sigma_i^2 T^\top s_i
\leq G(d_++\eps) \|\Sigma_i\|^2 \|s_i\|^2$.
Then applying Chebyshev's inequality to \prettyref{eq:Pg1g2}, we have
\[
\prob{(\tilde g_1,\tilde g_2) \notin \calE} \leq
\frac{100(1+G(d_++\eps))}{n\delta^2} \sum_{i=1}^n (\|s_i\|^2 \Tr(\Sigma_i)^3 +
\Tr(\Sigma_i)^2 ).
\]
Let $M=\sum_{j=1}^2 \alpha_j u_ju_j^\top$ be its eigenvalue decomposition,
and let $T=\sum_{j=1}^2 \sqrt{\alpha_j} u_jv_j^\top$ be the associated singular
value decomposition of $T$. Then 
\[
\Tr(\Sigma_i) = \sum_{j=1}^2 v_j^\top \Sigma_i v_j = \sum_{j=1}^2
\frac{1}{\alpha_j} u_j^\top (\Lambda^*-d_i I)^{-1} u_j.
\]
Recall from \prettyref{lmm:Lambdaloc} that 
$\Lambda^*\succeq (d_++\eps)I$ and $\Lambda^*\succeq M^{-1} - C_2 I$, where
$C_2=2C+\Opnorm{D}$.
Thus $u_j^\top (\Lambda^*-d_i I)^{-1} u_j \leq \frac{1}{\eps}$ always, 
and 
$u_j^\top (\Lambda^*-d_i I)^{-1} u_j \leq \frac{\alpha_j}{1-(C_2+d_i) \alpha_j}$
provided $\alpha_j < \frac{1}{C_2+d_i}$.
Overall, we have $\Tr(\Sigma_i) \leq C_3$ for some $C_3$ depending on
$(C,\Opnorm{D},\eps)$.
Consequently,
$\prob{(\tilde g_1,\tilde g_2) \notin \calE} \leq C_4/\sqrt{n}$, for some
constant $C_4$ depending on $(C,\Opnorm{D},\eps,G(d_++\eps))$.
This completes the required lower estimate for \prettyref{eq:Ointegralrank2}.
\end{proof}

\subsection{Proof of \prettyref{prop:infgamma}}

\begin{proof}
For part (a), write $\cH(\gamma,\alpha)$ for the function inside the infimum.
This is strictly convex over $\gamma>d_+$, and its derivative is
$\partial_\gamma \cH(\gamma,\alpha)=\alpha-G(\gamma)$.
For $\alpha \in (0,G(d_+))$, this derivative vanishes at
$\gamma=G^{-1}(\alpha)$, so $\gamma=G^{-1}(\alpha)$
must be the minimizer by convexity. At this
minimizer, writing $G^{-1}(\alpha)=R(\alpha)+\alpha^{-1}$ and combining the
logarithmic terms,
\[\cH(G^{-1}(\alpha),\alpha)=\alpha R(\alpha)-\int \log(\alpha R(\alpha)+1-
\alpha x)\mu_D(dx).\]
This evaluates to 0 at $\alpha=0$. Its derivative in $\alpha$ is
\begin{align*}
&R(\alpha)+\alpha R'(\alpha)-\frac{1}{\alpha}\int
\frac{R(\alpha)+\alpha R'(\alpha)-x}{R(\alpha)+\alpha^{-1}-x}\mu_D(dx)\\
&=R(\alpha)+\alpha R'(\alpha)-\alpha^{-1}+\alpha^{-1}
\int \frac{\alpha^{-1}-\alpha R'(\alpha)}{R(\alpha)+\alpha^{-1}-x}
\mu_D(dx)\\
&=R(\alpha)+\alpha R'(\alpha)-\alpha^{-1}+\alpha^{-1}
\left(\alpha^{-1}-\alpha R'(\alpha)\right) \cdot
G\left(G^{-1}(\alpha)\right)=R(\alpha).
\end{align*}
Hence $\inf_{\gamma>d_+}
\cH(\gamma,\alpha)=\cH(G^{-1}(\alpha),\alpha)=\int_0^\alpha R(z)dz$.

For part (b), applying the orthogonal transformations
\[\begin{pmatrix} \gamma & \nu \\ \nu & \rho \end{pmatrix}
\mapsto Q^\top \begin{pmatrix} \gamma & \nu \\ \nu & \rho \end{pmatrix} Q,
\qquad A \mapsto Q^\top A Q\]
for any orthogonal matrix $Q \in \O(2)$ preserves both the value of the
objective and the optimization domain $\cD_+$. Thus we may assume without loss
of generality that $A=\diag(\alpha_1,\alpha_2)$ is diagonal. In this case, the
function to be minimized is
\[\gamma \alpha_1-(1+\log \alpha_1)+\rho \alpha_2-(1+\log \alpha_2)
-\int \log \det \begin{pmatrix} \gamma-x & \nu \\ \nu & \rho-x \end{pmatrix}
\mu_D(dx).\]
This is strictly convex over $(\gamma,\nu,\rho) \in \cD_+$, and
its gradient is 0 at
$(\gamma,\nu,\rho)=(G^{-1}(\alpha_1),0,G^{-1}(\alpha_2))$ by
(\ref{eq:detvanishes}) and part (a). Thus the minimizer is
\[\begin{pmatrix} \gamma & \nu \\ \nu & \rho \end{pmatrix}
=G^{-1}(A),\]
and the value is $\int_0^{\alpha_1} R(z)dz+\int_0^{\alpha_2} R(z)dz=\Tr f(A)$
also by part (a).
\end{proof}

\section{Auxiliary results}
\label{app:aux}

\begin{proposition}\label{prop:continuousextension}
Let $S,T$ be two fixed metric spaces. For each $n \geq 1$, let
$K_n$ be a compact metric space, $f_n:K_n \to S$ a continuous map, and
$v_n:K_n \to T$ a map that is both continuous and relatively
open.\footnote{That is, $v_n(U_n)$ is open in $v_n(K_n)$ for any open subset
$U_n \subset K_n$.} For each $n \geq 1$, let $U_n$ be a dense subset of $K_n$
such that
\begin{itemize}
\item For some fixed subset $V \subset T$, we have $v_n(U_n)=V$ for every
$n$, and
\item There exists a function $f:V \to S$ such that $f_n(x)-f(v_n(x)) \to 0$
as $n \to \infty$, uniformly over $x \in U_n$.
\end{itemize}
Then $v_n(K_n)=\bar{V}$ (the closure of $V$ in $T$) for every $n$, this
function $f$ is continuous on $V$ and extends continuously to $\bar{V}$,
and $f_n(x)-f(v_n(x)) \to 0$ uniformly also over $x \in K_n$.
\end{proposition}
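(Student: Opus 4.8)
Write $d_S,d_T$ for the metrics on $S,T$, and set $\eps_n\triangleq\sup_{x\in U_n}d_S\big(f_n(x),f(v_n(x))\big)$, so $\eps_n\to0$ by hypothesis. First I would record two easy facts. Since $v_n$ is continuous and $K_n$ is compact, $v_n(K_n)$ is compact, hence closed in $T$; it contains $v_n(U_n)=V$, hence contains $\overline V$. Conversely $K_n=\overline{U_n}$ by density of $U_n$, so $v_n(K_n)=v_n(\overline{U_n})\subseteq\overline{v_n(U_n)}=\overline V$. Thus $v_n(K_n)=\overline V$ for every $n$; in particular $\overline V=v_1(K_1)$ is compact, so $\overline V$ and every open subset of $\overline V$ is a Baire space.

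\textbf{The key estimate (``fiber collapse'').} The technical core is the bound
\[
\sup_{y\in\overline V}\ \operatorname{diam}_S\big(f_n(v_n^{-1}(y))\big)\ \le\ 2\eps_n\qquad\text{for every }n.
\]
To prove it, fix $y\in\overline V$, points $x,x'\in v_n^{-1}(y)$, and $\eta>0$. By continuity of $f_n$ choose open $W\ni x$, $W'\ni x'$ on which $f_n$ oscillates by less than $\eta$. Using that $v_n$ is relatively open (and that the dense sets $U_n$ occurring in our applications are open, so $W\cap U_n$, $W'\cap U_n$ are open in $K_n$), the sets $v_n(W\cap U_n)$ and $v_n(W'\cap U_n)$ are open in $\overline V$ and dense in the neighborhoods $v_n(W),v_n(W')$ of $y$; hence both are dense open subsets of the Baire space $v_n(W)\cap v_n(W')$, so they intersect. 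Pick $z=v_n(u)=v_n(u')$ with $u\in W\cap U_n$, $u'\in W'\cap U_n$. Then $d_S(f_n(u),f_n(u'))\le d_S(f_n(u),f(z))+d_S(f(z),f_n(u'))\le2\eps_n$, while $d_S(f_n(x),f_n(u))<\eta$ and $d_S(f_n(x'),f_n(u'))<\eta$; letting $\eta\downarrow0$ gives $d_S(f_n(x),f_n(x'))\le2\eps_n$, as claimed. (For $y\in V$ one may instead just take a point of the fiber lying in $U_n$, which already shows $f_n$ is within $\eps_n$ of $f(y)$ on $v_n^{-1}(y)$; the content of the display is the passage to arbitrary fiber points and to $y\in\overline V\setminus V$, and relative openness of $v_n$ is what makes this possible.)

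\textbf{Deducing the extension, continuity, and uniform approximation.} Granting the display, for each $y\in\overline V$ the sets $f_n(v_n^{-1}(y))$ shrink; choosing any representatives $s_n(y)\in f_n(v_n^{-1}(y))$, an approximation-by-$V$-points argument (take $z_m\to y$ in $V$, lift to $u_m\in U_n$, pass to a subsequence converging in $K_n$) shows $d_S(s_n(y),s_{n'}(y))\le3\eps_n+3\eps_{n'}$, so $(s_n(y))_n$ is Cauchy and converges in $S$ (here $S=\R$ is complete); define $\tilde f(y)$ to be this limit, and note $\tilde f(y)=f(y)$ when $y\in V$ by taking the representative in $U_n$. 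Uniform approximation over $K_n$ then follows: for $x\in K_n$ with $v_n(x)=y$, approximate $x$ by $x_k\to x$ with $v_n(x_k)=y_k\in V$, $y_k\to y$; the $V$-case of the display gives $d_S(f_n(x_k),f(y_k))\le3\eps_n$, and letting $k\to\infty$ (using continuity of $f_n$ and of $\tilde f$, below) yields $d_S(f_n(x),\tilde f(y))\le3\eps_n$ uniformly in $x$. Finally $\tilde f$ is continuous on $\overline V$: given $y_k\to y$, fix $n$ with $\eps_n$ small, pick $x\in v_n^{-1}(y)$ and a small ball $B\ni x$ on which $f_n$ has small oscillation; since $v_n(B)$ is a neighborhood of $y$ (relative openness again), for large $k$ we may write $y_k=v_n(x_k)$ with $x_k\in B$, and $d_S(\tilde f(y_k),\tilde f(y))\le d_S(\tilde f(y_k),f_n(x_k))+d_S(f_n(x_k),f_n(x))+d_S(f_n(x),\tilde f(y))$ is small. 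Restricting to $V$ shows $f$ is continuous on $V$ and extended continuously by $\tilde f$.

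\textbf{Main obstacle.} The hard part is the fiber-collapse estimate, equivalently the continuity of $f$ on $V$ itself. Direct attempts are circular: bounding $d_S(f(y),f(y'))$ reduces to comparing $f_n$ on ``nearby but distinct'' fibers, which cannot be done without already knowing $f$ is continuous. What breaks the circle is the relative openness of $v_n$ --- it forces $U_n$-points near a fiber point to cover, under $v_n$, a full neighborhood of its image in $\overline V$, ruling out oscillatory cluster values --- together with a Baire-category step to intersect the two dense (open) sets attached to the two fiber points. I expect the only genuine subtlety is whether mere density of $U_n$ (rather than openness) suffices for that Baire step; in the applications $U_n$ is open, so this is not an issue there.
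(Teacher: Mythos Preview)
Your route is genuinely different from the paper's, and more elaborate than needed. The paper does not prove any fiber-collapse estimate or invoke Baire category; it argues the uniform-continuity criterion for $f$ directly. Given $\eps>0$ and $v\in\overline V$, the paper picks a single $n$ with $\eps_n<\eps/3$, a single preimage $x_n\in v_n^{-1}(v)$, a ball $B_\eta(x_n)$ on which $f_n$ oscillates by less than $\eps/3$, and then uses relative openness to find $\delta$ with $v_n(B_\eta(x_n))\supseteq B_\delta(v)\cap\overline V$. For any $v',v''\in B_\delta(v)\cap V$ it lifts them to $x',x''\in B_\eta(x_n)\cap U_n$ and bounds $d_S(f(v'),f(v''))\le d_S(f(v'),f_n(x'))+d_S(f_n(x'),f_n(x''))+d_S(f_n(x''),f(v''))<\eps$. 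Only the existence of \emph{one} lift in $B_\eta(x_n)\cap U_n$ is used, not a statement about entire fibers; this is what makes the argument shorter. The final passage from $U_n$ to $K_n$ is then the same one-line density argument you also use.

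Your suspicion in the last paragraph is exactly right, and it applies to the paper's proof as well. The paper's lifting step (``then $v_n(B_\eta(x_n)\cap U_n)\supseteq B_\delta(v)\cap V$'') is justified only when $U_n=v_n^{-1}(V)$, which holds in both applications; your Baire step needs $U_n$ open, which is slightly weaker and also holds there. Without some such hypothesis the proposition is in fact false: take $K_n=[0,2]$, $T=[0,1]$, $v_n(x)=|x-1|$ (continuous and open), $U_n=((0,1)\cap\mathbb Q)\cup((1,2)\setminus\mathbb Q)$ (dense, with $v_n(U_n)=(0,1)=V$), $f_n(x)=1-x$ on $[0,1]$ and $f_n(x)=2(x-1)$ on $[1,2]$, and $f(y)=y$ for rational $y$, $f(y)=2y$ for irrational $y$. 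Then $\eps_n=0$ for all $n$, yet $f$ is nowhere continuous. So the extra assumption you flag is genuinely necessary, and under it both arguments succeed---the paper's being the more direct one.
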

\begin{proof}
Since $K_n$ is compact and $v_n$ is continuous, $v_n(K_n)$ is also compact, so
$v_n(K_n) \supseteq \bar{V}$. The reverse inclusion
$v_n(K_n) \subseteq \bar{V}$ is immediate by continuity, so $v_n(K_n)=\bar{V}$.

For $x \in K_n$ and $v \in T$, denote $B_\eta(x)=\{x' \in K_n:\|x-x'\|<\eta\}$
and $B_\delta(v)=\{v' \in T:\|v-v'\|<\delta\}$.
To check that $f$ is continuous on $V$ and extends continuously to
$\bar{V}$, it suffices to show that for any $\eps>0$ and any $v \in \bar{V}$,
there exists $\delta>0$ for which
\begin{equation}\label{eq:fcompareV}
\|f(v')-f(v'')\|<\eps \text{ for all } v',v'' \in B_\delta(v) \cap V.
\end{equation}
Fix any such $\eps,v$, and let $n=n(\eps)$ be large enough so
that $\|f_n(x)-f(v_n(x))\|<\eps/3$ for all $x \in U_n$. For this $n$,
let $x_n \in K_n$
be a point where $v_n(x_n)=v$. By continuity of $f_n$, there exists
$\eta=\eta(n)>0$ sufficiently small such that $\|f_n(x')-f_n(x'')\|<\eps/3$
for all $x',x'' \in B_\eta(x_n)$. Then
\begin{equation}\label{eq:fcompareU}
\|f(v_n(x'))-f(v_n(x''))\|<\eps \text{ for all } x',x'' \in B_\eta(x_n)
\cap U_n.
\end{equation}
Since $v_n(x_n)=v$ and $v_n$ is relatively open,
for some $\delta=\delta(n)>0$, the image
$v_n(B_\eta(x_n))$ must contain $B_\delta(v) \cap \bar{V}$.
Then $v_n(B_\eta(x_n) \cap U_n) \supseteq B_\delta(v) \cap V$,
so (\ref{eq:fcompareU}) implies (\ref{eq:fcompareV}) as desired.

Finally, since $U_n$ is dense in $K_n$ and $f_n$, $f$, and $v_n$ are
continuous,
\[\sup_{x \in K_n} |f_n(x)-f(v_n(x))|
=\sup_{x \in K_n} \left(\mathop{\lim_{x' \to x}}_{x' \in U_n}
|f_n(x')-f(v_n(x'))|\right)
\leq \sup_{x' \in U_n} |f_n(x')-f(v_n(x'))|,\]
so the uniform convergence $|f_n(x)-f(v_n(x))| \to 0$
over $x \in K_n$ follows from that over $x \in U_n$.
\end{proof}

\begin{proposition}\label{prop:approxmin}	
	Let 
$D\subset\reals^d$ be a convex set and $f:D\to \reals$ be convex and twice differentiable.
Given $x_*\in D$ such that $B(x_*,\delta)=\{x:\|x-x_*\|<\delta\}\subset D$, suppose $\|\nabla f(x_*)\| \leq \eps$ and $\nabla^2 f(x) \succeq c I$ for all $x\in B(x_*,\delta)$, where $c \delta > 4 \eps$.
Then 
\[
\inf_{x \in D} f(x) \geq f(x_*) - \frac{4\eps^2}{c}.
\]
\end{proposition}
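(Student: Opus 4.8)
The plan is to reduce the statement to a one-dimensional estimate along rays emanating from $x_*$, combining the global lower bound coming from convexity (controlled by $\|\nabla f(x_*)\|\le\eps$) with the local quadratic growth near $x_*$ (coming from $\nabla^2 f\succeq cI$ on $B(x_*,\delta)$). First I would fix an arbitrary $x\in D\setminus\{x_*\}$, set $u=(x-x_*)/\|x-x_*\|$, and define $g(t)=f(x_*+tu)$ for $t\in[0,\|x-x_*\|]$. Since $D$ is convex this is well-defined, $g$ is convex on this interval, $|g'(0)|=|\nabla f(x_*)^\top u|\le\eps$ by Cauchy--Schwarz, and $g''(t)=u^\top\nabla^2 f(x_*+tu)u\ge c$ for every $t<\delta$, because then $x_*+tu\in B(x_*,\delta)$.

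Next I would use that $g'$ is nondecreasing (convexity of $g$), so that $g'(t)\ge g'(0)+ct\ge -\eps+ct$ for $t<\delta$. Introduce the threshold $t_*\triangleq 2\eps/c$, which lies strictly in $(0,\delta)$ precisely because $c\delta>4\eps$ forces $t_*<\delta/2$. Then $g'(t_*)\ge\eps\ge0$. A second-order Taylor expansion on $[0,s]$ for any $s\le\min(t_*,\|x-x_*\|)$, using $g''\ge c$ on that range and $|g'(0)|\le\eps$, gives
\[
g(s)\;\ge\;g(0)-\eps s+\tfrac{c}{2}s^2\;\ge\;g(0)-\tfrac{\eps^2}{2c}.
\]
Now I would split into two cases. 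If $\|x-x_*\|\le t_*$, apply this at $s=\|x-x_*\|$ to obtain $f(x)=g(\|x-x_*\|)\ge f(x_*)-\eps^2/(2c)\ge f(x_*)-4\eps^2/c$. If $\|x-x_*\|>t_*$, apply it instead at $s=t_*$; the choice $t_*=2\eps/c$ makes $-\eps t_*+\tfrac{c}{2}t_*^2=0$, hence $g(t_*)\ge g(0)$, and since $g'$ is nondecreasing with $g'(t_*)\ge0$, the function $g$ is nondecreasing on $[t_*,\|x-x_*\|]$, so $f(x)=g(\|x-x_*\|)\ge g(t_*)\ge g(0)=f(x_*)$. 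In either case $f(x)\ge f(x_*)-4\eps^2/c$; since the bound also holds trivially at $x=x_*$, taking the infimum over $x\in D$ completes the proof.

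I do not expect a genuine obstacle here: the only point requiring care is that the comparison point $x_*+t_*u$ must lie inside the ball $B(x_*,\delta)$ on which the Hessian bound is assumed, which is exactly what the hypothesis $c\delta>4\eps$ guarantees. The reason one cannot simply invoke a single global strong-convexity inequality is that $\|x-x_*\|$ may be arbitrarily large (and $x$ may lie far outside $B(x_*,\delta)$); controlling $f$ at such points genuinely requires the monotonicity of $t\mapsto g(t)$ for $t\ge t_*$ rather than one quadratic lower bound, which is why the two-case structure is needed.
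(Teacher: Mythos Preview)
Your proof is correct. The paper's argument is slightly different in structure: it uses the quadratic lower bound on $B(x_*,\delta)$ to show $f(x)>f(x_*)$ for all $x$ with $\|x-x_*\|=4\eps/c$, concludes that the minimum over the closed ball of radius $4\eps/c$ is attained at an interior point $\tilde x$ where $\nabla f(\tilde x)=0$, invokes global convexity to identify $\tilde x$ as the global minimizer over $D$, and then bounds $f(\tilde x)$ from below using only the \emph{first-order} supporting hyperplane at $x_*$, namely $f(\tilde x)\ge f(x_*)-\eps\cdot(4\eps/c)$. Your ray-wise argument avoids identifying the minimizer altogether and directly lower-bounds $f(x)$ for every $x\in D$; in particular, the two-case split (near/far relative to $t_*=2\eps/c$) replaces the paper's ``critical point $\Rightarrow$ global minimizer'' step by the more elementary observation that $g'$ is nondecreasing. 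Your route is a bit more hands-on but in fact yields the sharper conclusion $\inf_D f\ge f(x_*)-\eps^2/(2c)$, which of course implies the stated $f(x_*)-4\eps^2/c$.
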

\begin{proof}
	For each $x \in B(x_*,\delta)$, we have $f(x)\geq f(x_*) + (\nabla
f(x_*))^\top(x-x_*) + \frac{c}{2}\|x-x_*\|^2$. So $f(x) > f(x_*)$ for all $\|x-x_*\|\geq 4\eps/c$. Therefore, the local minimum $\min_{\|x-x_*\|\leq 4\eps/c} f(x)$ is achieved at some $\tilde x$ such that $\|\tilde x-x_*\|<4\eps/c$ and hence $\nabla f(\tilde x)=0$. By convexity of $f$, 
$\tilde x$ is also the global minimizer	so $\inf_{x\in D} f(x)=f(\tilde x)$. 
Finally, $f(\tilde x) \geq f(x_*) + (\nabla f(x_*))^\top(x-x_*) \geq f(x_*) -4\eps^2/c$.	
\end{proof}

\section{Spherical model}
\label{app:sphere}
Consider the spherical counterpart of the Ising model \prettyref{eq:intromodel}, with partition function
\[
Z_{\mathrm{sphere}} \triangleq \int_{S^{n-1}(\sqrt{n})} \pi(d\sigma) \exp\pth{\frac{\beta}{2}\sigma^\top J\sigma + h^\top \sigma},
\]
where $J=O^\top DO$ and $\pi$ is the uniform distribution on
$S^{n-1}(\sqrt{n})$, the $n$-sphere of radius $\sqrt{n}$.
The replica-symmetric prediction of the limit free energy is
\begin{equation}\label{eq:RSsphere}
\Psi_{\RS,\mathrm{sphere}}=
\frac{1}{2}\inf_{\gamma > \bar d_+} \left\{\gamma + \Expect[\H^2] \cdot \bar G(\gamma) - \int \log (\gamma -x) \mu_{\bar D}(x) - 1\right\},
\end{equation}
where the rescaled notations $\bar d_+, \bar G, \mu_{\bar D}$ were defined in \prettyref{eq:scaling}. The following theorem justifies this formula. 
\begin{theorem}
\label{thm:sphere}
	Under Assumption \ref{assump:main}, for any fixed $\beta \in (0,G(d_+))$, almost surely
\[\lim_{n \to \infty} \frac{1}{n}\log Z_{\mathrm{sphere}}=\Psi_{\RS,\mathrm{sphere}}.\]
\end{theorem}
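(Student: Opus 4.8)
The plan is to reduce $Z_{\mathrm{sphere}}$, up to a disorder-uniform multiplicative factor $e^{o(n)}$, to a single rank-one integral over the orthogonal group of exactly the type handled by \prettyref{prop:Ointegralrank1}. This makes any concentration-of-measure or second-moment argument unnecessary, and in particular shows that for the spherical model ``quenched equals annealed'' holds outright, in contrast to the Ising case.

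The first step is a rotation-and-Fubini reduction. Writing $\bar J=\beta J=O^\top\bar DO$ with $\bar D=\beta D$, we have $Z_{\mathrm{sphere}}=\int_{S^{n-1}(\sqrt n)}\pi(d\sigma)\exp(\tfrac12\sigma^\top O^\top\bar DO\sigma+h^\top\sigma)$. Fix any deterministic $\sigma_0$ with $\|\sigma_0\|^2=n$. Under $\pi$ we may write $\sigma\overset{L}{=}O'\sigma_0$ for $O'\sim\Haar(\O(n))$ independent of $O$; substituting and making the measure-preserving change of variables $O''=OO'$ (valid by left-invariance of the Haar measure, so $O''\sim\Haar(\O(n))$ and $O'=O^\top O''$), one obtains the exact identity
\[
Z_{\mathrm{sphere}}=\E_{O''}\Big[\exp\Big((Oh)^\top O''\sigma_0+\tfrac12\,\sigma_0^\top O''^\top\bar DO''\sigma_0\Big)\Big].
\]
This is precisely the integral appearing in \prettyref{prop:Ointegralrank1} with Haar matrix $O''$, vectors $a=\sigma_0$ and $b=Oh$, and the matrix $\bar D$, whose spectral distribution satisfies Assumption~\ref{assump:main}(b) with $\mu_{\bar D},\bar d_+,\bar G$ replacing $\mu_D,d_+,G$. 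Since $\|\sigma_0\|^2/n=1$ and $\|Oh\|^2/n=\|h\|^2/n=n^{-1}\sum_i h_i^2\to\E[\H^2]$, we may fix $\eps,C>0$ with $\bar G(\bar d_++\eps)-\eps>1$ and, for $n$ large, $\|h\|^2/n\le C$, so that $(\sigma_0,Oh)\in\Omega_n$ for \emph{every} realization of $O$. The inequality $\bar G(\bar d_+)>1$ used here is exactly the hypothesis $\beta<G(d_+)$, because $\bar G(\bar d_+)=G(d_+)/\beta$. As the error in \prettyref{prop:Ointegralrank1} is uniform over $\Omega_n$, there is a deterministic sequence $\eps_n\to0$ with
\[
\Big|\tfrac1n\log Z_{\mathrm{sphere}}-\tfrac12E_n(\sigma_0,Oh)\Big|\le\eps_n,\qquad
E_n(\sigma_0,Oh)=\inf_{\gamma\ge\bar d_++\eps}\Big\{\gamma+\tfrac1n\,h^\top O^\top(\gamma I-\bar D)^{-1}Oh-\tfrac1n\log\det(\gamma I-\bar D)-1\Big\}.
\]

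It then remains to identify the almost sure limit of $E_n(\sigma_0,Oh)$. The term $\tfrac1n\log\det(\gamma I-\bar D)=\int\log(\gamma-x)\,\mu_n(dx)$ converges to $\int\log(\gamma-x)\,\mu_{\bar D}(dx)$ uniformly over $\gamma\ge\bar d_++\eps$, by weak convergence together with equicontinuity in $\gamma$ (this is carried out verbatim in the proof of \prettyref{lemma:Psi1correct}). For the quadratic form, write $Oh=\|h\|\,u$ with $u$ uniform on the unit sphere, so that $\tfrac1n h^\top O^\top(\gamma I-\bar D)^{-1}Oh=\tfrac{\|h\|^2}{n}\,u^\top(\gamma I-\bar D)^{-1}u$; since $(\gamma I-\bar D)^{-1}\succeq0$ has operator norm $O(1/\eps)$ for $\gamma\ge\bar d_++\eps$, standard concentration of quadratic forms (representing $u=g/\|g\|$ with $g\sim\N(0,I_n)$ and invoking the Hanson-Wright inequality) and Borel-Cantelli give $u^\top(\gamma I-\bar D)^{-1}u\to\bar G(\gamma)$ almost surely; combined with $\|h\|^2/n\to\E[\H^2]$, with uniform Lipschitz control in $\gamma$ on compacts, and with the fact that the quadratic form is $O(1/(\gamma-\bar d_+))$ for large $\gamma$, this convergence is uniform over all $\gamma\ge\bar d_++\eps$, almost surely. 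Hence almost surely $E_n(\sigma_0,Oh)\to\inf_{\gamma\ge\bar d_++\eps}\{\gamma+\E[\H^2]\bar G(\gamma)-\int\log(\gamma-x)\mu_{\bar D}(dx)-1\}$, and letting $\eps\downarrow0$ — legitimate since the bracketed function is continuous (indeed convex) on $(\bar d_+,\infty)$ — the limit is $2\Psi_{\RS,\mathrm{sphere}}$ by \prettyref{eq:RSsphere}. Combining with the previous display yields $\tfrac1n\log Z_{\mathrm{sphere}}\to\Psi_{\RS,\mathrm{sphere}}$ almost surely. The only genuinely technical ingredient is this uniform-in-$\gamma$ almost sure convergence of the random quadratic form; everything else is bookkeeping. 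No AMP sigma-field and no matching of conditional first and second moments is needed here, because casting the external field as the rank-one term $b=Oh$ lets \prettyref{prop:Ointegralrank1}, which already carries a disorder-uniform error, do all the work.
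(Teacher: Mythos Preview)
Your proof is correct and follows essentially the same route as the paper's: both reduce $Z_{\mathrm{sphere}}$ to a rank-one orthogonal integral with $a=\sigma_0$ and $b=Oh$, apply \prettyref{prop:Ointegralrank1}, and then pass to the limit in the resulting variational expression. The only substantive difference is in the concentration step for the random quadratic form $(Oh)^\top(\gamma I-\bar D)^{-1}(Oh)/n$: you invoke Hanson--Wright via $u=g/\|g\|$, whereas the paper uses Gromov's Lipschitz concentration on $\O(n)$ directly; both yield the needed almost-sure uniform convergence after a union bound over a $\gamma$-grid and Lipschitz interpolation. One small remark: your ``letting $\eps\downarrow0$'' at the end is slightly loose since $\eps$ was fixed when applying \prettyref{prop:Ointegralrank1}; the paper instead checks that $\Psi'(\gamma)=1+\E[\H^2]\bar G'(\gamma)-\bar G(\gamma)<0$ on $(\bar d_+,\bar d_++\eps)$, so the infimum over $\gamma>\bar d_+$ already coincides with that over $\gamma\ge\bar d_++\eps$, which cleanly closes the argument.
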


A derivation of this result in the special case of $h=0$ is given in
\cite[Section 2.1]{maillard2019high}.\footnote{\cite[Eq.\
(14)]{maillard2019high} studies the unnormalized surface area measure on the
sphere, and hence has an extra additive term of $\frac{1}{2}\log(2\pi e)$.}
We prove \prettyref{thm:sphere} using \prettyref{prop:Ointegralrank1},
which we have stated under the assumption $\beta<G(d_+)$. Dropping this
assumption requires removing the upper-bound condition on $\|a\|$ in
\prettyref{prop:Ointegralrank1}; such an extension
was obtained in \cite[Theorem 6]{guionnet2005fourier} for $b=0$.

\begin{proof}[Proof of \prettyref{thm:sphere}]
We express the uniform distribution of $\sigma \in S^{n-1}(\sqrt{n})$ as
$\sigma=Qa$, where $a \in S^{n-1}(\sqrt{n})$ is any fixed vector on the sphere, and $Q \sim \Haar(\O(n))$ is independent of $J$. By the given
condition $\beta<G(d_+)$, we have $\|a\|^2/n=1<\bar{G}(\bar d_+)=G(d_+)/\beta$.
Thus there exists $\eps>0$ for which $\|a\|^2/n=1<\bar{G}(\bar d_++\eps)-\eps$.
Setting $b=h$ and applying
\prettyref{prop:Ointegralrank1} to evaluate the expectation over $Q$
(conditional on $J$), we obtain
\[\lim_{n \to \infty} \left|\frac{1}{n}\log Z-f(\bar{J})\right|=0,
\qquad f(\bar{J}) \triangleq \frac{1}{2}\inf_{\gamma\geq \bar d_+ +\eps} f(\bar{J},\gamma)\]
where
\begin{align}
f(\bar{J},\gamma) &\triangleq  \gamma +\frac{h^\top (\gamma I-\bar J)^{-1}h}{n}
-\frac{1}{n}\log \det (\gamma I-\bar J) - 1\nonumber\\
&=\gamma +\frac{(Oh)^\top (\gamma I-\bar D)^{-1}(Oh)}{n}
-\frac{1}{n}\log \det (\gamma I-\bar D) - 1.\label{eq:fO}
\end{align}
For any $\gamma \geq \bar{d}+\eps$ and all large $n$,
note that $f(\bar{J},\gamma) \geq \gamma-\frac{1}{n}\log \det (\gamma I-\bar D) - 1$, where the right side diverges as $\gamma \to \infty$. 
Thus there exists some constant $\Gamma>0$ independent of $\bar{J}$ and $n$ such that
\begin{equation}
f(\bar{J}) = \frac{1}{2}\inf_{\gamma \in [\bar d_++\eps,\Gamma]} f(\bar{J},\gamma).
\label{eq:fO-restricted}
\end{equation}

Writing
$\Psi_{\RS,\mathrm{sphere}}=\frac{1}{2}\inf_{\gamma>\bar{d}_+} \Psi(\gamma)$
where $\Psi(\gamma)$ is the function in (\ref{eq:RSsphere}),
by the same reasoning, this infimum may be
restricted to $\gamma \leq \Gamma$. For $\gamma \in (\bar{d}_+,\bar{d}_++\eps)$,
we have $\Psi'(\gamma)=1+\E[\H^2] \cdot \bar{G}'(\gamma)-\bar{G}(\gamma)
\leq 1-\bar{G}(\gamma)<0$, and hence the infimum may also be restricted to
$\gamma \geq \bar{d}_++\eps$. So
\begin{equation}
\Psi_{\RS,\mathrm{sphere}}=\frac{1}{2}\inf_{\gamma \in [\bar d_++\eps,\Gamma]}
\Psi(\gamma).
\label{eq:EfO-restricted}
\end{equation}

Finally, we check the convergence of
$f(\bar{J},\gamma)$ to $\Psi(\gamma)$. Note that
$\Expect[(Oh)^\top (\gamma I-\bar D)^{-1}(Oh)] = \sum_{i=1}^n
\Expect[(Oh)_i^2]/(\gamma-\bar d_i) $, where 
$\Expect[(Oh)_i^2] = \|h\|^2 \Expect[O_{1i}^2] = \frac{\|h\|^2}{n}$ by symmetry.
Thus, applying Assumption \ref{assump:main}(b) and (c),
\begin{equation}
\Expect[f(\bar{J},\gamma)]  = \gamma +  \frac{\|h\|^2}{n^2}\sum_{i=1}^n \frac{1}{\gamma-\bar d_i} - \frac{1}{n}\log \det (\gamma I-\bar D) - 1
\xrightarrow{n\to\infty} \Psi(\gamma).
\label{eq:EfO-limit}
\end{equation}
Next we argue that $f(\bar{J},\gamma)$ concentrates, similar to the proof
of \prettyref{thm:replicasymmetric}. Viewing $f(\bar{J},\gamma)$ as a function
of $O$ via (\ref{eq:fO}), we may compute its derivative
\[\partial_O f(\bar{J},\gamma) = \frac{2}{n} hh^\top O^\top (\gamma I - \bar
D)^{-1}.\] Thus for large enough $n$ and any $\gamma \geq d_++\eps$,
we have $\Fnorm{\partial_O f(\bar{J},\gamma)} \leq \frac{4\|h\|^2}{n \eps} \Opnorm{O}$. By \prettyref{assump:main}(c), for all sufficiently large $n$, $\frac{1}{n} \|h\|^2 \leq 2 \Expect[\H^2]$ and hence
$O\mapsto f(\bar{J},\gamma)$ is $L$-Lipschitz on $\O(n)$ with
$L=\frac{8\Expect[\H^2]}{\eps}$.
Then by the same argument that leads to \prettyref{eq:concentrationprelim} and
\prettyref{eq:concentration}, we have for each $\gamma \geq \bar d_++\eps$,
\begin{equation}
\prob{|f(\bar{J},\gamma) - \Expect[f(\bar{J},\gamma)]| \geq \delta} \leq 2 \exp\sth{-\frac{(\frac{n}{4}-\frac{1}{2}) \delta^2}{2 L^2}}.
\label{eq:fO-concentrate}
\end{equation}
Furthermore, $|\partial_\gamma f(\bar{J},\gamma)| \leq 1 +
\frac{\|h\|^2}{n\eps} +\frac{1}{\eps}$. Thus for all $O\in \O(n)$ and all
sufficiently large $n$, $\gamma \mapsto f(\bar{J},\gamma)$ is $L'$-Lipschitz
with $L'=1 + (2 \Expect[\H^2]+1)/\eps$ on $[\bar d_++\eps,\Gamma]$. The same
Lipschitz continuity holds for $\Psi(\gamma)$. Combining
(\ref{eq:EfO-limit}) and (\ref{eq:fO-concentrate}), and applying Borel-Cantelli
and a union bound
over a sufficiently fine grid of values $\gamma \in [\bar d_++\eps,\Gamma]$, we
obtain the almost-sure convergence
$f(\bar{J},\gamma) \to \Psi(\gamma)$ uniformly over $\gamma \in [\bar d_++\eps,
\Gamma]$. Then by (\ref{eq:fO-restricted}) and (\ref{eq:EfO-restricted}),
also $f(\bar{J}) \to \Psi_{\RS,\mathrm{sphere}}$, completing the proof.
\end{proof}

\section{Cavity-method derivation of the TAP equations}\label{appendix:cavity}

We provide a brief review of the heuristic approach
in \cite{opper2001adaptive} for deriving the TAP equations \eqref{eq:TAP}. Let
\begin{equation}\label{eq:mchidef}
m_i=\langle \sigma_i \rangle, \qquad
\chi_{ij}=\langle \sigma_i \sigma_j \rangle-\langle \sigma_i \rangle
\langle \sigma_j \rangle
\end{equation}
where $\langle \cdot \rangle$ is the expectation
under the law $P(\sigma)$ in our model of interest (\ref{eq:gibbsmeasure}).
Define the cavity field $\theta_i=\sum_{j \neq i} \bar J_{ij}\sigma_j$. Then
the single-spin marginals of $P(\sigma)$ are
\[P(\sigma_i)=\frac{1}{Z_i}\langle e^{\sigma_i(h_i+\theta_i)} \rangle_{\setminus
i}, \qquad Z_i=\sum_{\sigma_i \in \{\pm 1\}} \langle e^{\sigma_i(h_i+\theta_i)}
\rangle_{\setminus i}\]
where $\langle \cdot \rangle_{\setminus i}$ denotes the expectation over
$\{\sigma_j\}_{j \neq i}$ (defining $\theta_i$)
in the cavity system with the spin $\sigma_i$ removed.
We have the exact identities
\begin{equation}\label{eq:michiij}
m_i=\partial_{h_i} \log Z_i, \qquad \chi_{ij}=\partial_{h_j} m_i.
\end{equation}

Approximating the law of the cavity field $\theta_i$ under $\langle \cdot
\rangle_{\setminus i}$ by a Gaussian law $\N(\mu_i,v_i)$, one obtains
from the Gaussian moment-generating-function
\begin{equation}\label{eq:cavityapprox}
P(\sigma_i) \approx \frac{1}{Z_i}
e^{\sigma_i(h_i+\mu_i)+v_i/2}, \qquad
m_i \approx \tanh(h_i+\mu_i), \qquad
Z_i \approx \sum_{\sigma_i \in \{\pm 1\}}
e^{\sigma_i(h_i+\mu_i)+v_i/2}.
\end{equation}
Furthermore, the law of $\theta_i$ under $\langle \cdot \rangle$ in the original
model is then approximately a two-component Gaussian mixture
$P(\theta_i) \propto
\sum_{\sigma_i \in \{\pm 1\}} e^{\sigma_i(h_i+\theta_i)-(\theta_i-\mu_i)^2/2v_i}
\propto \sum_{\sigma_i \in \{\pm 1\}}
e^{\sigma_i(h_i+\mu_i)} e^{-(\theta_i-\mu_i-\sigma_i v_i)^2/2v_i}$, from which
one obtains
$\langle \theta_i \rangle \approx \mu_i+v_i \tanh(h_i+\mu_i) \approx 
\mu_i+v_i m_i$.
Equating this with $\langle \theta_i \rangle=\sum_{k \neq i} \bar
J_{ik}m_k$ from the definition of $\theta_i$ gives
\begin{equation}\label{eq:muapprox}
\mu_i \approx \sum_k \bar J_{ik} m_k-v_im_i.
\end{equation}

Finally, the approach of \cite{opper2001adaptive} is to derive
an equation for the
cavity field variances $\{v_i\}$ by implicit differentiation in $h_j$,
assuming $\partial_{h_j} v_i \approx 0$. Then, differentiating
\eqref{eq:muapprox} and applying \eqref{eq:michiij},
\begin{equation}\label{eq:cavity1}
\partial_{h_j} \mu_i \approx \sum_k \bar J_{ik} \chi_{kj}-v_i\chi_{ij}.
\end{equation}
Differentiating the first equality of \eqref{eq:michiij} using the
approximation for $Z_i$ in \eqref{eq:cavityapprox},
\begin{equation}\label{eq:cavity2}
\chi_{ij}=\partial_{h_j} m_i
\approx \partial_{h_j}
\frac{\sum_{\sigma_i} \sigma_ie^{\sigma_i(h_i+\mu_i)+v_i/2}}{\sum_{\sigma_i}
e^{\sigma_i(h_i+\mu_i)+v_i/2}}
\approx \chi_{ii}\Big(\mathbf{1}\{i=j\}+\partial_{h_j} \mu_i\Big).
\end{equation}
Combining \eqref{eq:cavity1} and \eqref{eq:cavity2}, and denoting
$\chi=(\chi_{ij})_{i,j=1}^N$,
$X=\diag(\chi_{ii})_{i=1}^N$, and $V=\diag(v_i)_{i=1}^N$, one obtains
$\chi \approx X(I+\bar J \chi-V\chi)$, hence
$\chi \approx (V+X^{-1}-\bar J)^{-1}$. Taking the trace and assuming
further by the symmetries of the model that $\chi_{ii} \approx 1-q_*$ and
$v_i \approx v$ for some $v>0$ and all $i=1,\ldots,N$,
this gives $1-q_* \approx \bar G(v+\frac{1}{1-q_*})$, i.e.\
$v \approx \bar R(1-q_*)$, where
$\frac{1}{N}\Tr(z I - \bar J) \to \bar G(z)$ as $N\to\infty$
and $\bar G,\bar R$ are the Cauchy and R transforms of the limiting law $\bar \mu_{\bar D}$
defined in \prettyref{eq:GR}.
Substituting this into
(\ref{eq:cavityapprox}--\ref{eq:muapprox}) yields
$m \approx \tanh(h+\mu) \approx \tanh(h+\bar J m-\bar R(1-q_*)m)$
which are the TAP equations \eqref{eq:TAP}.


\bibliographystyle{plain}
\bibliography{main}

\end{document}